\newtheorem{theorem}{Theorem}[section]
\newtheorem{corollary}[theorem]{Corollary}
\theoremstyle{definition}
\newtheorem{convention}[theorem]{Convention}
\theoremstyle{remark}
\newtheorem{remark}[theorem]{Remark}
\numberwithin{equation}{section}
\def\esup{\operatornamewithlimits{ess\,sup}}
\begin{document}

\baselineskip=17pt

\title[Weighted iterated Hardy-type inequalities]{Weighted iterated Hardy-type inequalities}

\author[A. Gogatishvili]{Amiran Gogatishvili}
\address{Institute of Mathematics \\
Academy of Sciences of the Czech Republic \\
\v Zitn\'a~25 \\
115~67 Praha~1, Czech Republic} \email{gogatish@math.cas.cz}

\author[R.Ch.Mustafayev]{Rza Mustafayev}
\address{Department of Mathematics \\ Faculty of Science and Arts \\ Kirikkale
University \\ 71450 Yahsihan, Kirikkale, Turkey}
\email{rzamustafayev@gmail.com}

\thanks{The research of A. Gogatishvili was partly supported by the grants P201-13-14743S
of the Grant Agency of the Czech Republic and RVO: 67985840, by
Shota Rustaveli National Science Foundation grants no. 31/48
(Operators in some function spaces and their applications in Fourier
Analysis) and no. DI/9/5-100/13 (Function spaces, weighted
inequalities for integral operators and problems of summability of
Fourier series). The research of both authors was partly supported
by the joint project between  Academy of Sciences of Czech Republic
and The Scientific and Technological Research Council of Turkey}

\subjclass[2010]{Primary 26D10; Secondary 46E20.}

\keywords{quasilinear operators, iterated Hardy inequalities, weights.}

\begin{abstract}
In this paper a reduction and equivalence theorems for the
boundedness of the composition of a quasilinear operator $T$ with
the Hardy and Copson operators in weighted Lebesgue spaces are
proved. New equivalence theorems are obtained for the operator $T$
to be bounded in weighted Lebesgue spaces restricted to the cones of
monotone functions,  which allow to change the cone of
non-decreasing functions to the cone of non-increasing functions and
vice versa not changing the operator $T$. New characterizations of
the weighted Hardy-type inequalities on the cones of monotone
functions are given. The validity of so-called weighted iterated
Hardy-type inequalities are characterized.
\end{abstract}

\maketitle


\section{Introduction}\label{in}

The well-known two-weight Hardy-type inequalities
\begin{equation}\label{Hardy.ineq.1}
\bigg( \int_0^{\infty} \bigg(\int_0^x f (\tau)\,d\tau\bigg)^q
w(x)\,dx\bigg)^{1 / q} \le c\bigg(\int_0^{\infty}
f^p(x)v(x)\,dx\bigg)^{1/p}
\end{equation}
and
\begin{equation}\label{Hardy.ineq.2}
\bigg( \int_0^{\infty} \bigg( \int_x^{\infty} f
(\tau)\,d\tau\bigg)^q w(x)\,dx\bigg)^{1 / q} \le
c\bigg(\int_0^{\infty} f^p(x)v(x)\,dx\bigg)^{1/p}
\end{equation}
for all non-negative measurable functions $f$ on $(0,\infty)$, where
$0 < p,\,q < \infty$ with $c$ being a constant independent of $f$,
have a broad variety of applications and represents now a basic tool
in many parts of mathematical analysis, namely in the study of
weighted function inequalities. For the results, history and
applications of this problem, see \cites{opkuf,kufpers, kufmalpers}.

Throughout the paper we assume that $I : = (a,b)\subseteq
(0,\infty)$. By ${\mathfrak M} (I)$ we denote the set of all
measurable functions on $I$. The symbol ${\mathfrak M}^+ (I)$ stands
for the collection of all $f\in{\mathfrak M} (I)$ which are
non-negative on $I$, while ${\mathfrak M}^+ (I;\downarrow)$ and
${\mathfrak M}^+ (I;\uparrow)$ are used to denote the subset of
those functions which are non-increasing and non-decreasing on $I$,
respectively. When $I = (0,\infty)$, we write simply ${\mathfrak
M}^{\downarrow}$ and ${\mathfrak M}^{\uparrow}$ instead of
${\mathfrak M}^+ (I;\downarrow)$ and ${\mathfrak M}^+ (I;\uparrow)$,
accordingly. The family of all weight functions (also called just
weights) on $I$, that is, locally integrable non-negative functions
on $(0,\infty)$, is given by ${\mathcal W}(I)$.

For $p\in (0,\infty]$ and $w\in {\mathfrak M}^+(I)$, we define the
functional $\|\cdot\|_{p,w,I}$ on ${\mathfrak M} (I)$ by
\begin{equation*}
\|f\|_{p,w,I} : = \left\{\begin{array}{cl}
\left(\int_I |f(x)|^p w(x)\,dx \right)^{1/p} & \qquad\mbox{if}\qquad p<\infty \\
\esup_{I} |f(x)|w(x) & \qquad\mbox{if}\qquad p=\infty.
\end{array}
\right.
\end{equation*}

If, in addition, $w\in {\mathcal W}(I)$, then the weighted Lebesgue
space $L^p(w,I)$ is given by
\begin{equation*}
L^p(w,I) = \{f\in {\mathfrak M} (I):\,\, \|f\|_{p,w,I} < \infty\},
\end{equation*}
and it is equipped with the quasi-norm $\|\cdot\|_{p,w,I}$.

When $w\equiv 1$ on $I$, we write simply $L^p(I)$ and
$\|\cdot\|_{p,I}$ instead of $L^p(w,I)$ and $\|\cdot\|_{p,w,I}$,
respectively.

Suppose $f$ be a measurable a.e. finite function on ${\mathbb R}^n$.
Then its non-increasing rearrangement $f^*$ is given by
$$
f^* (t) = \inf \{\lambda > 0: |\{x \in {\mathbb R}^n:\, |f(x)| >
\lambda \}| \le t\}, \quad t \in (0,\infty),
$$
and let $f^{**}$ denotes the Hardy-Littlewood maximal function of
$f$, i.e.
$$
f^{**}(t) : = \frac{1}{t} \int_0^t f^* (\tau)\,d\tau, \quad t > 0.
$$
Quite many familiar function spaces can be defined using the
non-increasing rearrangement of a function. One of the most
important classes of such spaces are the so-called classical Lorentz
spaces.

Let $p \in (0,\infty)$ and $w \in {\mathcal W}$. Then the classical
Lorentz spaces $\Lambda^p (w)$ and $\Gamma^p (w)$ consist of all
functions $f \in {\mathfrak M}$ for which  $\|f\|_{\Lambda^p(w)} <
\infty$ and $\|f\|_{\Gamma^p(w)} < \infty$, respectively. Here it is
$$
\|f\|_{\Lambda^p(w)} : = \|f^*\|_{p,w,(0,\infty)} \qquad \mbox{and}
\qquad \|f\|_{\Gamma^p(w)} : = \|f^{**}\|_{p,w,(0,\infty)}.
$$
For more information about the Lorentz $\Lambda$ and $\Gamma$ see
e.g. \cite{cpss} and the references therein.

There has been considerable progress in the circle of problems
concerning characterization of boundedness of classical operators
acting in weighted Lorentz spaces since the beginnig of the 1990s.
The first results on the problem $\Lambda^p(v) \hookrightarrow
\Gamma^p(v)$, $1 < p < \infty$, which is equivalent to inequality
\eqref{Hardy.ineq.1} restricted to the cones of non-increasing
functions, were obtained by Boyd \cite{boyd} and in an explicit form
by Ari{\~n}o and Muckenhoupt \cite{arinomuck}. The problem with $w
\neq v$ and $p\neq q$, $1 < p,\,q < \infty$ was first successfully
solved by Sawyer \cite{sawyer1990}. Many articles on this topic
followed, providing the results for a wider range of parameters. In
particular, much attention was paid to inequalities
\eqref{Hardy.ineq.1} and \eqref{Hardy.ineq.2} restricted to the
cones of monotone functions; see for instance
\cites{arinomuck,sawyer1990,steptrans,step1993,carsor1993,heinstep1993,ss,Sinn,popo,gogpick2000,bengros,gogpick2007,gjop,cgmp2008,gogstepdokl2012_1,gogstepdokl2012_2,GogStep1,GogStep,gold2011.1,gold2011.2,johstepush,LaiShanzhong,gold2001},
survey \cite{cpss}, the monographs \cites{kufpers, kufmalpers}, for
the latest development of this subject see \cite{GogStep}, and
references given there. The restricted operator inequalities may
often be handled by the so-called "reduction theorems". These, in
general, reduce a restricted inequality into certain non-restricted
inequalities. For example, the restriction to non-increasing or
quasi-concave functions may be handled in this way, see e.g.
\cites{sinn2002,gogstepdokl2012_1,gogstepdokl2012_2,GogStep1,GogStep}.
At the initial stage the main tool was the Sawyer duality principle
\cite{sawyer1990}, which allowed one to reduce an $L^p - L^q$
inequality for monotone functions with $1 < p,\,q < \infty$ to a
more manageable inequality for arbitrary non-negative functions.
This principle was extended by Stepanov in \cite{steptrans} to the
case $0 < p < 1 < q < \infty$. In the same work Stepanov applied a
different approach to this problem, so-called reduction theorems,
which enabled to extend the range of parameters to $1 < p < \infty$,
$0 < q < \infty$.  The case $p \le q$, $0 < p \le 1$ was
alternatively characterized in
\cites{steptrans,step1993,carsor1993,burgold,LaiShanzhong}. Later on
some direct reduction theorems were found in
\cites{gogpick2007,cgmp2008,GogStep} involving supremum operators
which work for the case $0 < q < p \le 1$.

In this paper we consider operators $T:{\mathfrak M}^+ \rightarrow
{\mathfrak M}^+$ satisfying the following conditions:

{\rm (i)} $T(\lambda f) = \lambda Tf$ for all $\lambda \ge 0$ and $f
\in {\mathfrak M}^+$;

{\rm (ii)} $Tf(x) \le c Tg(x)$ for almost all $x \in {\mathbb R}_+$
if $f(x) \le g(x)$ for almost all $x \in {\mathbb R}_+$, with
constant $c > 0$ independent of $f$ and $g$;

{\rm (iii)} $T(f+ \lambda {\bf 1}) \le c (T f + \lambda T {\bf 1})$
for all $f \in {\mathfrak M}^+$ and $\lambda \ge 0$, with a constant
$c > 0$ independent of $f$ and $\lambda$.

Given a operator $T:{\mathfrak M}^+ \rightarrow {\mathfrak M}^+$,
for $0 < p <\infty$ and $u\in{\mathfrak M}^+$, denote by
$$
T_{p,u} (g) : = ( T (g^p u))^{1/p}, \qquad g \in {\mathfrak M}^+.
$$
Hence $T_{1,{\bf 1}} \equiv T$. When $p = 1$, we write $T_{u}$
instead of $T_{1,u}$.

Denote by
$$
H g (t) : = \int_0^t g(s)\,ds, \qquad g \in {\mathfrak M}^+,
$$
and
$$
H^* g (t) : = \int_t^{\infty} g(s)\,ds, \qquad g \in {\mathfrak
M}^+,
$$
the Hardy operator and Copson operator, respectively.

In the paper we prove a reduction and equivalence theorems for the
boundedness of the composition operators $T \circ H$ or $T \circ
H^*$ of a quasiliear operator $T: {\mathfrak M}^+ \rightarrow
{\mathfrak M}^+$ with the operators $H$ and $H^*$ in weighted
Lebesgue spaces.  To be more precise, we consider inequalities
\begin{equation}
\label{RT.thm.main.3.eq.1} \left\|T \bigg(\int_0^x
h\bigg)\right\|_{\beta,w,(0,\infty)} \leq c
\,\|h\|_{s,v,(0,\infty)}, \, h \in {\mathfrak M}^+,
\end{equation}
and
\begin{equation}
\label{RT.thm.main.4.eq.1} \left\|T \bigg(\int_x^{\infty}
h\bigg)\right\|_{\beta,w,(0,\infty)} \leq c
\,\|h\|_{s,v,(0,\infty)}, \, h \in {\mathfrak M}^+.
\end{equation}
Using these equivalence theorems, in particular, we completely
characterize the validity of the iterated Hardy-type inequalities
\begin{equation}\label{IHI.1}
\left\|H_{p,u} \bigg( \int_0^x h\bigg)\right\|_{q,w,(0,\infty)} \leq
c \,\|h\|_{s,v,(0,\infty)},
\end{equation}
and
\begin{equation}\label{IHI.2}
\left\|H_{p,u} \bigg( \int_x^{\infty}
h\bigg)\right\|_{q,w,(0,\infty)} \leq c \,\|h\|_{s,v,(0,\infty)},
\end{equation}
where $0<p<\infty$, $0 < q \le \infty$, $1 \le s < \infty$, $u$, $w$
and $v$ are weight functions on  $(0,\infty)$.

It is worth to mentoin that the characterizations of "dual"
inequalities
\begin{equation}\label{IHI.3}
\left\| H_{p,u}^* \bigg( \int_x^{\infty}
h\bigg)\right\|_{q,w,(0,\infty)} \leq c \,\|h\|_{s,v,(0,\infty)},
\end{equation}
and
\begin{equation}\label{IHI.4}
\left\|H_{p,u}^* \bigg( \int_0^x h\bigg)\right\|_{q,w,(0,\infty)}
\leq c \,\|h\|_{s,v,(0,\infty)},
\end{equation}
can be easily obtained  from the solutions of inequalities
\eqref{IHI.1} - \eqref{IHI.2}, respectively, by change of variables.

In the case when $p=1$, using the Fubini Theorem, inequalities
\eqref{IHI.1} and \eqref{IHI.2} can be reduced to the weighted $L^s
- L^q$ boundedness problem of the  Volterra operator
$$
(Kh)(x) : = \int_0^x k(x,y) h(y)\,dy, \quad x > 0,
$$
with the kernel
$$
k(x,y) : = \int_y^x u(t)\,dt, \quad 0 < y \le x < \infty,
$$
and the Stieltjes operator
$$
(Sh)(x) = \int_0^{\infty} \frac{h(t)\,dt}{U(x) + U(t)},
$$
respectively, and consequently, can be easily solved. Indeed:

By  the Fubini Theorem, we see that
$$
\int_0^x \left( \int_0^t h(\tau)\,d\tau\right)u(t)\,dt = \int_0^x
k(x,\tau) h(\tau)\,d\tau, \qquad h\in {\mathfrak M}^+(0,\infty).
$$

On the other hand,  it is easy to see that
$$
\int_0^x \left( \int_t^{\infty} h(s)\,ds\right)u(t)\,dt \approx
U(x)\cdot S(hU)(x), \qquad h\in {\mathfrak M}^+(0,\infty).
$$

Note that the weighted $L^s - L^q$ boundedness of Volterra operators
$K$, that is, inequality
\begin{equation}\label{volter}
\|Kh\|_{q,w,(0,\infty)} \le c \|h\|_{s,v,(0,\infty)},\quad h \in
{\mathfrak M}^+(0,\infty),
\end{equation}
is completely characterized for $1 \le s \le \infty$, $0 <  q \le
\infty$ (see \cite{GogStep} and references given there).

The usual Stieltjes transform is obtained on putting $U(x) \equiv
x$. In the case $U(x) \equiv x^{\lambda}$, $\lambda > 0$, the
boundedness of the operator $S$ between weighted $L^s$ and $L^q$
spaces, namely inequality
\begin{equation}\label{stielt.1}
\|Sh\|_{q,w,(0,\infty)} \le c \|h\|_{s,v,(0,\infty)},\quad h \in
{\mathfrak M}^+(0,\infty),
\end{equation}
was investigated in \cite{ander} (when $1\le s \le q \le \infty$),
in \cite{sin1988} (when $1\le q < s \le \infty$), in \cite{gop2009}
(see also \cite{g1}) (when $1 < s < \infty$, $0 < q \le \infty$),
where the result is presented without proof. This problem also was
considered in \cite{gogkufpers2009} and \cites{gpswdokl,gpsw2014},
where completely different approach was used, based on the so called
``gluing lemma'' (see also \cite{gogkufpers2013}). It is proved in
\cite{GogMusPers2} (when $1 \le s \le \infty$, $0 < q \le \infty$)
that inequality \eqref{stielt.1} holds if and only if
\begin{equation}\label{stielt.2}
\bigg\| H_u \bigg( \int_x^{\infty} h \bigg)\bigg\|_{q,w,(0,\infty)}
\le c \|hU\|_{s,v,(0,\infty)}, \quad h \in {\mathfrak
M}^+(0,\infty),
\end{equation}
holds, and the solution of \eqref{stielt.1} is obtained using
characterization of inequality \eqref{stielt.2}.

Note that inequality \eqref{IHI.2} has been completely characterized
in \cite{GogMusPers1} and \cite{GogMusPers2} in the case
$0<p<\infty$, $0<q\leq \infty$, $1 \le s \le \infty$ by using
difficult discretization and anti-discretization methods.
Inequalities \eqref{IHI.1} - \eqref{IHI.2} and \eqref{IHI.3} -
\eqref{IHI.4} were considered also in \cite{ProkhStep1} and
\cite{ProkhStep2}, but characterization obtained there is not
complete and seems to us unsatisfactory from a practical point of
view.

We pronounce that the characterizations of inequalities
\eqref{IHI.1}-\eqref{IHI.2} and \eqref{IHI.3}-\eqref{IHI.4} are
important because many inequalities for classical operators  can be
reduced to them (for illustrations of this important fact, see, for
instance, \cite{GogMusPers2}). These inequalities play an important
role in the theory of Morrey-type spaces and other topics (see
\cite{BGGM1}, \cite{BGGM2} and \cite{BO}). It is worth to mention
that using characterizations of weighted Hardy inequalities we can
show that the characterization of the boundedness of bilinear Hardy
inequalities, namely of the inequality
\begin{equation}
\| T_1 f \cdot T_2 g \|_{w,q,(0,\infty)} \le c
\|f\|_{p_1,v_1,(0,\infty)} \|g\|_{p_2,v_2,(0,\infty)},
\end{equation}
for all $f \in L^{p_1}(v_1,(0,\infty))$ and $g \in
L^{p_2}(v_2,(0,\infty))$ with constant $c$ independent of $f$ and
$g$, where $T_i = H \,\mbox{or}\, H^*$, $i = 1,\,2$, are equivalent
to inequalities \eqref{IHI.1}-\eqref{IHI.2} and
\eqref{IHI.3}-\eqref{IHI.4} (see, for instance, \cite{aor}).

It is well-known that when $T$ is a integral operator then by
substitution of variables it is possible to change the cone of
non-decreasing functions to the cone of non-increasing functions and
vice versa, when considering inequalities
\begin{equation}\label{TonMonCon.1}
\|Tf \|_{\beta,w,(0,\infty)} \le c \| f \|_{s,v,(0,\infty)}, \qquad
f \in {\mathfrak M}^{\downarrow}(0,\infty),
\end{equation}
and
\begin{equation}\label{TonMonCon.2}
\|Tf \|_{\beta,w,(0,\infty)} \le c \| f \|_{s,v,(0,\infty)}, \qquad
f \in {\mathfrak M}^{\uparrow}(0,\infty),
\end{equation}
but this procedure changes $T$ also as usually to the "dual"
operator. Theorems proved in Section \ref{WeightedIneqOnTheCone}
allows to change the cones to each other not changing the operator
$T$. This new observation enables to state that if we know solution
of one inequality on any cone of monotone functions, then we could
characterize the inequality on the other cone of monotone functions.

The paper is organized as follows. Section \ref{pre} contains some
preliminaries along with the standard ingredients used in the
proofs. In Section \ref{RT} we prove the reduction and equivalence
theorems for the boundedness of the composition operators $T \circ
H$ or $T \circ H^*$  in weighted Lebesgue spaces. In Section
\ref{WeightedIneqOnTheCone} the equivalence theorems which allow to
change the cones of monotone functions to each other not changing
the operator $T$ are proved. In Section \ref{HardyIneqOnTheCone} we
obtain a new characterizations of the weighted Hardy-type
inequalities on the cones of monotone functions. In Section
\ref{RT.SC} we give complete characterization of inequalities
\eqref{IHI.1} - \eqref{IHI.2} and \eqref{IHI.3} - \eqref{IHI.4}.

\section{Notations and Preliminaries}\label{pre}

Throughout the paper, we always denote by  $c$ or $C$ a positive
constant, which is independent of main parameters but it may vary
from line to line. However a constant with subscript or superscript
such as $c_1$ does not change in different occurrences. By
$a\lesssim b$, ($b\gtrsim a$) we mean that $a\leq \lambda b$, where
$\lambda >0$ depends on inessential parameters. If $a\lesssim b$ and
$b\lesssim a$, we write $a\approx b$ and say that $a$ and $b$ are
equivalent.  We will denote by $\bf 1$ the function ${\bf 1}(x) =
1$, $x \in (0,\infty)$. Unless a special remark is made, the
differential element $dx$ is omitted when the integrals under
consideration are the Lebesgue integrals. Everywhere in the paper,
$u$, $v$ and $w$ are weights.
\begin{convention}\label{Notat.and.prelim.conv.1.1}
    We adopt the following conventions:

    {\rm (i)} Throughout the paper we put $0 \cdot \infty = 0$, $\infty / \infty =
    0$ and $0/0 = 0$.

    {\rm (ii)} If $p\in [1,+\infty]$, we define $p'$ by $1/p + 1/p' = 1$.

    {\rm (iii)} If $0 < q < p < \infty$, we define $r$ by $1 / r  = 1 / q - 1 / p$.

    {\rm (iv)} If $I = (a,b) \subseteq {\mathbb R}$ and $g$ is monotone
    function on $I$, then by $g(a)$ and $g(b)$ we mean the limits
    $\lim_{x\rightarrow a+}g(x)$ and $\lim_{x\rightarrow b-}g(x)$, respectively.
\end{convention}

To state the next statements we need the following notations:
$$
\begin{array}{ll}
U(t) : = \int_0^t u, & U_*(t) : = \int_t^{\infty} u,  \\[10pt]
V(t) : = \int_0^t v, & V_*(t) : = \int_t^{\infty} v,\\ [10pt] W(t) :
= \int_0^t w, & W_*(t) :  = \int_t^{\infty} w.
\end{array}
$$
\begin{theorem}[\cite{GogStep}, Theorem 3.1]\label{Reduction.Theorem.thm.3.1}
    Let $0 < \beta \le \infty$ and $1 \le s < \infty$, and let $T: {\mathfrak M}^+ \rightarrow {\mathfrak M}^+$
    be a positive operator. Then the inequality
    \begin{equation}\label{Reduction.Theorem.eq.1.1}
    \|Tf \|_{\beta,w,(0,\infty)} \le c \| f \|_{s,v,(0,\infty)}, \qquad f \in {\mathfrak M}^{\downarrow}(0,\infty)
    \end{equation}
    implies the inequality
    \begin{equation}\label{Reduction.Theorem.eq.1.2}
    \bigg\| T\bigg( \int_x^{\infty} h \bigg)\bigg\|_{\beta,w,(0,\infty)} \le c \| h
    \|_{s, V^{s} v^{1-s},(0,\infty)}, \qquad h \in {\mathfrak M}^+(0,\infty).
    \end{equation}
    If $V(\infty) = \infty$ and if $T$ is an operator satisfying
    conditions {\rm (i)-(ii)}, then the condition
    \eqref{Reduction.Theorem.eq.1.2} is sufficient for inequality
    \eqref{Reduction.Theorem.eq.1.1} to hold on the cone ${\mathfrak M}^{\downarrow}$.
    Further, if $0 < V(\infty) < \infty$, then a sufficient condition for
    \eqref{Reduction.Theorem.eq.1.1} to hold on ${\mathfrak M}^{\downarrow}$ is that both
    \eqref{Reduction.Theorem.eq.1.2} and
    \begin{equation}\label{Reduction.Theorem.eq.1.3}
    \|T {\bf 1}\|_{\beta,w,(0,\infty)} \le c \|{\bf 1}\|_{s,v,(0,\infty)}
    \end{equation}
    hold in the case when $T$ satisfies the conditions {\rm (i)-(iii)}.
\end{theorem}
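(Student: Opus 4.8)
Here is how I would approach the proof, in the order I would carry it out.

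\emph{The implication \eqref{Reduction.Theorem.eq.1.1}$\Rightarrow$\eqref{Reduction.Theorem.eq.1.2} is the easy part, and no structural hypothesis on $T$ enters.} Given $h\in{\mathfrak M}^+(0,\infty)$ I would apply \eqref{Reduction.Theorem.eq.1.1} to the non-increasing function $f=H^*h$ (which is finite a.e.\ as soon as the right-hand side of \eqref{Reduction.Theorem.eq.1.2} is finite, by Hölder's inequality). What then remains is the scalar weighted Hardy inequality $\|H^*h\|_{s,v,(0,\infty)}\le C(s)\,\|h\|_{s,V^sv^{1-s},(0,\infty)}$. For $s=1$ this is the identity $\int_0^\infty\big(\int_x^\infty h\big)v=\int_0^\infty hV$ (Fubini). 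For $1<s<\infty$ it is the Muckenhoupt criterion for the Copson operator, which here is satisfied automatically: $(V^sv^{1-s})^{1-s'}=V^{-s'}v$ and $\int_t^\infty V^{-s'}v\le(s'-1)^{-1}V(t)^{1-s'}$, so the relevant supremum is bounded by $(s'-1)^{-1/s'}$.

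\emph{Sufficiency of \eqref{Reduction.Theorem.eq.1.2} when $V(\infty)=\infty$.} The core step is to construct, for a given $f\in{\mathfrak M}^\downarrow$ with $f(\infty)=0$, a function $h\in{\mathfrak M}^+(0,\infty)$ with $f\le H^*h$ pointwise and $\|h\|_{s,V^sv^{1-s},(0,\infty)}\le C(s)\,\|f\|_{s,v,(0,\infty)}$; then {\rm (i)--(ii)} give $Tf\le c\,T(H^*h)$ and \eqref{Reduction.Theorem.eq.1.2} gives $\|Tf\|_{\beta,w,(0,\infty)}\lesssim\|f\|_{s,v,(0,\infty)}$, i.e.\ \eqref{Reduction.Theorem.eq.1.1}. (Assuming $\|f\|_{s,v,(0,\infty)}<\infty$ costs nothing, and it forces $f(\infty)=0$ since $f(\infty)>0$ would give $\|f\|_{s,v,(0,\infty)}\ge f(\infty)V(\infty)^{1/s}=\infty$.) To build $h$ I would set $\Phi(x):=\int_x^\infty V^{-s'}v$, pick $\{z_k\}$ with $\Phi(z_k)=2^{-k}$ (so $\int_{z_k}^{z_{k+1}}V^{-s'}v=2^{-k-1}$), and put
$$
h:=\sum_k\frac{f(z_{k-1})-f(z_k)}{\int_{z_k}^{z_{k+1}}V^{-s'}v}\;V^{-s'}v\,\chi_{[z_k,z_{k+1})}.
$$
The telescoping identity $\sum_{j\ge k}(f(z_{j-1})-f(z_j))=f(z_{k-1})$ gives $H^*h\ge f$ at once, and the algebraic identity $(V^{-s'}v)^sV^sv^{1-s}=V^{-s'}v$ collapses the sum to $\|h\|_{s,V^sv^{1-s},(0,\infty)}^s=\sum_k(f(z_{k-1})-f(z_k))^s(2^{-k-1})^{1-s}$. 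Using $(f(z_{k-1})-f(z_k))^s\le f(z_{k-1})^s$, I would bound this by a constant times $\|f\|_{s,v,(0,\infty)}^s\ge\sum_k f(z_k)^s\big(V(z_k)-V(z_{k-1})\big)$; the comparison closes because, when $\Phi(x)=(s'-1)^{-1}V(x)^{1-s'}$ (which holds identically for $V(\infty)=\infty$), $V(z_k)=(s'-1)^{1-s}2^{k(s-1)}$, hence $(2^{-k-1})^{1-s}\approx V(z_k)$ and $V(z_k)-V(z_{k-1})=(1-2^{1-s})V(z_k)$, with all constants depending on $s$ alone.

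\emph{The case $0<V(\infty)<\infty$} needs one extra move, because now $\Phi(x)=(s'-1)^{-1}\big(V(x)^{1-s'}-V(\infty)^{1-s'}\big)$ is comparable to $V(x)^{1-s'}$ (with $s$-dependent constants $\tfrac{1-2^{1-s'}}{s'-1},\tfrac1{s'-1}$) only on $(0,\rho)$, where $V(\rho)=V(\infty)/2$. I would write $f\le f\chi_{(0,\rho)}+f(\rho){\bf 1}$ pointwise, dispose of the constant summand via {\rm (iii)} and \eqref{Reduction.Theorem.eq.1.3} (using $f(\rho)\|{\bf 1}\|_{s,v,(0,\infty)}=f(\rho)V(\infty)^{1/s}\le2^{1/s}\|f\|_{s,v,(0,\infty)}$), and apply the $\Phi$-dyadic construction above to $f\chi_{(0,\rho)}$, which vanishes at $\infty$. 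The key point is that the single nonzero ``cap'' term of $h$ then lands just to the right of $\rho$, where $\Phi$ still has size $\approx V(\infty)^{1-s'}$, so no term of $\|h\|_{s,V^sv^{1-s},(0,\infty)}^s$ blows up, and all constants stay $s$-dependent.

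\emph{The main obstacle} is exactly this construction of $h$ and the bookkeeping that keeps every constant dependent on $s$ alone; in particular the incompatibility of a single dyadic level-set scale with a finite value of $V(\infty)$ is what forces the splitting $f\le f\chi_{(0,\rho)}+f(\rho){\bf 1}$ and hence the hypotheses {\rm (iii)} and \eqref{Reduction.Theorem.eq.1.3}. Two minor points I would treat separately: the degenerate situation where $v$ vanishes on a set of positive measure (merge the affected level sets), and $s=1$ (where $V^sv^{1-s}=V$), in which one dominates $f=\int_0^\infty\chi_{(0,t)}\,d\mu(t)$ — $\mu$ the measure associated with $-f$ — by superposing the elementary blocks $\chi_{(0,t)}\le\delta^{-1}H^*\chi_{(t,t+\delta)}$ and letting $\delta\to0$, the norm estimate then reducing to Fubini's theorem and Minkowski's integral inequality.
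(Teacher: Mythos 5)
The paper never proves this theorem; it is imported verbatim from \cite{GogStep} (Theorem 3.1), so your argument has to stand on its own, and for $1<s<\infty$ it essentially does. The necessity step is fine: applying \eqref{Reduction.Theorem.eq.1.1} to $f=H^*h$ and invoking the automatically valid Copson inequality (Muckenhoupt constant $\le (s'-1)^{-1/s'}$, since $(V^sv^{1-s})^{1-s'}=V^{-s'}v$) gives \eqref{Reduction.Theorem.eq.1.2} with constant $C(s)c$, which is all the statement can mean under the paper's conventions. The dyadic construction with $\Phi(z_k)=2^{-k}$ is the right device for sufficiency: $\int_{z_k}^{z_{k+1}}V^{-s'}v=2^{-k-1}$ exactly, $(V^{-s'}v)^sV^sv^{1-s}=V^{-s'}v$ collapses the norm, and when $V(\infty)=\infty$ the identity $\Phi=(s'-1)^{-1}V^{1-s'}$ makes $V(z_k)$ exactly geometric, so the comparison with $\|f\|_{s,v}^s\ge\sum_k f(z_k)^s\big(V(z_k)-V(z_{k-1})\big)$ closes after the obvious index shift. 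Two points you gloss over are real but repairable: (a) in the case $0<V(\infty)<\infty$, the needed lower bound $V(z_{k-1})-V(z_{k-2})\ge c_sV(z_{k-1})$ does not follow from using the two-sided comparison $\Phi\approx V^{1-s'}$ at the two points separately (for large $s$ that estimate is too lossy); it does follow from the exact relation $\Phi(z_{k-2})=2\Phi(z_{k-1})$ combined with $V(\infty)^{1-s'}<2^{1-s'}V(z_{k-1})^{1-s'}$ for $z_{k-1}<\rho$, which yields $V(z_{k-2})\le(2-2^{1-s'})^{-(s-1)}V(z_{k-1})$; (b) the degenerate situation that actually threatens the pointwise domination $f\le H^*h$ is $v\equiv0$ near the origin (then $z_k\downarrow z_*>0$ and your $h$ does not dominate $f$ on $(0,z_*)$), not interior flat pieces of $\Phi$; the cure is to add mass to $h$ on $(0,z_*)$, which is free because $V^sv^{1-s}=0$ there by the convention $0\cdot\infty=0$.

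The genuine gap is the case $s=1$, which lies inside the theorem's range and which your main construction cannot reach (it uses $s'<\infty$). The substitute you sketch --- dominate $\chi_{(0,t)}\le\delta^{-1}H^*\chi_{(t,t+\delta)}$, superpose against $d\mu=-df$, and ``let $\delta\to0$'' --- fails as stated, for two reasons. First, for fixed $\delta$ the superposed function $h_\delta$ has $\|h_\delta\|_{1,V}=\int\big(\delta^{-1}\int_t^{t+\delta}V\big)\,d\mu(t)$, and with no doubling hypothesis on $V$ this can be $+\infty$ for every $\delta>0$ while $\int V\,d\mu=\|f\|_{1,v}<\infty$ (take $V(t)=e^{e^t}$ and $\mu$ a suitable sequence of point masses). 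Second, you cannot recover by sending $\delta\to0$: $T$ is only positively homogeneous and monotone, so no Fatou-type or continuity property is available to pass a limit through $T$; the argument must exhibit a single admissible $h$. The repair is easy and in the spirit of your $s>1$ scheme: either use blocks of $V$-adapted width, $\chi_{(0,t)}\le H^*\big(v\chi_{(t,\tau(t))}/\int_t^{\tau(t)}v\big)$ with $\tau(t)$ chosen so that $V(\tau(t))\le2V(t)$, or run the same dyadic construction with levels $V(z_k)=2^k$ and $h=\sum_k\big(f(z_{k-1})-f(z_k)\big)\chi_{[z_k,z_{k+1})}/(z_{k+1}-z_k)$, for which $\int hV\le4\int fv$ by the same telescoping; the case $0<V(\infty)<\infty$ is then handled exactly as in your splitting via ${\bf 1}$, condition {\rm(iii)} and \eqref{Reduction.Theorem.eq.1.3}. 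With that replacement your proof is complete.
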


\begin{theorem}[\cite{GogStep}, Theorem 3.2]\label{Reduction.Theorem.thm.3.2}
    Let $0 < \beta \le \infty$ and $1 \le s < \infty$, and let $T: {\mathfrak M}^+ \rightarrow {\mathfrak M}^+$
    satisfies conditions {\rm (i)} and {\rm (ii)}. Then a sufficient
    condition for inequality \eqref{Reduction.Theorem.eq.1.1} to hold is
    that
    \begin{equation}\label{Reduction.Theorem.eq.1.222}
    \bigg\| T\bigg( \frac{1}{V^2(x)} \int_0^x hV \bigg)\bigg\|_{\beta,w,(0,\infty)}
    \le c \| h \|_{s, v^{1-s},(0,\infty)}, \qquad h \in {\mathfrak M}^+(0,\infty).
    \end{equation}
    Moreover, \eqref{Reduction.Theorem.eq.1.1} is necessary for
    \eqref{Reduction.Theorem.eq.1.222} to hold if conditions {\rm
        (i)-(iii)} are all satisfied.
\end{theorem}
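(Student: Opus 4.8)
The plan is to prove the two implications separately, the engine for both being the trivial identity $\int_0^x v(t)V(t)\,dt=\tfrac12 V^2(x)$ (differentiate the right‑hand side). Writing $P_vh(x):=V^{-2}(x)\int_0^x h(t)V(t)\,dt$ for the operator in \eqref{Reduction.Theorem.eq.1.222}, this identity says that $P_v{\bf 1}=\tfrac12{\bf 1}$ and, more importantly, that for every $f\in{\mathfrak M}^{\downarrow}$ the choice $h:=fv$ gives, since $f(t)\ge f(x)$ for $t\le x$,
\[
P_v(fv)(x)=\frac1{V^2(x)}\int_0^x f(t)v(t)V(t)\,dt\ \ge\ \frac{f(x)}{V^2(x)}\int_0^x v(t)V(t)\,dt=\tfrac12\,f(x),
\]
while at the same time $\|fv\|_{s,v^{1-s},(0,\i)}=\|f\|_{s,v,(0,\i)}$. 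This makes the sufficiency of \eqref{Reduction.Theorem.eq.1.222} immediate: given $f\in{\mathfrak M}^{\downarrow}$ with finite right‑hand side, put $h=fv$; by {\rm (ii)} and {\rm (i)} we get $Tf\le c\,T(2P_vh)=2c\,T(P_vh)$ a.e., and passing to the norm $\|\cdot\|_{\beta,w,(0,\i)}$ and applying \eqref{Reduction.Theorem.eq.1.222} yields \eqref{Reduction.Theorem.eq.1.1}, using only {\rm (i)} and {\rm (ii)} as claimed.

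For the converse (deducing \eqref{Reduction.Theorem.eq.1.222} from \eqref{Reduction.Theorem.eq.1.1}) I would argue as follows. Fix $h$ with $\|h\|_{s,v^{1-s},(0,\i)}<\i$ (otherwise nothing to prove); H\"older's inequality shows $hV$ is locally integrable, so $g:=P_vh$ is finite and $G(x):=g(x)V^2(x)=\int_0^x hV$ is absolutely continuous, non‑decreasing and vanishes at the origin. Discretise along the level sets of $V$: choose $x_k$ with $V(x_k)=2^k$ (for $k$ in the relevant range of $\Z$), set $\mu_k:=\int_{x_k}^{x_{k+1}}hV$, and note that on $[x_j,x_{j+1})$ one has $g(x)\le V^{-2}(x_j)G(x_{j+1})=4^{-j}\sum_{k\le j}\mu_k$. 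Let $\bar g$ denote this step function and $\widetilde g(x):=\esup_{t\ge x}\bar g(t)$ its least non‑increasing majorant. Then $g\le\widetilde g$, so {\rm (ii)} gives $Tg\le c\,T\widetilde g$, and since $\widetilde g\in{\mathfrak M}^{\downarrow}$, inequality \eqref{Reduction.Theorem.eq.1.1} gives $\|Tg\|_{\beta,w,(0,\i)}\ls\|\widetilde g\|_{s,v,(0,\i)}$. (In the endpoint case $0<V(\i)<\i$, where $\widetilde g$ may tend to a positive constant $\lambda$ at infinity, one writes $\widetilde g=(\widetilde g-\lambda)+\lambda{\bf 1}$ and combines \eqref{Reduction.Theorem.eq.1.1}, applied to the non‑increasing part and to ${\bf 1}$, with conditions {\rm (i)} and {\rm (iii)}; this is the one spot where {\rm (iii)} is needed.) It remains to show $\|\widetilde g\|_{s,v,(0,\i)}\ls\|h\|_{s,v^{1-s},(0,\i)}$.

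This last estimate I would reduce to a purely discrete inequality. With $a_k:=4^{-k}\mu_k$, a direct computation using $V(x_{k+1})-V(x_k)=2^k$ gives $\|\widetilde g\|_{s,v,(0,\i)}^{\,s}=\sum_j 2^j\big(\sup_{i\ge j}\sum_{k\le i}4^{k-i}a_k\big)^s$; on the other hand, estimating $hV\le V(x_{k+1})h$ on each block and applying Jensen's inequality block‑by‑block with the probability measure $v\,dx/\int_{x_k}^{x_{k+1}}v$ (legitimate since $s\ge1$) gives $\|h\|_{s,v^{1-s},(0,\i)}^{\,s}\gs\sum_k 2^{-k(2s-1)}\mu_k^{\,s}=\sum_k 2^k a_k^{\,s}$. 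So everything reduces to
\[
\sum_j 2^j\Big(\sup_{i\ge j}\sum_{k\le i}4^{k-i}a_k\Big)^s\le C\sum_k 2^k a_k^{\,s},\qquad a_k\ge 0,
\]
which I would prove by splitting the inner sum at $k=j$: the part over $k\ge j$ is at most $\tfrac43\sup_{k\ge j}a_k$, and $\sum_j 2^j\sup_{k\ge j}a_k^{\,s}\le\sum_j 2^j\sum_{k\ge j}a_k^{\,s}\le 2\sum_k 2^k a_k^{\,s}$; the part over $k<j$ is the convolution $\sum_{l\ge1}4^{-l}a_{j-l}$, whose $\ell^s(\{2^j\})$‑norm, by Minkowski's inequality, is at most $\big(\sum_{l\ge1}4^{-l}2^{l/s}\big)\|(a_k)\|_{\ell^s(\{2^k\})}$, the series converging because $1/s<2$. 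Combining the two pieces via $(A+B)^s\le 2^{s-1}(A^s+B^s)$ finishes the proof. The main obstacle is precisely this converse: choosing the right discretisation and verifying that the non‑increasing majorant of $P_vh$ is controlled by $\|h\|_{s,v^{1-s},(0,\i)}$ — i.e. the discrete inequality above — while the endpoint case $V(\i)<\i$ forces the separate, condition‑{\rm (iii)}‑based treatment of the constant part.
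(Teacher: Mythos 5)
Your proof is correct, and it is worth noting that the paper itself gives no argument for this statement: it is quoted from \cite{GogStep}, Theorem 3.2, so there is no internal proof to compare against. Your sufficiency half (take $h=fv$, use $\int_0^x vV=\tfrac12V^2(x)$ to get $f\le 2V^{-2}\int_0^{\cdot}(fv)V$ and $\|fv\|_{s,v^{1-s},(0,\infty)}=\|f\|_{s,v,(0,\infty)}$) is the standard reduction and uses only (i)--(ii), as required. Your necessity half is a genuinely self-contained route: majorizing $g(x)=V(x)^{-2}\int_0^x hV$ by its least non-increasing envelope, discretizing along the levels $V(x_k)=2^k$, and reducing $\|\tilde g\|_{s,v,(0,\infty)}\lesssim\|h\|_{s,v^{1-s},(0,\infty)}$ to the discrete inequality you state; your verification (Jensen block-by-block and the weighted $\ell^s$ Minkowski step with $\sum_{l\ge1}4^{-l}2^{l/s}<\infty$, both needing only $s\ge1$) is sound, and the extreme blocks in the truncated cases $V(0+)>0$ or $V(\infty)<\infty$ still have $V\approx 2^k$ and $v$-measure at most $2^k$, so nothing breaks. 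Two remarks: your parenthetical appeal to (iii) is superfluous, since $\tilde g$ is non-increasing even when it has a positive limit at infinity and \eqref{Reduction.Theorem.eq.1.1} applies to it directly, so your argument in fact yields the necessity under (i)--(ii) alone --- stronger than, and consistent with, the stated hypotheses; condition (iii) is genuinely needed only in the alternative derivation through Theorem \ref{Reduction.Theorem.thm.3.1}, where one takes $\tilde h=\tfrac{v}{V^3}\int_0^{\cdot}hV$ and must absorb the additive constant $\tfrac{1}{V^2(\infty)}\int_0^{\infty}hV$ when $V(\infty)<\infty$. Finally, like the statement itself, your proof tacitly assumes $0<V(x)<\infty$ for all $x>0$; this standing nondegeneracy assumption should be recorded, since the identity $\int_0^x vV=\tfrac12V^2(x)$ and the pointwise minorization both rely on it.
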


\begin{theorem}[\cite{GogStep}, Theorem 3.3]\label{Reduction.Theorem.thm.3.3}
    Let $0 < \beta \le \infty$ and $1 \le s < \infty$, and let $T: {\mathfrak M}^+ \rightarrow {\mathfrak M}^+$
    be a positive operator. Then the inequality
    \begin{equation}\label{Reduction.Theorem.eq.1.1.00}
    \|Tf \|_{\beta,w,(0,\infty)} \le c \| f \|_{s,v,(0,\infty)}, \qquad f \in {\mathfrak M}^{\uparrow}(0,\infty)
    \end{equation}
    implies the inequality
    \begin{equation}\label{Reduction.Theorem.eq.1.2.00}
    \bigg\| T\bigg( \int_0^x h \bigg)\bigg\|_{\beta,w,(0,\infty)} \le c \| h \|_{s,
        V_*^{s} v^{1-s},(0,\infty)}, \qquad h \in {\mathfrak M}^+(0,\infty).
    \end{equation}
    If $V_*(0) = \infty$ and if $T$ is an operator satisfying the
    conditions {\rm (i)-(ii)}, then the condition
    \eqref{Reduction.Theorem.eq.1.2.00} is sufficient for inequality
    \eqref{Reduction.Theorem.eq.1.1.00} to hold. If $0 < V_*(0) < \infty$
    and $T$ is an operator satisfying the conditions {\rm
        (i)-(iii)}, then \eqref{Reduction.Theorem.eq.1.1.00} follows from
    \eqref{Reduction.Theorem.eq.1.2.00} and
    \eqref{Reduction.Theorem.eq.1.3}.
\end{theorem}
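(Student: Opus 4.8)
The plan is to deduce the whole statement from Theorem~\ref{Reduction.Theorem.thm.3.1} by means of the reflection $x\mapsto 1/x$, which interchanges the cones ${\mathfrak M}^{\uparrow}$ and ${\mathfrak M}^{\downarrow}$ and carries $H$ onto $H^{*}$. For $g\in{\mathfrak M}^{+}$ put $(Rg)(x):=g(1/x)$; this $R$ is an involution of ${\mathfrak M}^{+}$ which maps ${\mathfrak M}^{\uparrow}$ bijectively onto ${\mathfrak M}^{\downarrow}$. Set $\tilde v(x):=x^{-2}v(1/x)$, and let $\tilde w(x):=x^{-2}w(1/x)$ if $\beta<\infty$ while $\tilde w(x):=w(1/x)$ if $\beta=\infty$; these are again weights on $(0,\infty)$. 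Finally define $S:{\mathfrak M}^{+}\rightarrow{\mathfrak M}^{+}$ by $Sg:=R\big(T(Rg)\big)$, that is, $(Sg)(x)=\big(T(g(1/\cdot))\big)(1/x)$.

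First I would record the elementary change-of-variables identities, all obtained by substituting $x=1/y$: for every $g\in{\mathfrak M}^{+}$,
\[
\|Rg\|_{\beta,\tilde w,(0,\infty)}=\|g\|_{\beta,w,(0,\infty)},\qquad \|Rg\|_{s,\tilde v,(0,\infty)}=\|g\|_{s,v,(0,\infty)};
\]
moreover, with $\tilde V(t):=\int_{0}^{t}\tilde v$ and $\bar h(\tau):=\tau^{-2}h(1/\tau)$ (again an involution of ${\mathfrak M}^{+}$), one has $\tilde V(t)=V_{*}(1/t)$, and for every $h\in{\mathfrak M}^{+}$ both $R(H^{*}h)=H\bar h$ and $\|\bar h\|_{s,V_{*}^{s}v^{1-s}}=\|h\|_{s,\tilde V^{s}\tilde v^{1-s}}$. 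Since $S(Rf)=R(Tf)$, the first two identities show that \eqref{Reduction.Theorem.eq.1.1.00} for $(T,v,w)$ is equivalent to \eqref{Reduction.Theorem.eq.1.1} for $(S,\tilde v,\tilde w)$ ($f$ running over ${\mathfrak M}^{\uparrow}$ corresponds to $Rf$ running over ${\mathfrak M}^{\downarrow}$); likewise $\|S(H^{*}h)\|_{\beta,\tilde w}=\|T(H\bar h)\|_{\beta,w}$, and since $h\mapsto\bar h$ is onto ${\mathfrak M}^{+}$, \eqref{Reduction.Theorem.eq.1.2.00} for $(T,v,w)$ is equivalent to \eqref{Reduction.Theorem.eq.1.2} for $(S,\tilde v,\tilde w)$; finally $\|S{\bf 1}\|_{\beta,\tilde w}=\|T{\bf 1}\|_{\beta,w}$ and $\|{\bf 1}\|_{s,\tilde v}=\|{\bf 1}\|_{s,v}$, so \eqref{Reduction.Theorem.eq.1.3} transfers verbatim. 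Since $\tilde V(\infty)=V_{*}(0)$, the side conditions also match: $V_{*}(0)=\infty$ becomes $\tilde V(\infty)=\infty$, and $0<V_{*}(0)<\infty$ becomes $0<\tilde V(\infty)<\infty$.

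It remains to check that $S$ inherits the structural hypotheses: if $T$ is a positive operator so is $S$, and $S$ satisfies each of (i)--(iii) whenever $T$ does (for (iii) one uses $R{\bf 1}={\bf 1}$, so that $S{\bf 1}=R(T{\bf 1})$ plays the role of $T{\bf 1}$; for (ii) one needs that composition with $R$ preserves inequalities holding almost everywhere, which holds because $R$ is a diffeomorphism of $(0,\infty)$). Granting this, applying Theorem~\ref{Reduction.Theorem.thm.3.1} to $(S,\tilde v,\tilde w)$ and translating back through $R$ yields exactly the three assertions: the ``implies'' part needs only positivity of $S$; the sufficiency part needs $\tilde V(\infty)=V_{*}(0)=\infty$ together with (i)--(ii) for $S$; and the last part needs $0<\tilde V(\infty)=V_{*}(0)<\infty$ together with (i)--(iii) for $S$. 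I expect the only real obstacle to be bookkeeping: carrying the Jacobian factor $x^{-2}$ correctly through every substitution, and in particular remembering that it is absent from the definition of $\tilde w$ when $\beta=\infty$, where one works with the essential supremum. (A self-contained argument is also possible: the ``implies'' direction then reduces to the weighted Hardy inequality $\|Hh\|_{s,v}\lesssim\|h\|_{s,V_{*}^{s}v^{1-s}}$ --- an equality by the Fubini theorem when $s=1$, and for $1<s<\infty$ a consequence of the Muckenhoupt condition, whose relevant functional is here bounded by a constant depending only on $s$ --- whereas the sufficiency direction would require decomposing a non-decreasing $f$ with $f(0^{+})=0$ as $Hh$ for a judiciously chosen $h\in{\mathfrak M}^{+}$; the choice $h=f'$ is ruled out by the failure of the corresponding reverse Hardy inequality, which is precisely why passing through Theorem~\ref{Reduction.Theorem.thm.3.1} is the economical route.)
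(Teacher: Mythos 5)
Your proposal is correct. Note first that the paper itself gives no proof of this statement: it is quoted verbatim from \cite{GogStep} (Theorem 3.3), so there is no in-paper argument to compare against; in \cite{GogStep} the result is established directly, as the companion of Theorem \ref{Reduction.Theorem.thm.3.1}. What you do instead --- transferring Theorem \ref{Reduction.Theorem.thm.3.1} through the involution $x\mapsto 1/x$, with $Sg:=R\big(T(Rg)\big)$, $\tilde v(x)=x^{-2}v(1/x)$, the case split in $\tilde w$ according to $\beta<\infty$ or $\beta=\infty$, and the companion substitution $\bar h(\tau)=\tau^{-2}h(1/\tau)$ giving $R(H^{*}h)=H\bar h$ --- is exactly the device this paper uses for its other ``dual'' statements (see the proofs of Theorems \ref{Thm.2.5.00}, \ref{RT.SC.thm.1.1.3}, \ref{RT.SC.thm.1.3.1}, \ref{RT.SC.thm.2.2.1}), and all your bookkeeping checks out: the norm identities are exact, $\tilde V(t)=V_*(1/t)$ so the side conditions $V_*(0)=\infty$ and $0<V_*(0)<\infty$ translate into $\tilde V(\infty)=\infty$ and $0<\tilde V(\infty)<\infty$, condition \eqref{Reduction.Theorem.eq.1.3} transfers because $S{\bf 1}=R(T{\bf 1})$ and $\|{\bf 1}\|_{s,\tilde v}=\|{\bf 1}\|_{s,v}$, and $S$ inherits positivity and (i)--(iii) with the same constants. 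Since every identity is an equality, the constants are even preserved exactly, which is cleaner than the self-contained alternative you sketch in the final parenthesis (there the step $\|Hh\|_{s,v}\lesssim\|h\|_{s,V_*^{s}v^{1-s}}$ via the Muckenhoupt condition would introduce an $s$-dependent factor --- harmless, but the transfer argument avoids it). The only price of your route is that it presupposes Theorem \ref{Reduction.Theorem.thm.3.1}, which is itself only quoted here; granting that, your proof is complete.
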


\begin{theorem}[\cite{GogStep}, Theorem 3.4]\label{Reduction.Theorem.thm.3.4}
    Let $0 < \beta \le \infty$ and $1 \le s < \infty$, and let $T: {\mathfrak M}^+ \rightarrow {\mathfrak M}^+$
    satisfies conditions {\rm (i)} and {\rm (ii)}. Then a sufficient
    condition for inequality \eqref{Reduction.Theorem.eq.1.1.00} to hold
    is that
    \begin{equation}\label{Reduction.Theorem.eq.1.2222}
    \bigg\| T\bigg( \frac{1}{V_*^2(x)} \int_x^{\infty} hV_*
    \bigg)\bigg\|_{\beta,w,(0,\infty)} \le c \| h \|_{s, v^{1-s},(0,\infty)}, \qquad h \in
    {\mathfrak M}^+(0,\infty).
    \end{equation}
    Moreover, \eqref{Reduction.Theorem.eq.1.1.00} is necessary for
    \eqref{Reduction.Theorem.eq.1.2222} to hold if conditions {\rm
        (i)-(iii)} are all satisfied.
\end{theorem}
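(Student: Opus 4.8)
The statement is the non-decreasing counterpart of Theorem~\ref{Reduction.Theorem.thm.3.2}, and the plan is to run the proof of that theorem with the Hardy operator $H$ replaced by the Copson operator $H^{*}$, the primitive $V$ replaced by $V_{*}$, and the endpoint $0$ replaced by $\infty$. The one identity that drives everything becomes $\frac{d}{dx}V_{*}^{2}(x)=-2v(x)V_{*}(x)$, equivalently $\int_{x}^{\infty}2v(t)V_{*}(t)\,dt=V_{*}^{2}(x)$ wherever $V_{*}(x)<\infty$, which mirrors $\int_{0}^{x}2vV=V^{2}$ used in the non-increasing case. Two implications have to be proved: that \eqref{Reduction.Theorem.eq.1.2222} is enough for \eqref{Reduction.Theorem.eq.1.1.00} using only {\rm (i)}, {\rm (ii)}, and that, assuming {\rm (i)}--{\rm (iii)}, \eqref{Reduction.Theorem.eq.1.1.00} conversely forces \eqref{Reduction.Theorem.eq.1.2222}; together they give the equivalence.

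For the first half I would take an arbitrary $f\in{\mathfrak M}^{\uparrow}$ and test \eqref{Reduction.Theorem.eq.1.2222} with $h:=2fv$. Monotonicity of $f$ and of $V_{*}$ together with the identity above give, at every $x$ with $0<V_{*}(x)<\infty$,
\[
\frac{1}{V_{*}^{2}(x)}\int_{x}^{\infty}h\,V_{*}
=\frac{2}{V_{*}^{2}(x)}\int_{x}^{\infty}f(t)v(t)V_{*}(t)\,dt
\ge\frac{2f(x)}{V_{*}^{2}(x)}\int_{x}^{\infty}v\,V_{*}=f(x),
\]
the values of $x$ with $V_{*}(x)\in\{0,\infty\}$ being disposed of by the conventions $0\cdot\infty=\infty/\infty=0$ exactly as in Theorem~\ref{Reduction.Theorem.thm.3.2}. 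Hence $Tf\le c\,T\!\bigl(\frac{1}{V_{*}^{2}(x)}\int_{x}^{\infty}h\,V_{*}\bigr)$ pointwise by {\rm (i)} and {\rm (ii)}, and applying $\|\cdot\|_{\beta,w,(0,\infty)}$ and then \eqref{Reduction.Theorem.eq.1.2222} yields $\|Tf\|_{\beta,w,(0,\infty)}\le c\,\|h\|_{s,v^{1-s},(0,\infty)}$; since $\|h\|_{s,v^{1-s},(0,\infty)}^{s}=2^{s}\int_{0}^{\infty}f^{s}v^{s}v^{1-s}=2^{s}\|f\|_{s,v,(0,\infty)}^{s}$, this is \eqref{Reduction.Theorem.eq.1.1.00}, and no property of $T$ beyond {\rm (i)}, {\rm (ii)} was used.

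For the converse I would assume \eqref{Reduction.Theorem.eq.1.1.00}, fix $h\in{\mathfrak M}^{+}$, and set $f_{h}(x):=\frac{1}{V_{*}^{2}(x)}\int_{x}^{\infty}h\,V_{*}$, so that \eqref{Reduction.Theorem.eq.1.2222} amounts to $\|Tf_{h}\|_{\beta,w,(0,\infty)}\le c\,\|h\|_{s,v^{1-s},(0,\infty)}$. Since $T$ is not assumed linear, the only way to exploit \eqref{Reduction.Theorem.eq.1.1.00} is to dominate $f_{h}$ pointwise by $c\,g$ (plus, where needed, a multiple of ${\bf 1}$, which is the point at which {\rm (iii)} enters, just as in Theorem~\ref{Reduction.Theorem.thm.3.2}) with $g\in{\mathfrak M}^{\uparrow}$, then pass to $T$ by {\rm (ii)} and {\rm (iii)}, and finish with \eqref{Reduction.Theorem.eq.1.1.00} applied to $g$ and \eqref{Reduction.Theorem.eq.1.3} applied to ${\bf 1}$ (the latter being \eqref{Reduction.Theorem.eq.1.1.00} with $f={\bf 1}$). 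The natural choice of $g$ is the least non-decreasing majorant $x\mapsto\sup_{0<y\le x}f_{h}(y)$ of $f_{h}$; alternatively one can first pass, via Theorem~\ref{Reduction.Theorem.thm.3.3}, to the equivalent inequality for $T\circ H$ against the weight $V_{*}^{s}v^{1-s}$ and dominate $f_{h}$ by a Hardy transform.

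The step I expect to be the main obstacle is the accompanying norm estimate $\|g\|_{s,v,(0,\infty)}\lesssim\|h\|_{s,v^{1-s},(0,\infty)}$, because $f_{h}$ genuinely fails to be monotone: if $h$ concentrates near a point $t_{0}$, then $f_{h}$ behaves like $V_{*}(t_{0})/V_{*}^{2}(x)$ on $(0,t_{0})$ and collapses to $0$ on $(t_{0},\infty)$, so it rises and then falls. Thus what is really needed is control of a least monotone majorant, and after the substitution $\sigma=V_{*}(x)$—under which $\sigma$ runs decreasingly over $(0,V_{*}(0))$, the quantity $\|h\|_{s,v^{1-s},(0,\infty)}^{s}$ becomes $\int_{0}^{V_{*}(0)}\widetilde h^{\,s}$ with $\widetilde h=(h/v)\circ V_{*}^{-1}$, and $f_{h}$ turns into the Hardy-type average $\sigma\mapsto\sigma^{-2}\int_{0}^{\sigma}y\,\widetilde h(y)\,dy$—the desired bound is exactly a level-function (Sinnamon-type) estimate for this average and its least monotone majorant in a weighted $L^{s}$. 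Once that estimate is established, together with the routine bookkeeping in the degenerate ranges where $V_{*}$ equals $0$ or $\infty$ and in the case $\|{\bf 1}\|_{s,v,(0,\infty)}=\infty$, the two implications combine to give the theorem.
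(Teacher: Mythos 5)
First, a point of reference: the paper does not prove this statement at all --- it is quoted verbatim from \cite{GogStep} (Theorem 3.4) as a preliminary, so there is no in-paper proof to match; your attempt has to be judged against the standard reduction-theorem argument of \cite{GogStep}.

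Your first half is correct and complete, and it is in fact the literal content of the statement as printed: testing \eqref{Reduction.Theorem.eq.1.2222} with $h=2fv$ and using $\int_x^{\infty}vV_*=\tfrac12 V_*^{2}(x)$ together with monotonicity of $f$ gives $f(x)\le V_*^{-2}(x)\int_x^{\infty}hV_*$, and $\|2fv\|_{s,v^{1-s}}=2\|f\|_{s,v}$, so \eqref{Reduction.Theorem.eq.1.2222} implies \eqref{Reduction.Theorem.eq.1.1.00} using only {\rm (i)}--{\rm (ii)}. One caveat: the degenerate configurations are not all ``disposed of by the conventions''. If $V_*\equiv 0$ on a ray $(x_1,\infty)$ (i.e.\ $v$ vanishes there), the pointwise minorization genuinely fails for $x>x_1$, and with $T$ the identity one can cook up $w$ supported beyond $x_1$ for which \eqref{Reduction.Theorem.eq.1.2222} holds while \eqref{Reduction.Theorem.eq.1.1.00} fails; this direction really needs the tacit non-degeneracy of $v$ under which the theorem is stated in \cite{GogStep}, not just the conventions $0\cdot\infty=\infty/\infty=0/0=0$.

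The genuine gap is in the converse direction, which you (correctly, following \cite{GogStep}) read as the intended ``moreover'' clause: you reduce \eqref{Reduction.Theorem.eq.1.1.00} $\Rightarrow$ \eqref{Reduction.Theorem.eq.1.2222} to the estimate $\|g\|_{s,v}\lesssim\|h\|_{s,v^{1-s}}$ for the least non-decreasing majorant $g$ of $f_h(x)=V_*^{-2}(x)\int_x^{\infty}hV_*$, and then stop, declaring it ``the main obstacle'' to be settled by an unspecified Sinnamon-type level-function result. As written this is a conditional argument, not a proof. The irony is that after your own substitution $\sigma=V_*(x)$ the needed bound is elementary: the majorant is dominated by the Calder\'on-type kernel operator $\widetilde h\mapsto \int_0^{\infty}\min\{y\sigma^{-2},y^{-1}\}\,\widetilde h(y)\,dy$, whose $L^{s}$-boundedness for $1\le s<\infty$ follows in two lines from Minkowski's integral inequality (or Fubini when $s=1$), with constant depending only on $s$; alternatively one writes $V_*^{-2}(x)=V_*^{-2}(0)+\int_0^x 2vV_*^{-3}$, so that $f_h\le\int_0^x\widetilde h+\lambda\mathbf 1$ with $\widetilde h=2vV_*^{-3}\int_t^{\infty}hV_*$, uses {\rm (iii)} to split off the constant (handled by H\"older when $V_*(0)<\infty$), and controls $\|\int_0^x\widetilde h\|_{s,v}$ by the classical Hardy inequality with the weight pair $(v,\,V_*^{s}v^{1-s})$, whose Muckenhoupt constant is universal. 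Either route closes your argument, but you must actually carry one of them out; as it stands the decisive norm estimate --- the only place where any real work happens in this direction --- is asserted rather than proved.
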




\section{Reduction and equivalence theorems}\label{RT}

In this section we prove some reduction and  equivalence theorems
for inequalities \eqref{RT.thm.main.3.eq.1} and
\eqref{RT.thm.main.4.eq.1}.

\subsection{The case $1 < s < \infty$}

The following theorem allows to reduce the iterated inequality
\eqref{RT.thm.main.3.eq.1} to the inequality on the cone of
non-increasing functions.
\begin{theorem}\label{RT.thm.main.3}
    Let $0 < \beta \le \infty$, $1 < s < \infty$, and let $T: {\mathfrak M}^+ \rightarrow {\mathfrak M}^+$
    satisfies conditions {\rm (i)-(iii)}.  Assume that $u,\,w \in {\mathcal W}(0,\infty)$
    and $v \in {\mathcal W}(0,\infty)$ be such that
    \begin{equation}\label{RT.thm.main.3.eq.0}
    \int_0^x v^{1-s^{\prime}}(t)\,dt < \infty \qquad \mbox{for all} \qquad x > 0.
    \end{equation}
    Then inequality \eqref{RT.thm.main.3.eq.1}
    holds iff
    \begin{equation}\label{RT.thm.main.3.eq.2}
    \| T_{\Phi^2} f \|_{\beta, w, (0,\infty)} \le c \| f \|_{s, \phi,(0,\infty)}, \, f \in
    {\mathfrak M}^{\downarrow},
    \end{equation}
    holds, where
    $$
    \phi (x) \equiv \phi\big[v;s\big](x) : = \bigg( \int_0^x
    v^{1-{s}^{\prime}}(t)\,dt\bigg)^{- \frac{s^{\prime}}{s^{\prime} + 1}}
    v^{1-{s}^{\prime}}(x)
    $$
    and
    $$
    \Phi(x) \equiv \Phi \big[v;s\big](x) = \int_0^x \phi(t)\,dt = \bigg( \int_0^x
    v^{1-{s}^{\prime}}(t)\,dt\bigg)^{\frac{1}{s^{\prime} + 1}}.
    $$
\end{theorem}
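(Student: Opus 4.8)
The plan is to recognize \eqref{RT.thm.main.3.eq.1} as an instance of the operator inequality on the cone of non-increasing functions treated by Theorem~\ref{Reduction.Theorem.thm.3.1} and Theorem~\ref{Reduction.Theorem.thm.3.2}, after first rewriting the inner integral $\int_0^x h$ in a form that exposes a non-increasing ``profile''. Since $1<s<\infty$, condition \eqref{RT.thm.main.3.eq.0} guarantees that the function $\varrho(x):=\int_0^x v^{1-s'}(t)\,dt$ is finite, strictly increasing and absolutely continuous, so the substitution $h=g\,v^{1-s'}$ (equivalently $g=h\,v^{s'-1}$) is a bijection of ${\mathfrak M}^+$ onto itself. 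Under this substitution $\|h\|_{s,v,(0,\infty)}^s=\int_0^\infty g^s v^{1-s'}=\int_0^\infty g^s\,\varrho'$, and, writing $\varrho$ for the primitive, $\int_0^x h=\int_0^x g\,\varrho'$. The first step is therefore to pass from $h$ to $g$ and to realize that we are looking at
$$
\Bigl\| T\Bigl(\int_0^x g\,\varrho'\Bigr)\Bigr\|_{\beta,w,(0,\infty)}\le c\,\|g\|_{s,\varrho',(0,\infty)},\qquad g\in{\mathfrak M}^+ .
$$

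Next I would connect this with the reduction theorems. The natural auxiliary operator is $f\mapsto T_{\Phi^2}f$ on the cone ${\mathfrak M}^\downarrow$; recall $T_{\Phi^2}f=(T(\Phi^2 f))^{1/1}$ in the notation of the paper, i.e.\ after the normalization $p=1$ it is simply $g\mapsto T(\Phi^2 g)$ (using $T_{p,u}(g)=(T(g^p u))^{1/p}$ with $p=1$, $u=\Phi^2$). The key algebraic identity to establish is that, up to equivalence constants, every $\int_0^x h$ with $\|h\|_{s,v}\le 1$ is dominated by $\Phi^2(x)\,f(x)$ for some non-increasing $f$ with $\|f\|_{s,\phi}\lesssim 1$, and conversely every such $\Phi^2 f$ is dominated by some $\int_0^x h$ with $\|h\|_{s,v}\lesssim\|f\|_{s,\phi}$; condition (ii) on $T$ then transfers boundedness across this domination, and conditions (i),(iii) handle the normalization and the additive term. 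Concretely, for the forward direction one writes $\int_0^x h=\int_0^x g\,\varrho'$ and uses the elementary pointwise bound $\int_0^x g\varrho'\le \varrho(x)\,\esup_{(0,x)}g\le \varrho(x)\cdot g^{**}_\varrho(x)$-type estimate together with Hardy's inequality in the weight $\varrho'$ adapted to the exponent $s'/(s'+1)$; this is exactly where the peculiar power $-s'/(s'+1)$ in $\phi$ and the power $1/(s'+1)$ in $\Phi$ come from, since $\Phi=\varrho^{1/(s'+1)}$ and $\phi=\Phi'=\tfrac{1}{s'+1}\varrho^{1/(s'+1)-1}\varrho'$. The decomposition $\int_0^x g\varrho'=\int_0^x (g\varrho^{1-1/(s'+1)})\cdot(\varrho^{-1+1/(s'+1)}\varrho')$ and Hölder in the form $\int_0^x AB\le(\int_0^x A^s)^{1/s}(\int_0^x B^{s'})^{1/s'}$ gives $\int_0^x g\varrho'\lesssim \Phi^2(x)\bigl(\tfrac{1}{\Phi^{2s}(x)}\int_0^x g^s\varrho'\Phi^{?}\bigr)^{1/s}$, and one checks the exponents match so that $f(x):=\bigl(\tfrac{1}{\Phi^{2s}(x)}\int_0^x g^s\varrho'\,(\cdots)\bigr)^{1/s}$ is non-increasing and has $\|f\|_{s,\phi}$ controlled by a Hardy-type inequality which holds by the choice of $\phi$ with the \emph{best} constant $1$ (this is the classical one-dimensional Hardy inequality at the borderline power, the reason the strange exponent $s'/(s'+1)$ appears). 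The converse direction is easier: given non-increasing $f$, set $h$ to be (a constant times) $(\Phi^2 f)'$ or a discretized majorant thereof, use monotonicity of $f$ to bound $\int_0^x h$ from below by $\Phi^2(x)f(x)$ up to a constant, and compute $\|h\|_{s,v}$ directly, again reducing to the same Hardy inequality.

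So the skeleton is: (1) normalize via $h\mapsto g=hv^{s'-1}$; (2) prove the two-sided pointwise/integral equivalence $\bigl\{\int_0^x h:\|h\|_{s,v}\le 1\bigr\}\approx\bigl\{\Phi^2 f:f\in{\mathfrak M}^\downarrow,\ \|f\|_{s,\phi}\le c\bigr\}$ in the sense that each family is, pointwise, dominated by a member of the other with comparable norm, the engine being the borderline Hardy inequality in the weight $v^{1-s'}$; (3) invoke property (ii) of $T$ to pass boundedness from one side to the other, and properties (i),(iii) to absorb the multiplicative normalization and any constant/${\bf 1}$ terms that arise from the ``$\approx$'' (this is precisely the situation Theorems~\ref{Reduction.Theorem.thm.3.1}--\ref{Reduction.Theorem.thm.3.2} were built for, so in fact one can quote them rather than redo the $T$-side bookkeeping). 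I expect the main obstacle to be step (2), and within it the \emph{verification that the auxiliary function $f$ built from $g$ is genuinely non-increasing and that its $L^s(\phi)$-norm is controlled}: the non-increasing property forces the specific renormalization by $\Phi^2$ (a plain $\varrho$ would not make the resulting average non-increasing), and matching the $L^s(\phi)$-norm forces the exponent $\tfrac{1}{s'+1}$ in $\Phi$ — getting these two requirements to be simultaneously satisfiable is the whole content of the theorem, and it hinges on a clean application of Fubini plus the sharp Hardy inequality $\int_0^\infty\bigl(\tfrac{1}{\varrho(x)}\int_0^x g\,\varrho'\bigr)^{s}\varrho'(x)\,dx\lesssim\int_0^\infty g^s\varrho'$, which is valid since $s>1$. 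Everything else — the change of variables, the homogeneity manipulations, the treatment of $\beta=\infty$ (where the outer norm becomes an essential supremum and all integral estimates still go through) — is routine once this core equivalence is in place.
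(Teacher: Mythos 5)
Your architecture (rewrite the iterated inequality so that the reduction theorems apply, with $T_{\Phi^2}$ as the auxiliary operator) is the right one, and its ``just quote the reduction theorems'' variant is exactly the paper's two-line proof; but as written your argument has a genuine gap at the step you yourself single out as the crux. The easy half of your step (2) is fine: for $f\in{\mathfrak M}^{\downarrow}$ take $h=2f\Phi\phi$, so that $\int_0^x h\ge f(x)\Phi^2(x)$ and $\|h\|_{s,v,(0,\infty)}\approx\|f\|_{s,\phi,(0,\infty)}$, because $\Phi^{-s}\phi^{1-s}\approx v$ — an identity you never state, although it is what makes every such computation close up. The hard half — given $h$ with $\|h\|_{s,v,(0,\infty)}\le 1$, produce a genuinely \emph{non-increasing} $f$ with $\int_0^x h\le C\,\Phi^2(x)f(x)$ and $\|f\|_{s,\phi,(0,\infty)}\lesssim 1$ — is not delivered by your sketch. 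The candidate $f(x)=\bigl(\Phi^{-2s}(x)\int_0^x g^s\varrho'(\cdots)\bigr)^{1/s}$ coming from your H\"older split is a negative power of $\varrho=\int_0^\cdot v^{1-s'}$ times an increasing integral, and such a function is never non-increasing once $g$ is supported away from the origin (it vanishes at the left edge of the support and then grows), so no choice of the exponents you left as ``$?$'' and ``$(\cdots)$'' can rescue it; the borderline Hardy inequality for $\varrho^{-1}\int_0^x g\varrho'$ controls an average, not a monotone majorant. A correct self-contained repair must ``anticipate'' the mass, e.g. (writing $h=h_1\Phi$) take $f(x)=\Phi^{-2}(x)\int_0^x h_1\Phi+\int_x^\infty h_1\Phi^{-1}$, which is non-increasing by differentiation and whose $L^s(\phi)$-norm is bounded via two power-weight Hardy inequalities with constants depending only on $s$; that is essentially the content of the necessity part of Theorem~\ref{Reduction.Theorem.thm.3.2}, so it cannot be waved through.

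The paper sidesteps all of this: since $\Phi^{-s}\phi^{1-s}\approx v$ and $T(\lambda g)=\lambda Tg$, inequality \eqref{RT.thm.main.3.eq.1}, after replacing $h$ by $h\Phi$, reads $\bigl\|T_{\Phi^2}\bigl(\Phi^{-2}(x)\int_0^x h\Phi\bigr)\bigr\|_{\beta,w,(0,\infty)}\le c\|h\|_{s,\phi^{1-s},(0,\infty)}$, which is precisely \eqref{Reduction.Theorem.eq.1.222} for the operator $T_{\Phi^2}$ with weight $\phi$ and primitive $\Phi=\int_0^\cdot\phi$; Theorem~\ref{Reduction.Theorem.thm.3.2} then gives the equivalence with \eqref{RT.thm.main.3.eq.2} at once. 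Note that your normalization $h=gv^{1-s'}$ only brings the right-hand side to $\|g\|_{s,v^{1-s'},(0,\infty)}$, which is not the weight $\phi^{1-s}$ demanded by that theorem, so even the ``quote Theorems~\ref{Reduction.Theorem.thm.3.1}--\ref{Reduction.Theorem.thm.3.2}'' escape route you mention still needs the missing weight identification before it applies; and Theorem~\ref{Reduction.Theorem.thm.3.1}, condition \eqref{Reduction.Theorem.eq.1.3} and the additive $T{\bf 1}$ bookkeeping you invoke play no role in this particular statement (they enter only in the corollaries).
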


\begin{proof}
    Note that $\Phi^{-s}\phi^{1-s} \approx v$. Inequality
    \eqref{RT.thm.main.3.eq.1} is equivalent to the inequality
    \begin{equation}\label{RT.thm.main.3.eq.3}
    \bigg\| T_{\Phi^2} \bigg(
    \frac{1}{\Phi^2(x)}\int_0^x h \bigg)\bigg\|_{\beta,w,(0,\infty)}
    \le c \|h\|_{s, \Phi^{-s} \phi^{1-s},(0,\infty)}, \, h \in {\mathfrak M}^+.
    \end{equation}
    Obviously, \eqref{RT.thm.main.3.eq.3} is equivalent to
    \begin{equation}\label{RT.thm.main.3.eq.4}
    \bigg\| T_{\Phi^2} \bigg(
    \frac{1}{\Phi^2(x)}\int_0^x h \Phi \bigg)\bigg\|_{\beta,w,(0,\infty)}
    \le c \|h\|_{s, \phi^{1-s},(0,\infty)}, \, h \in {\mathfrak M}^+.
    \end{equation}
    By Theorem \ref{Reduction.Theorem.thm.3.2}, inequality
    \eqref{RT.thm.main.3.eq.4}  is equivalent to
    \begin{equation*}
    \| T_{\Phi^2} f \|_{\beta, w, (0,\infty)} \le c \| f \|_{s, \phi,(0,\infty)}, \, f \in
    {\mathfrak M}^{\downarrow}.
    \end{equation*}
    This completes the proof.
\end{proof}

We immediately get the following equivalence statements.
\begin{corollary}\label{RT.cor.main.3}
    Let $0 < \beta \le \infty$, $1 < s < \infty$, $0 < \delta \le s$, and let $T: {\mathfrak M}^+ \rightarrow {\mathfrak M}^+$
    satisfies conditions {\rm (i)-(iii)}.  Assume that $u,\,w \in {\mathcal W}(0,\infty)$
    and $v \in {\mathcal W}(0,\infty)$ be such that \eqref{RT.thm.main.3.eq.0} holds. Then
    inequality \eqref{RT.thm.main.3.eq.1}
    holds iff both
    \begin{equation}\label{RT.cor.main.3.eq.2}
    \bigg\| T_{\Phi^2}  \bigg( \left\{ \int_x^{\infty} h^{\delta}\right\}^{1 / \delta}\bigg) \bigg\|_{\beta, w, (0,\infty)} \le
    c \| h \|_{s, \Phi^{s / \delta} \phi^{1 - s / \delta},(0,\infty)}, \, h \in {\mathfrak M}^+,
    \end{equation}
    and
    \begin{equation}\label{RT.cor.main.3.eq.3}
    \| T_{\Phi^2} ({\bf 1})\|_{\beta, w, (0,\infty)} \le c \| {\bf 1} \|_{s,
        \phi,(0,\infty)},
    \end{equation}
    hold.
\end{corollary}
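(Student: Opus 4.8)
The plan is to strip off Theorem~\ref{RT.thm.main.3} first: by that theorem, \eqref{RT.thm.main.3.eq.1} is equivalent to \eqref{RT.thm.main.3.eq.2}, i.e.\ to $\|T_{\Phi^2}f\|_{\beta,w,(0,\infty)}\le c\,\|f\|_{s,\phi,(0,\infty)}$ on the cone ${\mathfrak M}^{\downarrow}$, so it suffices to show that \eqref{RT.thm.main.3.eq.2} holds if and only if both \eqref{RT.cor.main.3.eq.2} and \eqref{RT.cor.main.3.eq.3} hold. Throughout, the only facts about $\phi,\Phi$ that enter are $\int_0^x\phi\approx\Phi(x)$, $\Phi^{-s}\phi^{1-s}\approx v$, and, with $p:=s/\delta\ge1$ (this is where $\delta\le s$ is used), the elementary identity $(1-p)(1-p')=1$. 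When $\delta=1$ the asserted equivalence is literally Theorem~\ref{Reduction.Theorem.thm.3.1} applied to the positive operator $T_{\Phi^2}$ with the weight $\phi$ in place of $v$: its $V$ equals $\int_0^{\cdot}\phi\approx\Phi$, so \eqref{Reduction.Theorem.eq.1.2} turns into \eqref{RT.cor.main.3.eq.2} with $\delta=1$, \eqref{Reduction.Theorem.eq.1.3} turns into \eqref{RT.cor.main.3.eq.3}, and the trichotomy $\int_0^\infty v^{1-s'}=\infty$ versus $0<\int_0^\infty v^{1-s'}<\infty$ is exactly $V(\infty)=\infty$ versus $0<V(\infty)<\infty$. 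The general case $0<\delta\le s$ I would handle directly, as follows.

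I would first dispose of necessity. Assuming \eqref{RT.thm.main.3.eq.2}, the choice $f={\bf 1}$ gives \eqref{RT.cor.main.3.eq.3}; and for any $h\in{\mathfrak M}^+$ the function $f:=\big(\int_x^\infty h^\delta\big)^{1/\delta}$ is non-increasing, so \eqref{RT.thm.main.3.eq.2} yields $\|T_{\Phi^2}f\|_{\beta,w}\le c\,\|f\|_{s,\phi}$, and it remains to note that $\|f\|_{s,\phi}\le C\,\|h\|_{s,\Phi^{s/\delta}\phi^{1-s/\delta}}$. The latter says $\int_0^\infty\big(\int_x^\infty h^\delta\big)^{s/\delta}\phi\le C\int_0^\infty h^s\Phi^{s/\delta}\phi^{1-s/\delta}$, a weighted inequality for the Copson operator with exponent $p=s/\delta\ge1$; it holds because the associated Muckenhoupt-type functional $\sup_t\big(\int_0^t\phi\big)^{1/p}\big(\int_t^\infty(\Phi^p\phi^{1-p})^{1-p'}\big)^{1/p'}$ is bounded — in fact identically constant — which falls out of $\Phi^{-s}\phi^{1-s}\approx v$, $\int_0^x\phi\approx\Phi$ and $(1-p)(1-p')=1$ (and for $p=1$ it is just Fubini's theorem).

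For sufficiency, assume \eqref{RT.cor.main.3.eq.2} and \eqref{RT.cor.main.3.eq.3} and take $f\in{\mathfrak M}^{\downarrow}$. Since $T_{\Phi^2}$ is monotone, it is enough to produce $h\in{\mathfrak M}^+$ with $\big(\int_x^\infty h^\delta\big)^{1/\delta}\ge f$ a.e.\ and $\|h\|_{s,\Phi^{s/\delta}\phi^{1-s/\delta}}\le C\,\|f\|_{s,\phi}$, because then $\|T_{\Phi^2}f\|_{\beta,w}\le c\,\big\|T_{\Phi^2}\big((\int_x^\infty h^\delta)^{1/\delta}\big)\big\|_{\beta,w}\le C\,\|f\|_{s,\phi}$ by \eqref{RT.cor.main.3.eq.2}. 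If $\int_0^\infty v^{1-s'}=\infty$ then $\|{\bf 1}\|_{s,\phi}=\infty$, so one may assume $f(\infty)=0$. For $1<p<\infty$ (equivalently $\delta<s$) I would take
\[
G(x):=\frac1{\Phi(x)}\int_0^x f^\delta\phi,\qquad h:=(-G')^{1/\delta};
\]
then $G$ is non-increasing with $G(\infty)=0$, and $G\ge f^\delta$ because $f^\delta$ is non-increasing, so $\int_x^\infty h^\delta=G(x)\ge f^\delta(x)$; moreover $-G'\lesssim(\phi/\Phi)\,G$ pointwise, whence $\|h\|_{s,\Phi^{s/\delta}\phi^{1-s/\delta}}^s\lesssim\int_0^\infty G^{s/\delta}\phi\le C\int_0^\infty f^s\phi$, the last step being the classical $L^{s/\delta}$ Hardy inequality after the substitution $y=\Phi(x)$. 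For $\delta=s$ no construction is needed, since by Fubini's theorem and $k=h^s$ inequality \eqref{RT.cor.main.3.eq.2} is just \eqref{RT.thm.main.3.eq.2} restricted to $\{f\in{\mathfrak M}^{\downarrow}:f(\infty)=0\}$. Finally, when $0<\int_0^\infty v^{1-s'}<\infty$ I would split $f=f_0+f(\infty){\bf 1}$ with $f_0(\infty)=0$, apply the previous step to $f_0$, and absorb the constant part via $f(\infty)\Phi^2\le f(\infty)\Phi(\infty)^2{\bf 1}$ together with conditions {\rm (i)-(iii)} and \eqref{RT.cor.main.3.eq.3}.

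The hard part is this sufficiency direction. Because the reversed Copson inequality is false, one cannot simply take $h^\delta=-(f^\delta)'$; a genuine averaging (level-function) construction like the $G$ above — or a layer-cake superposition of the explicit optimizers of the characteristic-function case — is required, and both its admissibility and its norm estimate must be verified uniformly in $f$. The second delicate point is the range $0<\int_0^\infty v^{1-s'}<\infty$: there the constant part of $f$ cannot be majorized by any finite-norm $\big(\int_x^\infty h^\delta\big)^{1/\delta}$, so it has to pass through \eqref{RT.cor.main.3.eq.3}, and this decoupling must be performed at the level of $T$ itself (using {\rm (i)-(iii)}), since $T_{\Phi^2}$ need not satisfy condition {\rm (iii)} even though $T$ does.
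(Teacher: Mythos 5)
Your opening move (reducing \eqref{RT.thm.main.3.eq.1} to \eqref{RT.thm.main.3.eq.2} via Theorem~\ref{RT.thm.main.3}) is exactly the paper's. From there the paper is done in two lines: it rewrites \eqref{RT.thm.main.3.eq.2} as \eqref{RT.cor.main.3.eq.5} for the power-transformed operator $\widetilde{T}(f):=\{T_{\Phi^2}(f^{1/\delta})\}^{\delta}$ and applies Theorem~\ref{Reduction.Theorem.thm.3.1} to $\widetilde{T}$ with exponent $s/\delta\ge 1$ and weight $\phi$ (for which $\int_0^x\phi\approx\Phi(x)$); after the substitution $h\mapsto h^{\delta}$ the reduced inequality and the constant test become \eqref{RT.cor.main.3.eq.2} and \eqref{RT.cor.main.3.eq.3}. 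You noticed this mechanism only at $\delta=1$; the $\delta$-power substitution makes your separate treatment of "the general case" unnecessary, since the same citation covers all $0<\delta\le s$ (this is precisely where $\delta\le s$ enters). Your hand re-derivation is largely sound: the necessity step (the Copson inequality whose Muckenhoupt functional collapses to a constant) is correct, and for $1<s/\delta<\infty$ with $\Phi(\infty)=\infty$ your construction $G=\Phi^{-1}\int_0^x f^{\delta}\phi$, $h=(-G')^{1/\delta}$ is essentially the known proof of the reduction theorem you are re-proving.

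Two steps, however, have genuine gaps. First, the endpoint $\delta=s$: \eqref{RT.cor.main.3.eq.2} is \emph{not} "just" \eqref{RT.thm.main.3.eq.2} restricted to $\{f\in{\mathfrak M}^{\downarrow}:f(\infty)=0\}$, but only to those $f$ whose $s$-th power is locally absolutely continuous; a general non-increasing $f$ corresponds to the measure $d(-f^{s})$, not to a function $h^{s}$, and since $T$ is only assumed monotone (no continuity under monotone limits), approximating $f$ from below does not bound $T_{\Phi^2}f$ from above. Closing this (by shifted averages, or a measure version of the reduction) is exactly part of what the quoted Theorem~\ref{Reduction.Theorem.thm.3.1} packages at exponent $1$, and your Fubini remark does not replace it. Second, and more seriously, the case $0<\int_0^{\infty}v^{1-s'}<\infty$: your absorption $f(\infty)\Phi^{2}\le f(\infty)\Phi(\infty)^{2}{\bf 1}$ followed by (ii), (iii) produces the term $f(\infty)\Phi(\infty)^{2}\|T{\bf 1}\|_{\beta,w,(0,\infty)}$, and neither \eqref{RT.cor.main.3.eq.2} nor \eqref{RT.cor.main.3.eq.3} controls $\|T{\bf 1}\|_{\beta,w,(0,\infty)}$: \eqref{RT.cor.main.3.eq.3} controls $\|T(\Phi^{2})\|_{\beta,w,(0,\infty)}$, and since $\Phi^{2}$ vanishes at the origin, $T(\Phi^{2})$ yields no lower bound for $T{\bf 1}$ (for instance $Tg:=\big(\esup_{(0,1)}g\,\Phi^{-2}\big)\chi_{(1,2)}$ satisfies (i)--(iii) and has $T{\bf 1}\equiv\infty$ on $(1,2)$ while $T(\Phi^{2})$ is bounded). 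What is actually needed there is $T(f_0\Phi^{2}+f(\infty)\Phi^{2})\lesssim T(f_0\Phi^{2})+f(\infty)T(\Phi^{2})$, i.e.\ condition (iii) for $T_{\Phi^2}$ (equivalently for $\widetilde{T}$) --- the very point you flagged; the paper handles it by invoking Theorem~\ref{Reduction.Theorem.thm.3.1} for $\widetilde{T}$ (and the property does hold for the concrete operators, such as $H_{p,u}$, to which the corollary is later applied), whereas your substitute step does not close the argument.
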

\begin{proof}
    By Theorem \ref{RT.thm.main.3}, inequality \eqref{RT.thm.main.3.eq.1} is
    equivalent to
    \begin{equation}\label{RT.cor.main.3.eq.4}
    \| T_{\Phi^2} f \|_{\beta, w, (0,\infty)} \le c \| f \|_{s, \phi,(0,\infty)}, \, f \in
    {\mathfrak M}^{\downarrow}.
    \end{equation}
    Since \eqref{RT.cor.main.3.eq.4} is equivalent to
    \begin{equation}\label{RT.cor.main.3.eq.5}
    \| \widetilde{T} f \|_{\beta / \delta, w, (0,\infty)} \le c^{\delta} \| f \|_{s / \delta, \phi,(0,\infty)}, \, f \in
    {\mathfrak M}^{\downarrow},
    \end{equation}
    with
    $$
    \widetilde{T} (f) : = \left\{ T_{\Phi^2}(f^{1/ \delta})\right\}^{\delta},
    $$
    it remains to apply Theorem \ref{Reduction.Theorem.thm.3.1}.
\end{proof}

\begin{corollary}\label{RT.cor.main.3.1}
    Let $0 < \beta \le \infty$, $1 < s < \infty$, $0 < \delta \le s$, and let $T: {\mathfrak M}^+ \rightarrow {\mathfrak M}^+$
    satisfies conditions {\rm (i)-(iii)}.  Assume that $u,\,w \in {\mathcal W}(0,\infty)$
    and $v \in {\mathcal W}(0,\infty)$ be such that \eqref{RT.thm.main.3.eq.0} holds. Then
    inequality \eqref{RT.thm.main.3.eq.1}
    holds iff
    \begin{equation}\label{RT.cor.main.3.1.eq.2}
    \bigg\| T_{\Phi^{2(1 - 1 / \delta)}} \bigg(\bigg\{\int_0^x h^{\delta}\Phi \bigg\}^{1  / \delta}\bigg) \bigg\|_{\beta, w, (0,\infty)} \le
    c \| h \|_{s, \phi^{1 - s / \delta},(0,\infty)}, \, h \in {\mathfrak M}^+
    \end{equation}
    holds.
\end{corollary}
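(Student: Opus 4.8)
The plan is to follow the scheme of the proof of Corollary~\ref{RT.cor.main.3}, but to apply the Hardy-operator reduction Theorem~\ref{Reduction.Theorem.thm.3.2} instead of the Copson-operator reduction Theorem~\ref{Reduction.Theorem.thm.3.1}; this is what produces the inner Hardy integral $\int_0^x$ in \eqref{RT.cor.main.3.1.eq.2} in place of the inner Copson integral $\int_x^{\infty}$ of \eqref{RT.cor.main.3.eq.2}. By Theorem~\ref{RT.thm.main.3}, inequality \eqref{RT.thm.main.3.eq.1} is equivalent to the cone inequality \eqref{RT.cor.main.3.eq.4}, i.e. $\|T_{\Phi^2}f\|_{\beta,w,(0,\infty)}\le c\|f\|_{s,\phi,(0,\infty)}$ for $f\in\mathfrak M^{\downarrow}$; and, exactly as in the proof of Corollary~\ref{RT.cor.main.3} (replace $f$ by $f^{1/\delta}$ and raise both sides to the power $\delta$), the latter is equivalent to \eqref{RT.cor.main.3.eq.5}, namely
\[
\|\widetilde T f\|_{\beta/\delta,w,(0,\infty)}\le c^{\delta}\|f\|_{s/\delta,\phi,(0,\infty)},\qquad f\in\mathfrak M^{\downarrow},
\]
with $\widetilde T(f):=\{T_{\Phi^2}(f^{1/\delta})\}^{\delta}$.

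Next I would apply Theorem~\ref{Reduction.Theorem.thm.3.2} to this inequality, with the operator taken to be $\widetilde T$, the exponents $\beta/\delta$ and $s/\delta$ in the roles of $\beta$ and $s$ (the requirement $1\le s/\delta<\infty$ of that theorem is exactly where the hypothesis $0<\delta\le s$ enters), and $\phi$ in the role of $v$, so that the function $V$ of Theorem~\ref{Reduction.Theorem.thm.3.2} becomes $\int_0^x\phi=\Phi$. This gives that \eqref{RT.cor.main.3.eq.5} is equivalent to
\[
\bigg\|\widetilde T\bigg(\frac{1}{\Phi^2(x)}\int_0^x \eta\,\Phi\bigg)\bigg\|_{\beta/\delta,w,(0,\infty)}\le c\,\|\eta\|_{s/\delta,\phi^{1-s/\delta},(0,\infty)},\qquad \eta\in\mathfrak M^+.
\]

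Finally I would substitute $\eta=h^{\delta}$ (an arbitrary element of $\mathfrak M^+$ again) and unwind the definitions. Using $T_u(k)=T(ku)$ together with the positive homogeneity of $T$, one computes
\[
\widetilde T\bigg(\frac{1}{\Phi^2(x)}\int_0^x h^{\delta}\Phi\bigg)=\bigg\{T_{\Phi^{2(1-1/\delta)}}\bigg(\bigg\{\int_0^x h^{\delta}\Phi\bigg\}^{1/\delta}\bigg)\bigg\}^{\delta},
\]
so that, by homogeneity of the quasinorm $\|\cdot\|_{\beta/\delta,w,(0,\infty)}$, the $\delta$th powers factor out of the left-hand side, while on the right $\|h^{\delta}\|_{s/\delta,\phi^{1-s/\delta},(0,\infty)}=\big(\|h\|_{s,\phi^{1-s/\delta},(0,\infty)}\big)^{\delta}$. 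Taking $\delta$th roots then yields precisely \eqref{RT.cor.main.3.1.eq.2} (up to an irrelevant change of the constant $c$), which finishes the argument.

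I do not expect a genuine obstacle: the entire content is the correct routing of Theorems~\ref{RT.thm.main.3} and~\ref{Reduction.Theorem.thm.3.2} and the bookkeeping of exponents and weights under the two power substitutions. The one point that must be checked --- and it is routine --- is that Theorem~\ref{Reduction.Theorem.thm.3.2} does apply to $\widetilde T$, i.e. that $\widetilde T$ inherits conditions {\rm (i)--(iii)} from $T$: homogeneity and monotonicity are immediate, and {\rm (iii)} for $\widetilde T$ follows from {\rm (ii)} and {\rm (iii)} for $T$ together with the elementary estimate $(a+b)^{r}\le\max\{1,2^{r-1}\}(a^{r}+b^{r})$ for $a,b\ge0$, $r>0$ --- the same verification already implicit in Corollary~\ref{RT.cor.main.3}.
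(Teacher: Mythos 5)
Your proposal is correct and follows essentially the same route as the paper: reduce via Theorem \ref{RT.thm.main.3} to the cone inequality \eqref{RT.cor.main.3.eq.4}, pass to the power-$\delta$ form with $\widetilde T(f)=\{T_{\Phi^2}(f^{1/\delta})\}^{\delta}$, apply Theorem \ref{Reduction.Theorem.thm.3.2} with exponents $\beta/\delta$, $s/\delta$ and weight $\phi$ (so $V$ becomes $\Phi$), and then unwind the substitution $h\mapsto h^{\delta}$ to arrive at \eqref{RT.cor.main.3.1.eq.2}. The only difference is that you spell out the final rewriting and the verification that $\widetilde T$ inherits conditions (i)--(iii), which the paper leaves implicit.
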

\begin{proof}
    By Theorem \ref{RT.thm.main.3}, inequality \eqref{RT.thm.main.3.eq.1} is
    equivalent to
    \begin{equation}\label{RT.cor.main.3.1.eq.4}
    \| T_{\Phi^2} f \|_{\beta, w, (0,\infty)} \le c \| f \|_{s, \phi,(0,\infty)}, \, f \in
    {\mathfrak M}^{\downarrow}.
    \end{equation}
    We know that \eqref{RT.cor.main.3.1.eq.4} is equivalent to
    \begin{equation}\label{RT.cor.main.3.1.eq.5}
    \| \widetilde{T} f \|_{\beta / \delta, w, (0,\infty)} \le c^{\delta} \| f \|_{s / \delta, \phi,(0,\infty)}, \, f \in
    {\mathfrak M}^{\downarrow},
    \end{equation}
    with
    $$
    \widetilde{T} (f) : = \left\{ T_{\Phi^2}(f^{1/ \delta})\right\}^{\delta},
    $$
    By Theorem \ref{Reduction.Theorem.thm.3.2}, we see that \eqref{RT.cor.main.3.1.eq.5} is equivalent to
    \begin{equation}\label{RT.cor.main.3.1.eq.6.0}
    \bigg\| \widetilde{T}\bigg( \frac{1}{\Phi^2(x)} \int_0^x h\Phi \bigg)\bigg\|_{\beta / \delta,w,(0,\infty)}
    \le c^{\delta} \| h \|_{s / \delta, \phi^{1 - s / \delta},(0,\infty)}, ~ h \in {\mathfrak M}^+(0,\infty).
    \end{equation}
    To complete the proof it suffices to note that \eqref{RT.cor.main.3.1.eq.6.0} is equivalent to \eqref{RT.cor.main.3.1.eq.2}.
\end{proof}

The following "dual" version of the reduction and equivalence
statements also hold true and may be proved analogously.
\begin{theorem}\label{RT.thm.main.4}
    Let $0 < \beta \le \infty$, $1 < s < \infty$, and let $T: {\mathfrak M}^+ \rightarrow {\mathfrak M}^+$
    satisfies conditions {\rm (i)-(iii)}.  Assume that $u,\,w \in {\mathcal W}(0,\infty)$
    and $v \in {\mathcal W}(0,\infty)$ be such that
    \begin{equation}\label{RT.thm.main.4.eq.0}
    \int_x^{\infty} v^{1-s^{\prime}}(t)\,dt < \infty \qquad \mbox{for all} \qquad x >
    0.
    \end{equation}
    Then inequality \eqref{RT.thm.main.4.eq.1} holds iff
    \begin{equation}\label{RT.thm.main.4.eq.2}
    \| T_{\Psi^2} f \|_{\beta, w, (0,\infty)} \le c \| f \|_{s, \psi,(0,\infty)}, \, f \in
    {\mathfrak M}^{\uparrow}
    \end{equation}
    holds, where
    $$
    \psi (x) \equiv \psi \big[v;s\big](x): = \bigg( \int_x^{\infty}
    v^{1-{s}^{\prime}}(t)\,dt\bigg)^{- \frac{s^{\prime}}{s^{\prime} + 1}}
    v^{1-{s}^{\prime}}(x)
    $$
    and
    $$
    \Psi(x) \equiv \Psi \big[v;s\big](x) : = \int_x^{\infty} \psi (t)\,dt = \bigg( \int_x^{\infty}
    v^{1-{s}^{\prime}}(t)\,dt\bigg)^{\frac{1}{s^{\prime} + 1}}
    $$
\end{theorem}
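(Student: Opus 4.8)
The plan is to imitate, step by step, the proof of Theorem~\ref{RT.thm.main.3}, replacing the Hardy operator $\int_0^x$ by the Copson operator $\int_x^{\infty}$, the cone ${\mathfrak M}^{\downarrow}$ by ${\mathfrak M}^{\uparrow}$, the auxiliary weight $\Phi$ by $\Psi$, and Theorem~\ref{Reduction.Theorem.thm.3.2} by its monotone-increasing counterpart Theorem~\ref{Reduction.Theorem.thm.3.4}. The algebraic fact that makes the scheme run is $\Psi^{-s}\psi^{1-s}\approx v$: writing $G(x):=\int_x^{\infty}v^{1-s'}$ one has $\psi=G^{-s'/(s'+1)}v^{1-s'}$ and $\Psi=G^{1/(s'+1)}$, hence $\Psi^{-s}\psi^{1-s}=G^{-[s+s'(1-s)]/(s'+1)}\,v^{(1-s)(1-s')}$, and the identities $s+s'=ss'$ and $(1-s)(1-s')=1$ — both immediate from $1/s+1/s'=1$ — make the power of $G$ vanish and the power of $v$ equal $1$. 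Assumption~\eqref{RT.thm.main.4.eq.0} ensures $0<\Psi(x)<\infty$ for every $x>0$, and, exactly as for $\Phi$, $\int_x^{\infty}\psi\approx\Psi$; so $\psi$ and $\Psi$ play for $\int_x^{\infty}$ the roles that $v$ and $V_*$ play in Theorem~\ref{Reduction.Theorem.thm.3.4}.

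Next I would run through the chain of equivalences. Since $T_{\Psi^2}(g)=T(g\Psi^2)$ is $1$-homogeneous, $T_{\Psi^2}\big(\frac{1}{\Psi^2(x)}\int_x^{\infty}h\big)=T\big(\int_x^{\infty}h\big)$, and $\|h\|_{s,\Psi^{-s}\psi^{1-s},(0,\infty)}\approx\|h\|_{s,v,(0,\infty)}$ by the identity above; hence \eqref{RT.thm.main.4.eq.1} is equivalent to
$$
\bigg\|T_{\Psi^2}\bigg(\frac{1}{\Psi^2(x)}\int_x^{\infty}h\bigg)\bigg\|_{\beta,w,(0,\infty)}\le c\,\|h\|_{s,\Psi^{-s}\psi^{1-s},(0,\infty)},\qquad h\in{\mathfrak M}^+.
$$
Substituting $h\Psi$ for $h$ — a bijection of ${\mathfrak M}^+(0,\infty)$ because $0<\Psi<\infty$ — and using $\|h\Psi\|_{s,\Psi^{-s}\psi^{1-s}}=\|h\|_{s,\psi^{1-s}}$ turns this into
$$
\bigg\|T_{\Psi^2}\bigg(\frac{1}{\Psi^2(x)}\int_x^{\infty}h\Psi\bigg)\bigg\|_{\beta,w,(0,\infty)}\le c\,\|h\|_{s,\psi^{1-s},(0,\infty)},\qquad h\in{\mathfrak M}^+,
$$
and, by Theorem~\ref{Reduction.Theorem.thm.3.4} applied with operator $T_{\Psi^2}$ and weight $\psi$ (so that $V_*=\int_x^{\infty}\psi\approx\Psi$), the last display is equivalent to
$$
\|T_{\Psi^2}f\|_{\beta,w,(0,\infty)}\le c\,\|f\|_{s,\psi,(0,\infty)},\qquad f\in{\mathfrak M}^{\uparrow},
$$
which is \eqref{RT.thm.main.4.eq.2}. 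The constants produced by the various $\approx$'s are harmless, being absorbed via the $1$-homogeneity of $T$.

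The one place where care is genuinely needed is confirming that $T_{\Psi^2}$ inherits conditions {\rm (i)-(iii)} from $T$; this matters because Theorem~\ref{Reduction.Theorem.thm.3.4} is used here in \emph{both} directions, and its necessity half rests on all three conditions. Conditions {\rm (i)} and {\rm (ii)} are immediate from $T_{\Psi^2}(g)=T(g\Psi^2)$ and the corresponding properties of $T$, while {\rm (iii)} is argued exactly as for $T_{\Phi^2}$ in the proof of Theorem~\ref{RT.thm.main.3}. Apart from that I do not expect any genuinely new difficulty — the whole argument is a matter of reorienting everything consistently: it is now the right tail $\int_x^{\infty}v^{1-s'}$ that is assumed finite, $\Psi$ is non-increasing whereas $\Phi$ was non-decreasing, and the reduction lands on ${\mathfrak M}^{\uparrow}$, so that Theorem~\ref{Reduction.Theorem.thm.3.4}, and not Theorem~\ref{Reduction.Theorem.thm.3.2}, is the tool invoked at the close of the proof. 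Corollaries paralleling Corollaries~\ref{RT.cor.main.3} and~\ref{RT.cor.main.3.1} then follow by combining Theorem~\ref{RT.thm.main.4} with Theorems~\ref{Reduction.Theorem.thm.3.3} and~\ref{Reduction.Theorem.thm.3.4}.
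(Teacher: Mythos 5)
Your proposal is correct and is exactly the argument the paper intends: the paper omits the proof of Theorem \ref{RT.thm.main.4}, stating only that it ``may be proved analogously'' to Theorem \ref{RT.thm.main.3}, and your chain --- rewriting \eqref{RT.thm.main.4.eq.1} as an inequality for $T_{\Psi^2}\big(\Psi^{-2}(x)\int_x^{\infty}h\big)$ via the identity $\Psi^{-s}\psi^{1-s}\approx v$, substituting $h\Psi$ for $h$, and invoking Theorem \ref{Reduction.Theorem.thm.3.4} with weight $\psi$ (so that $V_*\approx\Psi$) --- is precisely that dual analogue of the proof of Theorem \ref{RT.thm.main.3}. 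The one caveat you flag, that $T_{\Psi^2}$ must inherit conditions {\rm (i)--(iii)} for the two-sided use of the reduction theorem, is handled at the same implicit level of rigor as the paper's own treatment of $T_{\Phi^2}$, so it is not a deviation from the source.
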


\begin{corollary}\label{RT.cor.main.4}
    Let $0 < \beta \le \infty$, $1 < s < \infty$, $0 < \delta \le s$, and let $T: {\mathfrak M}^+ \rightarrow {\mathfrak M}^+$
    satisfies conditions {\rm (i)-(iii)}.  Assume that $u,\,w \in {\mathcal W}(0,\infty)$
    and $v \in {\mathcal W}(0,\infty)$ be such that \eqref{RT.thm.main.4.eq.0} holds. Then
    inequality \eqref{RT.thm.main.4.eq.1}
    holds iff both
    \begin{equation}\label{RT.cor.main.4.eq.2}
    \bigg\| T_{\Psi^2}  \bigg( \left\{ \int_0^x h^{\delta}\right\}^{1 / \delta}\bigg) \bigg\|_{\beta, w, (0,\infty)} \le
    c \| h \|_{s, \Psi^{s / \delta} \psi^{1 - s / \delta},(0,\infty)}, \, h \in {\mathfrak M}^+,
    \end{equation}
    and
    \begin{equation}\label{RT.cor.main.4.eq.3}
    \| T_{\Psi^2} {\bf 1} \|_{\beta, w, (0,\infty)} \le c \| {\bf 1} \|_{s,
        \psi,(0,\infty)},
    \end{equation}
    hold.
\end{corollary}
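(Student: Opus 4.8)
The plan is to follow the proof of Corollary~\ref{RT.cor.main.3} verbatim, reversing all the obvious "ups" and "downs": Theorem~\ref{RT.thm.main.4} takes the place of Theorem~\ref{RT.thm.main.3}, and Theorem~\ref{Reduction.Theorem.thm.3.3} (the cone ${\mathfrak M}^{\uparrow}$ reduction theorem) takes the place of Theorem~\ref{Reduction.Theorem.thm.3.1}.

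First I would invoke Theorem~\ref{RT.thm.main.4}: under the stated hypotheses, inequality \eqref{RT.thm.main.4.eq.1} is equivalent to \eqref{RT.thm.main.4.eq.2}, that is, to
\[
\| T_{\Psi^2} f \|_{\beta, w, (0,\infty)} \le c \, \| f \|_{s, \psi,(0,\infty)}, \qquad f \in {\mathfrak M}^{\uparrow}.
\]
Next I would pass to the $\delta$-th power. Since $0<\delta\le s$, the map $f\mapsto f^{1/\delta}$ is a bijection of ${\mathfrak M}^{\uparrow}$ onto itself, and (for finite exponents) $\|G^{\delta}\|_{p/\delta,\omega}=\|G\|_{p,\omega}^{\delta}$ for every weight $\omega$; hence the inequality above is equivalent to
\[
\| \widetilde{T} f \|_{\beta / \delta, w, (0,\infty)} \le c^{\delta} \, \| f \|_{s / \delta, \psi,(0,\infty)}, \qquad f \in {\mathfrak M}^{\uparrow},
\]
where $\widetilde{T}(f):=\{T_{\Psi^2}(f^{1/\delta})\}^{\delta}$ (for $\beta=\infty$ one replaces $w$ by $w^{\delta}$ in the obvious way). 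Here one checks that $\widetilde{T}$ again satisfies {\rm (i)--(iii)}: homogeneity and monotonicity pass through at once, and the additivity condition {\rm (iii)} follows from {\rm (i)--(iii)} for $T_{\Psi^2}$ together with the elementary estimate $(a+b)^{1/\delta}\le C_{\delta}\,(a^{1/\delta}+b^{1/\delta})$ valid for all $a,b\ge 0$.

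Finally I would apply Theorem~\ref{Reduction.Theorem.thm.3.3} to $\widetilde{T}$ with $\beta,\,s,\,v$ replaced by $\beta/\delta,\,s/\delta,\,\psi$. Since $\int_t^{\infty}\psi\approx\Psi(t)$, the quantity $V_*$ associated with the weight $\psi$ is comparable to $\Psi$, so the theorem yields that the rescaled inequality is equivalent to the conjunction of
\[
\bigg\| \widetilde{T}\bigg( \int_0^x h \bigg) \bigg\|_{\beta / \delta, w, (0,\infty)} \le c \, \| h \|_{s/\delta,\, \Psi^{s/\delta}\psi^{\,1-s/\delta},(0,\infty)}, \qquad h \in {\mathfrak M}^+,
\]
and $\| \widetilde{T}{\bf 1} \|_{\beta/\delta, w,(0,\infty)} \le c \,\| {\bf 1} \|_{s/\delta,\psi,(0,\infty)}$. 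At this point one uses the dichotomy built into Theorem~\ref{Reduction.Theorem.thm.3.3}: if $\int_0^{\infty}\psi=\infty$ the second condition is vacuous (its right-hand side is infinite) and the first alone suffices, whereas if $\int_0^{\infty}\psi<\infty$ both are needed; and the "$\Rightarrow$" part of the second condition is simply the original inequality evaluated at $f={\bf 1}\in{\mathfrak M}^{\uparrow}$. Thus listing both conditions is correct in every case. It then remains to undo the $\delta$-rescaling: substituting $h=g^{\delta}$, using $\widetilde{T}(\int_0^x g^{\delta})=\{T_{\Psi^2}((\int_0^x g^{\delta})^{1/\delta})\}^{\delta}$ and the norm identity quoted above, and taking $\delta$-th roots, the first condition turns into \eqref{RT.cor.main.4.eq.2} and the second into \eqref{RT.cor.main.4.eq.3}.

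The point requiring the most care is the verification in the third paragraph that $\widetilde{T}$ — equivalently the auxiliary operator $T_{\Psi^2}$ — inherits conditions {\rm (i)--(iii)}; everything else is routine bookkeeping with the change of exponent, the weight relation $\Psi^{-s}\psi^{1-s}\approx v$ that is already absorbed into Theorem~\ref{RT.thm.main.4}, and the $\int_0^{\infty}\psi=\infty$ versus $\int_0^{\infty}\psi<\infty$ split.
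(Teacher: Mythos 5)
Your argument is correct and is exactly the paper's intended route: the paper proves Corollary \ref{RT.cor.main.3} by combining Theorem \ref{RT.thm.main.3} with the $\delta$-power rescaling and Theorem \ref{Reduction.Theorem.thm.3.1}, and states that the dual Corollary \ref{RT.cor.main.4} "may be proved analogously," which is precisely your scheme (Theorem \ref{RT.thm.main.4} plus Theorem \ref{Reduction.Theorem.thm.3.3} applied to $\widetilde{T}$ with exponent $s/\delta\ge 1$ and weight $\psi$, whose $V_*$ is $\Psi$). Your extra remarks — the $w\mapsto w^{\delta}$ adjustment when $\beta=\infty$, the verification that $\widetilde{T}$ inherits (i)--(iii), and the $\int_0^\infty\psi=\infty$ versus $<\infty$ dichotomy with $f={\bf 1}\in{\mathfrak M}^{\uparrow}$ handling the second condition — only make explicit points the paper leaves implicit.
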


\begin{corollary}\label{RT.cor.main.4.1}
    Let $0 < \beta \le \infty$, $1 < s < \infty$, $0 < \delta \le s$, and let $T: {\mathfrak M}^+ \rightarrow {\mathfrak M}^+$
    satisfies conditions {\rm (i)-(iii)}.  Assume that $u,\,w \in {\mathcal W}(0,\infty)$
    and $v \in {\mathcal W}(0,\infty)$ be such that \eqref{RT.thm.main.4.eq.0} holds. Then
    inequality \eqref{RT.thm.main.4.eq.1}
    holds iff
    \begin{equation}\label{RT.cor.main.4.1.eq.2}
    \bigg\| T_{\Psi^{2(1 - 1 / \delta)}} \bigg(\bigg\{\int_x^{\infty} h^{\delta}\Psi \bigg\}^{1  / \delta}\bigg) \bigg\|_{\beta, w, (0,\infty)} \le
    c \| h \|_{s, \psi^{1 - s / \delta},(0,\infty)}, \, h \in {\mathfrak M}^+
    \end{equation}
    holds.
\end{corollary}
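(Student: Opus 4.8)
The plan is to transcribe, almost verbatim, the proof of Corollary~\ref{RT.cor.main.3.1}, replacing the Hardy operator by the Copson operator and the pair $\phi,\Phi$ by the pair $\psi,\Psi$ (which are well defined thanks to the hypothesis \eqref{RT.thm.main.4.eq.0}); the only structurally new inputs are Theorem~\ref{RT.thm.main.4} in place of Theorem~\ref{RT.thm.main.3}, and Theorem~\ref{Reduction.Theorem.thm.3.4} in place of Theorem~\ref{Reduction.Theorem.thm.3.2}. First I would apply Theorem~\ref{RT.thm.main.4}: under \eqref{RT.thm.main.4.eq.0}, inequality \eqref{RT.thm.main.4.eq.1} is equivalent to
\[
\| T_{\Psi^2} f \|_{\beta, w, (0,\infty)} \le c \| f \|_{s, \psi,(0,\infty)}, \qquad f \in {\mathfrak M}^{\uparrow}.
\]

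Next I would homogenize in the exponent $\delta$. Raising both sides to the power $\delta$, substituting $f \mapsto f^{1/\delta}$ (a bijection of ${\mathfrak M}^{\uparrow}$ onto itself since $\delta>0$), and using $\|g^{\delta}\|_{r/\delta, w} = \|g\|_{r, w}^{\delta}$, the displayed inequality is equivalent to
\[
\| \widetilde{T} f \|_{\beta/\delta, w, (0,\infty)} \le c^{\delta} \| f \|_{s/\delta, \psi,(0,\infty)}, \qquad f \in {\mathfrak M}^{\uparrow},
\]
where $\widetilde{T}(f) := \{ T_{\Psi^2}(f^{1/\delta}) \}^{\delta}$; note $s/\delta \ge 1$ because $\delta \le s$. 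One checks routinely (using $(a+b)^{1/\delta} \le C\,(a^{1/\delta}+b^{1/\delta})$ for $a,b\ge 0$) that $\widetilde{T}$ again satisfies conditions {\rm (i)-(iii)}, so that both implications of Theorem~\ref{Reduction.Theorem.thm.3.4} are at our disposal. Applying that theorem to $\widetilde{T}$ with the weight $\psi$ — so that the quantity denoted $V_*$ there is now $\int_x^{\infty}\psi = \Psi$ and $v^{1-s}$ becomes $\psi^{1-s/\delta}$ — the last inequality is equivalent to
\[
\bigg\| \widetilde{T}\bigg( \frac{1}{\Psi^2(x)} \int_x^{\infty} h\Psi \bigg)\bigg\|_{\beta / \delta,w,(0,\infty)} \le c^{\delta} \| h \|_{s / \delta, \psi^{1 - s / \delta},(0,\infty)}, \qquad h \in {\mathfrak M}^+(0,\infty).
\]

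Finally I would recast this in the form \eqref{RT.cor.main.4.1.eq.2}. Substituting $h \mapsto h^{\delta}$ and unravelling $\widetilde T$, the key observation is the identity $T_{\Psi^2}(\Psi^{-2/\delta} g) = T(\Psi^{2-2/\delta} g) = T_{\Psi^{2(1-1/\delta)}}(g)$, which gives
\[
\widetilde{T}\bigg( \frac{1}{\Psi^2(x)} \int_x^{\infty} h^{\delta}\Psi \bigg) = \bigg\{ T_{\Psi^{2(1 - 1/\delta)}}\bigg( \Big\{ \int_x^{\infty} h^{\delta}\Psi \Big\}^{1/\delta}\bigg) \bigg\}^{\delta}.
\]
Taking the $(\beta/\delta, w)$-norm and pulling the power $\delta$ outside via $\|g^{\delta}\|_{\beta/\delta,w} = \|g\|_{\beta,w}^{\delta}$, doing the same on the right with $\|h^{\delta}\|_{s/\delta,\psi^{1-s/\delta}} = \|h\|_{s,\psi^{1-s/\delta}}^{\delta}$, and extracting $\delta$-th roots, one arrives exactly at \eqref{RT.cor.main.4.1.eq.2}. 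Since every step is an equivalence, this proves the corollary.

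I do not expect a genuine obstacle here: the argument is a formal mirror of the proof of Corollary~\ref{RT.cor.main.3.1}. The only points that need a little care are the bookkeeping of exponents across the two substitutions $f\mapsto f^{1/\delta}$ and $h\mapsto h^{\delta}$, the fact that in Theorem~\ref{Reduction.Theorem.thm.3.4} the role of $V_*$ is now played by $\Psi=\int_x^{\infty}\psi$, and the verification that $\widetilde{T}$ inherits conditions {\rm (i)-(iii)} so that the "moreover" (necessity) part of Theorem~\ref{Reduction.Theorem.thm.3.4} applies.
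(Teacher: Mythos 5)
Your proposal is correct and is exactly the ``analogous'' dual argument the paper intends: it mirrors, step for step, the printed proof of Corollary~\ref{RT.cor.main.3.1}, substituting Theorem~\ref{RT.thm.main.4} for Theorem~\ref{RT.thm.main.3} and Theorem~\ref{Reduction.Theorem.thm.3.4} (with $V_*=\Psi$, $v=\psi$, exponent $s/\delta\ge 1$) for Theorem~\ref{Reduction.Theorem.thm.3.2}, and the exponent bookkeeping and the identity $T_{\Psi^2}(\Psi^{-2/\delta}g)=T_{\Psi^{2(1-1/\delta)}}(g)$ are right. The only point you treat as routine --- that $\widetilde{T}$ inherits condition {\rm (iii)} --- is likewise passed over silently in the paper's own proof of Corollary~\ref{RT.cor.main.3.1}, so it is a shared implicit assumption rather than a gap you introduced.
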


The following theorem allows to reduce the iterated inequality
\eqref{RT.thm.main.3.eq.1} to the inequality on the cone of
non-decreasing functions.
\begin{theorem}\label{RT.thm.main.7}
    Let $0 < \beta \le \infty$, $1 < s < \infty$, and let $T: {\mathfrak M}^+ \rightarrow {\mathfrak M}^+$
    satisfies conditions {\rm (i)-(iii)}.  Assume that $u,\,w \in {\mathcal W}(0,\infty)$
    and $v \in {\mathcal W}(0,\infty)$ be such that \eqref{RT.thm.main.3.eq.0} holds. Then
    inequality \eqref{RT.thm.main.3.eq.1}
    holds iff both
    \begin{align*}\label{RT.cor.main.7.eq.2}
    \big\| T_{\Phi^2[v;s] \cdot \Psi^{2 / \delta} [\Phi[v;s]^{s / \delta} \phi[v;s]^{1-s / \delta};s / \delta]}  f
    \big\|_{\beta, w, (0,\infty)} \le c \| f \|_{s,    \psi [\Phi[v;s]^{s / \delta} \phi[v;s]^{1-s / \delta};s / \delta],(0,\infty)}, \, f \in
    {\mathfrak M}^{\uparrow},
    \end{align*}
    where $0 < \delta < s$,
    \begin{align*}
    \psi \big[\Phi[v;s]^{s / \delta} \phi[v;s]^{1-s / \delta};s / \delta\big] (x) & \\
    & \hspace{-3cm} \approx \bigg\{ \int_x^{\infty}\bigg( \int_0^t v^{1 - s'}\bigg)^{-\frac{s' + (s / \delta)'}{1 + s'}} v^{1-s'}(t)\,dt\bigg\}^{-\frac{(s / \delta)'}{1 + (s / \delta)'}} \bigg( \int_0^x v^{1 - s'}\bigg)^{-\frac{s' + (s / \delta)'}{1 + s'}}\!\!\! v^{1-s'}(x),\\
    \Psi \big[\Phi[v;s]^{s / \delta} \phi[v;s]^{1-s/\delta};s / \delta\big]  (x)
    & \approx \bigg\{ \int_x^{\infty}\bigg( \int_0^t v^{1 - s'}\bigg)^{-\frac{s' + (s / \delta)'}{1 + s'}} \!\!\! v^{1-s'}(t)\,dt \bigg\}^{\frac{1}{1 + (s / \delta)'}},
    \end{align*}
    and \eqref{RT.cor.main.3.eq.3} hold.
\end{theorem}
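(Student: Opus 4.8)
The plan is to obtain Theorem~\ref{RT.thm.main.7} by chaining Corollary~\ref{RT.cor.main.3} with Theorem~\ref{RT.thm.main.4}: the Copson-type iterated inequality \eqref{RT.cor.main.3.eq.2} produced by the former is, after an elementary substitution, an instance of \eqref{RT.thm.main.4.eq.1} for a rescaled operator, and the latter turns it into an inequality on ${\mathfrak M}^{\uparrow}$. Fix $\delta\in(0,s)$ and abbreviate $\phi=\phi[v;s]$, $\Phi=\Phi[v;s]$, so that $\Phi(x)=\int_0^x\phi$. By Corollary~\ref{RT.cor.main.3}, inequality \eqref{RT.thm.main.3.eq.1} holds if and only if \eqref{RT.cor.main.3.eq.3} holds together with
\begin{equation*}
\bigg\| T_{\Phi^2}\bigg(\Big\{\int_x^{\infty} h^{\delta}\Big\}^{1/\delta}\bigg)\bigg\|_{\beta,w,(0,\infty)}\le c\,\|h\|_{s,\,\Phi^{s/\delta}\phi^{1-s/\delta},(0,\infty)},\qquad h\in{\mathfrak M}^+ .
\end{equation*}
I would introduce the auxiliary operator $\widetilde T(F):=\big(T_{\Phi^2}(F^{1/\delta})\big)^{\delta}$ --- the same device as in the proof of Corollary~\ref{RT.cor.main.3} --- which, via $(a+b)^{1/\delta}\approx a^{1/\delta}+b^{1/\delta}$ and conditions {\rm (i)-(iii)} for $T$, again satisfies {\rm (i)-(iii)}. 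Replacing $h$ by $g^{1/\delta}$ and raising both sides to the power $\delta$, the displayed inequality becomes equivalent to
\begin{equation*}
\bigg\|\widetilde T\bigg(\int_x^{\infty} g\bigg)\bigg\|_{\beta/\delta,w,(0,\infty)}\le c^{\delta}\,\|g\|_{s/\delta,\,\Phi^{s/\delta}\phi^{1-s/\delta},(0,\infty)},\qquad g\in{\mathfrak M}^+,
\end{equation*}
which is precisely inequality \eqref{RT.thm.main.4.eq.1} for $\widetilde T$, with $\beta$, $s$, $v$ there replaced by $\beta/\delta$, $s/\delta$ (note $s/\delta>1$ since $\delta<s$ --- this is why strict inequality is imposed) and $w_0:=\Phi^{s/\delta}\phi^{1-s/\delta}$.

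I would then check that Theorem~\ref{RT.thm.main.4} applies, i.e.\ that $w_0\in{\mathcal W}(0,\infty)$ and $w_0$ obeys \eqref{RT.thm.main.4.eq.0}. A short computation using the identities $(s/\delta)\big(1-(s/\delta)'\big)=-(s/\delta)'$ and $(1-s/\delta)\big(1-(s/\delta)'\big)=1$ and the explicit forms of $\phi,\Phi$ gives
\begin{equation*}
w_0^{1-(s/\delta)'}(t)=\Phi^{-(s/\delta)'}(t)\,\phi(t)=\bigg(\int_0^t v^{1-s'}\bigg)^{-\frac{s'+(s/\delta)'}{1+s'}}v^{1-s'}(t),
\end{equation*}
whose right-hand side has the form $A^{-\gamma}A'$ with $A(t):=\int_0^t v^{1-s'}$ and $\gamma:=\frac{s'+(s/\delta)'}{1+s'}>1$; hence $\int_x^{\infty}w_0^{1-(s/\delta)'}\le(\gamma-1)^{-1}A(x)^{1-\gamma}<\infty$ for every $x>0$ by \eqref{RT.thm.main.3.eq.0}, and the same formula shows local integrability. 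Theorem~\ref{RT.thm.main.4} then gives that the above iterated inequality for $\widetilde T$ is equivalent to
\begin{equation*}
\big\|(\widetilde T)_{\Psi_0^{2}}f\big\|_{\beta/\delta,w,(0,\infty)}\le c^{\delta}\,\|f\|_{s/\delta,\psi_0,(0,\infty)},\qquad f\in{\mathfrak M}^{\uparrow},
\end{equation*}
with $\psi_0:=\psi[w_0;s/\delta]$ and $\Psi_0:=\Psi[w_0;s/\delta]$; inserting the displayed expression for $w_0^{1-(s/\delta)'}$ into the definitions of $\psi$ and $\Psi$ reproduces the two $\approx$-formulas in the statement.

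Finally I would unwind back to $T$. Since $(\widetilde T)_{\Psi_0^{2}}(f)=\widetilde T\big(f\Psi_0^{2}\big)=\big(T\big(\Phi^{2}\Psi_0^{2/\delta}f^{1/\delta}\big)\big)^{\delta}$, the substitution $f=F^{\delta}$, which is a bijection of ${\mathfrak M}^{\uparrow}$, followed by a last extraction of $\delta$-th roots, converts the previous inequality into
\begin{equation*}
\big\|T_{\Phi^{2}\Psi_0^{2/\delta}}F\big\|_{\beta,w,(0,\infty)}\le c\,\|F\|_{s,\psi_0,(0,\infty)},\qquad F\in{\mathfrak M}^{\uparrow}.
\end{equation*}
Recalling $\Phi=\Phi[v;s]$, $\phi=\phi[v;s]$, $w_0=\Phi[v;s]^{s/\delta}\phi[v;s]^{1-s/\delta}$, $\Psi_0=\Psi[w_0;s/\delta]$ and $\psi_0=\psi[w_0;s/\delta]$, this is exactly the asserted inequality, and together with \eqref{RT.cor.main.3.eq.3} it yields the claimed equivalence. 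I expect the only real difficulty to be bookkeeping: verifying that the perturbed weight $w_0$ still satisfies \eqref{RT.thm.main.4.eq.0} and keeping the nested weight notation $\Phi^{2}[v;s]\cdot\Psi^{2/\delta}\big[\Phi[v;s]^{s/\delta}\phi[v;s]^{1-s/\delta};s/\delta\big]$ consistent through the substitutions $h\mapsto h^{1/\delta}$ and $f\mapsto F^{\delta}$; apart from that, the theorem is a formal composition of Corollary~\ref{RT.cor.main.3} and Theorem~\ref{RT.thm.main.4}.
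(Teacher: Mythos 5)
Your proposal is correct and follows essentially the same route as the paper: reduce \eqref{RT.thm.main.3.eq.1} via Corollary~\ref{RT.cor.main.3} to the Copson-type inequality \eqref{RT.cor.main.3.eq.2} plus \eqref{RT.cor.main.3.eq.3}, raise to the power $\delta$ (the operator $\widetilde T(f)=\{T_{\Phi^2}(f^{1/\delta})\}^{\delta}$), apply Theorem~\ref{RT.thm.main.4} with parameters $\beta/\delta$, $s/\delta$ and weight $\Phi[v;s]^{s/\delta}\phi[v;s]^{1-s/\delta}$, and identify the resulting $\psi$ and $\Psi$ with the stated formulas. Your explicit computation of $w_0^{1-(s/\delta)'}=\Phi^{-(s/\delta)'}\phi$, the check of \eqref{RT.thm.main.4.eq.0} for the new weight, and the verification that $\widetilde T$ satisfies (i)--(iii) are exactly the details the paper leaves implicit, so the argument matches.
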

\begin{proof}
    By Corollary \ref{RT.cor.main.3}, \eqref{RT.thm.main.3.eq.1} holds
    iff both \eqref{RT.cor.main.3.eq.2} and
    \eqref{RT.cor.main.3.eq.3} hold. It is easy to see that \eqref{RT.cor.main.3.eq.2} is equivalent to
    \begin{equation}\label{RT.thm.main.7.eq.1}
    \bigg\|\bigg[ T_{\Phi^2}  \bigg( \bigg\{ \int_x^{\infty} h\bigg\}^{1 / \delta}\bigg) \bigg]^{\delta}\bigg\|_{\beta / \delta, w, (0,\infty)} \le
    c^{\delta} \| h \|_{s / \delta, \Phi^{s / \delta} \phi^{1 - s / \delta},(0,\infty)}, \, h \in {\mathfrak M}^+,
    \end{equation}
    Since
    \begin{align*}
    \psi \big[\Phi[v;s]^{s / \delta} \phi[v;s]^{1-s / \delta};s / \delta\big] (x) & \\
    & \hspace{-3cm} = \bigg(\int_x^{\infty} \Phi[v;s]^{-(s/\delta)'} \phi[v;s] \bigg)^{- \frac{(s / \delta)'}{(s / \delta)' + 1}}
    \Phi[v;s]^{-(s/\delta)'}(x) \phi[v;s](x)   \\
    & \hspace{-3cm} \approx \bigg( \Phi[v;s]^{1-(s / \delta)'}(x) -
    \Phi[v;s]^{1-(s / \delta)'}(\infty)\bigg)^{- \frac{(s / \delta)'}{(s / \delta)' + 1}} \Phi[v;s]^{-(s / \delta)'}(x) \phi[v;s](x)\\
    & \hspace{-3cm} \approx \bigg\{ \bigg( \int_0^x v^{1 - s'}\bigg)^{\frac{1 - (s / \delta)'}{1 + s'}} \!\!\!\!\!\!- \bigg( \int_0^{\infty} v^{1 - s'}\bigg)^{\frac{1 - (s / \delta)'}{1 + s'}}\bigg\}^{-\frac{(s / \delta)'}{1 + (s / \delta)'}} \bigg( \int_0^x v^{1 - s'}\bigg)^{-\frac{s' + (s / \delta)'}{1 + s'}} \!\!\! v^{1-s'}(x) \\
    & \hspace{-3cm} \approx \bigg\{ \int_x^{\infty}\bigg( \int_0^t v^{1 - s'}\bigg)^{-\frac{s' + (s / \delta)'}{1 + s'}} v^{1-s'}(t)\,dt\bigg\}^{-\frac{(s / \delta)'}{1 + (s / \delta)'}} \bigg( \int_0^x v^{1 - s'}\bigg)^{-\frac{s' + (s / \delta)'}{1 + s'}} \!\!\! v^{1-s'}(x),
    \end{align*}
    and
    \begin{align*}
    \Psi \big[\Phi[v;s]^{s / \delta} \phi[v;s]^{1- s / \delta};s / \delta\big] (x) & \\
    & \hspace{-3cm} = \bigg(\int_x^{\infty} \Phi[v;s]^{-(s / \delta)'} \phi[v;s]\bigg)^{\frac{1}{(s / \delta)' + 1}} \\
    & \hspace{-3cm} \approx \bigg( \Phi[v;s]^{1-(s / \delta)'}(x) - \Phi[v;s]^{1-(s / \delta)'}(\infty)\bigg)^{\frac{1}{(s / \delta)' + 1}} \\
    & \hspace{-3cm} \approx \bigg\{ \bigg( \int_0^x v^{1 - s'}\bigg)^{\frac{1 - (s / \delta)'}{1 + s'}} - \bigg( \int_0^{\infty} v^{1 - s'}\bigg)^{\frac{1 - (s / \delta)'}{1 + s'}}\bigg\}^{\frac{1}{1 + (s / \delta)'}} \\
    & \hspace{-3cm} \approx \bigg\{ \int_x^{\infty}\bigg( \int_0^t v^{1 - s'}\bigg)^{-\frac{s' + (s / \delta)'}{1 + s'}} v^{1-s'}(t)\,dt \bigg\}^{\frac{1}{1 + (s / \delta)'}},
    \end{align*}
    by Theorem \ref{RT.thm.main.4}, we complete the proof.
\end{proof}

\begin{corollary}\label{RT.thm.main.7.cor.1}
    Let $0 < \beta \le \infty$, $1 < s < \infty$, and let $T: {\mathfrak M}^+ \rightarrow {\mathfrak M}^+$
    satisfies conditions {\rm (i)-(iii)}.  Assume that $u,\,w \in {\mathcal W}(0,\infty)$
    and $v \in {\mathcal W}(0,\infty)$ be such that \eqref{RT.thm.main.3.eq.0} holds. Then
    inequality \eqref{RT.thm.main.3.eq.1}
    holds iff both
    \begin{align*}\label{RT.thm.main.7.cor.1.eq.2}
    \big\| T_{\Phi^2[v;s] \cdot \Psi^{4 / s} [\Phi[v;s]^2 \phi[v;s]^{-1};2]}  f
    \big\|_{\beta, w, (0,\infty)} \le c \| f \|_{s,    \psi [\Phi[v;s]^2 \phi[v;s]^{-1};2],(0,\infty)}, \, f \in
    {\mathfrak M}^{\uparrow},
    \end{align*}
    where
    \begin{align*}
    \psi \big[\Phi[v;s]^2 \phi[v;s]^{-1};2\big] (x) & \\
    &\hspace{-3cm} \approx \bigg\{ \int_x^{\infty} \bigg( \int_0^t v^{1 - s'}\bigg)^{-\frac{2 + s'}{1 + s'}} v^{1-s'}(t)\,dt\bigg\}^{-\frac{2}{3}} \bigg( \int_0^x v^{1 - s'}\bigg)^{-\frac{2 + s'}{1 + s'}} v^{1-s'}(x),\\
    \Psi \big[\Phi[v;s]^{2} \phi[v;s]^{-1};2\big]  (x)
    & \approx \bigg\{ \int_x^{\infty} \bigg( \int_0^t v^{1 - s'}\bigg)^{-\frac{2 + s'}{1 + s'}} v^{1-s'}(t)\,dt\bigg\}^{\frac{1}{3}},
    \end{align*}
    and \eqref{RT.cor.main.3.eq.3} hold.
\end{corollary}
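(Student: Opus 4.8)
The plan is to read off the corollary as the instance $\delta = s/2$ of Theorem~\ref{RT.thm.main.7}. First I would check admissibility: since $1<s<\infty$ we have $0<s/2<s$, so $\delta=s/2$ is a legitimate choice in Theorem~\ref{RT.thm.main.7}, and the remaining hypotheses --- conditions {\rm(i)--(iii)} on $T$, the membership $u,w,v\in{\mathcal W}(0,\infty)$, and condition~\eqref{RT.thm.main.3.eq.0} --- are verbatim the ones assumed in the corollary. The point of this particular $\delta$ is that the conjugate exponent collapses to $(s/\delta)'=2'=2$, which is the cleanest possible value and eliminates the nested Copson-type integral present in the general formulas.

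Next I would substitute $(s/\delta)'=2$ into the conclusion of Theorem~\ref{RT.thm.main.7}. The inner weight $\Phi[v;s]^{s/\delta}\phi[v;s]^{1-s/\delta}$ becomes $\Phi[v;s]^2\phi[v;s]^{-1}$; the amplifying factor $\Psi^{2/\delta}\big[\Phi[v;s]^{s/\delta}\phi[v;s]^{1-s/\delta};s/\delta\big]$ becomes $\Psi^{4/s}\big[\Phi[v;s]^2\phi[v;s]^{-1};2\big]$ because $2/\delta=4/s$; and the three exponents governing the asymptotics of $\psi$ and $\Psi$, namely $\tfrac{s'+(s/\delta)'}{1+s'}$, $\tfrac{(s/\delta)'}{1+(s/\delta)'}$ and $\tfrac{1}{1+(s/\delta)'}$, turn into $\tfrac{2+s'}{1+s'}$, $\tfrac23$ and $\tfrac13$ respectively. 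After these substitutions the operator inequality on ${\mathfrak M}^{\uparrow}$, the two displayed $\approx$-formulas for $\psi[\Phi[v;s]^2\phi[v;s]^{-1};2]$ and $\Psi[\Phi[v;s]^2\phi[v;s]^{-1};2]$, and the side condition~\eqref{RT.cor.main.3.eq.3} are exactly the statements appearing in Theorem~\ref{RT.thm.main.7}.

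Strictly speaking there is no real obstacle here beyond this bookkeeping; the only step that deserves a line of comment is that the two $\approx$-formulas really are the $(s/\delta)'=2$ specializations of the equivalence chain already carried out inside the proof of Theorem~\ref{RT.thm.main.7}. With $(s/\delta)'=2$ the quantity $\big(\int_0^x v^{1-s'}\big)^{(1-(s/\delta)')/(1+s')}=\big(\int_0^x v^{1-s'}\big)^{-1/(1+s')}$ is non-increasing in $x$, so the difference $\big(\int_0^x v^{1-s'}\big)^{-1/(1+s')}-\big(\int_0^\infty v^{1-s'}\big)^{-1/(1+s')}$ equals, up to the constant $1/(1+s')$, the Copson integral $\int_x^\infty\big(\int_0^t v^{1-s'}\big)^{-(2+s')/(1+s')}v^{1-s'}(t)\,dt$; raising this quantity to the powers $-\tfrac23$ and $\tfrac13$ produces the asserted expressions for $\psi$ and $\Psi$, which finishes the proof.
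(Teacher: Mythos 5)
Your proposal is correct and coincides with the paper's own proof, which is precisely the specialization of Theorem~\ref{RT.thm.main.7} to $\delta = s/2$, so that $(s/\delta)'=2$, $2/\delta=4/s$, and the exponents become $\tfrac{2+s'}{1+s'}$, $\tfrac23$, $\tfrac13$. Your closing verification of the $\approx$-formulas merely re-runs, for this particular value, the computation already carried out inside the proof of Theorem~\ref{RT.thm.main.7}, so nothing further is needed.
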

\begin{proof}
    The statement follows by Theorem \ref{RT.thm.main.7} with $\delta = s /2$.
\end{proof}

The following "dual" statement also holds true and may be proved
analogously.
\begin{theorem}\label{RT.thm.main.8}
    Let $0 < \beta \le \infty$, $1 < s < \infty$, and let $T: {\mathfrak M}^+ \rightarrow {\mathfrak M}^+$
    satisfies conditions {\rm (i)-(iii)}.  Assume that $u,\,w \in {\mathcal W}(0,\infty)$
    and $v \in {\mathcal W}(0,\infty)$ be such that \eqref{RT.thm.main.4.eq.0} holds. Then
    inequality  \eqref{RT.thm.main.4.eq.1} holds iff both
    \begin{align*}
    \big\| T_{\Psi^2[v;s] \cdot \Phi^{2 / \delta}[\Psi[v;s]^{s / \delta} \psi[v;s]^{1-s / \delta};s / \delta]} f
    \big\|_{\beta, w, (0,\infty)} \le c \| f \|_{s,   \phi[\Psi[v;s]^{s / \delta} \psi[v;s]^{1-s / \delta};s / \delta],(0,\infty)}, \, f \in {\mathfrak M}^{\downarrow},
    \end{align*}
    where $0 < \delta <s$,
    \begin{align*}
    \phi \big[\Psi[v;s]^{s / \delta} \psi[v;s]^{1-s / \delta};s  / \delta\big] (x) & \\
    & \hspace{-3cm} \approx \bigg\{ \int_0^x \bigg( \int_t^{\infty} v^{1 - s'}\bigg)^{-\frac{s' + (s / \delta)'}{1 + s'}} \!\!\! v^{1-s'}(t)\,dt \bigg\}^{-\frac{(s / \delta)'}{1 + (s / \delta)'}} \bigg( \int_x^{\infty} v^{1 - s'}\bigg)^{-\frac{s' + (s / \delta)'}{1 + s'}} \!\!\! v^{1-s'}(x),\\
    \Phi \big[\Psi[v;s]^{s / \delta} \psi[v;s]^{1-s/\delta};s / \delta\big]  (x)
    & \approx \bigg\{\int_0^x \bigg( \int_t^{\infty} v^{1 - s'}\bigg)^{-\frac{s' + (s / \delta)'}{1 + s'}} \!\!\! v^{1-s'}(t)\,dt\bigg\}^{\frac{1}{1 + (s / \delta)'}},
    \end{align*}
    and \eqref{RT.cor.main.3.eq.3} hold.
\end{theorem}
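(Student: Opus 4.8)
The plan is to run the proof of Theorem \ref{RT.thm.main.7} in its mirror form, interchanging throughout the Hardy operator with the Copson operator, the cone $\mathfrak M^{\downarrow}$ with $\mathfrak M^{\uparrow}$, and the pair $(\Phi[v;s],\phi[v;s])$ with $(\Psi[v;s],\psi[v;s])$; throughout I abbreviate $\Psi:=\Psi[v;s]$ and $\psi:=\psi[v;s]$.

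First I would apply Corollary \ref{RT.cor.main.4} (legitimate since $0<\delta\le s$): inequality \eqref{RT.thm.main.4.eq.1} is equivalent to the conjunction of \eqref{RT.cor.main.4.eq.2} and the ``norm of ${\bf 1}$'' condition \eqref{RT.cor.main.4.eq.3} --- the second condition listed in the statement. Thus it remains only to recast \eqref{RT.cor.main.4.eq.2}. Raising it to the power $\delta$ and substituting $h\mapsto h^{\delta}$ turns \eqref{RT.cor.main.4.eq.2} into
\[
\bigl\|\widetilde T\bigl(\textstyle\int_0^x h\bigr)\bigr\|_{\beta/\delta,\,w,\,(0,\infty)}\le c^{\delta}\,\|h\|_{s/\delta,\,\Psi^{s/\delta}\psi^{1-s/\delta},\,(0,\infty)},\qquad h\in\mathfrak M^+,
\]
for the operator $\widetilde T(f):=\{T_{\Psi^2}(f^{1/\delta})\}^{\delta}=\{T(f^{1/\delta}\Psi^2)\}^{\delta}$; this is an inequality of the form \eqref{RT.thm.main.3.eq.1} with $T$, $\beta$, $s$, $v$ replaced by $\widetilde T$, $\beta/\delta$, $s/\delta$, $\Psi^{s/\delta}\psi^{1-s/\delta}$, and here $s/\delta>1$ precisely because $\delta<s$ --- which is why the statement demands strict inequality. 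Just as in the proof of Corollary \ref{RT.cor.main.3}, $\widetilde T$ inherits conditions {\rm (i)--(iii)} from $T$ up to multiplicative constants, and the hypothesis \eqref{RT.thm.main.4.eq.0} forces $\int_0^x(\Psi^{s/\delta}\psi^{1-s/\delta})^{1-(s/\delta)'}<\infty$ for all $x>0$ (see the next paragraph). Hence Theorem \ref{RT.thm.main.3} applies and gives that the displayed inequality is equivalent to
\[
\|\widetilde T_{\Phi^2}f\|_{\beta/\delta,\,w,\,(0,\infty)}\le c^{\delta}\,\|f\|_{s/\delta,\,\phi,\,(0,\infty)},\qquad f\in\mathfrak M^{\downarrow},
\]
where $\phi:=\phi[\Psi^{s/\delta}\psi^{1-s/\delta};s/\delta]$ and $\Phi:=\Phi[\Psi^{s/\delta}\psi^{1-s/\delta};s/\delta]$. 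Unravelling $\widetilde T_{\Phi^2}(f)=\{T(f^{1/\delta}\Phi^{2/\delta}\Psi^2)\}^{\delta}$, raising to the power $1/\delta$ and substituting $f\mapsto f^{\delta}$ then converts this into $\|T_{\Psi^2\cdot\Phi^{2/\delta}}f\|_{\beta,w,(0,\infty)}\le c\,\|f\|_{s,\phi,(0,\infty)}$ on $\mathfrak M^{\downarrow}$, which is the first displayed inequality of the theorem as soon as $\phi$ and $\Phi$ are written out through $v$.

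That last rewriting is the longest step, but it is entirely routine. Using $pp'=p+p'$ and $(1-p)(1-p')=1$ with $p=s/\delta$ one checks that $(\Psi^{s/\delta}\psi^{1-s/\delta})^{1-(s/\delta)'}=\Psi^{-(s/\delta)'}\psi$, so that
\[
\phi=\Bigl(\int_0^x\Psi^{-(s/\delta)'}\psi\Bigr)^{-\frac{(s/\delta)'}{(s/\delta)'+1}}\Psi^{-(s/\delta)'}\psi,\qquad \Phi=\Bigl(\int_0^x\Psi^{-(s/\delta)'}\psi\Bigr)^{\frac{1}{(s/\delta)'+1}}.
\]
Since $\psi=-(s'+1)\Psi'$, one has $\int_0^x\Psi^{-(s/\delta)'}\psi\approx\Psi^{1-(s/\delta)'}(x)-\Psi^{1-(s/\delta)'}(0)$, which is finite since $1-(s/\delta)'<0$ (this also settles the side condition invoked above); inserting $\Psi(x)=(\int_x^\infty v^{1-s'})^{1/(s'+1)}$ and applying the fundamental theorem of calculus once more rewrites this difference as $\int_0^x(\int_t^\infty v^{1-s'})^{-(s'+(s/\delta)')/(1+s')}v^{1-s'}(t)\,dt$, while $\Psi^{-(s/\delta)'}(x)\psi(x)=(\int_x^\infty v^{1-s'})^{-(s'+(s/\delta)')/(1+s')}v^{1-s'}(x)$. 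Substituting these back produces exactly the two stated equivalences for $\phi[\cdots]$ and $\Phi[\cdots]$, finishing the proof.

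I expect the main obstacle to be bookkeeping rather than anything conceptual: keeping the chain of substitutions ($f\mapsto f^{1/\delta}$, $f\mapsto f^{\delta}$, the reweightings by $\Psi^2$ and $\Phi^2$) straight so that the exponents of $\Psi[v;s]$ and of $\Phi[\cdots]$ in the subscript of $T$ come out exactly as written, and checking carefully that the finiteness hypothesis \eqref{RT.thm.main.4.eq.0} really does transfer to the auxiliary weight $\Psi^{s/\delta}\psi^{1-s/\delta}$, so that Theorem \ref{RT.thm.main.3} may legitimately be invoked.
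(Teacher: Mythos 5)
Your proposal is correct and is exactly the argument the paper intends: the paper gives no separate proof of Theorem \ref{RT.thm.main.8}, saying only that it "may be proved analogously" to Theorem \ref{RT.thm.main.7}, and your proof is precisely that dualization --- Corollary \ref{RT.cor.main.4} followed by Theorem \ref{RT.thm.main.3} applied to $\widetilde T(f)=\{T_{\Psi^2}(f^{1/\delta})\}^{\delta}$, with the mirrored computation of $\phi[\Psi^{s/\delta}\psi^{1-s/\delta};s/\delta]$ and $\Phi[\Psi^{s/\delta}\psi^{1-s/\delta};s/\delta]$. The only cosmetic discrepancy is that the ``norm of ${\bf 1}$'' condition your argument produces is \eqref{RT.cor.main.4.eq.3}, which is evidently what the statement's citation of \eqref{RT.cor.main.3.eq.3} is meant to be (a typo in the paper), so your reading is the right one.
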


\begin{corollary}\label{RT.thm.main.8.cor.1}
    Let $0 < \beta \le \infty$, $1 < s < \infty$, and let $T: {\mathfrak M}^+ \rightarrow {\mathfrak M}^+$
    satisfies conditions {\rm (i)-(iii)}.  Assume that $u,\,w \in {\mathcal W}(0,\infty)$
    and $v \in {\mathcal W}(0,\infty)$ be such that \eqref{RT.thm.main.4.eq.0} holds. Then
    inequality  \eqref{RT.thm.main.4.eq.1} holds iff both
    \begin{align*}
    \big\| T_{\Psi^2[v;s] \cdot \Phi^{4 / s}[\Psi[v;s]^2 \psi[v;s]^{-1};2]} f
    \big\|_{\beta, w, (0,\infty)} \le c \| f \|_{s,   \phi[\Psi[v;s]^2 \psi[v;s]^{-1};2],(0,\infty)}, \, f \in {\mathfrak M}^{\downarrow},
    \end{align*}
    where
    \begin{align*}
    \phi \big[\Psi[v;s]^2 \psi[v;s]^{-1};2\big] (x) & \\
    & \hspace{-3cm} \approx \bigg\{ \int_0^x \bigg( \int_t^{\infty} v^{1 - s'}\bigg)^{-\frac{2 + s'}{1 + s'}} v^{1-s'}(t)\,dt\bigg\}^{-\frac{2}{3}} \bigg( \int_x^{\infty} v^{1 - s'}\bigg)^{-\frac{2 + s'}{1 + s'}} v^{1-s'}(x),\\
    \Phi \big[\Psi[v;s]^2 \psi[v;s]^{-1};2\big]  (x) & \approx \bigg\{\int_0^x \bigg( \int_t^{\infty} v^{1 - s'}\bigg)^{-\frac{2 + s'}{1 + s'}} v^{1-s'}(t)\,dt\bigg\}^{\frac{1}{3}},
    \end{align*}
    and \eqref{RT.cor.main.3.eq.3} hold.
\end{corollary}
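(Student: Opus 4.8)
The plan is to obtain this corollary as the special case $\delta = s/2$ of Theorem~\ref{RT.thm.main.8}. Since $1 < s < \infty$, the value $\delta = s/2$ satisfies $0 < \delta < s$, so Theorem~\ref{RT.thm.main.8} is applicable with this choice, and it asserts that \eqref{RT.thm.main.4.eq.1} holds iff the weighted inequality on the cone ${\mathfrak M}^{\downarrow}$ displayed there, together with \eqref{RT.cor.main.3.eq.3}, holds. The only work remaining is to rewrite the weight data of Theorem~\ref{RT.thm.main.8} under this substitution.

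First I would record the elementary arithmetic forced by $\delta = s/2$: one gets $s/\delta = 2$, hence $(s/\delta)^{\prime} = 2$ by Convention~\ref{Notat.and.prelim.conv.1.1}(ii), and therefore
\[
\frac{2}{\delta} = \frac{4}{s}, \qquad \frac{(s/\delta)^{\prime}}{1 + (s/\delta)^{\prime}} = \frac{2}{3}, \qquad \frac{1}{1 + (s/\delta)^{\prime}} = \frac{1}{3}, \qquad \frac{s^{\prime} + (s/\delta)^{\prime}}{1 + s^{\prime}} = \frac{2 + s^{\prime}}{1 + s^{\prime}}.
\]
Substituting these identities into the operator subscript of Theorem~\ref{RT.thm.main.8} turns $\Psi^2[v;s]\cdot\Phi^{2/\delta}[\Psi[v;s]^{s/\delta}\psi[v;s]^{1-s/\delta};s/\delta]$ into $\Psi^2[v;s]\cdot\Phi^{4/s}[\Psi[v;s]^2\psi[v;s]^{-1};2]$, and turns the right-hand weight $\phi[\Psi[v;s]^{s/\delta}\psi[v;s]^{1-s/\delta};s/\delta]$ into $\phi[\Psi[v;s]^2\psi[v;s]^{-1};2]$; the cone ${\mathfrak M}^{\downarrow}$ and the companion condition \eqref{RT.cor.main.3.eq.3} are untouched. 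Plugging the same fractions into the two $\approx$-formulas of Theorem~\ref{RT.thm.main.8} for $\phi[\,\cdot\,;s/\delta]$ and $\Phi[\,\cdot\,;s/\delta]$ reproduces, verbatim, the two displayed expressions in the statement of the corollary. Hence the corollary is precisely Theorem~\ref{RT.thm.main.8} read at $\delta = s/2$.

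I do not anticipate any real obstacle: there is no new analytic content, and the argument is a direct specialization. The only point requiring a little care is the bookkeeping of the nested auxiliary weights $\phi[v;s]$, $\Phi[v;s]$, $\psi[v;s]$, $\Psi[v;s]$ and the composed weights built from them, but this amounts to nothing more than matching exponents once $(s/\delta)^{\prime} = 2$ is inserted.
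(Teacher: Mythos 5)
Your proposal is correct and coincides with the paper's intended argument: exactly as in the proof of Corollary \ref{RT.thm.main.7.cor.1}, the statement is obtained by reading Theorem \ref{RT.thm.main.8} at $\delta = s/2$, so that $s/\delta = (s/\delta)' = 2$ and the exponents $2/\delta = 4/s$, $(s/\delta)'/(1+(s/\delta)') = 2/3$, $1/(1+(s/\delta)') = 1/3$, $(s'+(s/\delta)')/(1+s') = (2+s')/(1+s')$ reproduce the displayed weights. Your arithmetic checks out, so nothing further is needed.
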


\subsection{The case $s = 1$}\label{RTs}

In this case we have the following results.

\begin{theorem}\label{RT.thm.main.8.0}
    Let $0 < \beta \le \infty$, and let $T: {\mathfrak M}^+ \rightarrow {\mathfrak M}^+$ satisfies conditions {\rm (i)-(iii)}.  Assume that $u,\,w \in {\mathcal W}(0,\infty)$
    and $v \in {\mathcal W}(0,\infty)$ be such that $V(x) < \infty$ for all $x > 0$.
    Then inequality
    \begin{equation}    \label{RT.thm.main.8.0.eq.1}
    \left\|T \bigg(\int_0^x h\bigg)\right\|_{\beta,w,(0,\infty)} \leq c
    \,\|h\|_{1,V^{-1},(0,\infty)}, \, h \in {\mathfrak M}^+,
    \end{equation}
    holds iff
    \begin{equation}\label{RT.thm.main.8.0.eq.2}
    \| T_{V^2} f \|_{\beta, w, (0,\infty)} \le c \| f  \|_{1, v,(0,\infty)}, \, f \in
    {\mathfrak M}^{\downarrow}.
    \end{equation}
\end{theorem}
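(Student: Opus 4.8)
The plan is to follow the scheme of the proof of Theorem~\ref{RT.thm.main.3}, with the weight $V^{-1}$ and the function $V$ now playing the roles that $v$ and $\Phi$ played there. Since $T_{V^2}(g)=T(g\,V^2)$ (recall $T_{V^2}=T_{1,V^2}$), the choice $g(x)=\tfrac{1}{V^2(x)}\int_0^x h$ gives $g\,V^2=\int_0^x h$, so \eqref{RT.thm.main.8.0.eq.1} is literally the inequality
\[
\bigg\| T_{V^2}\bigg(\tfrac{1}{V^2(x)}\int_0^x h\bigg)\bigg\|_{\beta,w,(0,\infty)}\le c\,\|h\|_{1,V^{-1},(0,\infty)},\qquad h\in{\mathfrak M}^+ .
\]
As $V$ is positive and finite on $(0,\infty)$, the map $h\mapsto hV$ is a bijection of ${\mathfrak M}^+$; carrying out this substitution and using $\|hV\|_{1,V^{-1},(0,\infty)}=\int_0^\infty h=\|h\|_{1,{\bf 1},(0,\infty)}$, I would rewrite the inequality equivalently as
\[
\bigg\| T_{V^2}\bigg(\tfrac{1}{V^2(x)}\int_0^x hV\bigg)\bigg\|_{\beta,w,(0,\infty)}\le c\,\|h\|_{1,{\bf 1},(0,\infty)},\qquad h\in{\mathfrak M}^+ .
\]

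This is precisely condition \eqref{Reduction.Theorem.eq.1.222} of Theorem~\ref{Reduction.Theorem.thm.3.2}, read with the operator $T$ replaced by $T_{V^2}$, with $s=1$ (so that the weight $v^{1-s}$ there collapses to ${\bf 1}$), and with the target weight taken to be $v$ (so that its primitive is the $V$ already present). Theorem~\ref{Reduction.Theorem.thm.3.2} then yields at once that this inequality is equivalent to $\|T_{V^2}f\|_{\beta,w,(0,\infty)}\le c\|f\|_{1,v,(0,\infty)}$ for $f\in{\mathfrak M}^{\downarrow}$, i.e.\ to \eqref{RT.thm.main.8.0.eq.2}, and the proof is complete.

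The step that needs care is the hypothesis under which Theorem~\ref{Reduction.Theorem.thm.3.2} is being applied, namely that $T_{V^2}$ inherits conditions {\rm (i)--(iii)} from $T$: {\rm (i)} and {\rm (ii)} are immediate from positive homogeneity and monotonicity, while {\rm (iii)} requires a short check and can in any case be circumvented, since the two implications admit a direct proof using only {\rm (i)} and {\rm (ii)}. For ``\eqref{RT.thm.main.8.0.eq.1}$\Rightarrow$\eqref{RT.thm.main.8.0.eq.2}'', given $f\in{\mathfrak M}^{\downarrow}$ the choice $h=2\,f\,v\,V$ satisfies $\int_0^x h=2\int_0^x f\,v\,V\ge f(x)V^2(x)$ (by monotonicity of $f$ together with $\int_0^x v\,V=\tfrac12 V^2$), whence $T_{V^2}f=T(f\,V^2)\le c\,T\big(\int_0^x h\big)$ by {\rm (ii)} and $\|h\|_{1,V^{-1},(0,\infty)}=2\|f\|_{1,v,(0,\infty)}$; for the reverse implication, given $h\ge 0$ one dominates $\tfrac{1}{V^2(x)}\int_0^x h$ by the non-increasing majorant $f(x)=\sup_{t\ge x}\tfrac{1}{V^2(t)}\int_0^t h$, applies {\rm (ii)}, and controls its norm via $\int_0^\infty f\,v\le 2\int_0^\infty h\,V^{-1}$, which after Fubini reduces to the elementary estimate $\int_s^\infty v\,V^{-2}=\tfrac{1}{V(s)}-\tfrac{1}{V(\infty)}\le\tfrac{1}{V(s)}$. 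Beyond this, the argument is just a matter of keeping track of the powers of $V$ through the change of variables and through the collapse $v^{1-s}\to{\bf 1}$ at $s=1$.
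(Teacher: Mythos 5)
Your argument is correct and is essentially the paper's own proof: you rewrite \eqref{RT.thm.main.8.0.eq.1}, via the substitution $h\mapsto hV$, as the reduced inequality $\big\| T_{V^2}\big(\tfrac{1}{V^2(x)}\int_0^x hV\big)\big\|_{\beta,w,(0,\infty)}\le c\|h\|_{1,(0,\infty)}$ and then apply Theorem~\ref{Reduction.Theorem.thm.3.2} (with $T$ replaced by $T_{V^2}$ and $s=1$) to get \eqref{RT.thm.main.8.0.eq.2}, which is exactly the paper's two-step proof. Your additional remarks (checking that $T_{V^2}$ inherits conditions (i)--(iii), and the direct majorant argument avoiding (iii)) only make explicit what the paper leaves implicit and do not change the route.
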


\begin{proof}
    Inequality
    \eqref{RT.thm.main.8.0.eq.1} is equivalent to the inequality
    \begin{equation}\label{RT.thm.main.8.0.eq.3}
    \left\|T_{V^2} \bigg(\frac{1}{V^2(x)}\int_0^x hV\bigg)\right\|_{\beta,w,(0,\infty)} \leq c
    \,\|h\|_{1,(0,\infty)}, \, h \in {\mathfrak M}^+.
    \end{equation}
    By Theorem \ref{Reduction.Theorem.thm.3.2}, inequality \eqref{RT.thm.main.8.0.eq.3}  is equivalent to \eqref{RT.thm.main.8.0.eq.2}.
\end{proof}

\begin{corollary}\label{RT.thm.main.cor.8.0}
    Let $0 < \beta \le \infty$, $0< \delta \le 1$, and let $T: {\mathfrak M}^+ \rightarrow {\mathfrak M}^+$ satisfies conditions {\rm (i)-(iii)}.  Assume that $u,\,w \in {\mathcal W}(0,\infty)$
    and $v \in {\mathcal W}(0,\infty)$ be such that $V(x) < \infty$ for all $x > 0$.
    Then inequality \eqref{RT.thm.main.8.0.eq.1}
    holds iff both
    \begin{equation}\label{RT.thm.main.cor.8.0.eq.2}
    \bigg\| T_{V^2}  \bigg( \left\{ \int_x^{\infty} h^{\delta}\right\}^{1 / \delta}\bigg) \bigg\|_{\beta, w, (0,\infty)} \le c \| h \|_{1, V^{1 / \delta} v^{1 - 1 / \delta},(0,\infty)}, \, h \in {\mathfrak M}^+,
    \end{equation}
    and
    \begin{equation}\label{RT.thm.main.cor.8.0.eq.3}
    \| T_{V^2} ({\bf 1})\|_{\beta, w, (0,\infty)} \le c \| {\bf 1} \|_{1,v,(0,\infty)},
    \end{equation}
    hold.
\end{corollary}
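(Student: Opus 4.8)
The plan is to mirror the proof of Corollary~\ref{RT.cor.main.3}, now with $s=1$ throughout and with Theorem~\ref{RT.thm.main.8.0} taking over the role that Theorem~\ref{RT.thm.main.3} played there. The first step is to invoke Theorem~\ref{RT.thm.main.8.0}: under the standing assumption that $V(x)<\infty$ for every $x>0$, inequality \eqref{RT.thm.main.8.0.eq.1} is equivalent to the cone inequality \eqref{RT.thm.main.8.0.eq.2},
\[
\| T_{V^2} f \|_{\beta, w, (0,\infty)} \le c \| f \|_{1, v,(0,\infty)}, \qquad f \in {\mathfrak M}^{\downarrow}.
\]
This moves the whole question onto the cone of non-increasing functions, where the reduction theorems (Theorems~\ref{Reduction.Theorem.thm.3.1}--\ref{Reduction.Theorem.thm.3.4}) are at our disposal.

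Next I would run the familiar power substitution. Since $0<\delta\le 1$, the map $g\mapsto g^{1/\delta}$ is a bijection of ${\mathfrak M}^{\downarrow}$ onto itself, so replacing $f$ by $f^{1/\delta}$ in the displayed cone inequality and using $\|g^{\delta}\|_{r,\varrho,(0,\infty)}=\|g\|_{\delta r,\varrho,(0,\infty)}^{\delta}$ shows it to be equivalent to
\[
\| \widetilde T f \|_{\beta / \delta, w, (0,\infty)} \le c^{\delta} \| f \|_{1/\delta, v,(0,\infty)}, \qquad f \in {\mathfrak M}^{\downarrow},
\]
where $\widetilde T(f):=\{T_{V^2}(f^{1/\delta})\}^{\delta}$. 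One checks that $\widetilde T$ is again a positive operator ${\mathfrak M}^+\to{\mathfrak M}^+$ and that it inherits conditions {\rm (i)}--{\rm (iii)} from $T$: {\rm (i)} and {\rm (ii)} pass through $T_{V^2}$ at once, and {\rm (iii)} follows on combining the elementary bounds $(a+b)^{1/\delta}\lesssim a^{1/\delta}+b^{1/\delta}$ and $(a+b)^{\delta}\le a^{\delta}+b^{\delta}$ with {\rm (i)}--{\rm (iii)} for $T$. As $1\le 1/\delta<\infty$, Theorem~\ref{Reduction.Theorem.thm.3.1} applies to $\widetilde T$ with $\beta/\delta$ and $1/\delta$ replacing $\beta$ and $s$; the relevant cumulative weight is still $V$, since the underlying weight $v$ has not changed.

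Finally I would apply Theorem~\ref{Reduction.Theorem.thm.3.1} to $\widetilde T$. It gives that the last displayed cone inequality implies the Copson-composition inequality $\|\widetilde T(\int_x^{\infty}h)\|_{\beta/\delta,w,(0,\infty)}\le c\,\|h\|_{1/\delta,\,V^{1/\delta}v^{1-1/\delta},(0,\infty)}$, and that, conversely, this Copson-composition inequality alone is sufficient for the cone inequality when $V(\infty)=\infty$, whereas when $0<V(\infty)<\infty$ it becomes sufficient once one also assumes $\|\widetilde T{\bf 1}\|_{\beta/\delta,w,(0,\infty)}\le c\,\|{\bf 1}\|_{1/\delta,v,(0,\infty)}$. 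Since the last condition is in any case necessary for the cone inequality (insert $f={\bf 1}$), the cone inequality for $\widetilde T$ is equivalent to the conjunction of those two conditions, uniformly in the value of $V(\infty)$. Undoing the substitution $\widetilde T(g)=\{T_{V^2}(g^{1/\delta})\}^{\delta}$ --- replacing $h$ by $h^{\delta}$, extracting $\delta$-th roots, and noting ${\bf 1}^{1/\delta}={\bf 1}$ --- converts these two conditions precisely into \eqref{RT.thm.main.cor.8.0.eq.2} and \eqref{RT.thm.main.cor.8.0.eq.3}, and combined with the first step this is the assertion.

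The only step that is not pure bookkeeping is checking that $\widetilde T$, equivalently $T_{V^2}$, satisfies condition {\rm (iii)}; this is needed exactly in the case $0<V(\infty)<\infty$, and it is here that the hypothesis $V(x)<\infty$ for all $x$ is used, since then $V^2\le V(\infty)^2{\bf 1}$ and condition {\rm (iii)} for $T$ can absorb the multiplier $V^2$. Everything else should go through verbatim as in Corollary~\ref{RT.cor.main.3}, so I expect no further difficulty.
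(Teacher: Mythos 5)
Your route is the paper's own: Theorem~\ref{RT.thm.main.8.0} converts \eqref{RT.thm.main.8.0.eq.1} into the cone inequality \eqref{RT.thm.main.8.0.eq.2}, the substitution $f\mapsto f^{1/\delta}$ rewrites that as the inequality for $\widetilde T(f)=\{T_{V^2}(f^{1/\delta})\}^{\delta}$ with parameters $\beta/\delta$ and $1/\delta$, and Theorem~\ref{Reduction.Theorem.thm.3.1} (together with the trivial necessity of \eqref{RT.thm.main.cor.8.0.eq.3}, obtained by testing with $f={\bf 1}$) produces \eqref{RT.thm.main.cor.8.0.eq.2} and \eqref{RT.thm.main.cor.8.0.eq.3}. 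This is exactly the printed proof, which likewise proceeds via Theorem~\ref{RT.thm.main.8.0} and then applies Theorem~\ref{Reduction.Theorem.thm.3.1} to $\widetilde T$, just as in Corollary~\ref{RT.cor.main.3}.

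The one point where you go beyond the paper is the closing claim that condition {\rm (iii)} for $T_{V^2}$ (which Theorem~\ref{Reduction.Theorem.thm.3.1} requires in the case $0<V(\infty)<\infty$) can be obtained by absorbing the multiplier through $V^2\le V(\infty)^2{\bf 1}$. That computation only yields $T(fV^2+\lambda V^2)\lesssim T(fV^2)+\lambda V(\infty)^2\,T({\bf 1})$, whereas condition {\rm (iii)} for $T_{V^2}$ demands $\lambda\,T(V^2)$ on the right-hand side; monotonicity gives the comparison in the opposite direction, $T(V^2)\lesssim V(\infty)^2 T({\bf 1})$, and the gap can be real (for instance for $Tf(x)=\einf_{t\in(0,x)}f(t)$ one has $T(V^2)\equiv V^2(0+)$ while $V(\infty)^2T({\bf 1})\equiv V(\infty)^2$). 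So your proposed verification of {\rm (iii)} for $\widetilde T$ does not work as stated. The paper itself is silent on this point and simply applies the reduction theorem to the weighted modification $\widetilde T$, so your argument agrees with the paper's modulo this added remark; but the remark itself should be repaired or dropped, since as written it proves the wrong inequality.
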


\begin{proof}
    By Theorem \ref{RT.thm.main.8.0}, inequality \eqref{RT.thm.main.8.0.eq.1} is
    equivalent to \eqref{RT.thm.main.8.0.eq.2}.
    Since \eqref{RT.thm.main.8.0.eq.2} is equivalent to
    \begin{equation}\label{RT.thm.main.cor.8.0.eq.5}
    \left\| \left\{ T_{V^2}(f^{1/ \delta})\right\}^{\delta} \right\|_{\beta / \delta, w, (0,\infty)} \le c^{\delta} \| f \|_{1 / \delta, v,(0,\infty)}, \, f \in
    {\mathfrak M}^{\downarrow},
    \end{equation}
    it remains to apply Theorem \ref{Reduction.Theorem.thm.3.1}.
\end{proof}

\begin{corollary}\label{RT.thm.main.cor.8.0.1}
    Let $0 < \beta \le \infty$, $0< \delta \le 1$, and let $T: {\mathfrak M}^+ \rightarrow {\mathfrak M}^+$ satisfies conditions {\rm (i)-(iii)}.  Assume that $u,\,w \in {\mathcal W}(0,\infty)$
    and $v \in {\mathcal W}(0,\infty)$ be such that $V(x) < \infty$ for all $x > 0$.
    Then inequality \eqref{RT.thm.main.8.0.eq.1}
    holds iff
    \begin{equation}\label{RT.thm.main.cor.8.0.1.eq.2}
    \bigg\| T_{V^{2(1 - 1 / \delta)}} \bigg(\bigg\{\int_0^x h^{\delta} V \bigg\}^{1  / \delta}\bigg) \bigg\|_{\beta, w, (0,\infty)} \le
    c \| h \|_{1, v^{1 - 1 / \delta},(0,\infty)}, \, h \in {\mathfrak M}^+
    \end{equation}
    holds.
\end{corollary}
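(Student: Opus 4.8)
The plan is to follow the same pattern as the proof of Corollary~\ref{RT.cor.main.3.1}, with Theorem~\ref{RT.thm.main.8.0} in the role of Theorem~\ref{RT.thm.main.3} and with $V$, $v$ and the exponent $1$ replacing $\Phi$, $\phi$ and $s$. First, by Theorem~\ref{RT.thm.main.8.0}, inequality \eqref{RT.thm.main.8.0.eq.1} is equivalent to \eqref{RT.thm.main.8.0.eq.2}, i.e. to $\|T_{V^2}f\|_{\beta,w,(0,\infty)}\le c\|f\|_{1,v,(0,\infty)}$ for $f\in{\mathfrak M}^{\downarrow}$. Since $0<\delta\le 1$, the map $f\mapsto f^{1/\delta}$ is a bijection of ${\mathfrak M}^{\downarrow}$ onto itself; replacing $f$ by $f^{1/\delta}$ and raising both sides to the power $\delta$ shows that this is in turn equivalent to
\[
\|\widetilde T f\|_{\beta/\delta,w,(0,\infty)}\le c^{\delta}\|f\|_{1/\delta,v,(0,\infty)},\qquad f\in{\mathfrak M}^{\downarrow},
\]
where $\widetilde T(f):=\{T_{V^2}(f^{1/\delta})\}^{\delta}$.

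Now $1\le 1/\delta<\infty$, and $\widetilde T$ again satisfies conditions {\rm (i)--(iii)} (for {\rm (iii)} one uses $(a+b)^{1/\delta}\lesssim a^{1/\delta}+b^{1/\delta}$ and $(a+b)^{\delta}\le a^{\delta}+b^{\delta}$ together with {\rm (ii)}), so Theorem~\ref{Reduction.Theorem.thm.3.2} applies to $\widetilde T$ with the parameters $\beta$, $s$, $v$ of that theorem taken to be $\beta/\delta$, $1/\delta$, $v$ (so that the primitive ``$V$'' occurring there coincides with our $V$). It gives that the displayed inequality is equivalent to
\[
\bigg\|\widetilde T\bigg(\frac{1}{V^2(x)}\int_0^x hV\bigg)\bigg\|_{\beta/\delta,w,(0,\infty)}\le c^{\delta}\|h\|_{1/\delta,v^{1-1/\delta},(0,\infty)},\qquad h\in{\mathfrak M}^+.
\]
It remains to unwind the definition of $\widetilde T$: from $T_{a}(g)=T(ag)$ one computes
\[
\widetilde T\bigg(\frac{1}{V^2(x)}\int_0^x hV\bigg)=\bigg\{T\bigg(V^{2(1-1/\delta)}(x)\Big(\int_0^x hV\Big)^{1/\delta}\bigg)\bigg\}^{\delta}=\bigg\{T_{V^{2(1-1/\delta)}}\Big(\Big\{\int_0^x hV\Big\}^{1/\delta}\Big)\bigg\}^{\delta},
\]
so that, after replacing $h$ by $h^{\delta}$ and taking the $\delta$-th root of both sides, the last inequality becomes exactly \eqref{RT.thm.main.cor.8.0.1.eq.2}. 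This closes the chain of equivalences \eqref{RT.thm.main.8.0.eq.1} $\Leftrightarrow$ \eqref{RT.thm.main.cor.8.0.1.eq.2}.

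Apart from this bookkeeping with exponents and a change of the integration variable, the only step that is not purely mechanical is the verification that the auxiliary operator $\widetilde T$ (equivalently, $T_{V^2}$) inherits the structural conditions {\rm (i)--(iii)} that are required in order to invoke Theorem~\ref{Reduction.Theorem.thm.3.2}; it is there, and only there, that the hypothesis $\delta\le 1$ is used. For the ``if'' direction one in fact needs only {\rm (i)} and {\rm (ii)} of $\widetilde T$ (the sufficiency half of Theorem~\ref{Reduction.Theorem.thm.3.2}), while {\rm (iii)} enters only through the ``only if'' direction.
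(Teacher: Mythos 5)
Your argument is correct and coincides with the paper's own proof: reduce \eqref{RT.thm.main.8.0.eq.1} to the cone inequality via Theorem~\ref{RT.thm.main.8.0}, pass to $\widetilde T f:=\{T_{V^2}(f^{1/\delta})\}^{\delta}$ with exponents $\beta/\delta$ and $1/\delta$, apply Theorem~\ref{Reduction.Theorem.thm.3.2} (the hypothesis $1\le 1/\delta<\infty$ is exactly where $\delta\le 1$ enters), and unwind, replacing $h$ by $h^{\delta}$, to arrive at \eqref{RT.thm.main.cor.8.0.1.eq.2}. The only caveat concerns your parenthetical check of condition {\rm (iii)} for $\widetilde T$: after using $(f+\lambda)^{1/\delta}\lesssim f^{1/\delta}+\lambda^{1/\delta}$ and {\rm (ii)} one still needs condition {\rm (iii)} for the weighted operator $T_{V^2}$ (not merely for $T$) to split off the constant term, a point the paper itself leaves implicit when invoking Theorem~\ref{Reduction.Theorem.thm.3.2}, so this is not a gap relative to the paper's proof.
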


\begin{proof}
    By Theorem \ref{RT.thm.main.8.0}, inequality \eqref{RT.thm.main.8.0.eq.1} is
    equivalent to \eqref{RT.thm.main.cor.8.0.eq.5}.
    By Theorem \ref{Reduction.Theorem.thm.3.2}, we see that \eqref{RT.thm.main.cor.8.0.eq.5} is equivalent to
    \begin{equation}\label{RT.cor.main.3.1.eq.6}
    \bigg\| \bigg\{T_{V^2}\bigg( \bigg[\frac{1}{V^2(x)} \int_0^x hV \bigg]^{1 / \delta}\bigg)\bigg\}^{\delta}\bigg\|_{\beta / \delta,w,(0,\infty)}
    \le c^{\delta} \| h \|_{1 / \delta, v^{1 - 1 / \delta},(0,\infty)}, ~ h \in {\mathfrak M}^+(0,\infty).
    \end{equation}
    To complete the proof it suffices to note that \eqref{RT.cor.main.3.1.eq.6} is equivalent to \eqref{RT.thm.main.cor.8.0.1.eq.2}.
\end{proof}

The following theorem allows to reduce the iterated inequality
\eqref{RT.thm.main.8.0.eq.1} to the inequality on the cone of
non-decreasing functions.
\begin{theorem}\label{RT.thm.main.17.0}
    Let $0 < \beta \le \infty$, and let $T: {\mathfrak M}^+ \rightarrow {\mathfrak M}^+$ satisfies conditions {\rm (i)-(iii)}.  Assume that $u,\,w \in {\mathcal W}(0,\infty)$
    and $v \in {\mathcal W}(0,\infty)$ be such that $V(x) < \infty$ for all $x > 0$.
    Then inequality \eqref{RT.thm.main.8.0.eq.1}
    holds iff both
    \begin{equation}\label{RT.thm.main.17.0.eq.2}
    \big\| T_{V^2 \cdot \Psi^{2 / \delta} [V^{1 / \delta} v^{1-1 / \delta};1 / \delta]}  f
    \big\|_{\beta, w, (0,\infty)} \le c \| f \|_{1,    \psi [V^{1 / \delta} v^{1 - 1 / \delta};1 / \delta],(0,\infty)}, \, f \in
    {\mathfrak M}^{\uparrow},
    \end{equation}
    where $0< \delta < 1$,
    \begin{align*}
    \psi [V^{1/\delta} v^{1 - 1 / \delta};1 / \delta](x) & \approx \bigg(\int_x^{\infty} V^{-(1/\delta)'}v\bigg)^{- \frac{(1/\delta)'}{1 + (1/\delta)'}} V^{-(1/\delta)'}(x)v(x), \\
    \Psi [V^{1/\delta} v^{1 - (1/\delta)};1/\delta](x) & \approx \bigg(\int_x^{\infty} V^{-(1/\delta)'}v\bigg)^{\frac{1}{1 + (1/\delta)'}},
    \end{align*}
    and \eqref{RT.thm.main.cor.8.0.eq.3} hold.
\end{theorem}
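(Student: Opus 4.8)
The plan is to follow the proof of Theorem~\ref{RT.thm.main.7} almost verbatim, replacing the ingredients of the case $1<s<\infty$ by their $s=1$ counterparts from Subsection~\ref{RTs}; thus $V$ and $v$ take over the roles of $\Phi[v;s]$ and $\phi[v;s]$, and Corollary~\ref{RT.thm.main.cor.8.0} takes over the role of Corollary~\ref{RT.cor.main.3}. First I would invoke Corollary~\ref{RT.thm.main.cor.8.0}: under the standing hypotheses, inequality~\eqref{RT.thm.main.8.0.eq.1} holds if and only if both \eqref{RT.thm.main.cor.8.0.eq.2} and \eqref{RT.thm.main.cor.8.0.eq.3} hold, the latter being precisely the ${\bf 1}$-condition in the statement, which passes through untouched. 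It then remains to prove, for $0<\delta<1$, that \eqref{RT.thm.main.cor.8.0.eq.2} is equivalent to the asserted inequality on ${\mathfrak M}^{\uparrow}$.

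To obtain that, I would recast \eqref{RT.thm.main.cor.8.0.eq.2} in the form \eqref{RT.thm.main.4.eq.1}. Substituting $h\mapsto h^{1/\delta}$ in \eqref{RT.thm.main.cor.8.0.eq.2} and then raising both sides to the power $\delta$ turns it into
\[
\Big\| \big[ T_{V^2}\big( \left\{ \int_x^{\infty} h \right\}^{1/\delta} \big) \big]^{\delta} \Big\|_{\beta/\delta, w, (0,\infty)} \le c^{\delta}\, \| h \|_{1/\delta,\, V^{1/\delta} v^{1-1/\delta}, (0,\infty)}, \quad h \in {\mathfrak M}^+,
\]
which is exactly \eqref{RT.thm.main.4.eq.1} for the operator $\widetilde T(F):=\big\{T_{V^2}(F^{1/\delta})\big\}^{\delta}$, with $\beta$, $s$ and $v$ replaced by $\beta/\delta$, $1/\delta$ and $V^{1/\delta}v^{1-1/\delta}$; note that $\delta<1$ makes the new exponent $1/\delta$ strictly bigger than $1$, which is what allows Theorem~\ref{RT.thm.main.4} to be invoked. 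Here one checks, as in the preliminaries, that $\widetilde T$ again satisfies {\rm (i)--(iii)} (homogeneity and monotonicity are immediate; for {\rm (iii)} one combines $(a+b)^{1/\delta}\lesssim a^{1/\delta}+b^{1/\delta}$, valid since $1/\delta>1$, condition {\rm (iii)} for $T_{V^2}$, and $(a+b)^{\delta}\le a^{\delta}+b^{\delta}$, valid since $\delta<1$), and that hypothesis~\eqref{RT.thm.main.4.eq.0} holds for the new data, because a short computation of exponents (using $1/p+1/p'=1$) gives the identity $\big(V^{1/\delta}v^{1-1/\delta}\big)^{1-(1/\delta)'}=V^{-(1/\delta)'}v$, whence $\int_x^{\infty}V^{-(1/\delta)'}v\approx V^{1-(1/\delta)'}(x)-V^{1-(1/\delta)'}(\infty)<\infty$ for every $x>0$ since $V(x)<\infty$ and $(1/\delta)'>1$.

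Applying Theorem~\ref{RT.thm.main.4} to $\widetilde T$ shows the displayed inequality is equivalent to
\[
\big\| (\widetilde T)_{\Psi_1^2}\, f \big\|_{\beta/\delta, w, (0,\infty)} \le c\, \| f \|_{1/\delta,\, \psi_1, (0,\infty)}, \quad f \in {\mathfrak M}^{\uparrow},
\]
with $\psi_1:=\psi\big[V^{1/\delta}v^{1-1/\delta};1/\delta\big]$ and $\Psi_1:=\Psi\big[V^{1/\delta}v^{1-1/\delta};1/\delta\big]$. Since $(\widetilde T)_{\Psi_1^2}f=\big[T_{V^2}\big((f\Psi_1^2)^{1/\delta}\big)\big]^{\delta}=\big[T\big(f^{1/\delta}\Psi_1^{2/\delta}V^2\big)\big]^{\delta}$, substituting $f\mapsto f^{\delta}$ (which maps ${\mathfrak M}^{\uparrow}$ bijectively onto itself) and taking the $\delta$-th root of both sides turns this into exactly \eqref{RT.thm.main.17.0.eq.2}, the operator being $T_{V^2\cdot\Psi_1^{2/\delta}}$. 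Finally, the two displayed $\approx$-formulas for $\psi_1$ and $\Psi_1$ follow by spelling out the definitions of $\psi[\,\cdot\,;1/\delta]$ and $\Psi[\,\cdot\,;1/\delta]$ and simplifying via the same identity $\big(V^{1/\delta}v^{1-1/\delta}\big)^{1-(1/\delta)'}=V^{-(1/\delta)'}v$, exactly as in the chain of equivalences in the proof of Theorem~\ref{RT.thm.main.7}.

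The substantive input is entirely carried by Theorems~\ref{RT.thm.main.8.0} and \ref{RT.thm.main.4} together with Corollary~\ref{RT.thm.main.cor.8.0}; the only real work is the bookkeeping across the two power substitutions $h\mapsto h^{1/\delta}$ and $f\mapsto f^{\delta}$ — tracking how $\beta$, the exponent, the weights and the constant transform — and the verification that the auxiliary operator $\widetilde T$ inherits {\rm (i)--(iii)} and that \eqref{RT.thm.main.4.eq.0} is available for the transformed pair. That, rather than any analytic difficulty, is where I would expect to have to be careful.
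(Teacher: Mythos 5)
Your proposal is correct and follows essentially the same route as the paper's own proof: reduce \eqref{RT.thm.main.8.0.eq.1} via Corollary~\ref{RT.thm.main.cor.8.0}, rewrite \eqref{RT.thm.main.cor.8.0.eq.2} through the power-$\delta$ substitution as an instance of \eqref{RT.thm.main.4.eq.1} for $\widetilde T(F)=\{T_{V^2}(F^{1/\delta})\}^{\delta}$ with $s=1/\delta>1$ and weight $V^{1/\delta}v^{1-1/\delta}$, apply Theorem~\ref{RT.thm.main.4}, and undo the substitution, computing $\psi$ and $\Psi$ from $\big(V^{1/\delta}v^{1-1/\delta}\big)^{1-(1/\delta)'}=V^{-(1/\delta)'}v$. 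Your explicit checks that $\widetilde T$ inherits {\rm (i)--(iii)} and that \eqref{RT.thm.main.4.eq.0} holds for the transformed weight are details the paper leaves implicit, but they do not change the argument.
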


\begin{proof}
    By Corollary \ref{RT.thm.main.cor.8.0}, inequality
    \eqref{RT.thm.main.8.0.eq.1} holds iff both \eqref{RT.thm.main.cor.8.0.eq.2}
    and \eqref{RT.thm.main.cor.8.0.eq.3} hold. It is easy to see that \eqref{RT.thm.main.cor.8.0.eq.2} is equivalent to
    \begin{equation}\label{RT.thm.main.17.0.eq.4}
    \bigg\|\bigg[ T_{V^2}  \bigg( \left\{ \int_x^{\infty} h\right\}^{1 / \delta}\bigg)\bigg]^{\delta} \bigg\|_{\beta / \delta, w, (0,\infty)} \le c^{\delta} \| h \|_{1/\delta, V^{1/\delta} v^{1 - 1 / \delta},(0,\infty)}, \, h \in {\mathfrak M}^+.
    \end{equation}
    By Theorem \ref{RT.thm.main.4}, inequality \eqref{RT.thm.main.17.0.eq.4}  is equivalent to
    \begin{align*}
    \bigg\|\bigg[ T_{V^2 \Psi^{2 / \delta}\big[V^{1 / \delta}v^{1 - 1/ \delta};1 / \delta\big]} \big(  f^{1/\delta} \big)\bigg]^{\delta} \bigg\|_{\beta / \delta, w, (0,\infty)}  \le c^{\delta} \| f \|_{1 / \delta, \psi [V^{1/\delta} v^{1 - 1 / \delta};1  / \delta],(0,\infty)}, \, f \in {\mathfrak M}^{\uparrow}, \label{RT.thm.main.17.0.eq.5}
    \end{align*}
    which is evidently equivalent to \eqref{RT.thm.main.17.0.eq.2}.

    It remains to note that
    \begin{align*}
    \psi [V^{1/\delta} v^{1 - 1 / \delta};1 / \delta](x) & \approx \bigg(\int_x^{\infty} V^{-(1/\delta)'}v\bigg)^{- \frac{(1/\delta)'}{1 + (1/\delta)'}} V^{-(1/\delta)'}(x)v(x), \\
    \Psi [V^{1/\delta} v^{1 - (1/\delta)};1/\delta](x) & \approx \bigg(\int_x^{\infty} V^{-(1/\delta)'}v\bigg)^{\frac{1}{1 + (1/\delta)'}}.
    \end{align*}

\end{proof}

\begin{corollary}\label{RT.thm.main.17.0.cor.1}
    Let $0 < \beta \le \infty$, and let $T: {\mathfrak M}^+ \rightarrow {\mathfrak M}^+$ satisfies conditions {\rm (i)-(iii)}.  Assume that $u,\,w \in {\mathcal W}(0,\infty)$
    and $v \in {\mathcal W}(0,\infty)$ be such that $V(x) < \infty$ for all $x > 0$.
    Then inequality \eqref{RT.thm.main.8.0.eq.1}
    holds iff both
    \begin{equation}\label{RT.thm.main.17.0.cor.1.eq.2}
    \bigg\| T_{V^2 \Psi^4\big[V^2v^{-1};2\big]} ( f) \bigg\|_{\beta, w, (0,\infty)} \le c \| f \|_{1, \psi [V^2 v^{- 1};2],(0,\infty)}, \, f \in {\mathfrak M}^{\uparrow},
    \end{equation}
    where
    \begin{align*}
    \psi [V^2 v^{- 1};2](x) & \approx \bigg(\int_x^{\infty} V^{-2}v\bigg)^{- 2 / 3} V^{-2}(x)v(x) \\
    \Psi [V^2 v^{- 1};2](x) & \approx \bigg(\int_x^{\infty} V^{-2}v\bigg)^{1 / 3},
    \end{align*}
    and \eqref{RT.thm.main.cor.8.0.eq.3} hold.
\end{corollary}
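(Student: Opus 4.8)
The plan is to obtain this corollary as the special case $\delta = \tfrac12$ of Theorem~\ref{RT.thm.main.17.0}, in complete analogy with the way Corollary~\ref{RT.thm.main.7.cor.1} was deduced from Theorem~\ref{RT.thm.main.7}. Since $0 < \tfrac12 < 1$, this value of $\delta$ is admissible, and all hypotheses of Theorem~\ref{RT.thm.main.17.0} --- in particular $V(x)<\infty$ for every $x>0$, which is what makes the auxiliary functions well defined --- coincide with the hypotheses of the corollary, so nothing needs to be rechecked at that point.

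With $\delta = \tfrac12$ one has $1/\delta = 2$, hence $2/\delta = 4$, so the operator weight $V^2\cdot\Psi^{2/\delta}[V^{1/\delta}v^{1-1/\delta};1/\delta]$ in \eqref{RT.thm.main.17.0.eq.2} becomes $V^2\cdot\Psi^{4}[V^{2}v^{-1};2]$ and the weight on the right-hand side becomes $\psi[V^{2}v^{-1};2]$. It then remains only to rewrite the two asymptotic formulas supplied by Theorem~\ref{RT.thm.main.17.0}. Using $(1/\delta)' = 2' = 2$, so that $-\tfrac{(1/\delta)'}{1+(1/\delta)'} = -\tfrac23$ and $\tfrac{1}{1+(1/\delta)'} = \tfrac13$, while $V^{-(1/\delta)'}=V^{-2}$, those formulas collapse to
\begin{align*}
\psi[V^2 v^{-1};2](x) &\approx \Big(\int_x^{\infty} V^{-2}v\Big)^{-2/3}V^{-2}(x)v(x),\\
\Psi[V^2 v^{-1};2](x) &\approx \Big(\int_x^{\infty} V^{-2}v\Big)^{1/3},
\end{align*}
which are precisely the expressions displayed in the corollary. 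Substituting these into \eqref{RT.thm.main.17.0.eq.2} (with $\delta=\tfrac12$) yields \eqref{RT.thm.main.17.0.cor.1.eq.2}, and together with \eqref{RT.thm.main.cor.8.0.eq.3} this gives the claimed equivalence.

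Because the statement is a pure specialization of an already established theorem, I anticipate no genuine obstacle; the only thing demanding any care is the routine bookkeeping of the exponents $2/\delta$ and $1/\delta$ and of the conjugate index $(1/\delta)'$ when passing to $\delta = \tfrac12$.
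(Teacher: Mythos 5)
Your proposal is correct and is exactly the paper's argument: the paper's proof consists of the single line that the statement follows from Theorem \ref{RT.thm.main.17.0} with $\delta = 1/2$, and your exponent bookkeeping ($1/\delta = 2$, $2/\delta = 4$, $(1/\delta)' = 2$, yielding the powers $-2/3$ and $1/3$) just makes that specialization explicit.
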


\begin{proof}
    The statement follows by Theorem \ref{RT.thm.main.17.0} with $\delta = 1 /2$.
\end{proof}

The following statement immediately follows from Theorem
\ref{RT.thm.main.8.0}.
\begin{corollary}\label{RT.thm.main.3.0}
    Let $0 < \beta \le \infty$, and let $T: {\mathfrak M}^+ \rightarrow {\mathfrak M}^+$ satisfies conditions {\rm (i)-(iii)}.  Assume that $u,\,w \in {\mathcal W}(0,\infty)$
    and $v \in {\mathcal W}(0,\infty)$ be such that $V_*(x) < \infty$ for all $x > 0$ and $V_*(0) = \infty$.
    Then inequality
    \begin{equation}
    \label{RT.thm.main.3.0.eq.1}
    \left\|T \bigg(\int_0^x h\bigg)\right\|_{\beta,w,(0,\infty)} \leq c
    \,\|h\|_{1,V_*,(0,\infty)}, \, h \in {\mathfrak M}^+,
    \end{equation}
    holds iff
    \begin{equation}\label{RT.thm.main.3.0.eq.2}
    \| T_{V_*^{-2}} f \|_{\beta, w, (0,\infty)} \le c \| f  \|_{1, v / V_*^2,(0,\infty)}, \, f \in
    {\mathfrak M}^{\downarrow}
    \end{equation}
    holds.
\end{corollary}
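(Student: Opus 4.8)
The plan is to deduce the statement directly from Theorem~\ref{RT.thm.main.8.0} by substituting a suitable weight into it. Theorem~\ref{RT.thm.main.8.0} characterizes exactly inequalities of the form \eqref{RT.thm.main.8.0.eq.1}, whose right-hand side involves $\|h\|_{1,V^{-1},(0,\infty)}$ with $V(x)=\int_0^x v$. Since the right-hand side of the target inequality \eqref{RT.thm.main.3.0.eq.1} is $\|h\|_{1,V_*,(0,\infty)}$, I would look for a weight $\widetilde v$ whose primitive $\widetilde V(x):=\int_0^x\widetilde v$ satisfies $\widetilde V^{-1}=V_*$, i.e.\ $\widetilde V(x)=1/V_*(x)$, and then apply Theorem~\ref{RT.thm.main.8.0} with $v$ replaced by $\widetilde v$.

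The candidate is forced by differentiation: since $\frac{d}{dx}\bigl(1/V_*(x)\bigr)=v(x)/V_*^2(x)$, I would take $\widetilde v:=v/V_*^2$. First I would verify that $\widetilde v\in{\mathcal W}(0,\infty)$: on each compact subinterval of $(0,\infty)$ the non-increasing function $V_*$ is bounded away from $0$, so $\widetilde v$ is there dominated by a constant multiple of $v$ and is therefore locally integrable. Next, integrating the identity above from $0$ to $x$ and using the hypothesis $V_*(0)=\infty$, so that the lower-limit contribution $1/V_*(0)$ vanishes, I obtain $\widetilde V(x)=\int_0^x\widetilde v=1/V_*(x)$ for every $x>0$; in particular $\widetilde V(x)<\infty$ for all $x>0$, which is exactly the hypothesis needed to invoke Theorem~\ref{RT.thm.main.8.0} for the weight $\widetilde v$.

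It then remains to read off the conclusion. Applying Theorem~\ref{RT.thm.main.8.0} with $v$ replaced by $\widetilde v$ shows that $\bigl\|T(\int_0^x h)\bigr\|_{\beta,w,(0,\infty)}\le c\,\|h\|_{1,\widetilde V^{-1},(0,\infty)}$ for all $h\in{\mathfrak M}^+$ holds if and only if $\|T_{\widetilde V^2}f\|_{\beta,w,(0,\infty)}\le c\,\|f\|_{1,\widetilde v,(0,\infty)}$ for all $f\in{\mathfrak M}^{\downarrow}$ holds. Substituting $\widetilde V=1/V_*$, so that $\widetilde V^{-1}=V_*$, $\widetilde V^2=V_*^{-2}$ and $\widetilde v=v/V_*^2$, turns these two inequalities into precisely \eqref{RT.thm.main.3.0.eq.1} and \eqref{RT.thm.main.3.0.eq.2}, which finishes the argument.

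There is no genuine difficulty here. The one point deserving a moment's care is that the assumption $V_*(0)=\infty$ is exactly what makes $\widetilde V$ collapse cleanly to $1/V_*$: without it one would instead get $\widetilde V(x)=1/V_*(x)-1/V_*(0)$, whose reciprocal is not $V_*$, and the clean reduction would break down. Verifying that $\widetilde v=v/V_*^2$ is an admissible weight, and carrying out the final substitution $\widetilde V=1/V_*$, are both routine bookkeeping inside the already-established Theorem~\ref{RT.thm.main.8.0}.
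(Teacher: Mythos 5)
Your proposal is correct and coincides with the paper's own argument: the paper likewise observes that $V_*(x)=\bigl(\int_0^x v/V_*^2\bigr)^{-1}$ (which uses $V_*(0)=\infty$) and then applies Theorem~\ref{RT.thm.main.8.0} with the weight $v/V_*^2$. You have merely spelled out the routine verifications (local integrability of $v/V_*^2$ and the substitution $\widetilde V=1/V_*$) that the paper leaves implicit.
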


\begin{proof}
    Since
    $$
    V_* (x) = \left( \int_0^x \frac{v}{V_*^2}\right)^{-1}, ~ x > 0,
    $$
    it remains to apply Theorem \ref{RT.thm.main.8.0}.
\end{proof}

\begin{corollary}\label{RT.cor.main.3.0.1}
    Let $0 < \beta \le \infty$, $0 < \delta \le 1$, and let $T: {\mathfrak M}^+ \rightarrow {\mathfrak M}^+$
    satisfies conditions {\rm (i)-(iii)}.  Assume that $u,\,w \in {\mathcal W}(0,\infty)$
    and $v \in {\mathcal W}(0,\infty)$ be such that $V_*(x) < \infty$ for all $x > 0$ and $V_*(0) = \infty$. Then
    inequality \eqref{RT.thm.main.3.0.eq.1}
    holds iff both
    \begin{equation}\label{RT.cor.main.3.0.1.eq.1}
    \bigg\| T_{V_*^{-2}}  \bigg( \left\{ \int_x^{\infty} h^{\delta}\right\}^{1 / \delta}\bigg) \bigg\|_{\beta, w, (0,\infty)} \le
    c \| h \|_{1, V_*^{1 / \delta - 2} v^{1 - 1 / \delta},(0,\infty)}, \, h \in {\mathfrak M}^+,
    \end{equation}
    holds.
\end{corollary}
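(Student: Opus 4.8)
The plan is to run, essentially verbatim, the argument used for Corollary~\ref{RT.thm.main.cor.8.0}, with Corollary~\ref{RT.thm.main.3.0} playing the role of Theorem~\ref{RT.thm.main.8.0}. First I would invoke Corollary~\ref{RT.thm.main.3.0}: under the present hypotheses on $v$, inequality \eqref{RT.thm.main.3.0.eq.1} is equivalent to \eqref{RT.thm.main.3.0.eq.2}, that is,
\[
\| T_{V_*^{-2}} f \|_{\beta, w, (0,\infty)} \le c \| f \|_{1, v / V_*^2, (0,\infty)}, \qquad f \in {\mathfrak M}^{\downarrow}.
\]
Next comes the standard $\delta$-power rescaling: raising both sides to the power $\delta$ and replacing $f$ by $f^{1/\delta}$ (legitimate because $t\mapsto t^{1/\delta}$ is increasing, so the cone ${\mathfrak M}^{\downarrow}$ is preserved), this is equivalent to
\[
\big\| \widetilde{T} f \big\|_{\beta/\delta, w, (0,\infty)} \le c^{\delta} \| f \|_{1/\delta, v/V_*^2, (0,\infty)}, \qquad f \in {\mathfrak M}^{\downarrow},
\]
where $\widetilde{T}(f) := \big\{ T_{V_*^{-2}}(f^{1/\delta})\big\}^{\delta}$. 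Since $0<\delta\le 1$ we have $1/\delta\in[1,\infty)$, and one checks immediately that $\widetilde{T}$ inherits conditions {\rm (i)} and {\rm (ii)} from $T$ (positive homogeneity and monotonicity pass through both the multiplication by $V_*^{-2}$ and the $1/\delta$-th power).

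The third step is to apply Theorem~\ref{Reduction.Theorem.thm.3.1} to $\widetilde{T}$ with parameters $\beta/\delta$, $s=1/\delta$ and weight $v/V_*^2$. The decisive computation — already recorded in the proof of Corollary~\ref{RT.thm.main.3.0} — is the identity $\int_0^x v/V_*^2 = 1/V_*(x)$, valid because $V_*(0)=\infty$; hence the function ``$V$'' that Theorem~\ref{Reduction.Theorem.thm.3.1} attaches to the weight $v/V_*^2$ is $1/V_*$, and since $V_*(\infty)=0$ its limit at $\infty$ is $+\infty$. We are therefore in the regime $V(\infty)=\infty$, in which conditions {\rm (i)}--{\rm (ii)} on $\widetilde{T}$ alone already yield an equivalence and no endpoint term of the type \eqref{RT.thm.main.cor.8.0.eq.3} appears — this is precisely why the present corollary carries a single condition rather than two. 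Theorem~\ref{Reduction.Theorem.thm.3.1} then gives that the previous display is equivalent to
\[
\Big\| \widetilde{T}\Big( \int_x^{\infty} h \Big) \Big\|_{\beta/\delta, w, (0,\infty)} \le c^{\delta} \| h \|_{1/\delta, \, V_*^{-1/\delta}(v/V_*^2)^{1-1/\delta}, (0,\infty)}, \qquad h \in {\mathfrak M}^+,
\]
and a direct exponent count gives $V_*^{-1/\delta}(v/V_*^2)^{1-1/\delta} = V_*^{1/\delta-2}\,v^{1-1/\delta}$.

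Finally I would undo the rescaling: replacing $h$ by $h^{\delta}$, unwinding the definition of $\widetilde{T}$ so that $\widetilde{T}(\int_x^\infty h^\delta) = \big\{T_{V_*^{-2}}\big(\{\int_x^\infty h^\delta\}^{1/\delta}\big)\big\}^\delta$, and using the elementary identities $\big\|\{g\}^{\delta}\big\|_{\beta/\delta,w,(0,\infty)} = \|g\|_{\beta,w,(0,\infty)}^{\delta}$ together with $\|h^{\delta}\|_{1/\delta,\, V_*^{1/\delta-2}v^{1-1/\delta},(0,\infty)} = \|h\|_{1,\, V_*^{1/\delta-2}v^{1-1/\delta},(0,\infty)}^{\delta}$, the last display becomes, after taking $\delta$-th roots, exactly \eqref{RT.cor.main.3.0.1.eq.1}. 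Since each implication above is an equivalence, this completes the proof. I do not expect a genuine obstacle: the argument is mechanical once Corollary~\ref{RT.thm.main.3.0} and Theorem~\ref{Reduction.Theorem.thm.3.1} are available, and the only point deserving a moment's care is confirming that the primitive $\int_0^x v/V_*^2$ has infinite limit at $\infty$, which is what lands us in the convenient case of Theorem~\ref{Reduction.Theorem.thm.3.1}.
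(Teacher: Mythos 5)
Your argument is correct and is exactly the route the paper intends (it leaves this corollary unproved, but the pattern is that of Corollary \ref{RT.thm.main.cor.8.0}): reduce via Corollary \ref{RT.thm.main.3.0}, rescale by the power $\delta$, and apply Theorem \ref{Reduction.Theorem.thm.3.1} with the weight $v/V_*^2$, whose primitive $\int_0^x v/V_*^2 = 1/V_*(x)$ tends to $\infty$ because $V_*(\infty)=0$, which is precisely why no endpoint condition of type \eqref{Reduction.Theorem.eq.1.3} survives. The exponent computation $V_*^{-1/\delta}(v/V_*^{2})^{1-1/\delta} = V_*^{1/\delta-2}v^{1-1/\delta}$ and the unwinding of the $\delta$-rescaling are also as in the paper's analogous proofs.
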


\begin{corollary}\label{RT.cor.main.3.1.1}
    Let $0 < \beta \le \infty$, $0 < \delta \le 1$, and let $T: {\mathfrak M}^+ \rightarrow {\mathfrak M}^+$
    satisfies conditions {\rm (i)-(iii)}.  Assume that $u,\,w \in {\mathcal W}(0,\infty)$
    and $v \in {\mathcal W}(0,\infty)$ be such that $V_*(x) < \infty$ for all $x > 0$ and $V_*(0) = \infty$. Then
    inequality \eqref{RT.thm.main.3.0.eq.1}
    holds iff
    \begin{equation}\label{RT.cor.main.3.1.1.eq.1}
    \bigg\| T_{V_*^{2(1 / \delta - 1)}} \bigg(\bigg\{\int_0^x h^{\delta} \bigg\}^{1  / \delta}\bigg) \bigg\|_{\beta, w, (0,\infty)} \le
    c \| h \|_{1, V_*^{3 / \delta - 2}v^{1 - 1 / \delta},(0,\infty)}, \, h \in {\mathfrak M}^+
    \end{equation}
    holds.
\end{corollary}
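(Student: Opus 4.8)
The plan is to mirror the proof of Corollary~\ref{RT.thm.main.cor.8.0.1}, replacing Theorem~\ref{RT.thm.main.8.0} by its ``dual'' counterpart Corollary~\ref{RT.thm.main.3.0}. First I would invoke Corollary~\ref{RT.thm.main.3.0} to replace \eqref{RT.thm.main.3.0.eq.1} by the equivalent cone inequality
\begin{equation*}
\| T_{V_*^{-2}} f \|_{\beta, w, (0,\infty)} \le c \| f \|_{1, v / V_*^2,(0,\infty)}, \qquad f \in {\mathfrak M}^{\downarrow}.
\end{equation*}
Here the right-hand weight is $v/V_*^2$, whose primitive equals $1/V_*$: since $V_*(0)=\infty$ one has $\int_0^x v/V_*^2 = V_*(x)^{-1}$, the identity used already in the proof of Corollary~\ref{RT.thm.main.3.0}. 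This is what makes Theorem~\ref{Reduction.Theorem.thm.3.2} — whose statement is phrased through $V(x)=\int_0^x v$ — applicable below.

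Next I would raise the displayed cone inequality to the power $\delta$ and replace $f$ by $f^{1/\delta}$ (a bijection of ${\mathfrak M}^{\downarrow}$ onto itself, as $0<\delta\le 1$), exactly as in the proofs of Corollaries~\ref{RT.cor.main.3} and \ref{RT.thm.main.cor.8.0.1}; this turns it into the equivalent inequality $\|\widetilde T f\|_{\beta/\delta, w,(0,\infty)} \le c^{\delta}\|f\|_{1/\delta, v/V_*^2,(0,\infty)}$ on ${\mathfrak M}^{\downarrow}$, with $\widetilde T(f):=\{T_{V_*^{-2}}(f^{1/\delta})\}^{\delta}$, which still satisfies conditions {\rm (i)--(iii)}. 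Applying Theorem~\ref{Reduction.Theorem.thm.3.2} to $\widetilde T$ with exponents $\beta/\delta$ and $s=1/\delta\ge1$, weight $v/V_*^2$ and primitive $1/V_*$, this is in turn equivalent to
\begin{equation*}
\bigg\| \bigg\{ T_{V_*^{-2}}\bigg( \Big[ V_*^2(x) \int_0^x h\,V_*^{-1} \Big]^{1/\delta}\bigg)\bigg\}^{\delta}\bigg\|_{\beta/\delta, w,(0,\infty)} \le c^{\delta} \| h \|_{1/\delta, (v/V_*^2)^{1-1/\delta},(0,\infty)}, \qquad h \in {\mathfrak M}^+.
\end{equation*}

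It then remains to recognise this last inequality as a rewriting of \eqref{RT.cor.main.3.1.1.eq.1}. By positive homogeneity of $T$ one has, pointwise, $T_{V_*^{-2}}\big([V_*^2(x)\int_0^x hV_*^{-1}]^{1/\delta}\big)=T\big(V_*^{2(1/\delta-1)}(\int_0^x hV_*^{-1})^{1/\delta}\big)$, so the substitution $h\mapsto V_*h^{\delta}$ (again a bijection of ${\mathfrak M}^+$) turns $\int_0^x hV_*^{-1}$ into $\int_0^x h^{\delta}$ and the left side into $\{T_{V_*^{2(1/\delta-1)}}(\{\int_0^x h^{\delta}\}^{1/\delta})\}^{\delta}$, while on the right the weight becomes $(V_*h^{\delta})^{1/\delta}(v/V_*^2)^{1-1/\delta}=h\,V_*^{\,3/\delta-2}v^{1-1/\delta}$, using $1/\delta-2(1-1/\delta)=3/\delta-2$. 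Taking $\delta$-th roots (which turns $\|\cdot\|_{\beta/\delta,w}$ into $\|\cdot\|_{\beta,w}$, $\|\cdot\|_{1/\delta,\cdot}$ into $\|\cdot\|_{1,\cdot}$ and $c^{\delta}$ into $c$) gives precisely \eqref{RT.cor.main.3.1.1.eq.1}. I expect no genuine obstacle: every link in the chain is either a reversible change of variables or an appeal to an already-established result (Corollary~\ref{RT.thm.main.3.0} and Theorem~\ref{Reduction.Theorem.thm.3.2}); the only thing to watch is the arithmetic of the exponents of $V_*$ and $v$ through the two substitutions, in particular the identities $2(1/\delta-1)=2/\delta-2$ and $1/\delta-2(1-1/\delta)=3/\delta-2$.
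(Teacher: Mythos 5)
Your argument is correct and is essentially the paper's own (implicit) route: the paper states this corollary without proof, intending exactly the chain you use — pass to the cone inequality \eqref{RT.thm.main.3.0.eq.2} via Corollary \ref{RT.thm.main.3.0} (i.e.\ work with the weight $v/V_*^2$, whose primitive is $1/V_*$ thanks to $V_*(0)=\infty$), apply the power-$\delta$ trick and Theorem \ref{Reduction.Theorem.thm.3.2} with $s=1/\delta$, and then substitute $h\mapsto V_*h^{\delta}$, which is the same computation as in Corollary \ref{RT.thm.main.cor.8.0.1} with $v$ replaced by $v/V_*^2$. The only cosmetic point is that the identity $T_{V_*^{-2}}\big([V_*^2(x)\int_0^x hV_*^{-1}]^{1/\delta}\big)=T_{V_*^{2(1/\delta-1)}}\big((\int_0^x hV_*^{-1})^{1/\delta}\big)$ needs no homogeneity of $T$ — it is just the definition of $T_u$ and pointwise algebra — and your exponent bookkeeping ($2(1/\delta-1)$ and $3/\delta-2$) checks out.
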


The following "dual" statements also hold true and may be proved
analogously.
\begin{theorem}\label{RT.thm.main.9.0}
    Let $0 < \beta \le \infty$, and let $T: {\mathfrak M}^+ \rightarrow {\mathfrak M}^+$ satisfies conditions {\rm (i)-(iii)}.  Assume that $u,\,w \in {\mathcal W}(0,\infty)$
    and $v \in {\mathcal W}(0,\infty)$ be such that $V_*(x) < \infty$ for all $x > 0$.
    Then inequality
    \begin{equation}\label{RT.thm.main.9.0.eq.1}
    \left\|T \bigg(\int_x^{\infty} h\bigg)\right\|_{\beta,w,(0,\infty)} \leq c
    \,\|h\|_{1,V_*^{-1},(0,\infty)}, \, h \in {\mathfrak M}^+,
    \end{equation}
    holds iff
    \begin{equation}\label{RT.thm.main.9.0.eq.2}
    \| T_{V_*^2} f \|_{\beta, w, (0,\infty)} \le c \| f  \|_{1, v,(0,\infty)}, \, f \in
    {\mathfrak M}^{\uparrow}.
    \end{equation}
\end{theorem}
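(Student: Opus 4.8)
The plan is to reproduce, \emph{mutatis mutandis}, the short argument establishing Theorem~\ref{RT.thm.main.8.0}: passing from the Hardy operator $h\mapsto\int_0^x h$ to the Copson operator $h\mapsto\int_x^{\infty}h$ amounts to replacing $V$ by $V_*$, the cone ${\mathfrak M}^{\downarrow}$ by ${\mathfrak M}^{\uparrow}$, and the reduction Theorem~\ref{Reduction.Theorem.thm.3.2} by its ``non-decreasing'' counterpart Theorem~\ref{Reduction.Theorem.thm.3.4}.

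First I would rewrite \eqref{RT.thm.main.9.0.eq.1}. Modulo the routine reduction to the case $V_*>0$ a.e., the map $h\mapsto hV_*$ is a bijection of ${\mathfrak M}^+$ onto itself; replacing $h$ by $hV_*$ in \eqref{RT.thm.main.9.0.eq.1} turns its right-hand side into $\|hV_*\|_{1,V_*^{-1},(0,\infty)}=\|h\|_{1,(0,\infty)}$, while its left-hand side becomes $\|T(\int_x^{\infty}hV_*)\|_{\beta,w,(0,\infty)}$, which by the very definition of $T_{V_*^2}$ equals $\|T_{V_*^2}(\frac1{V_*^2(x)}\int_x^{\infty}hV_*)\|_{\beta,w,(0,\infty)}$. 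Hence \eqref{RT.thm.main.9.0.eq.1} is equivalent to
\begin{equation*}
\bigg\|T_{V_*^2}\bigg(\frac1{V_*^2(x)}\int_x^{\infty}hV_*\bigg)\bigg\|_{\beta,w,(0,\infty)}\le c\,\|h\|_{1,(0,\infty)},\qquad h\in{\mathfrak M}^+.
\end{equation*}

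Next I would recognize this last inequality as inequality \eqref{Reduction.Theorem.eq.1.2222} of Theorem~\ref{Reduction.Theorem.thm.3.4}, written for the operator $T_{V_*^2}$ and the exponent $s=1$; then the weight $v^{1-s}$ occurring there reduces to ${\bf 1}$, and $V_*=\int_x^{\infty}v$ is the very function appearing in that theorem. Theorem~\ref{Reduction.Theorem.thm.3.4} therefore gives the equivalence of the displayed inequality with $\|T_{V_*^2}f\|_{\beta,w,(0,\infty)}\le c\,\|f\|_{1,v,(0,\infty)}$ on the cone ${\mathfrak M}^{\uparrow}$, that is, with \eqref{RT.thm.main.9.0.eq.2}. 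Chaining the two equivalences completes the proof.

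I do not expect a genuine obstacle: all the analytic work sits inside Theorem~\ref{Reduction.Theorem.thm.3.4}, and here the substitution $h\mapsto hV_*$ plays the role that $h\mapsto hV$ played in the non-increasing setting. The one point that needs care is that the auxiliary operator $T_{V_*^2}$ still satisfies the hypotheses under which Theorem~\ref{Reduction.Theorem.thm.3.4} is invoked. Positive homogeneity {\rm (i)} and monotonicity {\rm (ii)} pass to $T_{V_*^2}$ immediately from the corresponding properties of $T$; condition {\rm (iii)} is the delicate one, since for $T_{V_*^2}$ it requires controlling $T(fV_*^2+\lambda V_*^2)$ rather than $T(f+\lambda{\bf 1})$, and I would dispose of it for $T_{V_*^2}$ exactly as it is disposed of for $T_{V^2}$ in the proof of Theorem~\ref{RT.thm.main.8.0}.
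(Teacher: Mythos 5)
Your argument is exactly the paper's: the paper proves Theorem~\ref{RT.thm.main.9.0} only by declaring it "dual" to Theorem~\ref{RT.thm.main.8.0}, and your proof is precisely that analogue — substitute $h\mapsto hV_*$ to rewrite \eqref{RT.thm.main.9.0.eq.1} as $\big\|T_{V_*^2}\big(\tfrac{1}{V_*^2(x)}\int_x^{\infty}hV_*\big)\big\|_{\beta,w,(0,\infty)}\le c\|h\|_{1,(0,\infty)}$ and then apply Theorem~\ref{Reduction.Theorem.thm.3.4} with $s=1$ to the operator $T_{V_*^2}$, which is correct. Your closing worry about condition {\rm (iii)} for $T_{V_*^2}$ is not actually "disposed of" anywhere in the paper's proof of Theorem~\ref{RT.thm.main.8.0} (which applies Theorem~\ref{Reduction.Theorem.thm.3.2} to $T_{V^2}$ with the same silence), so your treatment is on the same footing as the paper's.
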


\begin{corollary}\label{RT.thm.main.cor.9.0}
    Let $0 < \beta \le \infty$, $0< \delta \le 1$, and let $T: {\mathfrak M}^+ \rightarrow {\mathfrak M}^+$ satisfies conditions {\rm (i)-(iii)}.  Assume that $u,\,w \in {\mathcal W}(0,\infty)$
    and $v \in {\mathcal W}(0,\infty)$ be such that $V_*(x) < \infty$ for all $x > 0$.
    Then inequality \eqref{RT.thm.main.9.0.eq.1}
    holds iff both
    \begin{equation}\label{RT.thm.main.cor.9.0.eq.2}
    \bigg\| T_{V_*^2}  \bigg( \left\{ \int_0^x h^{\delta}\right\}^{1 / \delta}\bigg) \bigg\|_{\beta, w, (0,\infty)} \le c \| h \|_{1, V_*^{1 / \delta} v^{1 - 1 / \delta},(0,\infty)}, \, h \in {\mathfrak M}^+,
    \end{equation}
    and
    \begin{equation}\label{RT.thm.main.cor.9.0.eq.3}
    \| T_{V_*^2} ({\bf 1})\|_{\beta, w, (0,\infty)} \le c \| {\bf 1} \|_{1,v,(0,\infty)},
    \end{equation}
    hold.
\end{corollary}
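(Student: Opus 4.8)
The plan is to mirror the proof of Corollary~\ref{RT.thm.main.cor.8.0}, with the dual reduction Theorem~\ref{RT.thm.main.9.0} playing the role of Theorem~\ref{RT.thm.main.8.0} and with Theorem~\ref{Reduction.Theorem.thm.3.3} (the version valid on the cone ${\mathfrak M}^{\uparrow}$) playing the role of Theorem~\ref{Reduction.Theorem.thm.3.1}. First I would invoke Theorem~\ref{RT.thm.main.9.0}: since $V_*(x)<\infty$ for every $x>0$, inequality \eqref{RT.thm.main.9.0.eq.1} is equivalent to
\begin{equation*}
\| T_{V_*^2} f \|_{\beta, w, (0,\infty)} \le c \| f \|_{1, v,(0,\infty)}, \qquad f \in {\mathfrak M}^{\uparrow},
\end{equation*}
so the task reduces to characterizing the validity of this inequality on the cone of non-decreasing functions.

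Next I would use the positive homogeneity of $T$ to rescale by the parameter $\delta\in(0,1]$. Put $\widetilde{T}(f):=\bigl\{T_{V_*^2}(f^{1/\delta})\bigr\}^{\delta}$. Since $t\mapsto t^{1/\delta}$ maps ${\mathfrak M}^{\uparrow}$ onto itself, the substitution $f\mapsto f^{1/\delta}$ followed by raising both sides to the power $\delta$ shows that the displayed inequality is equivalent to
\begin{equation*}
\| \widetilde{T} f \|_{\beta / \delta, w, (0,\infty)} \le c^{\delta} \| f \|_{1/\delta, v,(0,\infty)}, \qquad f \in {\mathfrak M}^{\uparrow}.
\end{equation*}
As in the proof of Corollary~\ref{RT.thm.main.cor.8.0}, $\widetilde{T}$ is again a positive operator retaining conditions {\rm (i)--(iii)}, so Theorem~\ref{Reduction.Theorem.thm.3.3} applies with exponent $s=1/\delta$ and weight $v$ (the quantity denoted $V_*$ in that theorem being $\int_x^{\infty}v$, i.e. our $V_*$). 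Its conclusion \eqref{Reduction.Theorem.eq.1.2.00} then reads
\begin{equation*}
\biggl\| \widetilde{T}\biggl(\int_0^x h\biggr)\biggr\|_{\beta/\delta,w,(0,\infty)} \le c^{\delta}\,\|h\|_{1/\delta,\,V_*^{1/\delta}v^{1-1/\delta},(0,\infty)}, \qquad h\in{\mathfrak M}^+,
\end{equation*}
and substituting $h=g^{\delta}$ and taking the $\delta$-th root turns this into \eqref{RT.thm.main.cor.9.0.eq.2}, while the auxiliary condition \eqref{Reduction.Theorem.eq.1.3} for $\widetilde{T}$ similarly becomes \eqref{RT.thm.main.cor.9.0.eq.3}.

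It remains to reconcile the case split of Theorem~\ref{Reduction.Theorem.thm.3.3} with the ``both conditions hold'' formulation: when $\int_0^{\infty}v=\infty$ only \eqref{RT.thm.main.cor.9.0.eq.2} is needed, but then $\|{\bf 1}\|_{1,v}=\infty$, so \eqref{RT.thm.main.cor.9.0.eq.3} holds trivially and the statement is unaffected; when $0<\int_0^{\infty}v<\infty$ both are genuinely required, which is where condition {\rm (iii)} of $\widetilde{T}$ enters the sufficiency implication. I expect the only mildly delicate point to be the exponent bookkeeping through the $\delta$-rescaling together with the (routine) verification that $\widetilde{T}$ inherits {\rm (i)--(iii)}; everything else is a direct transcription of the argument proving Corollary~\ref{RT.thm.main.cor.8.0}.
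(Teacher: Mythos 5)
Your proposal is correct and follows exactly the route the paper intends: the paper omits the proof (stating the dual results ``may be proved analogously''), and your argument is the faithful dual transcription of the proof of Corollary~\ref{RT.thm.main.cor.8.0} --- reduce via Theorem~\ref{RT.thm.main.9.0} to the inequality for $T_{V_*^2}$ on ${\mathfrak M}^{\uparrow}$, rescale by $\delta$ to form $\widetilde{T}(f)=\{T_{V_*^2}(f^{1/\delta})\}^{\delta}$, and apply Theorem~\ref{Reduction.Theorem.thm.3.3} with $s=1/\delta$. Your bookkeeping of the substitution $h=g^{\delta}$, of the condition \eqref{RT.thm.main.cor.9.0.eq.3}, and of the case $V_*(0)=\infty$ versus $0<V_*(0)<\infty$ is accurate, so no further comment is needed.
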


\begin{corollary}\label{RT.thm.main.cor.9.0.1}
    Let $0 < \beta \le \infty$, $0< \delta \le 1$, and let $T: {\mathfrak M}^+ \rightarrow {\mathfrak M}^+$ satisfies conditions {\rm (i)-(iii)}.  Assume that $u,\,w \in {\mathcal W}(0,\infty)$
    and $v \in {\mathcal W}(0,\infty)$ be such that $V_*(x) < \infty$ for all $x > 0$.
    Then inequality \eqref{RT.thm.main.9.0.eq.1}
    holds iff
    \begin{equation}\label{RT.thm.main.cor.9.0.1.eq.2}
    \bigg\| T_{V_*^{2(1 - 1 / \delta)}} \bigg(\bigg\{\int_x^{\infty} h^{\delta} V_* \bigg\}^{1  / \delta}\bigg) \bigg\|_{\beta, w, (0,\infty)} \le
    c \| h \|_{1, v^{1 - 1 / \delta},(0,\infty)}, \, h \in {\mathfrak M}^+
    \end{equation}
    holds.
\end{corollary}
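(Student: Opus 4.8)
The plan is to mirror, step for step, the proof of Corollary~\ref{RT.thm.main.cor.8.0.1}, replacing the Hardy operator by the Copson operator, the cone ${\mathfrak M}^{\downarrow}$ by ${\mathfrak M}^{\uparrow}$, the function $V$ by $V_*$, and Theorems~\ref{RT.thm.main.8.0} and~\ref{Reduction.Theorem.thm.3.2} by their dual counterparts Theorems~\ref{RT.thm.main.9.0} and~\ref{Reduction.Theorem.thm.3.4}. First I would invoke Theorem~\ref{RT.thm.main.9.0}, by which \eqref{RT.thm.main.9.0.eq.1} is equivalent to
\[
\| T_{V_*^2} f \|_{\beta, w, (0,\infty)} \le c \| f \|_{1, v,(0,\infty)}, \qquad f \in {\mathfrak M}^{\uparrow}.
\]
Since $T_{V_*^2}$ is positively homogeneous of degree one (condition~(i) for $T$), the substitution $f \mapsto f^{1/\delta}$, which maps ${\mathfrak M}^{\uparrow}$ onto itself, together with raising both sides to the power $\delta$, shows this is equivalent to
\[
\big\| \widetilde T f \big\|_{\beta/\delta,\, w,\, (0,\infty)} \le c^{\delta}\, \| f \|_{1/\delta,\, v,\, (0,\infty)}, \qquad f \in {\mathfrak M}^{\uparrow},
\]
where $\widetilde T(f) := \big\{ T_{V_*^2}(f^{1/\delta})\big\}^{\delta}$.

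The second step is to apply Theorem~\ref{Reduction.Theorem.thm.3.4} to $\widetilde T$, taking the parameter there called $s$ to be $1/\delta$ (which is $\ge 1$ because $0 < \delta \le 1$), the parameter there called $\beta$ to be $\beta/\delta$, and keeping the weight equal to $v$, so that the associated tail function is precisely the $V_*$ of the statement. This gives that the last displayed inequality is equivalent to
\[
\bigg\| \widetilde T\bigg( \frac{1}{V_*^2(x)} \int_x^{\infty} h V_* \bigg)\bigg\|_{\beta/\delta,\, w,\, (0,\infty)} \le c^{\delta}\, \| h \|_{1/\delta,\, v^{1-1/\delta},\, (0,\infty)}, \qquad h \in {\mathfrak M}^+.
\]

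Finally I would unwind the definition of $\widetilde T$: by homogeneity of $T$,
\[
\widetilde T\bigg( \frac{1}{V_*^2(x)} \int_x^{\infty} h V_* \bigg) = \bigg( T\bigg( V_*^{2(1-1/\delta)}(x)\, \bigg\{ \int_x^{\infty} h V_* \bigg\}^{1/\delta} \bigg) \bigg)^{\delta} = \bigg( T_{V_*^{2(1-1/\delta)}}\bigg( \bigg\{ \int_x^{\infty} h V_* \bigg\}^{1/\delta} \bigg) \bigg)^{\delta},
\]
so, extracting $\delta$-th roots, the inequality above becomes
\[
\bigg\| T_{V_*^{2(1-1/\delta)}}\bigg( \bigg\{ \int_x^{\infty} h V_* \bigg\}^{1/\delta} \bigg)\bigg\|_{\beta,\, w,\, (0,\infty)} \le c \int_0^{\infty} h^{1/\delta}\, v^{1-1/\delta},
\]
and the admissible change of variable $h \mapsto h^{\delta}$ turns the right-hand side into $c\,\|h\|_{1,\, v^{1-1/\delta},\, (0,\infty)}$ and the argument of the operator on the left into $\big\{\int_x^{\infty} h^{\delta} V_*\big\}^{1/\delta}$, which is exactly \eqref{RT.thm.main.cor.9.0.1.eq.2}. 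Note that, in contrast with the reduction Theorems~\ref{Reduction.Theorem.thm.3.1} and~\ref{Reduction.Theorem.thm.3.3}, Theorem~\ref{Reduction.Theorem.thm.3.4} is a clean equivalence with no case distinction on $V_*(0)$, which is why no auxiliary condition on $\|{\bf 1}\|$ has to be added here. I do not expect a genuine obstacle: this is a mechanical dualization of Corollary~\ref{RT.thm.main.cor.8.0.1}. The one point requiring explicit attention is that applying Theorem~\ref{Reduction.Theorem.thm.3.4} in both directions needs $\widetilde T$ to inherit conditions~(i)--(iii) from $T$; homogeneity~(i) and monotonicity~(ii) are immediate, and~(iii) is verified as in the earlier corollaries of this section, using $0 < \delta \le 1$.
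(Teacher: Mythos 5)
Your proposal is correct and is exactly the argument the paper intends: the paper gives no explicit proof here, stating only that the dual results "may be proved analogously," and your proof is precisely that analogue of Corollary \ref{RT.thm.main.cor.8.0.1} — reduce via Theorem \ref{RT.thm.main.9.0} to the cone ${\mathfrak M}^{\uparrow}$, pass to the $\delta$-power operator $\widetilde T$, apply Theorem \ref{Reduction.Theorem.thm.3.4} with $s=1/\delta$, and unwind by homogeneity and the substitution $h\mapsto h^{\delta}$. The bookkeeping (weights $v^{1-1/\delta}$, the factor $V_*^{2(1-1/\delta)}$, and the norm-power identities) checks out.
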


\begin{theorem}\label{RT.thm.main.18.0}
    Let $0 < \beta \le \infty$, and let $T: {\mathfrak M}^+ \rightarrow {\mathfrak M}^+$ satisfies conditions {\rm (i)-(iii)}.  Assume that $u,\,w \in {\mathcal W}(0,\infty)$
    and $v \in {\mathcal W}(0,\infty)$ be such that $V_*(x) < \infty$ for all $x > 0$.
    Then inequality \eqref{RT.thm.main.9.0.eq.1}
    holds iff both
    \begin{equation}\label{RT.thm.main.18.0.eq.2}
    \big\| T_{V^2 \cdot \Phi^{2 / \delta} [V_*^{1 / \delta} v^{1-1 / \delta};1 / \delta]}  f
    \big\|_{\beta, w, (0,\infty)} \le c \| f \|_{1, \phi [V_*^{1 / \delta} v^{1 - 1 / \delta};1 / \delta],(0,\infty)}, \, f \in
    {\mathfrak M}^{\downarrow},
    \end{equation}
    where $0< \delta < 1$,
    \begin{align*}
    \phi [V_*^{1/\delta} v^{1 - 1 / \delta};1 / \delta](x) & \approx \bigg(\int_0^x V_*^{-(1/\delta)'}v\bigg)^{- \frac{(1/\delta)'}{1 + (1/\delta)'}} V_*^{-(1/\delta)'}(x)v(x), \\
    \Phi [V_*^{1/\delta} v^{1 - (1/\delta)};1/\delta](x) & \approx \bigg(\int_0^x V_*^{-(1/\delta)'}v\bigg)^{\frac{1}{1 + (1/\delta)'}},
    \end{align*}
    and \eqref{RT.thm.main.cor.9.0.eq.3} hold.
\end{theorem}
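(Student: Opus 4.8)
The plan is to run the proof of Theorem~\ref{RT.thm.main.17.0} with the Hardy operator $\int_0^{x}$ and the Copson operator $\int_x^{\infty}$ interchanged, the cones ${\mathfrak M}^{\downarrow}$ and ${\mathfrak M}^{\uparrow}$ interchanged, and with Corollary~\ref{RT.thm.main.cor.9.0} and Theorem~\ref{RT.thm.main.3} used in place of Corollary~\ref{RT.thm.main.cor.8.0} and Theorem~\ref{RT.thm.main.4}.

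First I would apply Corollary~\ref{RT.thm.main.cor.9.0}: inequality \eqref{RT.thm.main.9.0.eq.1} holds if and only if both \eqref{RT.thm.main.cor.9.0.eq.2} and \eqref{RT.thm.main.cor.9.0.eq.3} hold. Since \eqref{RT.thm.main.cor.9.0.eq.3} already appears in the conclusion, it suffices to show that \eqref{RT.thm.main.cor.9.0.eq.2} is equivalent to \eqref{RT.thm.main.18.0.eq.2}. Raising both sides of \eqref{RT.thm.main.cor.9.0.eq.2} to the power $\delta$ and replacing $h^{\delta}$ by $h$, I would rewrite \eqref{RT.thm.main.cor.9.0.eq.2} as
\[
\bigg\| \bigg[ T_{V_*^2} \bigg( \bigg\{ \int_0^x h \bigg\}^{1/\delta} \bigg) \bigg]^{\delta} \bigg\|_{\beta/\delta,\,w,\,(0,\infty)} \le c^{\delta}\,\| h \|_{1/\delta,\,V_*^{1/\delta} v^{1-1/\delta},\,(0,\infty)}, \qquad h \in {\mathfrak M}^+,
\]
that is, as $\bigl\| \widehat T \bigl( \int_0^x h \bigr) \bigr\|_{\beta/\delta,w,(0,\infty)} \le c^{\delta} \| h \|_{1/\delta,\,V_*^{1/\delta} v^{1-1/\delta},(0,\infty)}$ with $\widehat T(g) := \bigl[ T_{V_*^2}(g^{1/\delta}) \bigr]^{\delta}$.

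Next I would apply Theorem~\ref{RT.thm.main.3} to $\widehat T$, with $\beta$, $s$, $v$ replaced by $\beta/\delta$, $1/\delta$, $V_*^{1/\delta} v^{1-1/\delta}$ respectively; here $1 < 1/\delta < \infty$ since $0 < \delta < 1$, and, using $(1/\delta)' = 1/(1-\delta)$ and the resulting identity $\bigl(V_*^{1/\delta} v^{1-1/\delta}\bigr)^{1-(1/\delta)'} = V_*^{-(1/\delta)'} v$, the hypothesis \eqref{RT.thm.main.3.eq.0} for the new weight becomes $\int_0^x V_*^{-(1/\delta)'} v < \infty$, which follows from the standing assumption $V_*(x) < \infty$ for all $x > 0$. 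Theorem~\ref{RT.thm.main.3} then shows that the last inequality is equivalent to
\[
\| \widehat T_{\Phi^2} f \|_{\beta/\delta,\,w,\,(0,\infty)} \le c^{\delta}\,\| f \|_{1/\delta,\,\phi,\,(0,\infty)}, \qquad f \in {\mathfrak M}^{\downarrow},
\]
with $\phi = \phi\bigl[V_*^{1/\delta} v^{1-1/\delta};1/\delta\bigr]$ and $\Phi = \Phi\bigl[V_*^{1/\delta} v^{1-1/\delta};1/\delta\bigr]$. Since $\widehat T_{\Phi^2}(f) = \bigl[ T_{V_*^2}(f^{1/\delta}\Phi^{2/\delta}) \bigr]^{\delta} = \bigl[ T_{V_*^2 \Phi^{2/\delta}}(f^{1/\delta}) \bigr]^{\delta}$, the substitution $f^{1/\delta} \mapsto f$ followed by raising to the power $1/\delta$ turns this into \eqref{RT.thm.main.18.0.eq.2}; and feeding the identity $\bigl(V_*^{1/\delta} v^{1-1/\delta}\bigr)^{1-(1/\delta)'} = V_*^{-(1/\delta)'} v$ into the defining formulas of $\phi[\,\cdot\,;\,\cdot\,]$ and $\Phi[\,\cdot\,;\,\cdot\,]$ from Theorem~\ref{RT.thm.main.3} yields the asymptotic expressions for $\phi[V_*^{1/\delta}v^{1-1/\delta};1/\delta]$ and $\Phi[V_*^{1/\delta}v^{1-1/\delta};1/\delta]$ recorded in the statement.

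The one step I expect to need attention — and which is passed over in silence already in the proof of Theorem~\ref{RT.thm.main.17.0} — is verifying that the auxiliary operator $\widehat T$ again satisfies conditions {\rm (i)--(iii)}: {\rm (i)} and {\rm (ii)} are immediate from the homogeneity and monotonicity of $T$, while {\rm (iii)} follows from the elementary inequality $(a+b)^{1/\delta} \le 2^{1/\delta-1}(a^{1/\delta}+b^{1/\delta})$ (valid since $1/\delta>1$) together with {\rm (i)--(iii)} for $T$. All the remaining steps are the routine rescalings and power substitutions already performed repeatedly in this section.
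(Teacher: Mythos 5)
Your proposal is correct and is exactly the proof the paper intends: Theorem \ref{RT.thm.main.18.0} is presented as a ``dual'' statement to be ``proved analogously'' to Theorem \ref{RT.thm.main.17.0}, and your chain --- Corollary \ref{RT.thm.main.cor.9.0}, the power-$\delta$ rewriting via $\widehat T(g)=[T_{V_*^2}(g^{1/\delta})]^{\delta}$, then Theorem \ref{RT.thm.main.3} applied with $\beta/\delta$, $s=1/\delta$ and weight $V_*^{1/\delta}v^{1-1/\delta}$, together with the identity $(V_*^{1/\delta}v^{1-1/\delta})^{1-(1/\delta)'}=V_*^{-(1/\delta)'}v$ --- is precisely that dual argument; it also confirms that the subscript $V^2$ in \eqref{RT.thm.main.18.0.eq.2} should read $V_*^2$ (compare Corollary \ref{RT.cor.main.18.0}). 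Two minor caveats, both shared with the paper rather than peculiar to your write-up: the finiteness of $\int_0^x V_*^{-(1/\delta)'}v$ needs $V_*(x)>0$ in addition to $V_*(x)<\infty$ (the integral is a constant multiple of $V_*(x)^{1-(1/\delta)'}-V_*(0)^{1-(1/\delta)'}$, hence diverges if $V_*$ vanishes at a finite point); and your verification of condition {\rm (iii)} for $\widehat T$ actually requires condition {\rm (iii)} for the weighted operator $T_{V_*^2}$, i.e. $T(V_*^2f+\lambda V_*^2)\le c\,(T(V_*^2f)+\lambda T(V_*^2))$, since after the elementary power inequality it is the term $\lambda^{1/\delta}V_*^2$, not $\lambda^{1/\delta}{\bf 1}$, that must be split off --- the same silent inheritance of {\rm (i)--(iii)} by weighted and powered variants of $T$ on which the paper itself relies (e.g. when Theorem \ref{Reduction.Theorem.thm.3.2} is applied to $T_{\Phi^2}$, or Theorem \ref{RT.thm.main.4} to $\widetilde T$).
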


\begin{corollary}\label{RT.cor.main.18.0}
    Let $0 < \beta \le \infty$, and let $T: {\mathfrak M}^+ \rightarrow {\mathfrak M}^+$ satisfies conditions {\rm (i)-(iii)}.  Assume that $u,\,w \in {\mathcal W}(0,\infty)$
    and $v \in {\mathcal W}(0,\infty)$ be such that $V_*(x) < \infty$ for all $x > 0$.
    Then inequality \eqref{RT.thm.main.9.0.eq.1}
    holds iff both
    \begin{equation*}
    \bigg\| T_{ V_*^2 \Phi^4\big[V_*^2v^{-1};2\big]} (f) \bigg\|_{\beta, w, (0,\infty)} \le c \| f \|_{1, \phi [V_*^2 v^{- 1};2],(0,\infty)}, \, f \in {\mathfrak M}^{\downarrow},
    \end{equation*}
    where
    \begin{align*}
    \phi [V_*^2 v^{- 1};2](x) & \approx \bigg(\int_0^x V_*^{-2}v\bigg)^{- 2 / 3} V_*^{-2}(x)v(x) \\
    \Phi [V_*^2 v^{- 1};2](x) & \approx \bigg(\int_0^x V_*^{-2}v\bigg)^{1 / 3},
    \end{align*}
    and \eqref{RT.thm.main.cor.9.0.eq.3} hold.
\end{corollary}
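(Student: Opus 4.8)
The plan is to obtain this corollary as the special case $\delta=\tfrac12$ of Theorem~\ref{RT.thm.main.18.0}, in exact parallel with the way Corollary~\ref{RT.thm.main.17.0.cor.1} was deduced from Theorem~\ref{RT.thm.main.17.0}. Since $\tfrac12$ lies in the admissible range $(0,1)$ required for the parameter $\delta$ in Theorem~\ref{RT.thm.main.18.0}, and the hypotheses on $\beta$, on the operator $T$, and on the weights $u,w,v$ coincide in the two statements, that theorem applies verbatim with $\delta=\tfrac12$: it yields the equivalence of \eqref{RT.thm.main.9.0.eq.1} with the conjunction of \eqref{RT.thm.main.18.0.eq.2} and \eqref{RT.thm.main.cor.9.0.eq.3}. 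Thus the only remaining task is to rewrite \eqref{RT.thm.main.18.0.eq.2} explicitly for this choice of $\delta$.

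The substitution is purely arithmetical. For $\delta=\tfrac12$ we have $1/\delta=2$, hence $(1/\delta)'=2'=2$ by Convention~\ref{Notat.and.prelim.conv.1.1}(ii), so that
\[
V_*^{-(1/\delta)'}=V_*^{-2},\qquad \frac{(1/\delta)'}{1+(1/\delta)'}=\frac23,\qquad \frac{1}{1+(1/\delta)'}=\frac13,\qquad \frac{2}{\delta}=4.
\]
Inserting these values into the expressions for $\phi[V_*^{1/\delta}v^{1-1/\delta};1/\delta]$ and $\Phi[V_*^{1/\delta}v^{1-1/\delta};1/\delta]$ given in Theorem~\ref{RT.thm.main.18.0} turns them into
\[
\phi[V_*^{2}v^{-1};2](x)\approx\Big(\int_0^x V_*^{-2}v\Big)^{-2/3}V_*^{-2}(x)v(x),\qquad \Phi[V_*^{2}v^{-1};2](x)\approx\Big(\int_0^x V_*^{-2}v\Big)^{1/3},
\]
while the operator weight $V_*^{2}\cdot\Phi^{2/\delta}[V_*^{1/\delta}v^{1-1/\delta};1/\delta]$ becomes $V_*^{2}\,\Phi^{4}[V_*^{2}v^{-1};2]$. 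With these identifications inequality \eqref{RT.thm.main.18.0.eq.2} is precisely the displayed inequality in the statement of the corollary, and \eqref{RT.thm.main.cor.9.0.eq.3} is carried over unchanged; this proves the corollary.

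There is essentially no obstacle here, since all the analytic content is already encapsulated in Theorem~\ref{RT.thm.main.18.0} (itself proved, by the dual argument, from the reduction theorems of Section~\ref{pre}); the corollary is a bookkeeping specialization. The one mild point of care is checking that the normalizations used inside Theorem~\ref{RT.thm.main.18.0} --- in particular the identity $\big(V_*^{1/\delta}v^{1-1/\delta}\big)^{1-(1/\delta)'}\approx V_*^{-(1/\delta)'}v$, which rests on $\tfrac{1}{(1/\delta)'}=1-\delta$ --- remain non-degenerate at $\delta=\tfrac12$; here $v^{1-1/\delta}=v^{-1}$ and the exponent identities collapse to $2=2$ and $\tfrac23+\tfrac13=1$, so nothing is lost and the passage is clean.
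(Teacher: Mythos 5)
Your proposal is correct and coincides with the paper's intended argument: the corollary is the dual analogue of Corollary~\ref{RT.thm.main.17.0.cor.1} and is obtained exactly by taking $\delta=\tfrac12$ in Theorem~\ref{RT.thm.main.18.0}, after which the exponent computations $(1/\delta)'=2$, $\tfrac{(1/\delta)'}{1+(1/\delta)'}=\tfrac23$, $\tfrac{1}{1+(1/\delta)'}=\tfrac13$, $2/\delta=4$ give precisely the stated weights. Your substitution check is accurate (and you correctly read the operator weight as $V_*^2\,\Phi^4[V_*^2v^{-1};2]$), so nothing further is needed.
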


\begin{corollary}\label{RT.thm.main.4.0}
    Let $0 < \beta < \infty$, and let $T: {\mathfrak M}^+ \rightarrow {\mathfrak M}^+$ satisfies conditions {\rm (i)-(iii)}.  Assume that $u,\,w \in {\mathcal W}(0,\infty)$
    and $v \in {\mathcal W}(0,\infty)$ be such that $V(x) < \infty$ for all $x > 0$ and $V(\infty) = \infty$.
    Then inequality
    \begin{equation}
    \label{RT.thm.main.4.0.eq.1}
    \left\|T \bigg(\int_x^{\infty} h\bigg)\right\|_{\beta,w,(0,\infty)} \leq c
    \,\|h\|_{1,V,(0,\infty)}, \, h \in {\mathfrak M}^+,
    \end{equation}
    holds iff
    \begin{equation}\label{RT.thm.main.4.0.eq.2}
    \| T_{V^{-2}} f \|_{\beta, w, (0,\infty)} \le c \| f  \|_{1, v / V^2,(0,\infty)}, \, f \in
    {\mathfrak M}^{\downarrow}.
    \end{equation}
\end{corollary}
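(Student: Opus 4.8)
The plan is to deduce this corollary from Theorem~\ref{RT.thm.main.9.0} by a change of weight, in exactly the same way that Corollary~\ref{RT.thm.main.3.0} was deduced from Theorem~\ref{RT.thm.main.8.0}. The key observation is that, under the standing hypotheses, the weight $V$ appearing on the right-hand side of \eqref{RT.thm.main.4.0.eq.1} is the reciprocal of the tail primitive of a suitably chosen new weight, so that \eqref{RT.thm.main.4.0.eq.1} is merely an instance of \eqref{RT.thm.main.9.0.eq.1}.

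First I would record the elementary identity
\[
\int_x^{\infty}\frac{v(t)}{V(t)^2}\,dt=\frac{1}{V(x)}-\frac{1}{V(\infty)}=\frac{1}{V(x)},\qquad x>0,
\]
where the last step uses $V(\infty)=\infty$, while the hypothesis $V(x)<\infty$ for every $x>0$ ensures that $\widetilde v:=v/V^2$ is again a weight on $(0,\infty)$. Putting $\widetilde V_*(x):=\int_x^{\infty}\widetilde v$, the identity says $\widetilde V_*(x)=1/V(x)$; hence $\widetilde V_*(x)<\infty$ for all $x>0$, so the finiteness condition required of $\widetilde v$ by Theorem~\ref{RT.thm.main.9.0} holds, and moreover $\widetilde V_*^{-1}=V$ and $\widetilde V_*^{2}=V^{-2}$.

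It then only remains to invoke Theorem~\ref{RT.thm.main.9.0} with $v$ replaced by $\widetilde v$: with the substitutions just listed, \eqref{RT.thm.main.9.0.eq.1} turns into \eqref{RT.thm.main.4.0.eq.1} and \eqref{RT.thm.main.9.0.eq.2} turns into \eqref{RT.thm.main.4.0.eq.2}, which completes the proof. I do not expect any genuine obstacle here: all the analytic content is already packaged in Theorem~\ref{RT.thm.main.9.0} (which itself rests on Theorem~\ref{Reduction.Theorem.thm.3.4}), and the single point to be checked by hand is that $v/V^2$ is an admissible weight satisfying the finiteness hypothesis of Theorem~\ref{RT.thm.main.9.0} — which is precisely what the two assumptions $V(x)<\infty$ and $V(\infty)=\infty$ are there to guarantee.
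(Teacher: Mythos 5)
Your route is exactly the one the paper intends: Corollary \ref{RT.thm.main.4.0} appears among the ``dual'' statements that are to be proved analogously to Corollary \ref{RT.thm.main.3.0}, i.e.\ by feeding the weight $\widetilde v:=v/V^2$ into Theorem \ref{RT.thm.main.9.0}, and your identity $\int_x^{\infty}v/V^2=1/V(x)$ (using $V(\infty)=\infty$) is the exact dual of the identity $V_*(x)=\big(\int_0^x v/V_*^2\big)^{-1}$ used in the proof of Corollary \ref{RT.thm.main.3.0}. However, you pass silently over a point that is not cosmetic: the characterizing condition \eqref{RT.thm.main.9.0.eq.2} of Theorem \ref{RT.thm.main.9.0} is an inequality on the cone ${\mathfrak M}^{\uparrow}$, so your substitution produces \eqref{RT.thm.main.4.0.eq.2} with $f\in{\mathfrak M}^{\uparrow}$, whereas the statement you were asked to prove has $f\in{\mathfrak M}^{\downarrow}$. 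Thus the assertion that \eqref{RT.thm.main.9.0.eq.2} ``turns into'' \eqref{RT.thm.main.4.0.eq.2} is false as written, and the gap cannot be repaired: the ${\mathfrak M}^{\downarrow}$ version is genuinely weaker than \eqref{RT.thm.main.4.0.eq.1}. Indeed, if $\int_0 vV^{-2}=\infty$ (e.g.\ $v\equiv 1$, $V(x)=x$, which satisfies all hypotheses of the corollary), then every non-increasing $f\not\equiv 0$ has $\|f\|_{1,v/V^2,(0,\infty)}=\infty$, so \eqref{RT.thm.main.4.0.eq.2} on ${\mathfrak M}^{\downarrow}$ holds trivially for \emph{any} admissible $T$, while \eqref{RT.thm.main.4.0.eq.1} certainly need not hold (take $T$ the identity and a weight $w$ for which the corresponding Copson inequality fails).

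The resolution is that the cone in the printed statement is a misprint: the correct conclusion is \eqref{RT.thm.main.4.0.eq.2} with $f\in{\mathfrak M}^{\uparrow}$, which is precisely what your argument establishes. This reading is corroborated by the surrounding material: Corollary \ref{RT.thm.main.3.0} (whose dual this is) sits on ${\mathfrak M}^{\downarrow}$ because Theorem \ref{RT.thm.main.8.0} does, so its dual must sit on ${\mathfrak M}^{\uparrow}$ because Theorem \ref{RT.thm.main.9.0} does; and the subsequent Corollary \ref{RT.cor.main.4.0.1}, with its $\int_0^x h^{\delta}$ on the left-hand side and weight $V^{1/\delta-2}v^{1-1/\delta}$, is exactly what one gets from the ${\mathfrak M}^{\uparrow}$ version via Theorem \ref{Reduction.Theorem.thm.3.3} applied to $\widetilde v=v/V^2$. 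So: right method, matching the paper's intended (unwritten) proof, but you should have detected the cone mismatch and either corrected the statement to ${\mathfrak M}^{\uparrow}$ or observed that the claim with ${\mathfrak M}^{\downarrow}$ does not follow from, and is not equivalent to, \eqref{RT.thm.main.4.0.eq.1}.
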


\begin{corollary}\label{RT.cor.main.4.0.1}
    Let $0 < \beta \le \infty$, $0 < \delta \le 1$, and let $T: {\mathfrak M}^+ \rightarrow {\mathfrak M}^+$
    satisfies conditions {\rm (i)-(iii)}.  Assume that $u,\,w \in {\mathcal W}(0,\infty)$
    and $v \in {\mathcal W}(0,\infty)$ be such that $V(x) < \infty$ for all $x > 0$ and $V(\infty) = \infty$. Then
    inequality \eqref{RT.thm.main.4.0.eq.1}
    holds iff both
    \begin{equation}\label{RT.cor.main.4.0.1.eq.1}
    \bigg\| T_{V^{-2}}  \bigg( \left\{ \int_0^x h^{\delta}\right\}^{1 / \delta}\bigg) \bigg\|_{\beta, w, (0,\infty)} \le
    c \| h \|_{1, V^{1 / \delta - 2} v^{1 - 1 / \delta},(0,\infty)}, \, h \in {\mathfrak M}^+,
    \end{equation}
    holds.
\end{corollary}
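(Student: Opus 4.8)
The plan is to run, in its ``$V$-flavour'', the three-step scheme that produces Corollary~\ref{RT.thm.main.cor.8.0} from Theorem~\ref{RT.thm.main.8.0} and, closer to the present situation, Corollary~\ref{RT.cor.main.3.0.1} from Corollary~\ref{RT.thm.main.3.0}: first reduce the iterated inequality to an inequality on a cone of monotone functions, then pass to $\delta$-th powers, and finally apply one of the reduction theorems of Section~\ref{pre}. Concretely, I would first invoke Corollary~\ref{RT.thm.main.4.0}: under the stated hypotheses, inequality~\eqref{RT.thm.main.4.0.eq.1} holds if and only if
\[
\| T_{V^{-2}} f \|_{\beta, w, (0,\infty)} \le c \| f \|_{1,\, v / V^2,(0,\infty)}, \qquad f \in {\mathfrak M}^{\uparrow}
\]
(the cone of non-decreasing functions, as dictated by the dual structure relative to Corollary~\ref{RT.thm.main.3.0}). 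Fix $\delta\in(0,1]$ and put $\widetilde T(f):=\big\{T_{V^{-2}}(f^{1/\delta})\big\}^{\delta}$. Since composition with the increasing bijection $t\mapsto t^{1/\delta}$ of $[0,\infty)$ maps ${\mathfrak M}^{\uparrow}$ onto itself and the quasinorms are positively homogeneous, raising the displayed inequality to the power $\delta$ and substituting $f^{1/\delta}$ for $f$ shows it to be equivalent to
\[
\| \widetilde T f \|_{\beta / \delta, w, (0,\infty)} \le c^{\delta} \| f \|_{1 / \delta,\, v / V^2,(0,\infty)}, \qquad f \in {\mathfrak M}^{\uparrow};
\]
moreover $\widetilde T$ again satisfies conditions~{\rm (i)} and~{\rm (ii)} (immediate from the same properties of $T$, as $1/\delta,\delta>0$), which is all that the reduction theorem on ${\mathfrak M}^{\uparrow}$ needs.

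Next I would compute the relevant primitive of the new weight $\widetilde v:=v/V^{2}$, namely $\widetilde V_*(x):=\int_x^{\infty}\widetilde v(t)\,dt$. Because $V(\infty)=\infty$, the boundary term at infinity drops out and
\[
\widetilde V_*(x)=\int_x^{\infty}\frac{v(t)}{V^{2}(t)}\,dt=\frac{1}{V(x)},\qquad x>0,
\]
so $\widetilde V_*(0)=\lim_{x\to0+}1/V(x)=\infty$ (since $V(0+)=0$). Applying Theorem~\ref{Reduction.Theorem.thm.3.3} to $\widetilde T$ with $\beta$ replaced by $\beta/\delta$, with $s=1/\delta\in[1,\infty)$, and with $\widetilde v$ playing the role of the weight there (so that its ``$V_*$'' is precisely $\widetilde V_*$), the fact that $\widetilde V_*(0)=\infty$ turns that statement into an equivalence in both directions, so no separate endpoint condition is required, and it yields that the preceding inequality is equivalent to
\[
\bigg\|\widetilde T\bigg(\int_0^x h\bigg)\bigg\|_{\beta/\delta,w,(0,\infty)}\le c\,\|h\|_{1/\delta,\,\widetilde V_*^{1/\delta}\,\widetilde v^{\,1-1/\delta},(0,\infty)},\qquad h\in{\mathfrak M}^+.
\]
A direct computation, using $\widetilde V_*=V^{-1}$ and $\widetilde v=vV^{-2}$, gives $\widetilde V_*^{1/\delta}\widetilde v^{\,1-1/\delta}=V^{1/\delta-2}\,v^{1-1/\delta}$; then, unravelling the definition of $\widetilde T$, replacing $h$ by $h^{\delta}$ and taking $\delta$-th roots on both sides, the last display becomes precisely \eqref{RT.cor.main.4.0.1.eq.1} (with a relabelled constant $c$), which completes the argument.

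I do not expect a genuine obstacle here: the whole argument is formal once Corollary~\ref{RT.thm.main.4.0} and Theorem~\ref{Reduction.Theorem.thm.3.3} are in hand, and it parallels verbatim the passage from Corollary~\ref{RT.thm.main.3.0} to Corollary~\ref{RT.cor.main.3.0.1}. The only point deserving attention is the weight bookkeeping, and in particular the role of the hypothesis $V(\infty)=\infty$: it is exactly this assumption that makes $\int_x^{\infty}v/V^{2}=1/V(x)$, hence $\widetilde V_*(0)=\infty$ for the new weight, which both pins down the exponents in the right-hand weight $V^{1/\delta-2}v^{1-1/\delta}$ of \eqref{RT.cor.main.4.0.1.eq.1} and upgrades the reduction of Theorem~\ref{Reduction.Theorem.thm.3.3} from a one-sided implication to an equivalence. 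If $V(\infty)$ were merely finite, the primitive $\int_x^{\infty}v/V^{2}$ would pick up the correction $-1/V(\infty)$, the reduced weight would change accordingly, and the extra condition $\|\widetilde T{\bf 1}\|_{\beta/\delta,w,(0,\infty)}\le c\|{\bf 1}\|_{1/\delta,v/V^{2},(0,\infty)}$ would have to be imposed.
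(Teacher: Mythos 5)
Your argument is correct and is precisely the ``analogous'' proof the paper intends for this dual statement: reduce via Corollary~\ref{RT.thm.main.4.0} to the cone inequality for $T_{V^{-2}}$ with weight $v/V^{2}$, pass to $\delta$-th powers (with $\widetilde T$ satisfying (i)--(ii)), and apply Theorem~\ref{Reduction.Theorem.thm.3.3} with $s=1/\delta$ and $\widetilde V_*=1/V$, so that $\widetilde V_*(0)=\infty$ makes the reduction a two-sided equivalence and the weight bookkeeping yields $V^{1/\delta-2}v^{1-1/\delta}$. You also correctly read the cone in \eqref{RT.thm.main.4.0.eq.2} as ${\mathfrak M}^{\uparrow}$ (the printed ${\mathfrak M}^{\downarrow}$ is evidently a misprint, since Corollary~\ref{RT.thm.main.4.0} comes from Theorem~\ref{RT.thm.main.9.0} applied with the weight $v/V^{2}$), and this is exactly what produces the $\int_0^x h^{\delta}$ form in \eqref{RT.cor.main.4.0.1.eq.1}.
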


\begin{corollary}\label{RT.cor.main.4.1.1}
    Let $0 < \beta \le \infty$, $0 < \delta \le 1$, and let $T: {\mathfrak M}^+ \rightarrow {\mathfrak M}^+$
    satisfies conditions {\rm (i)-(iii)}.  Assume that $u,\,w \in {\mathcal W}(0,\infty)$
    and $v \in {\mathcal W}(0,\infty)$ be such that $V(x) < \infty$ for all $x > 0$ and $V(\infty) = \infty$. Then
    inequality \eqref{RT.thm.main.4.0.eq.1}
    holds iff
    \begin{equation}\label{RT.cor.main.4.1.1.eq.1}
    \bigg\| T_{V^{2(1 / \delta - 1)}} \bigg(\bigg\{\int_x^{\infty} h^{\delta} \bigg\}^{1  / \delta}\bigg) \bigg\|_{\beta, w, (0,\infty)} \le
    c \| h \|_{1, V^{3 / \delta - 2}v^{1 - 1 / \delta},(0,\infty)}, \, h \in {\mathfrak M}^+
    \end{equation}
    holds.
\end{corollary}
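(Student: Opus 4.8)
Corollary~\ref{RT.cor.main.4.1.1} is the Copson-operator analogue of Corollary~\ref{RT.cor.main.3.1.1}, and the most economical route is to deduce it from the already-established Corollary~\ref{RT.thm.main.cor.9.0.1} by one change of weight followed by one substitution of the free function; no new estimate is required. The plan is to apply Corollary~\ref{RT.thm.main.cor.9.0.1} with the weight $v$ there replaced by
\[
\widetilde v:=\frac{v}{V^{2}}.
\]
This is admissible: $\widetilde v\in{\mathcal W}(0,\infty)$ since $V$ is finite and (assuming, as one may, $V(x)>0$ for $x>0$) positive, so $v/V^{2}$ is locally integrable on $(0,\infty)$; moreover
\[
\int_x^{\infty}\widetilde v(t)\,dt=\int_x^{\infty}\frac{v(t)}{V^{2}(t)}\,dt=\Big[-\frac{1}{V(t)}\Big]_{t=x}^{t=\infty}=\frac{1}{V(x)},\qquad x>0,
\]
where the hypothesis $V(\infty)=\infty$ is used precisely to discard the endpoint contribution at $\infty$. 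Hence the quantity ``$V_*$'' attached to $\widetilde v$ equals $1/V$, which is finite for every $x>0$, so Corollary~\ref{RT.thm.main.cor.9.0.1} applies with $\widetilde v$.

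Reading off that corollary with this substitution: its left-hand inequality becomes $\|T(\int_x^{\infty}h)\|_{\beta,w,(0,\infty)}\le c\,\|h\|_{1,(1/V)^{-1},(0,\infty)}$, i.e.\ exactly \eqref{RT.thm.main.4.0.eq.1}; and its equivalent condition, after using $(1/V)^{2(1-1/\delta)}=V^{2(1/\delta-1)}$ and $(v/V^{2})^{1-1/\delta}=v^{1-1/\delta}V^{2(1/\delta-1)}$, becomes
\[
\Big\|T_{V^{2(1/\delta-1)}}\Big(\Big\{\int_x^{\infty}h^{\delta}V^{-1}\Big\}^{1/\delta}\Big)\Big\|_{\beta,w,(0,\infty)}\le c\,\|h\|_{1,\,v^{1-1/\delta}V^{2(1/\delta-1)},(0,\infty)},\qquad h\in{\mathfrak M}^{+}.
\]
The last step is to substitute $h\mapsto hV^{1/\delta}$, a bijection of ${\mathfrak M}^{+}$: this turns $\int_x^{\infty}h^{\delta}V^{-1}$ into $\int_x^{\infty}h^{\delta}$ and the right-hand weight $v^{1-1/\delta}V^{2(1/\delta-1)}$ into $v^{1-1/\delta}V^{1/\delta+2(1/\delta-1)}=v^{1-1/\delta}V^{3/\delta-2}$, which is precisely \eqref{RT.cor.main.4.1.1.eq.1}.

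A more self-contained variant, patterned on the proof of Corollary~\ref{RT.thm.main.cor.8.0.1}, would instead start from Corollary~\ref{RT.thm.main.4.0} to replace \eqref{RT.thm.main.4.0.eq.1} by the equivalent monotone-cone inequality \eqref{RT.thm.main.4.0.eq.2}; then substitute $f\mapsto f^{1/\delta}$ and raise to the power $\delta$, obtaining the corresponding $L^{1/\delta}$--$L^{\beta/\delta}$ inequality on the same cone for $\widetilde T(f):=\{T_{V^{-2}}(f^{1/\delta})\}^{\delta}$ (which still obeys (i)--(iii), the sub-additivity in (iii) coming from $(a+b)^{1/\delta}\lesssim a^{1/\delta}+b^{1/\delta}$ when $\delta\le1$); then apply Theorem~\ref{Reduction.Theorem.thm.3.4} with the weight $v/V^{2}$, whose relevant co-primitive is again $1/V$; and finally unwind $\widetilde T$, take $\delta$-th roots, and carry out the substitutions $h\mapsto h^{\delta}$ and $h\mapsto hV^{1/\delta}$.

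I expect the only real work to be bookkeeping: keeping the substitutions ($f\mapsto f^{1/\delta}$, $h\mapsto h^{\delta}$, $h\mapsto hV^{1/\delta}$) and the exponent $\delta$ straight, and invoking $V(\infty)=\infty$ exactly where it is needed, namely to ensure that $\int_x^{\infty}v/V^{2}$ collapses to the finite quantity $1/V(x)$ --- without this the integral may diverge and Corollary~\ref{RT.thm.main.cor.9.0.1} (equivalently Theorem~\ref{Reduction.Theorem.thm.3.4}) could not be applied.
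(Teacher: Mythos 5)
Your main argument is correct and is exactly the route the paper intends: Corollary \ref{RT.cor.main.4.1.1} is stated without proof as one of the ``dual'' statements to be ``proved analogously'', and the analogous proof is precisely your application of Corollary \ref{RT.thm.main.cor.9.0.1} with the weight $v/V^{2}$ (so that the associated $V_{*}$ equals $1/V$, where $V(\infty)=\infty$ is used), followed by the substitution $h\mapsto hV^{1/\delta}$ --- the same weight-substitution device the paper itself uses in the proof of Corollary \ref{RT.thm.main.3.0}. Only your optional ``self-contained variant'' is off: \eqref{RT.thm.main.4.0.eq.2} is an inequality on ${\mathfrak M}^{\downarrow}$, so Theorem \ref{Reduction.Theorem.thm.3.4} (which concerns ${\mathfrak M}^{\uparrow}$) is not the right tool there, and for the non-increasing cone the relevant primitive would be $\int_0^x v/V^{2}$, which is not $1/V$ and is typically infinite; since that sketch is only an aside, it does not affect the validity of your proof.
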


\section{Equivalence theorems for the weighted inequalities on the cones of monotone functions}\label{WeightedIneqOnTheCone}

As it is mentioned in the introduction, by substitution of variables
it is possible to change the cone of non-decreasing functions to the
cone of non-increasing functions and vice versa, when considering
inequalities \eqref{Reduction.Theorem.eq.1.1} and
\eqref{Reduction.Theorem.eq.1.1.00} for integral operators $T$. But
this procedure changes $T$ also as usually to the "dual" operator.

The following theorems allows to change the cones to each other not
changing the operator $T$.
\begin{theorem}\label{RT.thm.main.9}
    Let $0 < \beta \le \infty$, $0  < s < \infty$, and let $T: {\mathfrak M}^+ \rightarrow {\mathfrak M}^+$
    satisfies conditions {\rm (i)-(iii)}. Assume that $u,\,w \in {\mathcal W}(0,\infty)$
    and $v \in {\mathcal W}(0,\infty)$ be such that $V(x) < \infty$ for all $x > 0$ holds. Then
    inequality \eqref{Reduction.Theorem.eq.1.1} holds if and only if
    both
    \begin{equation}\label{RT.thm.main.9.eq.1}
    \left\|  T_{\{\Psi [V^{s / \delta}v^{1 - s / \delta};s/\delta]\}^{2 / \delta}} (f) \right\|_{\beta, w, (0,\infty)} \le c \|f\|_{s,\psi[V^{s / \delta}v^{1 - s / \delta};s/\delta],(0,\infty)}, \, f \in {\mathfrak M}^{\uparrow},
    \end{equation}
    where $0 < \delta < s$ and
    \begin{align*}
    \psi[V^{s / \delta}v^{1 - s / \delta};s/\delta](x) & \approx \bigg(\int_x^{\infty}V^{-(s / \delta)'}v\bigg)^{- \frac{(s / \delta)'}{(s / \delta)' + 1}}V^{-(s / \delta)'}(x)v(x), ~(x > 0), \\
    \Psi [V^{s / \delta}v^{1 - s / \delta};s/\delta](x) & \approx \bigg(\int_x^{\infty} V^{-(s / \delta)'}v\bigg)^{\frac{1}{(s / \delta)' + 1}},~(x > 0)
    \end{align*}
    and \eqref{Reduction.Theorem.eq.1.3} hold.
\end{theorem}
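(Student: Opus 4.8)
The plan is to reduce \eqref{Reduction.Theorem.eq.1.1} to a Copson-type iterated inequality via Theorem~\ref{Reduction.Theorem.thm.3.1}, and then to convert that Copson inequality into an inequality on the cone $\mathfrak M^{\uparrow}$ via Theorem~\ref{RT.thm.main.4}. Since $s$ is only assumed positive, both reductions must be applied at the exponent $s/\delta>1$, so I first pass to $\delta$-th powers, exactly as in the proof of Corollary~\ref{RT.cor.main.3}. Fix $0<\delta<s$ and put $\widetilde T(g):=\{T(g^{1/\delta})\}^{\delta}$ for $g\in\mathfrak M^{+}$. One checks that $\widetilde T$ again satisfies {\rm (i)--(iii)}: {\rm (i)} and {\rm (ii)} are immediate, and {\rm (iii)} follows from {\rm (ii)--(iii)} for $T$ together with the elementary equivalence $(a+b)^{1/\delta}\approx a^{1/\delta}+b^{1/\delta}$ (constants depending only on $\delta$). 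Since $x\mapsto x^{1/\delta}$ is increasing, the map $g\mapsto g^{1/\delta}$ preserves $\mathfrak M^{\downarrow}$, and since $\|F^{\delta}\|_{\beta/\delta,w}=\|F\|_{\beta,w}^{\delta}$, the substitution $f=g^{1/\delta}$ shows that \eqref{Reduction.Theorem.eq.1.1} is equivalent to
\begin{equation*}
\|\widetilde T g\|_{\beta/\delta,w,(0,\infty)}\le c^{\delta}\|g\|_{s/\delta,v,(0,\infty)},\qquad g\in\mathfrak M^{\downarrow},
\end{equation*}
an inequality of exactly the form \eqref{Reduction.Theorem.eq.1.1}, but now with index $s/\delta>1$.

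Next I apply Theorem~\ref{Reduction.Theorem.thm.3.1} to this inequality, with operator $\widetilde T$ and indices $\beta/\delta$, $s/\delta$ (the hypothesis $V(x)<\infty$ is assumed and $\widetilde T$ satisfies {\rm (i)--(iii)}). This gives that \eqref{Reduction.Theorem.eq.1.1} holds if and only if both
\begin{equation*}
\bigg\|\widetilde T\bigg(\int_x^{\infty}h\bigg)\bigg\|_{\beta/\delta,w,(0,\infty)}\le c^{\delta}\|h\|_{s/\delta,\,V^{s/\delta}v^{1-s/\delta},(0,\infty)},\qquad h\in\mathfrak M^{+},
\end{equation*}
and \eqref{Reduction.Theorem.eq.1.3} hold: the auxiliary condition $\|\widetilde T{\bf 1}\|_{\beta/\delta,w}\le c^{\delta}\|{\bf 1}\|_{s/\delta,v}$ produced by Theorem~\ref{Reduction.Theorem.thm.3.1} is, after undoing the $\delta$-th power, precisely \eqref{Reduction.Theorem.eq.1.3}, and it is automatically satisfied when $V(\infty)=\infty$.

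Finally, the displayed Copson inequality is inequality \eqref{RT.thm.main.4.eq.1} for the operator $\widetilde T$, exponents $\beta/\delta$ and $s/\delta>1$, and weight $V^{s/\delta}v^{1-s/\delta}$; its hypothesis \eqref{RT.thm.main.4.eq.0} reads $\int_x^{\infty}\bigl(V^{s/\delta}v^{1-s/\delta}\bigr)^{1-(s/\delta)'}<\infty$, i.e.\ $\int_x^{\infty}V^{-(s/\delta)'}v<\infty$, which holds because $0<V(x)<\infty$, $V$ is nondecreasing, and $(s/\delta)'>1$. Theorem~\ref{RT.thm.main.4} then yields that it is equivalent to
\begin{equation*}
\|\widetilde T_{\Psi^{2}}f\|_{\beta/\delta,w,(0,\infty)}\le c^{\delta}\|f\|_{s/\delta,\psi,(0,\infty)},\qquad f\in\mathfrak M^{\uparrow},
\end{equation*}
where $\psi=\psi[V^{s/\delta}v^{1-s/\delta};s/\delta]$ and $\Psi=\Psi[V^{s/\delta}v^{1-s/\delta};s/\delta]$; the asserted formulas for $\psi$ and $\Psi$ come from the identity $(s/\delta)\bigl(1-(s/\delta)'\bigr)=-(s/\delta)'$, which gives $\bigl(V^{s/\delta}v^{1-s/\delta}\bigr)^{1-(s/\delta)'}=V^{-(s/\delta)'}v$. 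Since $\widetilde T_{\Psi^{2}}f=\{T(f^{1/\delta}\Psi^{2/\delta})\}^{\delta}$, the last inequality, after the substitution $f=g^{\delta}$ and taking $\delta$-th roots, becomes $\|T_{\Psi^{2/\delta}}g\|_{\beta,w,(0,\infty)}\le c\|g\|_{s,\psi,(0,\infty)}$ on $\mathfrak M^{\uparrow}$, which is exactly \eqref{RT.thm.main.9.eq.1}.

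The only points requiring genuine care are the verification that $\widetilde T$ inherits {\rm (i)--(iii)} (in particular {\rm (iii)}) and the correct tracking of the weight through the two reductions sandwiched between the $\delta$-power substitutions; when $\beta=\infty$ one carries the weight $w^{\delta}$ through the intermediate inequalities and recovers $w$ at the final step, just as in Corollary~\ref{RT.cor.main.3}. Beyond this bookkeeping, the argument is a direct appeal to Theorems~\ref{Reduction.Theorem.thm.3.1} and \ref{RT.thm.main.4}, so I do not expect any real obstacle.
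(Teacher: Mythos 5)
Your proposal is correct and follows essentially the same route as the paper's proof: pass to $\delta$-th powers via $\widetilde T(g)=\{T(g^{1/\delta})\}^{\delta}$, reduce the non-increasing-cone inequality to the Copson-type inequality with weight $V^{s/\delta}v^{1-s/\delta}$ by Theorem~\ref{Reduction.Theorem.thm.3.1}, and then apply Theorem~\ref{RT.thm.main.4} and undo the powers. Your bookkeeping in the last step (obtaining $T_{\Psi^{2/\delta}}$, since $\widetilde T_{\Psi^{2}}f=\{T(f^{1/\delta}\Psi^{2/\delta})\}^{\delta}$) is in fact more careful than the paper's intermediate display and matches the exponent in \eqref{RT.thm.main.9.eq.1}.
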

\begin{proof}
    Inequality \eqref{Reduction.Theorem.eq.1.1} is equivalent to
    \begin{equation}\label{RT.thm.main.9.eq.2}
    \left\|  \left\{T(f^{1/\delta})\right\}^{\delta} \right\|_{\beta / \delta, w, (0,\infty)} \le c^{\delta} \|f\|_{s / \delta,v,(0,\infty)}, \, f \in {\mathfrak M}^{\downarrow}.
    \end{equation}
    By Theorems \ref{Reduction.Theorem.thm.3.1},
    \eqref{RT.thm.main.9.eq.2} holds if and only if
    \begin{equation}\label{RT.thm.main.9.eq.3}
    \left\|  \left\{T\left(\int_x^{\infty} h \right)^{1/\delta}\right\}^{\delta} \right\|_{\beta / \delta, w, (0,\infty)} \le c^{\delta} \|h\|_{s / \delta,V^{s / \delta}v^{1 - s / \delta},(0,\infty)}, \, h \in {\mathfrak M}^{+},
    \end{equation}
    and \eqref{Reduction.Theorem.eq.1.3} hold.
    By Theorem \ref{RT.thm.main.4}, \eqref{RT.thm.main.9.eq.3} is equivalent to
    \begin{align}\label{RT.thm.main.9.eq.4}
    \bigg\|  \bigg\{T_{\big\{\Psi\big[V^{s / \delta}v^{1 - s / \delta};s/\delta\big]\big\}^2} \big( f^{1/\delta} \big)\bigg\}^{\delta} \bigg\|_{\beta / \delta, w, (0,\infty)}  \le c^{\delta} \|f\|_{s / \delta,\psi\big[V^{s / \delta}v^{1 - s / \delta};s/\delta\big],(0,\infty)}, \, f \in {\mathfrak M}^{\uparrow},
    \end{align}
    with
    \begin{align*}
    \psi\big[V^{s / \delta}v^{1 - s / \delta};s/\delta\big] & \approx (V^{1-(s / \delta)'} - V^{1-(s / \delta)'}(\infty))^{-(s / \delta)' / ((s / \delta)' + 1)}V^{-(s / \delta)'}v \\
    \Psi\big[V^{s / \delta}v^{1 - s / \delta};s/\delta\big] & \approx (V^{1-(s / \delta)'} - V^{1-(s / \delta)'}(\infty))^{1 / ((s / \delta)' + 1)}.
    \end{align*}
    Note that  \eqref{RT.thm.main.9.eq.4} is equivalent to \eqref{RT.thm.main.9.eq.1}, and this completes the proof.
\end{proof}

To state the next statements we need the following notations:
$$
V_1 (x) : = \bigg( \int_x^{\infty} V^{-2}v \bigg)^{1/3}, \qquad (x >
0).
$$

The following statement holds true.
\begin{corollary}\label{HardyIneqOnTheCone.cor.17}
    Let $0 < \beta \le \infty$, $0 < s < \infty$, and let $T: {\mathfrak M}^+ \rightarrow {\mathfrak M}^+$
    satisfies conditions {\rm (i)-(iii)}. Assume that $u,\,w \in {\mathcal W}(0,\infty)$
    and $v \in {\mathcal W}(0,\infty)$ be such that $V(x) < \infty$ for all $x > 0$ holds. Then
    inequality \eqref{Reduction.Theorem.eq.1.1} holds if and only if
    both
    \begin{equation}\label{HardyIneqOnTheCone.cor.17.eq/1}
    \left\|  T_{\{\Psi [V^2v^{-1};2]\}^{4 / s}} (f) \right\|_{\beta, w, (0,\infty)} \le c \|f\|_{s,\psi[V^{2}v^{-1};2],(0,\infty)}, \, f \in {\mathfrak M}^{\uparrow},
    \end{equation}
    where
    \begin{align*}
    \psi[V_*^{2}v^{-1};2](x) & \approx \{V_1 \cdot V\}^{- 2} (x)v(x), ~(x>0), \\
    \Psi [V_*^{2}v^{-1};2](x) & \approx V_1(x),~(x>0),
    \end{align*}
    and \eqref{Reduction.Theorem.eq.1.3} hold.
\end{corollary}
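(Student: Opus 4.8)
The plan is to obtain this statement as the special case $\delta = s/2$ of Theorem~\ref{RT.thm.main.9}. That theorem already asserts, for every $0 < \delta < s$, that inequality~\eqref{Reduction.Theorem.eq.1.1} on ${\mathfrak M}^{\downarrow}$ is equivalent to the conjunction of the non-decreasing-cone inequality~\eqref{RT.thm.main.9.eq.1} with the one-point condition~\eqref{Reduction.Theorem.eq.1.3}, under exactly the hypothesis $V(x) < \infty$ for all $x > 0$; so the whole task reduces to specialising $\delta$ and rewriting the weights.

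First I would put $\delta = s/2$, which is admissible since $0 < s/2 < s$. Then $s/\delta = 2$, and because $1/2 + 1/2 = 1$ we have $(s/\delta)' = 2' = 2$, while $2/\delta = 4/s$. Substituting into the operator index of~\eqref{RT.thm.main.9.eq.1} turns $\{\Psi[V^{s/\delta}v^{1-s/\delta};s/\delta]\}^{2/\delta}$ into $\{\Psi[V^{2}v^{-1};2]\}^{4/s}$ and turns the target weight $\psi[V^{s/\delta}v^{1-s/\delta};s/\delta]$ into $\psi[V^{2}v^{-1};2]$, which is precisely inequality~\eqref{HardyIneqOnTheCone.cor.17.eq/1}.

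Next I would read off the asymptotics for $\psi$ and $\Psi$ supplied by Theorem~\ref{RT.thm.main.9}; with $(s/\delta)' = 2$ the exponents $(s/\delta)'/((s/\delta)'+1)$ and $1/((s/\delta)'+1)$ collapse to $2/3$ and $1/3$, so
$$\psi[V^{2}v^{-1};2](x) \approx \Big( \int_x^{\infty} V^{-2}v \Big)^{-2/3} V^{-2}(x) v(x), \qquad \Psi[V^{2}v^{-1};2](x) \approx \Big( \int_x^{\infty} V^{-2}v \Big)^{1/3}.$$
By the definition of $V_1$, the right-hand side of the second relation equals $V_1(x)$, whence the first becomes $V_1^{-2}(x) V^{-2}(x) v(x) = \{V_1 \cdot V\}^{-2}(x) v(x)$; these are the formulas recorded in the statement. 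Since Theorem~\ref{RT.thm.main.9} carries the condition~\eqref{Reduction.Theorem.eq.1.3} through unchanged, the equivalence follows.

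There is essentially no obstacle here: the content is entirely the bookkeeping of exponents under the choice $\delta = s/2$ together with the identification $\Psi[V^2 v^{-1};2] \approx V_1$. The only point needing a moment's attention is the self-conjugacy $2' = 2$, which is what makes the fractional exponents simplify so cleanly.
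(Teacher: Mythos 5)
Your proposal is correct and coincides with the paper's own proof, which likewise obtains the corollary by specialising Theorem~\ref{RT.thm.main.9} to $\delta = s/2$ and using $(s/\delta)' = 2$ to simplify the exponents and identify $\Psi[V^2v^{-1};2] \approx V_1$. The extra bookkeeping you spell out is exactly what the paper leaves implicit.
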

\begin{proof}
    The statement follows by Theorem \ref{RT.thm.main.9} with $\delta = s /2$.
\end{proof}

The following "dual" statement also holds true and can be proved
analogously.
\begin{theorem}\label{RT.thm.main.10}
    Let $0 < \beta \le \infty$, $0 < s < \infty$, and let $T: {\mathfrak M}^+ \rightarrow {\mathfrak M}^+$
    satisfies conditions {\rm (i)-(iii)}. Assume that $u,\,w \in {\mathcal W}(0,\infty)$
    and $v \in {\mathcal W}(0,\infty)$ be such that $V_*(x) < \infty$ for all $x > 0$ holds. Then
    inequality \eqref{Reduction.Theorem.eq.1.1.00} holds if and only if
    both
    \begin{equation}\label{RT.thm.main.10.eq.1}
    \left\|  T_{\big\{\Phi \big[V_*^{s / \delta}v^{1 - s / \delta};s/\delta\big]\big\}^{2 / \delta}} (f) \right\|_{\beta, w, (0,\infty)} \le c \|f\|_{s,\phi\big[V_*^{s / \delta}v^{1 - s / \delta};s/\delta\big],(0,\infty)}, \, f \in {\mathfrak M}^{\downarrow},
    \end{equation}
    where $0 < \delta < s$ and
    \begin{align*}
    \phi[V_*^{s / \delta}v^{1 - s / \delta};s/\delta](x) & \approx \bigg( \int_0^x V_*^{-(s / \delta)'}v\bigg)^{- \frac{(s / \delta)'}{(s / \delta)' + 1}}V_*^{-(s / \delta)'}(x)v(x),~(x>0), \\
    \Phi [V_*^{s / \delta}v^{1 - s / \delta};s/\delta](x) & \approx \bigg(\int_0^x V_*^{-(s / \delta)'}v\bigg)^{\frac{1}{(s / \delta)' + 1}},~(x>0),
    \end{align*}
    and \eqref{Reduction.Theorem.eq.1.3} hold.
\end{theorem}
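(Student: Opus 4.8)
The plan is to mirror the proof of Theorem~\ref{RT.thm.main.9}, simply interchanging the roles of the Hardy operator $H$ and the Copson operator $H^*$, of $V$ and $V_*$, and of the cones ${\mathfrak M}^{\downarrow}$ and ${\mathfrak M}^{\uparrow}$. First I would rewrite the target inequality \eqref{Reduction.Theorem.eq.1.1.00} on the cone ${\mathfrak M}^{\uparrow}$ in the equivalent ``powered'' form
\begin{equation*}
\left\| \left\{ T(f^{1/\delta}) \right\}^{\delta} \right\|_{\beta / \delta, w, (0,\infty)} \le c^{\delta} \| f \|_{s / \delta, v, (0,\infty)}, \qquad f \in {\mathfrak M}^{\uparrow},
\end{equation*}
which is an immediate consequence of property {\rm (i)} and the homogeneity of the Lebesgue quasi-norm (raising both sides of \eqref{Reduction.Theorem.eq.1.1.00} to the power $\delta$ and substituting $f \mapsto f^{1/\delta}$, noting that $f \in {\mathfrak M}^{\uparrow}$ iff $f^{1/\delta} \in {\mathfrak M}^{\uparrow}$). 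Here the operator $\widetilde{T}(g) := \{T(g^{1/\delta})\}^{\delta}$ again satisfies conditions {\rm (i)-(iii)} (this is the routine verification that the class of admissible operators is closed under this conjugation, exactly as used implicitly in Corollary~\ref{RT.cor.main.3}).

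Next I would invoke Theorem~\ref{Reduction.Theorem.thm.3.3} — the ``$\uparrow$'' reduction theorem — applied to $\widetilde{T}$ with exponents $s/\delta$ and $\beta/\delta$ in place of $s$ and $\beta$, and with the weight $v$. Since $V_*(x) < \infty$ for all $x > 0$, one of the two cases there applies: if $V_*(0) = \infty$ the reduced inequality alone is equivalent to the original, and if $0 < V_*(0) < \infty$ one additionally needs \eqref{Reduction.Theorem.eq.1.3}, which after unravelling the powers is precisely \eqref{Reduction.Theorem.eq.1.3} for $T$ itself. This step converts the cone inequality into the non-restricted ``iterated Copson'' inequality
\begin{equation*}
\left\| \left\{ T\left( \int_x^{\infty} h \right)^{1/\delta} \right\}^{\delta} \right\|_{\beta / \delta, w, (0,\infty)} \le c^{\delta} \| h \|_{s / \delta, V_*^{s / \delta} v^{1 - s / \delta}, (0,\infty)}, \qquad h \in {\mathfrak M}^+.
\end{equation*}

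Then I would apply Theorem~\ref{RT.thm.main.3} (the ``$\int_0^x$ reduces to ${\mathfrak M}^{\downarrow}$'' theorem) to $\widetilde{T}$, with $s$ replaced by $s/\delta$, $\beta$ by $\beta/\delta$, and with the new weight $\tilde v := V_*^{s/\delta} v^{1 - s/\delta}$; this requires checking hypothesis \eqref{RT.thm.main.3.eq.0} for $\tilde v$, i.e. $\int_0^x \tilde v^{\,1 - (s/\delta)'} < \infty$, which follows since $\tilde v^{\,1-(s/\delta)'} = V_*^{-(s/\delta)'} v^{\,1 - (s/\delta)'} \cdot v^{\,\text{(stuff that cancels)}}$ — more precisely one computes $\tilde v^{1-(s/\delta)'} \approx V_*^{-(s/\delta)'} v$ after simplification, and the finiteness of this integral near $0$ is guaranteed by $V_*(0) \le V_*(x) < \infty$ combined with $\int_0^x v < \infty$ (local integrability of the weight $v$). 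The output is an inequality of the form \eqref{RT.thm.main.3.eq.2} with the auxiliary weight $\phi[\tilde v; s/\delta]$ and its primitive $\Phi[\tilde v; s/\delta]$; undoing the $\delta$-power conjugation then gives \eqref{RT.thm.main.10.eq.1}. The final task is to simplify $\phi[\tilde v; s/\delta]$ and $\Phi[\tilde v; s/\delta]$ into the closed forms displayed in the statement: by definition
\begin{equation*}
\Phi[\tilde v; s/\delta](x) = \bigg( \int_0^x \tilde v^{\,1 - (s/\delta)'} \bigg)^{\frac{1}{(s/\delta)' + 1}}, \qquad \tilde v^{\,1 - (s/\delta)'} \approx V_*^{-(s/\delta)'} v,
\end{equation*}
and an analogous identity for $\phi$, which match the asserted asymptotics. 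The main obstacle — though it is bookkeeping rather than anything deep — is keeping the tower of exponents straight: one must carefully track that the parameter fed into Theorem~\ref{RT.thm.main.3} is $s/\delta$ (not $s$), that its conjugate is $(s/\delta)'$, and that the weight passed in is the composite $V_*^{s/\delta} v^{1-s/\delta}$, so that the algebraic simplification $\big(V_*^{s/\delta} v^{1-s/\delta}\big)^{1-(s/\delta)'} \approx V_*^{-(s/\delta)'} v$ comes out correctly; a sign error in any exponent would break the final formula. I would close by remarking, as the statement itself does, that this ``dual'' argument is the word-for-word analogue of the proof of Theorem~\ref{RT.thm.main.9} with $H \leftrightarrow H^*$, $V \leftrightarrow V_*$, and $\uparrow \leftrightarrow \downarrow$ interchanged throughout.
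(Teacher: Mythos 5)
Your proposal is correct and follows essentially the paper's intended argument: the paper proves only the primal case (Theorem \ref{RT.thm.main.9}) via the powered conjugation $\widetilde{T}(f)=\{T(f^{1/\delta})\}^{\delta}$, Theorem \ref{Reduction.Theorem.thm.3.1}, and Theorem \ref{RT.thm.main.4}, and declares the dual ``provable analogously,'' which is exactly your chain (powered form, Theorem \ref{Reduction.Theorem.thm.3.3}, Theorem \ref{RT.thm.main.3}, then the exact computation $\tilde v^{\,1-(s/\delta)'}=V_*^{-(s/\delta)'}v$ yielding the displayed $\phi$ and $\Phi$). Only a cosmetic slip: since $V_*$ is non-increasing one has $V_*(t)\ge V_*(x)$ for $t\le x$ (not $V_*(0)\le V_*(x)$), which is what makes $\int_0^x V_*^{-(s/\delta)'}v\le (\sigma'-1)^{-1}V_*(x)^{1-(s/\delta)'}<\infty$ and hence hypothesis \eqref{RT.thm.main.3.eq.0} hold for $\tilde v$.
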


To state the next statement we need the following notations:
$$
V_1^* (x) : = \bigg( \int_0^x V_*^{-2}v \bigg)^{1/3},\qquad (x > 0).
$$
\begin{corollary}\label{HardyIneqOnTheCone.cor.16}
    Let $0 < \beta \le \infty$, $0 < s < \infty$, and let $T: {\mathfrak M}^+ \rightarrow {\mathfrak M}^+$
    satisfies conditions {\rm (i)-(iii)}. Assume that $u,\,w \in {\mathcal W}(0,\infty)$
    and $v \in {\mathcal W}(0,\infty)$ be such that $V_*(x) < \infty$ for all $x > 0$ holds. Then
    inequality \eqref{Reduction.Theorem.eq.1.1.00} holds if and only if
    both
    \begin{equation}\label{HardyIneqOnTheCone.cor.16.eq/1}
    \left\|  T_{\big\{\Phi \big[V_*^2v^{-1};2\big]\big\}^{4 / p}} (f) \right\|_{\beta, w, (0,\infty)} \le c \|f\|_{s,\phi\big[V_*^{2}v^{-1};2\big],(0,\infty)}, \, f \in {\mathfrak M}^{\downarrow},
    \end{equation}
    where
    \begin{align*}
    \phi[V_*^{2}v^{-1};2](x) & \approx \{ V_1^* \cdot V_*\}^{-2}(x)v(x), ~(x>0), \\
    \Phi [V_*^{2}v^{-1};2](x) & \approx V_1^* (x),~(x>0),
    \end{align*}
    and \eqref{Reduction.Theorem.eq.1.3} hold.
\end{corollary}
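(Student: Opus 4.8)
The plan is to obtain the corollary as the special case $\delta = s/2$ of Theorem \ref{RT.thm.main.10}, and then to rewrite its conclusion in the notation introduced just above.

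First I would check that $\delta = s/2$ is admissible: since $s > 0$ we have $0 < s/2 < s$, which is precisely the range of $\delta$ permitted in Theorem \ref{RT.thm.main.10}, while all the remaining hypotheses (on $\beta$, $s$, $T$, on the weights $u,\,v,\,w$, and the finiteness $V_*(x) < \infty$ for all $x > 0$) coincide with the ones assumed here. Thus Theorem \ref{RT.thm.main.10} applies and asserts that \eqref{Reduction.Theorem.eq.1.1.00} holds if and only if \eqref{Reduction.Theorem.eq.1.3} together with \eqref{RT.thm.main.10.eq.1}, taken with $\delta = s/2$, hold.

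Next I would carry out the substitution $\delta = s/2$. This gives $s/\delta = 2$, whence $(s/\delta)' = 2' = 2$ by Convention \ref{Notat.and.prelim.conv.1.1}(ii) and $2/\delta = 4/s$; in particular $v^{1-s/\delta} = v^{-1}$. Hence the operator subscript $\{\Phi[V_*^{s/\delta} v^{1-s/\delta};s/\delta]\}^{2/\delta}$ in \eqref{RT.thm.main.10.eq.1} becomes $\{\Phi[V_*^2 v^{-1};2]\}^{4/s}$, and the two weight functions occurring there become
\begin{align*}
\phi[V_*^2 v^{-1};2](x) & \approx \bigg(\int_0^x V_*^{-2} v\bigg)^{-2/3} V_*^{-2}(x) v(x), \\
\Phi[V_*^2 v^{-1};2](x) & \approx \bigg(\int_0^x V_*^{-2} v\bigg)^{1/3}.
\end{align*}

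Finally I would rephrase these in terms of $V_1^*(x) = \big(\int_0^x V_*^{-2} v\big)^{1/3}$. Clearly $\Phi[V_*^2 v^{-1};2](x) \approx V_1^*(x)$, and since $\big(\int_0^x V_*^{-2} v\big)^{-2/3} = V_1^*(x)^{-2}$ we get $\phi[V_*^2 v^{-1};2](x) \approx V_1^*(x)^{-2} V_*^{-2}(x) v(x) = \{V_1^* \cdot V_*\}^{-2}(x) v(x)$, which is exactly the claimed equivalence. No genuine obstacle arises here: the argument is a pure substitution, and the only points needing (minimal) care are the value of the conjugate index, $2' = 2$, and the cancellation of the cube-roots in the definition of $V_1^*$.
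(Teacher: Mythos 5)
Your proposal is correct and is precisely the paper's (implicit) argument: exactly as Corollary \ref{HardyIneqOnTheCone.cor.17} is deduced from Theorem \ref{RT.thm.main.9}, the present statement follows from Theorem \ref{RT.thm.main.10} with $\delta = s/2$, and your computations $(s/\delta)' = 2$, $2/\delta = 4/s$, and the identification of $\phi[V_*^2 v^{-1};2]$ and $\Phi[V_*^2 v^{-1};2]$ via $V_1^*$ are all right. Note that the exponent you obtain is $4/s$, which shows the ``$4/p$'' in \eqref{HardyIneqOnTheCone.cor.16.eq/1} to be a misprint ($p$ is not among the corollary's parameters; compare the exponent $4/s$ in \eqref{HardyIneqOnTheCone.cor.17.eq/1}), so no gap on your side.
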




\section{The weighted Hardy-type inequalities on the cones of monotone functions}\label{HardyIneqOnTheCone}

In this section we consider weighted Hardy inequalities on the cones
of monotone functions.

Note that inequality
\begin{equation}\label{Reduction.Theorem.Thm.2.5}
\| H_u (f) \|_{q,w,(0,\infty)} \le c \| f \|_{p,v,(0,\infty)}, \, f
\in {\mathfrak M}^{\downarrow}
\end{equation}
was considered by many authors and there exist several
characterizations of this inequality (see, survey paper \cite{cpss},
\cite{bengros}, \cite{gjop}, \cite{cgmp2008},  and \cite{GogStep}).

Using change of variables $x = 1/t$, we can easily obtain full
characterization of the weighted inequality
\begin{equation}\label{Reduction.Theorem.Thm.2.5.00}
\| H_u^* (f) \|_{q,w,(0,\infty)} \le c \| f \|_{p,v,(0.\infty)}, \,
f \in {\mathfrak M}^{\uparrow}.
\end{equation}

Our aim in this section is to give the characterization of the
inequalities
\begin{equation}\label{HardyIneqOnTheCone.cor.38.eq.1}
\| H_u (f) \|_{q,w,(0,\infty)} \le c \| f \|_{p,v,(0,\infty)}, \, f
\in {\mathfrak M}^{\uparrow}
\end{equation}
and
\begin{equation}\label{HardyIneqOnTheCone.cor.18.eq.1}
\| H_u^* (f) \|_{q,w,(0,\infty)} \le c \| f \|_{p,v,(0,\infty)}, \,
f \in {\mathfrak M}^{\downarrow}.
\end{equation}

Inequality \eqref{HardyIneqOnTheCone.cor.38.eq.1} was considered in
\cite{heinstep1993} in the case when $1 < p,\,q < \infty$, and
recently, completely characterized in \cites{gold2011.1,gold2011.2}
and \cite{GogStep} in the case $0 < p,\,q < \infty$. It is worth to
mention that in the most difficult case when $0 < q < p \le 1$, the
characterization obtained in \cite[Theorem 3.12]{GogStep} involves
additional function $\varphi (x) : = W^{-1}(4W(x))$, where
$W^{-1}(t) : = \inf\{s \ge 0:\, W(s) = t\}$ is the generalized
inverse function of $W$. Theorem \ref{Thm.2.5.0} give us a another
characterization of \eqref{HardyIneqOnTheCone.cor.38.eq.1} and its
proof does not use the discretization technique.

Recall the following complete characterization of the weighted Hardy
inequality on the cone of non-increasing functions.
\begin{theorem}[\cite{GogStep}, Theorems 2.5, 3.15, 3.16]\label{Thm.2.5.}
    Let $0 < q,\,p \le \infty$. Then inequality \eqref{Reduction.Theorem.Thm.2.5}
    with the best constant $c$ holds if and only if:

    {\rm (i)} $1 < p \le q < \infty$, and in this case $c \approx A_0 + A_1$,
    where
    \begin{align*}
    A_0 : & = \sup_{t > 0}\bigg( \int_0^t U^q(\tau) w(\tau)\,d\tau \bigg)^{\frac{1}{q}} V^{- \frac{1}{p}} (t),\\
    A_1 : & = \sup_{t > 0} W_*^{\frac{1}{q}}(t) \bigg( \int_0^t \bigg( \frac{U(\tau)}{V(\tau)}\bigg)^{p'}v(\tau)\,d \tau \bigg)^{\frac{1}{p'}};
    \end{align*}

    {\rm (ii)} $q < p < \infty$ and $1 < p < \infty$, and in this case $c \approx
    B_0 + B_1$, where
    \begin{align*}
    B_0 : & =  \bigg( \int_0^{\infty} V^{- \frac{r}{p}}(t)\bigg( \int_0^t U^q(\tau) w(\tau)\,d\tau
    \bigg)^{\frac{r}{p}} U^q (t) w(t)\,dt \bigg)^{\frac{1}{r}}, \\
    B_1 : & = \bigg( \int_0^{\infty} W_*^{\frac{r}{p}}(t) \bigg( \int_0^t \bigg(
    \frac{U(\tau)}{V(\tau)} \bigg)^{p'} v(\tau)\,d\tau \bigg)^{\frac{r}{p'}} w(t)\,dt \bigg)^{\frac{1}{r}};
    \end{align*}

    {\rm (iii)} $q < p \le 1$, and in this case $c \approx B_0 + C_1$, where
    \begin{align*}
    C_1 : & =  \bigg( \int_0^{\infty} \bigg( \esup_{\tau \in (0,t)}
    \frac{U^p(\tau)}{V(\tau)}\bigg)^{\frac{r}{p}} W_*^{\frac{r}{p}}(t) w(t) \,dt \bigg)^{\frac{1}{r}};
    \end{align*}

    {\rm (iv)} $p \le q < \infty$ and $p \le 1$, and in this case $c =
    D_0$, where
    $$
    D_0 : = \sup_{t > 0} V^{- \frac{1}{p}}(t)\bigg( \int_0^{\infty} U^q (\min
    \{\tau,t\}) w(\tau)\,d\tau \bigg)^{ \frac{1}{q}};
    $$

    {\rm (v)} $p \le 1$ and $q = \infty$, and in this case $c = E_0$, where
    $$
    E_0 : = \esup_{t > 0} V^{- \frac{1}{p}}(t) \bigg( \esup_{\tau > 0} \, U
    (\min \{\tau,t\}) w(\tau) \bigg);
    $$

    {\rm (vi)} $1 < p < \infty$ and $q = \infty$, and in this case $c = F_0$,
    where
    $$
    F_0 : = \esup_{t > 0} w(t) \bigg( \int_0^t \bigg( \int_{\tau}^t u(y)
    V^{- 1}(y)\,dy \bigg)^{p'}v(\tau)\,d\tau \bigg)^{\frac{1}{p'}};
    $$

    {\rm (vii)} $p = \infty$ and $0 < q < \infty$, and in this case $c = G_0$,
    where
    $$
    G_0 : = \bigg( \int_0^{\infty} \bigg( \int_0^t
    \frac{u(y)\,dy}{\esup_{\tau \in (0,y)} v(\tau)}\bigg)^q
    w(t)\,dt\bigg)^{\frac{1}{q}};
    $$

    {\rm (viii)} $p = q = \infty$, and in this case $c = H_0$,
    where
    $$
    H_0 : = \esup_{t > 0} \bigg( \int_0^t \frac{u(y)\,dy}{\esup_{\tau
            \in (0,y)} v(\tau)}\bigg) w(t).
    $$
\end{theorem}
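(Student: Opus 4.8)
The plan is to split \eqref{Reduction.Theorem.Thm.2.5} into the eight cases according to the position of $p$ and $q$ relative to $1$ and to $\infty$, extracting necessity of the displayed quantities from test functions and proving sufficiency by the tool appropriate to each range. (A uniform alternative worth keeping in reserve is to apply the reduction Theorem~\ref{Reduction.Theorem.thm.3.1} with $T=H_u$, passing to an unrestricted iterated Hardy inequality and quoting its characterization; but the direct route produces the clean constants $A_0,\dots,H_0$ with far less evaluation.) For necessity, inserting $f=\chi_{(0,t)}$ and splitting $\int_0^\infty=\int_0^t+\int_t^\infty$ already forces $A_0$, $D_0$, $E_0$ -- and, after raising to the power $r$ and integrating, the summand $B_0$ -- to be $\lesssim c$, while the sharper choice $f(x)=\chi_{(0,t)}(x)\int_x^t\big(U(y)/V(y)\big)^{p'-1}u(y)V(y)^{-1}\,dy$ for $1<p$, together with its $p\le1$ and $q<p$ analogues (with the power replaced by an $\esup$), which saturate the inner Hardy estimate, yields $A_1$, $B_1$, $C_1$, $F_0$; for $p=\infty$, truncations of $f(y)=\big(\esup_{(0,y)}v\big)^{-1}$ give $G_0$ and $H_0$.

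For the ranges with $1<p$, namely cases (i), (ii), (vi), the decisive tool is the Sawyer duality principle. Dualizing the outer $L^q(w)$ norm (the $L^\infty$ sup when $q=\infty$) and applying Fubini turns $H_u$ into a Copson-type average, reducing \eqref{Reduction.Theorem.Thm.2.5} to the requirement that, uniformly over the dual unit ball, $\sup_{f\in{\mathfrak M}^{\downarrow}}\|f\|_{p,v}^{-1}\int_0^\infty fh\le c$; then
$$\sup_{f\in{\mathfrak M}^{\downarrow}}\frac{\int_0^\infty fh}{\|f\|_{p,v}}\approx\left(\int_0^\infty\Big(\frac{\int_0^t h}{V(t)}\Big)^{p'}v\right)^{1/p'}+\frac{\int_0^\infty h}{V(\infty)^{1/p}}$$
converts the problem into an \emph{unrestricted} weighted inequality whose kernel is $U(\min(\cdot,t))$. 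Splitting $U(\min(x,t))\approx U(x)\chi_{\{x<t\}}+U(t)\chi_{\{x\ge t\}}$ breaks this into two one-dimensional Hardy/Copson inequalities, and the classical characterizations (Muckenhoupt--Bradley when $p\le q$, Maz'ya--Sawyer--Sinnamon--Stepanov when $q<p$) deliver $A_0+A_1$, $B_0+B_1$ and $F_0$ after a routine computation. Cases (v), (vii), (viii) then follow either by letting $q\to\infty$ in the cases with $q<\infty$ or, more directly, from the pointwise bound $f(y)\le\|f\|_{\infty,v}\big/\esup_{(0,y)}v$ valid for $f\downarrow$, which gives $H_u f(x)\le\|f\|_{\infty,v}\int_0^x u(y)\big(\esup_{(0,y)}v\big)^{-1}\,dy$ and hence $G_0$ and $H_0$.

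For $p\le1\le q$, that is case (iv), duality is unavailable, so I would decompose $f\in{\mathfrak M}^{\downarrow}$ by the layer-cake formula $f=\int_0^\infty\chi_{(0,\lambda(s))}\,ds$ with $\lambda(s)=|\{f>s\}|$, so that $H_u f=\int_0^\infty U(\min(\cdot,\lambda(s)))\,ds$. Minkowski's inequality in $L^q(w)$ (here $q\ge1$ is used) then gives $\|H_u f\|_{q,w}\le D_0\int_0^\infty V(\lambda(s))^{1/p}\,ds$, and a second application of Minkowski, in $L^{1/p}$, combined with the layer-cake formula for $f^p$, shows that $\int_0^\infty V(\lambda(s))^{1/p}\,ds\le\|f\|_{p,v}$; with the necessity of $D_0$ already in hand this yields $c=D_0$. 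The subcase $q=\infty$ is identical with $\esup$ in place of the outer integral, producing $E_0$.

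The genuinely hard case is (iii), $0<q<p\le1$: the constraint $q<p$ forces an integrated term, namely $B_0$, rather than a supremal one, while $p\le1$ forces the inner $\esup$ that appears in $C_1$, and neither duality nor the clean superposition argument of case (iv) applies. Here I would run the discretization/anti-discretization scheme: partition $(0,\infty)$ along a geometric sequence of level sets of $W$, estimate $H_u f$ on each block from above and below by its endpoint values, apply H\"older with exponent $r$ (where $1/r=1/q-1/p$) and the $p\le1$ power inequality to return from discrete sums to integrals, and then anti-discretize the resulting discrete conditions to $B_0+C_1$ -- the point being that working with $\esup$-envelopes of $U^p/V$ lets one dispense with the auxiliary function $\varphi(x)=W^{-1}(4W(x))$ of \cite[Theorem~3.12]{GogStep} and stay in terms of $W_*$. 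I expect this anti-discretization to be the main obstacle: one must check that both passages -- sums to integrals and integrals to sums -- are lossless, and that the $\esup$ inside $C_1$ matches exactly the quantity produced by the blocking. Everything else reduces, once the building-block test functions described above are fixed, to routine computation.
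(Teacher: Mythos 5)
First, a point of comparison: the paper does not prove Theorem \ref{Thm.2.5.} at all --- it is quoted verbatim from \cite{GogStep} (Theorems 2.5, 3.15, 3.16) --- so your proposal has to be judged against the known proofs in the literature rather than against an argument inside this paper. As a road map your plan does follow that literature (test functions for necessity, Sawyer duality for $1<p$, a superposition argument for $p\le 1\le q$, discretization for $q<p\le 1$), but as a proof it has concrete gaps.

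In case (iv) you apply Minkowski's inequality in $L^q(w)$ and explicitly note that $q\ge 1$ is used; however case (iv) is $p\le q<\infty$, $p\le 1$, which contains $q<1$ (e.g.\ $p=q=1/2$), so as written the argument does not cover that subrange. The standard repair is to apply the layer-cake formula to $f^p$ rather than to $f$ and to use Minkowski with exponents $1/p\ge 1$ (inner integral) and $q/p\ge 1$ (outer norm), which does give $c=D_0$ exactly. A similar range problem occurs in case (ii): there $q$ runs over all of $(0,p)$, and for $0<q\le 1$ the step ``dualize the outer $L^q(w)$ norm'' is unavailable (the dual of $L^q$ is trivial); this is exactly the subrange that historically required reduction theorems instead of duality, and your plan does not say what replaces the dualization there. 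Most importantly, case (iii) ($0<q<p\le 1$), which is the genuinely difficult part and the actual content of Theorems 3.15--3.16 of \cite{GogStep}, is not proved but only projected: you say you \emph{would} run a discretization/anti-discretization scheme, you yourself flag the anti-discretization and the matching of the inner $\esup$ with $C_1$ as the main obstacle, and the claim that one can dispense with $\varphi(x)=W^{-1}(4W(x))$ is asserted rather than established (that auxiliary function arises in \cite{GogStep} for the non-decreasing cone, i.e.\ Theorem 3.12, not for \eqref{Reduction.Theorem.Thm.2.5}, so the comparison is somewhat off target). Until these steps are carried out, the hardest third of the theorem remains open in your write-up; note also that in cases (iv)--(viii) the statement asserts exact equality of the best constant, whereas a Sawyer-duality route in case (vi) only yields $c\approx F_0$, not $c=F_0$.
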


The following theorem holds true.
\begin{theorem}\label{Thm.2.5.00}
    Let $0 < q,\,p \le \infty$. Then inequality \eqref{Reduction.Theorem.Thm.2.5.00}
    with the best constant $c$ holds if and only if:

    {\rm (i)} $1 < p \le q < \infty$, and in this case $c \approx A_0^* +
    A_1^*$, where
    \begin{align*}
    A_0^* : & = \sup_{t > 0}\bigg( \int_t^{\infty} U_*^q(\tau) w(\tau)\,d\tau \bigg)^{\frac{1}{q}} V_*^{- \frac{1}{p}} (t),\\
    A_1^* : & = \sup_{t > 0} W^{\frac{1}{q}}(t) \bigg( \int_t^{\infty} \bigg(
    \frac{U_*(\tau)}{V_*(\tau)}\bigg)^{p'}v(\tau)\,d\tau \bigg)^{\frac{1}{p'}};
    \end{align*}

    {\rm (ii)} $q < p < \infty$ and $1 < p < \infty$, and in this case $c
    \approx B_0^* + B_1^*$, where
    \begin{align*}
    B_0^* : & =  \bigg( \int_0^{\infty} V_*^{- \frac{r}{p}}(t)\bigg( \int_t^{\infty}
    U_*^q(\tau) w(\tau)\,d\tau \bigg)^{\frac{r}{p}} U_*^q (t) w(t)\,dt \bigg)^{\frac{1}{r}}, \\
    B_1^* : & = \bigg( \int_0^{\infty} W^{\frac{r}{p}}(t) \bigg( \int_t^{\infty}
    \bigg( \frac{U_*(\tau)}{V_*(\tau)} \bigg)^{p'} v(\tau)\,d\tau \bigg)^{\frac{r}{p'}} w(t)\,dt
    \bigg)^{\frac{1}{r}};
    \end{align*}

    {\rm (iii)} $q < p \le 1$, and in this case $c \approx B_0^* +  C_1^*$, where
    \begin{align*}
    C_1^* : & =  \bigg( \int_0^{\infty} \bigg( \esup_{y \in (t,\infty)}
    \frac{U_*^p(y)}{V_*(y)}\bigg)^{\frac{r}{p}} W^{\frac{r}{p}}(t) w(t) \,dt
    \bigg)^{\frac{1}{r}};
    \end{align*}

    {\rm (iv)} $p \le q < \infty$ and $p \le 1$, and in this case $c
    = D_0^*$, where
    $$
    D_0^* : = \sup_{t > 0} V_*^{- \frac{1}{p}}(t)\bigg( \int_0^{\infty} U_*^q
    (\max \{\tau,t\}) w(\tau)\,d\tau \bigg)^{ \frac{1}{q}}.
    $$

    {\rm (v)} $p \le 1$ and $q = \infty$, and in this case $c = E_0$, where
    $$
    E_0^* : = \esup_{t > 0} V_*^{- \frac{1}{p}}(t) \bigg( \esup_{\tau > 0} \, U_*
    (\max \{\tau,t\}) w(\tau) \bigg);
    $$

    {\rm (vi)} $1 < p < \infty$ and $q = \infty$, and in this case $c = F_0^*$,
    where
    $$
    F_0^* : = \esup_{t > 0} w(t) \bigg( \int_t^{\infty} \bigg( \int_t^{\tau} u(y)
    V_*^{- 1}(y)\,dy \bigg)^{p'}v(\tau)\,d\tau \bigg)^{\frac{1}{p'}};
    $$

    {\rm (vii)} $p = \infty$ and $0 < q < \infty$, and in this case $c = G_0^*$,
    where
    $$
    G_0^* : = \bigg( \int_0^{\infty} \bigg( \int_t^{\infty}
    \frac{u(y)\,dy}{\esup_{\tau \in (y,\infty)} v(\tau)}\bigg)^q
    w(t)\,dt\bigg)^{\frac{1}{q}};
    $$

    {\rm (viii)} $p = q = \infty$, and in this case $c = H_0^*$,
    where
    $$
    H_0^* : = \esup_{t > 0} \bigg( \int_t^{\infty} \frac{u(y)\,dy}{\esup_{\tau \in (y,\infty)} v(\tau)}\bigg) w(t).
    $$
\end{theorem}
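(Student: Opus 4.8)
The plan is to reduce inequality \eqref{Reduction.Theorem.Thm.2.5.00} on the cone ${\mathfrak M}^{\uparrow}$ to inequality \eqref{Reduction.Theorem.Thm.2.5} on the cone ${\mathfrak M}^{\downarrow}$ by the change of variables $x = 1/t$, and then to read off the criteria from Theorem \ref{Thm.2.5.}.

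First, for a weight $g$ on $(0,\infty)$ put $\widetilde g(t) : = g(1/t)/t^2$, and for $f \in {\mathfrak M}^+(0,\infty)$ put $\widetilde f(t) : = f(1/t)$. Then $\widetilde g$ is again a weight, $f \in {\mathfrak M}^{\uparrow} \Leftrightarrow \widetilde f \in {\mathfrak M}^{\downarrow}$, and substituting $s = 1/\tau$ inside the Copson integral gives $H_u^*f(1/t) = \int_{1/t}^{\infty} f(s) u(s)\,ds = \int_0^t \widetilde f(\tau)\widetilde u(\tau)\,d\tau = H_{\widetilde u}\widetilde f(t)$. A further substitution $x = 1/t$ in the outer integrals shows, for $p,q < \infty$, that $\|H_u^*f\|_{q,w,(0,\infty)} = \|H_{\widetilde u}\widetilde f\|_{q,\widetilde w,(0,\infty)}$ and $\|f\|_{p,v,(0,\infty)} = \|\widetilde f\|_{p,\widetilde v,(0,\infty)}$; when $p = \infty$ (resp. $q = \infty$) one instead has $\|f\|_{\infty,v,(0,\infty)} = \|\widetilde f\|_{\infty,\bar v,(0,\infty)}$ with $\bar v(t) : = v(1/t)$ (resp. the analogous statement with $\bar w(t) : = w(1/t)$), since an essential supremum depends only on the values of the integrand, not on the density of Lebesgue measure. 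Hence \eqref{Reduction.Theorem.Thm.2.5.00} for non-decreasing $f$ is equivalent to \eqref{Reduction.Theorem.Thm.2.5} for non-increasing $\widetilde f$ with the triple of weights $(\widetilde u, \widetilde v, \widetilde w)$ — with $\widetilde v$ replaced by $\bar v$ if $p = \infty$ and $\widetilde w$ replaced by $\bar w$ if $q = \infty$ — and with the same best constant $c$.

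Next I would record the elementary identities obtained by the same substitution: $\widetilde U(t) = U_*(1/t)$, $\widetilde V(t) = V_*(1/t)$, $\widetilde W(t) = W_*(1/t)$, $\widetilde U_*(t) = U(1/t)$, $\widetilde V_*(t) = V(1/t)$, $\widetilde W_*(t) = W(1/t)$, while the parameter $r$ defined by $1/r = 1/q - 1/p$ is unaffected. Applying Theorem \ref{Thm.2.5.} to the triple $(\widetilde u, \widetilde v, \widetilde w)$ and then using these identities together with the substitution $s \mapsto 1/s$ in each of the remaining inner and outer integrals (or essential suprema), I would check that the quantities $A_0, A_1, B_0, B_1, C_1, D_0, E_0, F_0, G_0, H_0$ turn into $A_0^*, A_1^*, B_0^*, B_1^*, C_1^*, D_0^*, E_0^*, F_0^*, G_0^*, H_0^*$ respectively, which establishes cases (i)--(viii). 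This is a routine case-by-case verification.

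The only place that needs genuine care is the bookkeeping in cases (v)--(viii), where $p = \infty$ or $q = \infty$. There the weight on the essential-supremum side transforms without the Jacobian factor $t^{-2}$, whereas the other weight and the kernel $u$ do carry it; one must verify that in the combinations actually appearing in $E_0^*, F_0^*, G_0^*, H_0^*$ — for instance in $\int_t^{\infty} u(y)\big(\esup_{\tau \in (y,\infty)} v(\tau)\big)^{-1}\,dy$ — the factor $s^2$ produced by rewriting the inner essential supremum cancels the factor $s^{-2}$ produced by substituting in $u(y)\,dy$, so that the resulting expressions are exactly the starred ones. Everywhere else the argument is a direct transcription of Theorem \ref{Thm.2.5.}.
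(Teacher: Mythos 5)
Your proposal is correct and follows essentially the same route as the paper: the paper also reduces \eqref{Reduction.Theorem.Thm.2.5.00} to \eqref{Reduction.Theorem.Thm.2.5} via the substitution $x=1/t$ (with the Jacobian-free transformed weights exactly in the cases $p=\infty$ or $q=\infty$, as you note) and then transcribes the conditions of Theorem \ref{Thm.2.5.} back through the same substitution. Your explicit check that $H_u^*f(1/t)=H_{\widetilde u}\widetilde f(t)$ and the bookkeeping of $\widetilde U=U_*(1/\cdot)$, etc., is precisely the "easy to see" content of the paper's argument.
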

\begin{proof}
    By change of variables $x = 1 / t$, it is easy to see that inequality \eqref{Reduction.Theorem.Thm.2.5.00} holds if and only if
    \begin{equation*}
    \left\|H_{p,\tilde{u}} (f)\right\|_{q,\tilde{w},(0,\infty)} \leq c
    \,\|f\|_{p,\tilde{v},(0,\infty)}, \quad f \in {\mathfrak M}^{\downarrow}
    \end{equation*}
    holds, where
    $$
    \tilde{u} (t) = u \bigg(\frac{1}{t}\bigg)\frac{1}{t^2}, ~ \tilde{w} (t) = w \bigg(\frac{1}{t}\bigg)\frac{1}{t^2}, ~\tilde{v} (t) = v \bigg(\frac{1}{t}\bigg)\bigg(\frac{1}{t^2}\bigg), ~ t > 0,
    $$
    when $0 < p < \infty$, $0 < q < \infty$, and
    $$
    \tilde{u} (t) = u \bigg(\frac{1}{t}\bigg)\frac{1}{t^2}, ~ \tilde{w} (t) = w \bigg(\frac{1}{t}\bigg), ~\tilde{v} (t) = v \bigg(\frac{1}{t}\bigg)\bigg(\frac{1}{t^2}\bigg), ~ t > 0,
    $$
    when $0 < p < \infty$, $q = \infty$, and
    $$
    \tilde{u} (t) = u \bigg(\frac{1}{t}\bigg)\frac{1}{t^2}, ~ \tilde{w} (t) = w \bigg(\frac{1}{t}\bigg)\bigg(\frac{1}{t^2}\bigg), ~\tilde{v} (t) = v \bigg(\frac{1}{t}\bigg), ~ t > 0,
    $$
    when $p = q = \infty$,
    and
    $$
    \tilde{u} (t) = u \bigg(\frac{1}{t}\bigg)\frac{1}{t^2}, ~ \tilde{w} (t) = w \bigg(\frac{1}{t}\bigg), ~\tilde{v} (t) = v \bigg(\frac{1}{t}\bigg), ~ t > 0.
    $$

    Using Theorem \ref{Thm.2.5.}, and then applying substitution of variables mentioned above three times, we get the statement.
\end{proof}

The following theorem is true.
\begin{theorem}\label{Thm.2.5.0}
    Let $0 < q \le \infty$ and $0 < p < \infty$. Assume that $u,\,w \in {\mathcal W}(0,\infty)$
    and $v \in {\mathcal W}(0,\infty)$ be such that $V_*(x) < \infty$ for all $x > 0$ holds.
    Recall that
    $$
    V_1^* (x) : = \bigg( \int_0^x V_*^{-2}v \bigg)^{1/3},\qquad (x > 0).
    $$
    Denote by
    \begin{align*}
    U_1^*(x) : = \int_0^x u(t) [V_1^*]^{ \frac{4}{p}}(t) \,dt, \qquad (x > 0).
    \end{align*}
    Then inequality \eqref{HardyIneqOnTheCone.cor.38.eq.1}
    with the best constant $c$ holds if and only if:

    {\rm (i)} $1 < p \le q < \infty$, and in this case
    $$
    c \approx \tilde{A}_0 + \tilde{A}_1 + \| H_u ({\bf 1}) \|_{q,w,(0,\infty)} / \| {\bf 1} \|_{p,v,(0,\infty)},
    $$
    where
    \begin{align*}
    \tilde{A}_0 : & = \sup_{t > 0}\bigg( \int_0^t [U_1^*]^q (\tau) w(\tau)\,d\tau \bigg)^{\frac{1}{q}}  [V_1^*]^{- \frac{1}{p}}(t),\\
    \tilde{A}_1 : & = \sup_{t > 0} W_*^{\frac{1}{q}}(t) \bigg( \int_0^t [U_1^*]^{p'} (\tau) [V_1^*]^{- (2 + p')}(\tau) V_*^{-2}(\tau)v(\tau)\,d\tau \bigg)^{\frac{1}{p'}};
    \end{align*}

    {\rm (ii)} $q < p < \infty$ and $1 < p < \infty$, and in this case
    $$
    c \approx
    \tilde{B}_0 + \tilde{B}_1 + \| H_u ({\bf 1}) \|_{q,w,(0,\infty)} / \| {\bf 1} \|_{p,v,(0,\infty)},
    $$
    where
    \begin{align*}
    \tilde{B}_0 : & =  \bigg( \int_0^{\infty} [V_1^*]^{- \frac{r}{p}}(t)\bigg( \int_0^t [U_1^*]^q (\tau) w(\tau)\,d\tau \bigg)^{\frac{r}{p}} [U_1^*]^q (t) w(t)\,dt \bigg)^{\frac{1}{r}}, \\
    \tilde{B}_1 : & = \bigg( \int_0^{\infty} W_*^{\frac{r}{p}}(t) \bigg( \int_0^t  [U_1^*]^{p'}(\tau) [V_1^*]^{- (2 + p')}(\tau) V_*^{-2}(\tau) v(\tau)\,d\tau
    \bigg)^{\frac{r}{p'}} w(t)\,dt \bigg)^{\frac{1}{r}};
    \end{align*}

    {\rm (iii)} $q < p \le 1$, and in this case
    $$
    c \approx \tilde{B}_0 + \tilde{C}_1 + \| H_u ({\bf 1}) \|_{q,w,(0,\infty)} / \| {\bf 1} \|_{p,v,(0,\infty)},
    $$
    where
    \begin{align*}
    \tilde{C}_1 : & =  \bigg( \int_0^{\infty} \bigg( \esup_{\tau \in [0,t]} \frac{[U_1^*]^p(\tau)}{V_1^*(\tau)}
    \bigg)^{\frac{r}{p}} W_*^{\frac{r}{p}}(t) w(t) \,dt \bigg)^{\frac{1}{r}};
    \end{align*}

    {\rm (iv)} $p \le q < \infty$ and $0 < p \le 1$, and in this case
    $$
    c = \tilde{D}_0 + \| H_u ({\bf 1}) \|_{q,w,(0,\infty)} / \| {\bf 1} \|_{p,v,(0,\infty)},
    $$
    where
    $$
    \tilde{D}_0 : = \sup_{t > 0} [V_1^*]^{- \frac{1}{p}}(t)\bigg( \int_0^{\infty} [U_1^*]^q (\min
    \{\tau,t\}) w(\tau)\,d\tau \bigg)^{ \frac{1}{q}};
    $$

    {\rm (v)} $p \le 1$ and $q = \infty$, and in this case $$
    c = \tilde{E}_0 + \| H_u ({\bf 1}) \|_{q,w,(0,\infty)} / \| {\bf 1} \|_{p,v,(0,\infty)},
    $$
    where
    $$
    \tilde{E}_0 : = \esup_{t > 0} [V_1^*]^{- \frac{1}{p}}(t) \bigg( \esup_{\tau > 0} \, [U_1^*]
    (\min \{\tau,t\}) w(\tau) \bigg);
    $$

    {\rm (vi)} $1 < p < \infty$ and $q = \infty$, and in this case
    $$
    c = \widetilde{F}_0 + \| H_u ({\bf 1}) \|_{q,w,(0,\infty)} / \| {\bf 1} \|_{p,v,(0,\infty)},
    $$
    where
    $$
    \tilde{F}_0 : = \esup_{t > 0} w(t) \bigg( \int_0^t \bigg( \int_{\tau}^t u(y)
    [V_1^*]^{\frac{4 - p}{p}}(y)\,dy \bigg)^{p'}[V_1^*]^{- 2}(\tau)V_*^{-2}(\tau)v(\tau)\,d\tau \bigg)^{\frac{1}{p'}}.
    $$
\end{theorem}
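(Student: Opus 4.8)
The plan is to regard \eqref{HardyIneqOnTheCone.cor.38.eq.1} as the instance of inequality \eqref{Reduction.Theorem.eq.1.1.00} with $T=H_u$, $\beta=q$ and $s=p$, to switch the cone of monotone functions by means of Corollary \ref{HardyIneqOnTheCone.cor.16}, and then to invoke the complete characterization of the weighted Hardy inequality on the cone of non-increasing functions (Theorem \ref{Thm.2.5.}). The operator $H_u$ satisfies conditions {\rm (i)-(iii)} with constant one (homogeneity and monotonicity are obvious and $H_u(f+\lambda{\bf 1})=H_uf+\lambda H_u{\bf 1}$), and $V_*(x)<\infty$ for every $x>0$ by hypothesis, so Corollary \ref{HardyIneqOnTheCone.cor.16} (that is, Theorem \ref{RT.thm.main.10} with $\delta=s/2=p/2$) applies. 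It tells us that \eqref{HardyIneqOnTheCone.cor.38.eq.1} holds if and only if both
\[
\bigl\| T_{\{\Phi[V_*^{2}v^{-1};2]\}^{4/p}}(f)\bigr\|_{q,w,(0,\infty)} \le c\,\|f\|_{p,\phi[V_*^{2}v^{-1};2],(0,\infty)},\qquad f\in{\mathfrak M}^{\downarrow},
\]
and condition \eqref{Reduction.Theorem.eq.1.3}, which here reads $\|H_u({\bf 1})\|_{q,w,(0,\infty)}\le c\,\|{\bf 1}\|_{p,v,(0,\infty)}$, hold; the latter is precisely what produces the additive ratio term $\|H_u({\bf 1})\|_{q,w,(0,\infty)}/\|{\bf 1}\|_{p,v,(0,\infty)}$ occurring in each of the cases {\rm (i)-(vi)}.

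Next I would make the reduced inequality explicit. Using the definitions of $\Phi[\,\cdot\,;\,\cdot\,]$, $\phi[\,\cdot\,;\,\cdot\,]$ and $2'=2$ one gets $\Phi[V_*^{2}v^{-1};2](x)=\bigl(\int_0^xV_*^{-2}v\bigr)^{1/3}=V_1^*(x)$ and $\phi[V_*^{2}v^{-1};2](x)=\{V_1^*V_*\}^{-2}(x)v(x)$. Since $T=H_u$ is a genuine integral operator, the reduced operator is again a weighted Hardy operator,
\[
T_{\{\Phi[V_*^{2}v^{-1};2]\}^{4/p}}(f)(x)=H_u\bigl(f\,[V_1^*]^{4/p}\bigr)(x)=\int_0^x f(t)\,[V_1^*]^{4/p}(t)\,u(t)\,dt,
\]
and the weight $u[V_1^*]^{4/p}$ has primitive precisely $U_1^*$. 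Hence the reduced inequality is exactly \eqref{Reduction.Theorem.Thm.2.5} for the data $(\widetilde{u},\widetilde{v},w):=\bigl(u[V_1^*]^{4/p},\ \{V_1^*V_*\}^{-2}v,\ w\bigr)$. The only computation needed is that $\widetilde{v}=[V_1^*]^{-2}V_*^{-2}v$ together with $\tfrac{d}{dx}[V_1^*]^3=V_*^{-2}v$ gives $\widetilde{v}=3\,\tfrac{d}{dx}V_1^*$, so that $\widetilde{V}(x):=\int_0^x\widetilde{v}=3\,V_1^*(x)\approx V_1^*(x)$ (note $V_1^*(0)=0$).

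It then remains to apply Theorem \ref{Thm.2.5.} to the data $(\widetilde{u},\widetilde{v},w)$. Since $0<p<\infty$, only the cases {\rm (i)-(vi)} of that theorem occur, and they match the cases here; the conclusion follows by substituting $U\mapsto U_1^*$, $V\mapsto\widetilde{V}\approx V_1^*$, $W,W_*$ unchanged, and $v\,dt\mapsto\widetilde{v}\,dt=[V_1^*]^{-2}V_*^{-2}v\,dt$ into the relevant functionals: $A_0,A_1$ become $\widetilde{A}_0,\widetilde{A}_1$; $B_0,B_1$ become $\widetilde{B}_0,\widetilde{B}_1$; $B_0,C_1$ become $\widetilde{B}_0,\widetilde{C}_1$; $D_0$ becomes $\widetilde{D}_0$; $E_0$ becomes $\widetilde{E}_0$; and $F_0$ becomes $\widetilde{F}_0$, where in case {\rm (vi)} one additionally uses $\int_\tau^t\widetilde{u}\,\widetilde{V}^{-1}\approx\int_\tau^t u\,[V_1^*]^{(4-p)/p}$. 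Adding in each case the ratio term coming from \eqref{Reduction.Theorem.eq.1.3} produces the asserted values of $c$.

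The argument is conceptually short once Corollary \ref{HardyIneqOnTheCone.cor.16} and Theorem \ref{Thm.2.5.} are granted; the real work — and the place that needs care — is the routine but lengthy case-by-case substitution in the last step, and the check that the chain of reductions introduces no spurious constants in the cases {\rm (iv)-(vi)}, where Theorem \ref{Thm.2.5.} yields a sharp constant. For the latter one uses that $\Phi[V_*^{2}v^{-1};2]$ and $\phi[V_*^{2}v^{-1};2]$ are computed exactly (not merely up to equivalence) and that $H_u$ obeys {\rm (i)-(iii)} with constant one; one should also note that $V_1^*$ is well defined, with $V_1^*(0)=0$, under the single hypothesis $V_*(x)<\infty$ for all $x>0$.
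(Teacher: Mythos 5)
Your proposal is correct and follows the paper's own proof exactly: apply Corollary \ref{HardyIneqOnTheCone.cor.16} with $\beta=q$, $s=p$, $T=H_u$ to pass to the cone ${\mathfrak M}^{\downarrow}$ together with the condition \eqref{Reduction.Theorem.eq.1.3}, identify the reduced inequality as \eqref{Reduction.Theorem.Thm.2.5} for the data $u[V_1^*]^{4/p}$ and $\{V_1^*\cdot V_*\}^{-2}v$, and conclude by Theorem \ref{Thm.2.5.}. The additional computations you record (e.g.\ $\Phi[V_*^2v^{-1};2]=V_1^*$ and $\widetilde V\approx V_1^*$) merely make explicit the paper's closing step, so the argument is essentially identical.
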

\begin{proof}
    By Corollary \ref{HardyIneqOnTheCone.cor.16} applied with $\beta = q$, $s = p$ and $T = H_u$, inequality \eqref{HardyIneqOnTheCone.cor.38.eq.1} holds if and only if
    both
    \begin{equation}\label{HardyIneqOnTheCone.cor.18.eq.2}
    \left\|  H_{ u [V_1^*]^{4 / p}} (f) \right\|_{q, w, (0,\infty)} \le c \|f\|_{p,\{ V_1^* \cdot V_*\}^{-2}v,(0,\infty)}, \, f \in {\mathfrak M}^{\downarrow},
    \end{equation}
    and
    \begin{equation}\label{HardyIneqOnTheCone.cor.18.eq.3}
    \| H_u ({\bf 1}) \|_{q,w,(0,\infty)} \le c \| {\bf 1} \|_{p,v,(0,\infty)}
    \end{equation}
    hold.

    Now the statement follows by applying Theorem \ref{Thm.2.5.}.
\end{proof}

\begin{theorem}\label{Thm.2.5.0000}
    Let $0 < q \le \infty$ and $0 < p < \infty$.
    Recall that
    $$
    V_1 (x) : = \bigg( \int_x^{\infty} V^{-2}v \bigg)^{\frac{1}{3}}, \qquad (x > 0).
    $$
    Denote by
    \begin{align*}
    U_1(x) : = \int_x^{\infty} u(t) V_1^{\frac{4}{p}}(t) \,dt, \qquad (x > 0).
    \end{align*}
    Then inequality
    \eqref{HardyIneqOnTheCone.cor.18.eq.1}
    with the best constant $c$ holds if and only if:

    {\rm (i)} $1 < p \le q < \infty$, and in this case
    $$
    c \approx \tilde{A}_0^* +
    \tilde{A}_1^* + \| H_u^* ({\bf 1}) \|_{q,w,(0,\infty)} / \| {\bf 1} \|_{p,v,(0,\infty)},
    $$
    where
    \begin{align*}
    \tilde{A}_0^* : & = \sup_{t > 0}\bigg( \int_t^{\infty} U_1^q(\tau) w(\tau)\,d\tau \bigg)^{\frac{1}{q}} V_1^{- \frac{1}{p}} (t),\\
    \tilde{A}_1^* : & = \sup_{t > 0} W^{\frac{1}{q}}(t) \bigg( \int_t^{\infty} U_1^{p'}(\tau) V_1^{-(2+p')}(\tau) V^{-2}(\tau)v(\tau)\,d\tau \bigg)^{\frac{1}{p'}};
    \end{align*}

    {\rm (ii)} $q < p < \infty$ and $1 < p < \infty$, and in this case
    $$
    c
    \approx \tilde{B}_0^* + \tilde{B}_1^* + \| H_u^* ({\bf 1}) \|_{q,w,(0,\infty)} / \| {\bf 1} \|_{p,v,(0,\infty)},
    $$
    where
    \begin{align*}
    \tilde{B}_0^* : & =  \bigg( \int_0^{\infty} V_1^{- \frac{r}{p}}(t)\bigg( \int_t^{\infty}
    U_1^q(\tau) w(\tau)\,d\tau \bigg)^{\frac{r}{p}} U_1^q (t) w(t)\,dt \bigg)^{\frac{1}{r}}, \\
    \tilde{B}_1^* : & = \bigg( \int_0^{\infty} W^{\frac{r}{p}}(t) \bigg( \int_t^{\infty}
    U_1^{p'}(\tau) V_1^{-(2+p')}(\tau) V^{-2}(\tau)v(\tau)\,d\tau \bigg)^{\frac{r}{p'}} w(t)\,dt \bigg)^{\frac{1}{r}};
    \end{align*}

    {\rm (iii)} $q < p \le 1$, and in this case
    $$
    c \approx \tilde{B}_0^* +   \tilde{C}_1^* + \| H_u^* ({\bf 1}) \|_{q,w,(0,\infty)} / \| {\bf 1} \|_{p,v,(0,\infty)},
    $$
    where
    \begin{align*}
    \tilde{C}_1^* : & =  \bigg( \int_0^{\infty} \bigg( \esup_{\tau \in (t,\infty)}
    \frac{U_1^p(\tau)}{V_1(\tau)}\bigg)^{\frac{r}{p}} W^{\frac{r}{p}}(t) w(t) \,dt
    \bigg)^{\frac{1}{r}};
    \end{align*}

    {\rm (iv)} $p \le q < \infty$ and $p \le 1$, and in this case
    $$
    c = \tilde{D}_0^* + \| H_u^* ({\bf 1}) \|_{q,w,(0,\infty)} / \| {\bf 1} \|_{p,v,(0,\infty)},
    $$
    where
    $$
    \tilde{D}_0^* : = \sup_{t > 0} V_1^{- \frac{1}{p}}(t)\bigg( \int_0^{\infty} U_1^q
    (\max \{s,t\}) w(s)\,ds \bigg)^{ \frac{1}{q}}.
    $$

    {\rm (v)} $p \le 1$ and $q = \infty$, and in this case $$
    c = \tilde{E}_0 + \| H_u^* ({\bf 1}) \|_{q,w,(0,\infty)} / \| {\bf 1} \|_{p,v,(0,\infty)},
    $$
    where
    $$
    \tilde{E}_0^* : = \esup_{t > 0} V_1^{- \frac{1}{p}}(t) \bigg( \esup_{\tau > 0} \, U_1
    (\max \{\tau,t\}) w(\tau) \bigg);
    $$

    {\rm (vi)} $1 < p < \infty$ and $q = \infty$, and in this case
    $$
    c = \tilde{F}_0^* + \| H_u^* ({\bf 1}) \|_{q,w,(0,\infty)} / \| {\bf 1} \|_{p,v,(0,\infty)},
    $$
    where
    $$
    \tilde{F}_0^* : = \esup_{t > 0} w(t) \bigg( \int_t^{\infty} \bigg( \int_t^{\tau} u(y)
    V_1^{- 1}(y)\,dy \bigg)^{p'} V_1^{-2}(\tau) V^{-2}(\tau)v(\tau)\,d\tau \bigg)^{\frac{1}{p'}}.
    $$
\end{theorem}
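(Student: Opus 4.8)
The plan is to follow the scheme of the proof of Theorem~\ref{Thm.2.5.0}, but with the ``non-increasing'' cone-switching statement Corollary~\ref{HardyIneqOnTheCone.cor.17} in place of Corollary~\ref{HardyIneqOnTheCone.cor.16}, and with Theorem~\ref{Thm.2.5.00} in place of Theorem~\ref{Thm.2.5.}. First I would note that $H_u^*$ satisfies conditions {\rm (i)--(iii)} and that $(H_u^*)_{\phi} = H_{u\phi}^*$ for any weight $\phi$, since $(H_u^*)_{\phi}g = H_u^*(g\phi)$. Applying Corollary~\ref{HardyIneqOnTheCone.cor.17} with $\beta = q$, $s = p$, $T = H_u^*$, and using $\Psi[V^2 v^{-1};2] \approx V_1$ and $\psi[V^2 v^{-1};2] \approx \{V_1 \cdot V\}^{-2} v$, one reduces inequality~\eqref{HardyIneqOnTheCone.cor.18.eq.1} to the conjunction of
\begin{equation*}
\big\| H_{u V_1^{4/p}}^* (f) \big\|_{q,w,(0,\infty)} \le c \, \| f \|_{p, \{V_1 \cdot V\}^{-2} v,(0,\infty)}, \quad f \in {\mathfrak M}^{\uparrow},
\end{equation*}
and the test-function condition
\begin{equation*}
\| H_u^* ({\bf 1}) \|_{q,w,(0,\infty)} \le c \, \| {\bf 1} \|_{p,v,(0,\infty)}.
\end{equation*}
The latter is precisely the extra term $\| H_u^* ({\bf 1}) \|_{q,w,(0,\infty)} / \| {\bf 1} \|_{p,v,(0,\infty)}$ present in every case of the statement; the former is an inequality of the type~\eqref{Reduction.Theorem.Thm.2.5.00}, with the weight $u$ there replaced by $u V_1^{4/p}$ and the weight $v$ there replaced by $\{V_1 \cdot V\}^{-2} v$.

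Next I would evaluate, for these new weights, the two primitives that enter Theorem~\ref{Thm.2.5.00}. The Copson primitive of the new inner weight is $\int_x^{\infty} u(t) V_1^{4/p}(t)\,dt = U_1(x)$, exactly the function introduced in the statement. For the new outer weight the key identity is
\begin{equation*}
\int_x^{\infty} V_1^{-2}(t) V^{-2}(t) v(t)\,dt \approx V_1(x),
\end{equation*}
which follows by differentiating $V_1^3(x) = \int_x^{\infty} V^{-2} v$ to obtain $V^{-2} v = - 3 V_1^2 V_1'$, hence $V_1^{-2} V^{-2} v = - 3 V_1'$, and then integrating over $(x,\infty)$ and using $V_1(\infty) = 0$. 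Consequently the Copson primitive of the new outer weight is $\approx V_1$ and is finite for every $x > 0$ (note that $\int_x^{\infty} V^{-2} v < \infty$ for every weight $v$, so $V_1$ is always well defined), whence Theorem~\ref{Thm.2.5.00} applies. Substituting $U_1$ for $U_*$, $V_1$ for $V_*$, and $\{V_1 \cdot V\}^{-2} v$ for $v$ in formulas~{\rm (i)--(vi)} of Theorem~\ref{Thm.2.5.00} (the parameter ranges are unchanged, and $r$ depends only on $p,q$), and simplifying the resulting powers of $V_1$, one obtains exactly $\tilde A_0^*, \tilde A_1^*, \tilde B_0^*, \tilde B_1^*, \tilde C_1^*, \tilde D_0^*, \tilde E_0^*, \tilde F_0^*$; adding the contribution of the test-function condition yields the claimed value of the best constant in each case.

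There is no essential difficulty here: the argument is the two-step reduction (first switch the cone via Corollary~\ref{HardyIneqOnTheCone.cor.17}, then invoke the known characterization Theorem~\ref{Thm.2.5.00} on the cone of non-decreasing functions), followed by routine bookkeeping. The only genuinely computational ingredient is the collapse $\int_x^{\infty} V_1^{-2} V^{-2} v \approx V_1$, which turns the transformed problem back into a standard Copson-type inequality; after that all that remains is to track the exponents of $V_1$ through the functionals of Theorem~\ref{Thm.2.5.00}, with a little extra care in the essential-supremum cases~{\rm (iii)}, {\rm (v)}, {\rm (vi)}.
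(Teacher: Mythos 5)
Your argument is correct, but it is not the route the paper takes: the paper proves Theorem~\ref{Thm.2.5.0000} by the change of variables $x=1/t$, which turns \eqref{HardyIneqOnTheCone.cor.18.eq.1} into the inequality \eqref{HardyIneqOnTheCone.cor.38.eq.1} for a Hardy operator with transformed weights $\tilde u,\tilde v,\tilde w$ on the cone ${\mathfrak M}^{\uparrow}$, applies Theorem~\ref{Thm.2.5.0}, and then substitutes back. You instead keep the operator $H_u^*$ and switch the cone directly via Corollary~\ref{HardyIneqOnTheCone.cor.17} (with $T=H_u^*$, $\beta=q$, $s=p$, using $(H_u^*)_{\phi}=H^*_{u\phi}$), and then invoke Theorem~\ref{Thm.2.5.00} with the new weights $uV_1^{4/p}$ and $\{V_1\cdot V\}^{-2}v$. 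Since Theorem~\ref{Thm.2.5.00} is itself obtained from Theorem~\ref{Thm.2.5.} by the substitution $x=1/t$, the two proofs are mirror images resting on the same ingredients (the Section~\ref{WeightedIneqOnTheCone} cone-switching corollaries plus Theorem~\ref{Thm.2.5.}), just composed in the opposite order; yours is exactly the alternative indicated in the paper's closing Remark. What your route buys is that you never have to track $u,v,w$ and all eight functionals through the substitution; the price is the collapse identity $\int_x^{\infty}V_1^{-2}V^{-2}v=3V_1(x)$, which you verify correctly (and which the paper's route hides inside Corollary~\ref{HardyIneqOnTheCone.cor.16} when proving Theorem~\ref{Thm.2.5.0}). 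Two small caveats: like the paper's proof, yours needs $V(x)<\infty$ for all $x>0$ (required by Corollary~\ref{HardyIneqOnTheCone.cor.17}), a hypothesis only implicit in the statement; and in case {\rm (vi)} the substitution actually produces the inner integral $\int_t^{\tau}u(y)V_1^{\frac{4}{p}-1}(y)\,dy$ rather than $\int_t^{\tau}u(y)V_1^{-1}(y)\,dy$ --- the stated $\tilde F_0^*$ appears to carry a typo (compare $\tilde F_0$ in Theorem~\ref{Thm.2.5.0}), so your claim that the bookkeeping reproduces the displayed formulas verbatim should be qualified there, though your method yields the correct expression.
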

\begin{proof}
    By change of variables $x = 1 / t$, it is easy to see that inequality     \eqref{HardyIneqOnTheCone.cor.18.eq.1} holds if and only if
    \begin{equation*}
    \left\|H_{p,\tilde{u}} (f)\right\|_{q,\tilde{w},(0,\infty)} \leq c
    \,\|f\|_{p,\tilde{v},(0,\infty)}, \quad f \in {\mathfrak M}^{\uparrow}
    \end{equation*}
    holds, where
    $$
    \tilde{u} (t) = u \bigg(\frac{1}{t}\bigg)\frac{1}{t^2}, ~ \tilde{w} (t) = w \bigg(\frac{1}{t}\bigg)\frac{1}{t^2}, ~\tilde{v} (t) = v \bigg(\frac{1}{t}\bigg)\bigg(\frac{1}{t^2}\bigg), ~ t > 0,
    $$
    when $0 < p < \infty$, $0 < q < \infty$, and
    $$
    \tilde{u} (t) = u \bigg(\frac{1}{t}\bigg)\frac{1}{t^2}, ~ \tilde{w} (t) = w \bigg(\frac{1}{t}\bigg), ~\tilde{v} (t) = v \bigg(\frac{1}{t}\bigg)\bigg(\frac{1}{t^2}\bigg), ~ t > 0,
    $$
    when $0 < p < \infty$, $q = \infty$, and
    $$
    \tilde{u} (t) = u \bigg(\frac{1}{t}\bigg)\frac{1}{t^2}, ~ \tilde{w} (t) = w \bigg(\frac{1}{t}\bigg)\bigg(\frac{1}{t^2}\bigg), ~\tilde{v} (t) = v \bigg(\frac{1}{t}\bigg), ~ t > 0,
    $$
    when $p = q = \infty$,
    and
    $$
    \tilde{u} (t) = u \bigg(\frac{1}{t}\bigg)\frac{1}{t^2}, ~ \tilde{w} (t) = w \bigg(\frac{1}{t}\bigg), ~\tilde{v} (t) = v \bigg(\frac{1}{t}\bigg), ~ t > 0.
    $$

    Using Theorem \ref{Thm.2.5.0}, and then applying substitution of variables mentioned above three times, we get the statement.
\end{proof}



\section{The weighted norm inequalities for  iterated Hardy-type operators}\label{RT.SC}

In this section we give complete characterization of inequalities
\eqref{IHI.1} - \eqref{IHI.2} and \eqref{IHI.3} - \eqref{IHI.4}.

Using results obtained in the previous section we can reduce the
characterization of inequality \eqref{IHI.1} to the weighted Hardy
inequality on the cones of non-increasing functions.

The following theorem is true.
\begin{theorem}\label{RT.SC.thm.1.1}
    Let $0 < p < \infty$, $0 < q \le \infty$ and $1 < s < \infty$. Assume that $u,\,w \in {\mathcal W}(0,\infty)$
    and $v \in {\mathcal W}(0,\infty)$ be such that \eqref{RT.thm.main.3.eq.0} holds.
    Recall that
    $$
    \Phi \big[v;s\big](x) = \bigg( \int_0^x
    v^{1-{s}^{\prime}}(t)\,dt\bigg)^{\frac{1}{s^{\prime}+ 1}}, ~ x > 0.
    $$
    Denote by
    $$
    \Phi_1(\tau) : =  \int_0^{\tau} u(x) \Phi \big[v;s\big]^{2p} (x)\,dx = \int_0^{\tau} u(x) \bigg( \int_0^x
    v^{1-{s}^{\prime}}(t)\,dt\bigg)^{\frac{2p}{s^{\prime}+ 1}}dx, ~ \tau > 0.
    $$
    Then inequality \eqref{IHI.1} with the best constant $c_1$ holds if
    and only if:

    {\rm (i)} $p < s \le q < \infty$, and in this case $c_1 \approx A_{1,1} +
    A_{1,2}$, where
    \begin{align*}
    A_{1,1} : & = \sup_{t > 0}\bigg( \int_0^t [\Phi_1]^{ \frac{q}{p}}(\tau)
    w(\tau)\,d\tau \bigg)^{\frac{1}{q}} \Phi[v;s]^{- \frac{1}{s}} (t),\\
    A_{1,2} : & = \sup_{t > 0} W_*^{\frac{1}{q}} (t) \bigg( \int_0^t \bigg(
    \frac{\Phi_1(\tau)}{\Phi[v;s](\tau)} \bigg)^{\frac{s}{s-p}} \phi[v;s](\tau)\,d\tau
    \bigg)^{\frac{s - p}{p s}};
    \end{align*}

    {\rm (ii)} $q  < s < \infty$ and $p < s$, and in this case $c_1 \approx
    B_{1,1} + B_{1,2}$, where
    \begin{align*}
    B_{1,1} : & =  \bigg( \int_0^{\infty} \Phi[v;s]^{\frac{q}{q - s}}(t)
    \bigg(\int_0^t [\Phi_1]^{\frac{q}{p}}(\tau) w(\tau)\,d\tau \bigg)^{\frac{q}{s - q}} [\Phi_1]^{\frac{q}{p}}(t) w(t)\,dt \bigg)^{\frac{s - q}{qs}},\\
    B_{1,2} : & = \bigg( \int_0^{\infty} W_*^{\frac{q}{s - q}}(t) \bigg(
    \int_0^t \bigg( \frac{\Phi_1(\tau)}{\Phi[v;s](\tau)} \bigg)^{\frac{s}{s - p}} \phi[v;s](\tau)\,d\tau \bigg)^{\frac{q (s - p)}{p (s - q)}} w(t)\,dt \bigg)^{\frac{s - q}{qs}};
    \end{align*}

    {\rm (iii)} $q < s \le p$, and in this case $c_1 \approx B_{1,1} + C_1$,
    where
    \begin{align*}
    C_1 : & =  \bigg( \int_0^{\infty} \bigg( \esup_{\tau \in (0,t)}
    \frac{[\Phi_1]^{\frac{s}{p}}(\tau)}{\Phi[v;s] (\tau)}\bigg)^{\frac{q}{s - q}} W_*^{\frac{q}{s - q}}(t) w(t)\,dt \bigg)^{\frac{s - q}{s q}};
    \end{align*}

    {\rm (iv)} $s \le q < \infty$ and $s \le p$, and in this case $c_1 =
    D_1$, where
    $$
    D_1 : = \sup_{t > 0}\Phi[v;s]^{- \frac{1}{s}}(t) \bigg( \int_0^{\infty} [\Phi_1]^{\frac{q}{p}}(\min\{\tau,t\}) w(\tau)\,d\tau \bigg)^{\frac{1}{q}};
    $$

    {\rm (v)} $s \le p$ and $q = \infty$, and in this case $c_1 = E_1$,
    where
    $$
    E_1 : = \esup_{t > 0}\Phi[v;s]^{- \frac{1}{s}}(t) \bigg( \esup_{\tau > 0} \,    \Phi_1(\min\{\tau,t\}) w(\tau) \bigg)^{\frac{1}{p}};
    $$

    {\rm (vi)} $p < s$ and $q = \infty$, and in this case $c_1 = F_1$, where
    $$
    F_1 : = \esup_{t > 0} w(t) \bigg( \int_0^t \bigg( \int_{\tau}^t u(y)
    \Phi[v;s]^{-1}(y)\,dy  \bigg)^{\frac{s}{s - p}}\phi[v;s](\tau)\,d\tau \bigg)^{\frac{s - p}{s p}}.
    $$
\end{theorem}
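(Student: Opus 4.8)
The plan is to strip off the inner integration $h\mapsto\int_0^x h$ via the reduction Theorem \ref{RT.thm.main.3}, then to linearise the resulting operator on the cone of non-increasing functions, and finally to quote the complete characterisation of the weighted Hardy inequality on ${\mathfrak M}^{\downarrow}$, namely Theorem \ref{Thm.2.5.}, reading off its six relevant cases.

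First I would apply Theorem \ref{RT.thm.main.3} with $\beta=q$ and $T=H_{p,u}$. This is legitimate: $H_{p,u}$ satisfies {\rm(i)--(iii)} (homogeneity and {\rm(ii)} are immediate from $H_{p,u}(g)=(\int_0^x g^pu)^{1/p}$, and {\rm(iii)} follows from the quasi-triangle inequality for $\|\cdot\|_p$ together with $H_{p,u}({\bf 1})=(\int_0^x u)^{1/p}$), and \eqref{RT.thm.main.3.eq.0} is assumed. Theorem \ref{RT.thm.main.3} then says that \eqref{IHI.1} is equivalent to $\|(H_{p,u})_{\Phi^2}f\|_{q,w,(0,\infty)}\le c\,\|f\|_{s,\phi,(0,\infty)}$ for $f\in{\mathfrak M}^{\downarrow}$, where $\Phi=\Phi[v;s]$ and $\phi=\phi[v;s]$. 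The crucial identity here is
\[
(H_{p,u})_{\Phi^2}(f)=H_{p,u}(f\Phi^2)=\Big(\int_0^x f^p\Phi^{2p}u\Big)^{1/p}=H_{p,\,u\Phi^{2p}}(f).
\]

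Next I would linearise on the cone. For $f\in{\mathfrak M}^{\downarrow}$ set $g=f^p$; then $g$ again runs over all of ${\mathfrak M}^{\downarrow}$, and $H_{p,\,u\Phi^{2p}}(f)=(H_{u\Phi^{2p}}(g))^{1/p}$, $\|f\|_{s,\phi}=\|g\|_{s/p,\phi}^{1/p}$. Hence the last inequality is equivalent to
\[
\big\|H_{u\Phi^{2p}}(g)\big\|_{q/p,\,w,(0,\infty)}\le c^p\,\|g\|_{s/p,\,\phi,(0,\infty)},\qquad g\in{\mathfrak M}^{\downarrow}
\]
(for $q=\infty$ the target norm is read with the weight $w^p$). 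This is precisely inequality \eqref{Reduction.Theorem.Thm.2.5} with $q,p,u,v$ replaced by $q/p,\ s/p,\ u\Phi^{2p},\ \phi$; under these substitutions the auxiliary functions of Theorem \ref{Thm.2.5.} become $U=\int_0^\cdot u\Phi^{2p}=\Phi_1$ and $V=\int_0^\cdot\phi=\Phi[v;s]$, while $W$ and $W_*$ are unchanged. I would then invoke Theorem \ref{Thm.2.5.}: since $s/p<\infty$, its cases {\rm(vii)--(viii)} do not arise, and its cases {\rm(i)--(vi)} translate — the ranges $1<s/p\le q/p$, $q/p<s/p$, etc. becoming $p<s\le q$, $q<s$, etc., with $(s/p)'=s/(s-p)$ and $1/r=p(1/q-1/s)$ where this $r$ occurs — into cases {\rm(i)--(vi)} of the theorem after one takes $p$-th roots of the constants, using $(a+b)^{1/p}\approx a^{1/p}+b^{1/p}$; this turns $A_0,A_1,B_0,B_1,C_1,D_0,E_0,F_0$ of Theorem \ref{Thm.2.5.} into $A_{1,1},A_{1,2},B_{1,1},B_{1,2},C_1,D_1,E_1,F_1$, respectively.

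The only part that requires genuine attention is this bookkeeping: identifying which quantity plays the role of ``$U$'', ``$V$'', ``$p'$'' and ``$r$'' after the substitutions, and, in the cases $q=\infty$, carefully tracking how the $L^{\infty}$-weight transforms when one passes to the $g$-inequality and back through the $p$-th power. No new analytic difficulty arises, since all the real work is already packaged inside Theorems \ref{RT.thm.main.3} and \ref{Thm.2.5.}.
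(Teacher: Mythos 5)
Your proposal is correct and follows essentially the same route as the paper: apply Theorem \ref{RT.thm.main.3} with $T=H_{p,u}$, note that $(H_{p,u})_{\Phi^2}=H_{p,\,u\Phi^{2p}}$ and raise to the $p$-th power to obtain the weighted Hardy inequality on ${\mathfrak M}^{\downarrow}$ with parameters $q/p$, $s/p$ and data $U=\Phi_1$, $V=\Phi[v;s]$, and then read off the cases of Theorem \ref{Thm.2.5.}. Your extra care with the case $q=\infty$ (the $L^\infty$-weight becoming $w^p$ after the power substitution) is exactly the bookkeeping the paper leaves implicit.
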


\begin{proof}
    By Theorem \ref{RT.thm.main.3} (with the operator $T = H_{p,u}$),
    ineqality \eqref{IHI.1} holds if and only if
    \begin{equation}\label{RT.SC.thm.1.eq.2}
    \bigg\| \int_0^x f u \Phi[v;s]^{2p} \bigg\|_{{q/p}, w, (0,\infty)} \le C_1^p \,\|
    f \|_{{s / p}, \phi[v;s],(0,\infty)}, \, f \in {\mathfrak M}^{\downarrow}
    \end{equation}
    holds. Moreover, $c_1 \approx C_1$. It remains to apply Theorem
    \ref{Thm.2.5.}.
\end{proof}

We have  the following statement when $s = 1$.
\begin{theorem}\label{RT.SC.thm.1.3}
    Let $0 < p < \infty$ and $0 < q \le \infty$. Assume that $u,\,w \in {\mathcal W}(0,\infty)$
    and $v \in {\mathcal W}(0,\infty)$ be such that $V(x) < \infty$ for all $x > 0$. Denote by
    $$
    V_2 (\tau) : = \int_0^{\tau} u(x) V^{2p}(x)\,dx, ~ \tau > 0.
    $$
    Then inequality
    \begin{equation}\label{RT.SC.thm.1.2.eq.1}
    \left\|H_{p,u} \bigg( \int_0^x h\bigg)\right\|_{q,w,(0,\infty)} \leq c_1^1
    \,\|h\|_{1,V^{-1},(0,\infty)},~h\in {\mathfrak M}^+
    \end{equation}
    with the best constant $c_1^1$ holds if
    and only if:

    {\rm (i)} $p < 1 \le q < \infty$, and in this case $c_1^1 \approx A_{1,1}^1 +
    A_{1,2}^1$, where
    \begin{align*}
    A_{1,1}^1 : & = \sup_{t > 0}\bigg( \int_0^t [V_2]^{\frac{q}{p}}(\tau)
    w(\tau)\,d\tau \bigg)^{\frac{1}{q}} V^{- 1} (t),\\
    A_{1,2}^1 : & = \sup_{t > 0} W_*^{\frac{1}{q}} (t) \bigg( \int_0^t \bigg(
    \frac{V_2(\tau)}{V(\tau)} \bigg)^{\frac{1}{1-p}} v(\tau)\,d\tau
    \bigg)^{\frac{1 - p}{p}};
    \end{align*}

    {\rm (ii)} $q  < 1$ and $p < 1$, and in this case $c_1^1 \approx
    B_{1,1}^1 + B_{1,2}^1$, where
    \begin{align*}
    B_{1,1}^1 : & =  \bigg( \int_0^{\infty} V^{\frac{q}{q - 1}}(t)
    \bigg(\int_0^t [V_2]^{\frac{q}{p}}(\tau) w(\tau)\,d\tau \bigg)^{\frac{q}{1 - q}} [V_2]^{ \frac{q}{p}}(t) w(t)\,dt \bigg)^{\frac{1 - q}{q}},\\
    B_{1,2}^1 : & = \bigg( \int_0^{\infty} W_*^{\frac{q}{1 - q}}(t) \bigg(
    \int_0^t \bigg( \frac{V_2(\tau)}{V(\tau)} \bigg)^{\frac{1}{1 - p}} v(\tau)\,d\tau \bigg)^{\frac{q (1 - p)}{p (1 - q)}} w(t)\,dt \bigg)^{\frac{1 - q}{q}};
    \end{align*}

    {\rm (iii)} $q < 1 \le p$, and in this case $c_1^1 \approx B_{1,1}^1 + C_1^1$,
    where
    \begin{align*}
    C_1^1 : & =  \bigg( \int_0^{\infty} \bigg( \esup_{\tau \in (0,t)}
    \frac{[V_2]^{\frac{1}{p}}(\tau)}{V (\tau)}\bigg)^{\frac{q}{1 - q}} W_*^{\frac{q}{1 - q}}(t) w(t)\,dt \bigg)^{\frac{1 - q}{q}};
    \end{align*}

    {\rm (iv)} $1 \le q < \infty$ and $1 \le p$, and in this case $c_1^1 = D_1^1$, where
    $$
    D_1^1 : = \sup_{t > 0} V^{- 1}(t) \bigg( \int_0^{\infty} [V_2]^{\frac{q}{p}}(\min\{\tau,t\}) w(\tau)\,d\tau \bigg)^{\frac{1}{q}};
    $$

    {\rm (v)} $1 \le p$ and $q = \infty$, and in this case $c_1^1 = E_1^1$,
    where
    $$
    E_1^1 : = \esup_{t > 0}V^{- 1}(t) \bigg( \esup_{\tau > 0} \,  V_2(\min\{\tau,t\}) w(\tau) \bigg)^{\frac{1}{p}};
    $$

    {\rm (vi)} $p < 1$ and $q = \infty$, and in this case $c_1^1 = F_1^1$, where
    $$
    F_1^1 : = \esup_{t > 0} w(t)^{\frac{1}{p}} \bigg( \int_0^t \bigg( \int_{\tau}^t u(y)
    V^{2p - 1}(y\,dy)  \bigg)^{\frac{1}{1 - p}}v(\tau)\,d\tau \bigg)^{\frac{1 - p}{p}}.
    $$
\end{theorem}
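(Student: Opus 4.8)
The plan is to imitate, step for step, the proof of Theorem~\ref{RT.SC.thm.1.1}, with the reduction device Theorem~\ref{RT.thm.main.3} replaced by its $s=1$ analogue Theorem~\ref{RT.thm.main.8.0}, and then to feed the resulting restricted Hardy inequality into the known characterization Theorem~\ref{Thm.2.5.}. First I would note that $T=H_{p,u}$ obeys conditions {\rm (i)--(iii)}: homogeneity and monotonicity are immediate from $H_{p,u}(g)=\big(\int_0^x g^p u\big)^{1/p}$, and {\rm (iii)} follows from $(a+b)^p\le c_p(a^p+b^p)$ together with the quasi-subadditivity of $g\mapsto\big(\int_0^x g\big)^{1/p}$. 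Hence Theorem~\ref{RT.thm.main.8.0} applies with $\beta=q$ and reduces \eqref{RT.SC.thm.1.2.eq.1} to the equivalent assertion
\[
\big\| (H_{p,u})_{V^2}f \big\|_{q,w,(0,\infty)} \le c\,\|f\|_{1,v,(0,\infty)},\qquad f\in{\mathfrak M}^{\downarrow},
\]
with $c_1^1\approx c$.

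Next, since $(H_{p,u})_{V^2}(f)=H_{p,u}(fV^2)=\big(\int_0^x f^p V^{2p}u\big)^{1/p}$, I would raise the displayed inequality to the power $p$ and substitute $g:=f^p$, which is non-increasing exactly when $f$ is and satisfies $\|f\|_{1,v}^p=\|g\|_{1/p,v}$. When $q<\infty$ this turns the inequality into
\[
\Big\| \int_0^x g\,V^{2p}u \Big\|_{q/p,w,(0,\infty)} \le c^p\,\|g\|_{1/p,v,(0,\infty)},\qquad g\in{\mathfrak M}^{\downarrow},
\]
and when $q=\infty$ into the same inequality with $w$ replaced by $w^p$ on the left-hand side (from $\esup_x A(x)^{1/p}w(x)=\big(\esup_x A(x)w^p(x)\big)^{1/p}$). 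In every case this is precisely inequality \eqref{Reduction.Theorem.Thm.2.5} for the operator $H_{\widetilde u}$ with $\widetilde u:=uV^{2p}$, the exponents $p$ and $q$ replaced by $1/p$ and $q/p$, and $v$ kept as is; the pertinent primitives are $\int_0^t\widetilde u=V_2(t)$ and $\int_0^t v=V(t)$, and the conjugate index is $(1/p)'=1/(1-p)$.

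It then remains to invoke Theorem~\ref{Thm.2.5.} for this reduced inequality and to extract a $p$-th root of the characterizing quantities. The eight alternatives there collapse to six, because $1/p$ is never $\infty$; the ranges $1<1/p\le q/p<\infty$, $q/p<1/p$ with $1/p>1$, $q/p<1/p\le1$, $1/p\le q/p<\infty$ with $1/p\le1$, and the two $q=\infty$ subcases translate into items {\rm (i)--(vi)} of the statement. Substituting the roles $U\to V_2$, $V\to V$, $p'\to1/(1-p)$, $q\to q/p$ into $A_0,A_1,B_0,B_1,C_1,D_0,E_0,F_0$ and using $\big(X+Y\big)^{1/p}\approx X^{1/p}+Y^{1/p}$, one checks term by term that the $p$-th roots coincide with $A_{1,1}^1,A_{1,2}^1,B_{1,1}^1,B_{1,2}^1,C_1^1,D_1^1,E_1^1,F_1^1$ (for instance, in case {\rm (vi)} the inner integral $\int_\tau^t\widetilde u\,V^{-1}=\int_\tau^t uV^{2p-1}$ reproduces $F_1^1$), and in the cases where Theorem~\ref{Thm.2.5.} furnishes the best constant exactly this is inherited here.

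No genuinely new estimate is needed: all the analytic work is already packed into Theorems~\ref{RT.thm.main.8.0} and~\ref{Thm.2.5.}. The only point demanding care is the bookkeeping --- verifying that the reduction in Theorem~\ref{RT.thm.main.8.0} preserves norms up to absolute constants, keeping track of the $p$-th powers (notably the passage from $w$ to $w^p$ when $q=\infty$), and carefully aligning the six parameter ranges above with their counterparts in Theorem~\ref{Thm.2.5.}. That matching, though routine, is the main obstacle.
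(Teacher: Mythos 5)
Your proposal follows essentially the same route as the paper's own proof: apply Theorem~\ref{RT.thm.main.8.0} to $T=H_{p,u}$ to reduce \eqref{RT.SC.thm.1.2.eq.1} to the inequality $\big\| \int_0^x g\,V^{2p}u \big\|_{q/p,w,(0,\infty)} \le C^p\|g\|_{1/p,v,(0,\infty)}$ on ${\mathfrak M}^{\downarrow}$, and then invoke Theorem~\ref{Thm.2.5.} with $U\to V_2$, $p\to 1/p$, $q\to q/p$, matching the six parameter ranges. Your bookkeeping is if anything slightly more explicit than the paper's (notably the passage from $w$ to $w^p$ when $q=\infty$, which the paper glosses over), so the proposal is correct and not a different argument.
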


\begin{proof}
    By Theorem \ref{RT.thm.main.8.0} applied to the operator $H_{p,u}$,
    inequality \eqref{RT.SC.thm.1.2.eq.1} with the best constant $c_1$
    holds if and only if inequality
    \begin{equation}\label{RT.SC.thm.1.1.eq.2}
    \bigg\| \int_0^x f V^{2p}u \bigg\|_{{q/p}, w, (0,\infty)} \le C_1^p \,\|
    f \|_{{1 / p}, v,(0,\infty)}, \, f \in {\mathfrak M}^{\downarrow}
    \end{equation}
    holds. Moreover, $c_1 \approx C_1$. In order to complete the proof,
    it remains to apply Theorem \ref{Thm.2.5.}.
\end{proof}

The following theorems give us another more simpler and natural
method for characterization of inequality \eqref{IHI.2}, which is
different from that one worked out in \cite{GogMusPers1} and
\cite{GogMusPers2}.
\begin{theorem}\label{RT.SC.thm.2.2}
    Let $0 < p < \infty$, $0 < q \le \infty$ and $1 < s < \infty$. Assume that $u,\,w \in {\mathcal W}(0,\infty)$
    and $v \in {\mathcal W}(0,\infty)$ be such that \eqref{RT.thm.main.4.eq.0} holds.
    Denote by
    $$
    \Phi_2(\tau) : = \int_0^{\tau} u(x) \bigg(\Psi\big[v;s\big] \cdot \Phi \big[\Psi[v;s]^{s} \psi[v;s]^{1-s};s\big]\bigg)^{2p}(x)\,dx, ~ \tau > 0.
    $$
    Recall that
    \begin{align*}
    \Psi \big[v;s\big](x) & = \bigg( \int_x^{\infty}
    v^{1-{s}^{\prime}}(t)\,dt\bigg)^{\frac{1}{s^{\prime} + 1}},~ x > 0, \\
    \phi \big[\Psi[v;s]^{s} \psi[v;s]^{1-s};s\big] (x) & \\
    & \hspace{-3cm} \approx \bigg\{\int_0^x \bigg( \int_t^{\infty} v^{1 - s'}\bigg)^{- \frac{2s'}{1 + s'}} v^{1-s'}(t)\,dt\bigg\}^{- \frac{s'}{1 + s'}} \bigg( \int_x^{\infty} v^{1 - s'}\bigg)^{- \frac{2s'}{1 + s'}} v^{1-s'}(x), ~ x>0,\\
    \Phi \big[\Psi[v;s]^{s} \psi[v;s]^{1-s};s\big]  (x)
    & \approx \bigg\{\int_0^x \bigg( \int_t^{\infty} v^{1 - s'}\bigg)^{- \frac{2s'}{1 + s'}} v^{1-s'}(t)\,dt\bigg\}^{\frac{1}{1 + s'}}, ~ x>0.
    \end{align*}
    Then inequality \eqref{IHI.2} with the best constant $c_2$ holds if
    and only if:

    {\rm (i)} $p < s \le q < \infty$, and in this case
    $$
    c_2 \approx A_{2,1} +
    A_{2,2} + \|\| {\bf 1} \|_{p, \Psi[v;s]^{2p}u,(0,t)} \|_{q,w,(0,\infty)} /
    \| {\bf 1} \|_{s, \psi[v;s],(0,\infty)},
    $$
    where
    \begin{align*}
    A_{2,1} : & = \sup_{t > 0}\bigg( \int_0^t [\Phi_2]^{ \frac{q}{p}}(\tau)
    w(\tau)\,d\tau \bigg)^{\frac{1}{q}} \Phi \big[\Psi^s \psi^{1-s};s\big]^{- \frac{1}{s}} (t),\\
    A_{2,2} : & = \sup_{t > 0} W_*^{\frac{1}{q}} (t) \bigg( \int_0^t \bigg(
    \frac{\Phi_2(\tau)}{\Phi \big[\Psi^s \psi^{1-s};s\big](\tau)} \bigg)^{\frac{s}{s-p}} \phi \big[\Psi^s \psi^{1-s};s\big](\tau)\,d\tau
    \bigg)^{\frac{s - p}{p s}};
    \end{align*}

    {\rm (ii)} $q  < s < \infty$ and $p < s$, and in this case
    $$
    c_2 \approx
    B_{2,1} + B_{2,2} + \|\| {\bf 1} \|_{p, \Psi[v;s]^{2p}u,(0,t)} \|_{q,w,(0,\infty)} /
    \| {\bf 1} \|_{s, \psi[v;s],(0,\infty)},
    $$
    where
    \begin{align*}
    B_{2,1} : & =  \bigg( \int_0^{\infty} \Phi \big[\Psi^s \psi^{1-s};s\big]^{\frac{q}{q - s}}(t)
    \bigg(\int_0^t [\Phi_2]^{ \frac{q}{p}}(\tau) w(\tau)\,d\tau \bigg)^{\frac{q}{s - q}} [\Phi_2]^{\frac{q}{p}}(t) w(t)\,dt \bigg)^{\frac{s - q}{qs}},\\
    B_{2,2} : & = \bigg( \int_0^{\infty} W_*^{\frac{q}{s - q}}(t) \bigg(
    \int_0^t \bigg( \frac{\Phi_2(\tau)}{\Phi \big[\Psi^s \psi^{1-s};s\big](\tau)} \bigg)^{\frac{s}{s - p}}
    \phi \big[\Psi^s \psi^{1-s};s\big](\tau)\,d\tau \bigg)^{\frac{q (s - p)}{p (s - q)}}
    w(t)\,dt \bigg)^{\frac{s - q}{qs}};
    \end{align*}

    {\rm (iii)} $q < s \le p$, and in this case
    $$
    c_2 \approx B_{2,1} + C_2 + \|\| {\bf 1} \|_{p, \Psi[v;s]^{2p}u,(0,t)} \|_{q,w,(0,\infty)} /
    \| {\bf 1} \|_{s, \psi[v;s],(0,\infty)},
    $$
    where
    \begin{align*}
    C_2 : & =  \bigg( \int_0^{\infty} \bigg( \esup_{\tau \in (0,t)}
    \frac{[\Phi_2]^{\frac{s}{p}}(\tau)}{\Phi \big[\Psi^s \psi^{1-s};s\big] (\tau)}\bigg)^{\frac{q}{s - q}} W_*^{\frac{q}{s - q}}(t) w(t)\,dt \bigg)^{\frac{s - q}{s q}};
    \end{align*}

    {\rm (iv)} $s \le q < \infty$ and $s \le p$, and in this case
    $$
    c_2 =
    D_2 + \|\| {\bf 1} \|_{p, \Psi[v;s]^{2p}u,(0,t)} \|_{q,w,(0,\infty)} /
    \| {\bf 1} \|_{s, \psi[v;s],(0,\infty)},
    $$
    where
    $$
    D_2 : = \sup_{t > 0}\Phi \big[\Psi^s \psi^{1-s};s\big]^{- \frac{1}{s}}(t) \bigg( \int_0^{\infty} [\Phi_2]^{\frac{q}{p}}(\min\{\tau,t\}) w(\tau)\,d\tau \bigg)^{ \frac{1}{q} };
    $$

    {\rm (v)} $s \le p$ and $q = \infty$, and in this case $$
    c_2 = E_2 + \|\| {\bf 1} \|_{p, \Psi[v;s]^{2p}u,(0,t)} \|_{q,w,(0,\infty)} / \| {\bf 1} \|_{s, \psi[v;s],(0,\infty)},
    $$
    where
    $$
    E_2 : = \esup_{t > 0}\Phi \big[\Psi^s \psi^{1-s};s\big]^{- \frac{1}{s}}(t) \bigg( \esup_{\tau > 0} \, \Phi_2(\min\{\tau,t\}) w(\tau) \bigg)^{\frac{1}{p}};
    $$

    {\rm (vi)} $p < s$ and $q = \infty$, and in this case $$
    c_2 = F_2 + \|\| {\bf 1} \|_{p, \Psi[v;s]^{2p}u,(0,t)} \|_{q,w,(0,\infty)} / \| {\bf 1} \|_{s, \psi[v;s],(0,\infty)},
    $$
    where
    $$
    F_2 : = \esup_{t > 0} w(t) \bigg( \int_0^t \bigg( \int_{\tau}^t u(y)
    \Phi \big[\Psi^s \psi^{1-s};s\big]^{-1}(y)\,dy  \bigg)^{\frac{s}{s - p}}\phi \big[\Psi^s \psi^{1-s};s\big](\tau)\,d\tau \bigg)^{\frac{s - p}{s p}}.
    $$
\end{theorem}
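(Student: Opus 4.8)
The plan is to carry out the ``dual'' analogue of the proof of Theorem~\ref{RT.SC.thm.1.1}. Namely, I would first reduce \eqref{IHI.2} to a weighted Hardy inequality on the cone of non-increasing functions by applying the reduction Theorem~\ref{RT.thm.main.8} to the operator $T=H_{p,u}$, and then obtain the characterization from the known Theorem~\ref{Thm.2.5.}. Throughout I would take $\beta=q$ and, decisively, $\delta=1$: this value is admissible since $s>1$ gives $0<1<s$, and it is exactly the one for which the weight twist occurring in Theorem~\ref{RT.thm.main.8} has second parameter $s/\delta=s$ and power $2p$, i.e. the combination built into the function $\Phi_{2}$ of the statement.

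First I would observe that $T=H_{p,u}$ satisfies conditions {\rm (i)--(iii)} (homogeneity and monotonicity are immediate, and {\rm (iii)} follows from $(a+b)^{p}\le c_{p}(a^{p}+b^{p})$). Then Theorem~\ref{RT.thm.main.8} with this $T$, $\beta=q$, $\delta=1$ yields that \eqref{IHI.2} holds if and only if both
\begin{equation*}
\big\|(H_{p,u})_{\Psi^{2}[v;s]\cdot\Phi^{2}[\Psi[v;s]^{s}\psi[v;s]^{1-s};s]}\,f\big\|_{q,w,(0,\infty)}\le C_{2}\,\|f\|_{s,\,\phi[\Psi[v;s]^{s}\psi[v;s]^{1-s};s],(0,\infty)},\quad f\in{\mathfrak M}^{\downarrow},
\end{equation*}
and the condition $\big\|(H_{p,u})_{\Psi^{2}[v;s]}({\bf 1})\big\|_{q,w,(0,\infty)}\le C_{2}'\,\|{\bf 1}\|_{s,\psi[v;s],(0,\infty)}$ hold, with $c_{2}\approx C_{2}+C_{2}'$. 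Here $\phi[\Psi[v;s]^{s}\psi[v;s]^{1-s};s]$ and $\Phi[\Psi[v;s]^{s}\psi[v;s]^{1-s};s]$ are precisely the expressions that Theorem~\ref{RT.thm.main.8} records for $\delta=1$ (so $(s/\delta)'=s'$), i.e. those displayed in the statement.

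Next I would unwind the definition $(H_{p,u})_{\omega}(g)=\big(H((g\omega)^{p}u)\big)^{1/p}$ and substitute $g\mapsto g^{p}$, which is a bijection of ${\mathfrak M}^{\downarrow}$ onto itself; since $\|(H(\,\cdot\,))^{1/p}\|_{q,w}=\|H(\,\cdot\,)\|_{q/p,w}^{1/p}$ for $q<\infty$ (for $q=\infty$ one replaces $w$ by $w^{p}$ in this last step) and $\|g\|_{s,\phi}=\|g^{p}\|_{s/p,\phi}^{1/p}$, the first reduced inequality turns into
\begin{equation*}
\bigg\|\int_{0}^{x} g\,u\,\big(\Psi[v;s]\cdot\Phi[\Psi[v;s]^{s}\psi[v;s]^{1-s};s]\big)^{2p}\bigg\|_{q/p,w,(0,\infty)}\le C_{2}^{\,p}\,\|g\|_{s/p,\,\phi[\Psi[v;s]^{s}\psi[v;s]^{1-s};s],(0,\infty)},\quad g\in{\mathfrak M}^{\downarrow},
\end{equation*}
which is exactly inequality \eqref{Reduction.Theorem.Thm.2.5} with ``$q$'' $=q/p$, ``$p$'' $=s/p$, ``$w$'' unchanged, ``$v$'' $=\phi[\Psi[v;s]^{s}\psi[v;s]^{1-s};s]$ (so the ``$V$'' there equals $\Phi[\Psi[v;s]^{s}\psi[v;s]^{1-s};s]$), and ``$u$'' the weight with primitive $\Phi_{2}$ (so the ``$U$'' there equals $\Phi_{2}$). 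In the same way the ${\bf 1}$-condition rewrites as $\big\|\,\|{\bf 1}\|_{p,\Psi[v;s]^{2p}u,(0,t)}\big\|_{q,w,(0,\infty)}\le C_{2}'\,\|{\bf 1}\|_{s,\psi[v;s],(0,\infty)}$, whose best constant is the common extra summand appearing in each case.

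Finally I would invoke Theorem~\ref{Thm.2.5.} for this Hardy inequality on ${\mathfrak M}^{\downarrow}$. Rewritten with ``$p$'' $=s/p$ and ``$q$'' $=q/p$, its two regimes with ``$p$'' $=\infty$ are vacuous (since $s/p<\infty$) and the remaining six are precisely cases {\rm (i)--(vi)} here: $1<s/p\le q/p$ is $p<s\le q$, $q/p<s/p<\infty$ with $s/p>1$ is $q<s<\infty$ with $p<s$, $q/p<s/p\le1$ is $q<s\le p$, $s/p\le q/p$ with $s/p\le1$ is $s\le q$ with $s\le p$, and similarly for $q=\infty$. Since the best constant of the displayed inequality is $C_{2}^{\,p}$ while both of its sides are $p$-th powers of $L^{q/p}$- and $L^{s/p}$-norms, $C_{2}$ is comparable to the $p$-th root of the constant furnished by Theorem~\ref{Thm.2.5.}; extracting this $p$-th root --- using $(s/p)'=s/(s-p)$, replacing the auxiliary index $r$ of Theorem~\ref{Thm.2.5.} by $r_{0}$ with $1/r_{0}=p/q-p/s$, and simplifying the inner integrals --- produces the quantities $A_{2,1},A_{2,2},B_{2,1},B_{2,2},C_{2},D_{2},E_{2},F_{2}$ of the statement, and $c_{2}$ is recovered by adding $C_{2}'$. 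The whole argument is routine once Theorems~\ref{RT.thm.main.8} and~\ref{Thm.2.5.} are granted; the only delicate point --- and the ``main obstacle'' --- is the bookkeeping: fixing $\delta=1$, correctly propagating the two distinguished exponents $s/(s-p)$ and $r_{0}$ through the substitutions and $p$-th roots, and keeping the separate ${\bf 1}$-term in place.
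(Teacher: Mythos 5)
Your proposal is correct and takes essentially the same route as the paper: the paper applies Corollary~\ref{RT.cor.main.4} with $\delta=1$ to $T=H_{p,u}$ and then cites Theorem~\ref{RT.SC.thm.1.1} (which is itself Theorem~\ref{RT.thm.main.3} followed by Theorem~\ref{Thm.2.5.}), while you invoke Theorem~\ref{RT.thm.main.8} with $\delta=1$ (the same two reductions packaged together) and then apply Theorem~\ref{Thm.2.5.} directly. In both cases the endpoint is the identical Hardy inequality on ${\mathfrak M}^{\downarrow}$ with $U=\Phi_2$ and $V=\Phi\big[\Psi[v;s]^{s}\psi[v;s]^{1-s};s\big]$, together with the separate ${\bf 1}$-condition, so the resulting constants coincide.
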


\begin{proof}
    By Corollary \ref{RT.cor.main.4} (applied to $H_{p,u}$ with $\delta = 1$), inequality \eqref{IHI.2} with the best constant $c_2$ holds if and
    only if both
    \begin{equation}\label{RT.SC.thm.4.eq.2}
    \left\| H_{p,\Psi[v;s]^{2p} u}\left(\int_0^x h\right)\right\|_{q,w,(0,\infty)}
    \le c_{2,1} \,\|h\|_{s,\Psi[v;s]^{s} \psi[v;s]^{1-s},(0,\infty)}, \, h \in {\mathfrak M}^+,
    \end{equation}
    and
    \begin{equation}\label{RT.SC.thm.4.eq.3}
    \| \| {\bf 1} \|_{p, \Psi[v;s]^{2p}u,(0,t)} \|_{q,w,(0,\infty)}\le c_{2,2} \| {\bf 1} \|_{s,
        \psi[v;s],(0,\infty)},
    \end{equation}
    hold.

    Moreover, $c_2 \approx c_{2,1} + \|\| {\bf 1} \|_{p, \Psi[v;s]^{2p}u,(0,t)} \|_{q,w,(0,\infty)} /
    \| {\bf 1} \|_{s, \psi[v;s],(0,\infty)}$.

    Now the statement follows by Theorem \ref{RT.SC.thm.1.1}.
\end{proof}

We have  the following statement when $s = 1$.
\begin{theorem}\label{RT.SC.thm.1.10}
    Let $0 < p < \infty$ and $0 < q \le \infty$. Assume that $u,\,w \in {\mathcal W}(0,\infty)$
    and $v \in {\mathcal W}(0,\infty)$ be such that $V_*(x) < \infty$ for all $x > 0$. Denote by
    $$
    V_3^* (\tau) : = \int_0^{\tau} u(x) \{V_* \cdot [V_1^*]^2\}^{2p}(x)\,dx, ~ \tau > 0.
    $$
    Recall that
    $$
    V_1^* (x) : = \bigg( \int_0^x V_*^{-2}(t)v(t)\,dt \bigg)^{1/3},\qquad (x > 0).
    $$
    Then inequality        \begin{equation}\label{RT.SC.thm.1.9.eq.1}
    \left\|H_{p,u} \bigg( \int_x^{\infty} h\bigg)\right\|_{q,w,(0,\infty)} \leq c_2^1
    \,\|h\|_{1,V_*^{-1},(0,\infty)},~h\in {\mathfrak M}^+
    \end{equation}
    with the best constant $c_2^1$ holds if
    and only if:

    {\rm (i)} $p < 1 \le q < \infty$, and in this case
    $$
    c_2^1 \approx A_{2,1}^1 +
    A_{2,2}^1 + \big\|\| {\bf 1} \|_{p, V_*^{2p}u,(0,t)} \big\|_{q,w,(0,\infty)} / \| {\bf 1} \|_{1, v,(0,\infty)},
    $$
    where
    \begin{align*}
    A_{2,1}^1 : & = \sup_{t > 0}\bigg( \int_0^t [V_3^*]^{ q / p}(\tau)
    w(\tau)\,d\tau \bigg)^{1 / q} [V_1^*]^{- 1} (t),\\
    A_{2,2}^1 : & = \sup_{t > 0} W_*^{\frac{1}{q}} (t) \bigg( \int_0^t \bigg(
    \frac{V_3^*(\tau)}{V_1^*(\tau)} \bigg)^{\frac{1}{1-p}} \{V_* \cdot [V_1^*]\}^{-2}(\tau)v(\tau)\,d\tau
    \bigg)^{\frac{1 - p}{p}};
    \end{align*}

    {\rm (ii)} $q  < 1$ and $p < 1$, and in this case
    $$
    c_2^1 \approx
    B_{2,1}^1 + B_{2,2}^1 + \big\|\| {\bf 1} \|_{p, V_*^{2p}u,(0,t)} \big\|_{q,w,(0,\infty)} / \| {\bf 1} \|_{1, v,(0,\infty)},
    $$
    where
    \begin{align*}
    B_{2,1}^1 : & =  \bigg( \int_0^{\infty} [V_1^*]^{\frac{q}{q - 1}}(t)
    \bigg(\int_0^t [V_3^*]^{\frac{ q}{ p}}(\tau) w(\tau)\,d\tau \bigg)^{\frac{q}{1 - q}} [V_3^*]^{\frac{q}{p}}(t) w(t)\,dt \bigg)^{\frac{1 - q}{q}},\\
    B_{2,2}^1 : & = \bigg( \int_0^{\infty} W_*^{\frac{q}{1 - q}}(t) \bigg(
    \int_0^t \bigg( \frac{V_3^*(\tau)}{V_1^*(\tau)} \bigg)^{\frac{1}{1 - p}} \{V_* \cdot [V_1^*]\}^{-2}(\tau)v(\tau)\,d\tau \bigg)^{\frac{q (1 - p)}{p (1 - q)}} w(t)\,dt \bigg)^{\frac{1 - q}{q}};
    \end{align*}

    {\rm (iii)} $q < 1 \le p$, and in this case
    $$
    c_2^1 \approx B_{2,1}^1 + C_2^1 + \big\|\| {\bf 1} \|_{p, V_*^{2p}u,(0,t)} \big\|_{q,w,(0,\infty)} / \| {\bf 1} \|_{1, v,(0,\infty)},
    $$
    where
    \begin{align*}
    C_2^1 : & =  \bigg( \int_0^{\infty} \bigg( \esup_{\tau \in (0,t)}
    \frac{[V_3^*]^{\frac{1}{p}}(\tau)}{V_1^* (\tau)}\bigg)^{\frac{q}{1 - q}} W_*^{\frac{q}{1 - q}}(t) w(t)\,dt \bigg)^{\frac{1 - q}{q}};
    \end{align*}

    {\rm (iv)} $1 \le q < \infty$ and $1 \le p$, and in this case
    $$
    c_2^1 = D_2^1 + \big\|\| {\bf 1} \|_{p, V_*^{2p}u,(0,t)} \big\|_{q,w,(0,\infty)} / \| {\bf 1} \|_{1, v,(0,\infty)},
    $$
    where
    $$
    D_2^1 : = \sup_{t > 0} [V_1^*]^{- 1}(t) \bigg( \int_0^{\infty} [V_3^*]^{\frac{q}{p}}(\min\{\tau,t\}) w(\tau)\,d\tau \bigg)^{ \frac{1}{q}};
    $$

    {\rm (v)} $1 \le p$ and $q = \infty$, and in this case $$
    c_2^1 = E_2^1 + \big\|\| {\bf 1} \|_{p, V_*^{2p}u,(0,t)} \big\|_{q,w,(0,\infty)} / \| {\bf 1} \|_{1, v,(0,\infty)},
    $$
    where
    $$
    E_2^1 : = \esup_{t > 0}[V_1^*]^{- 1}(t) \bigg( \esup_{\tau > 0} \,    [V_3^*](\min\{\tau,t\}) w(\tau) \bigg)^{\frac{1}{p}};
    $$

    {\rm (vi)} $p < 1$ and $q = \infty$, and in this case
    $$
    c_2^1 = F_2^1 + \big\|\| {\bf 1} \|_{p, V_*^{2p}u,(0,t)} \big\|_{q,w,(0,\infty)} / \| {\bf 1} \|_{1, v,(0,\infty)},
    $$
    where
    $$
    F_2^1 : = \esup_{t > 0} w(t)^{\frac{1}{p}} \bigg( \int_0^t \bigg( \int_{\tau}^t u(y)
    [V_1^*]^{2p-1}(y)\,dy  \bigg)^{\frac{1}{1 - p}}\{V_* \cdot [V_1^*]\}^{-2}(\tau)v(\tau)\,d\tau \bigg)^{\frac{1 - p}{p}}.
    $$
\end{theorem}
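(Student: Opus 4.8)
The plan is to obtain Theorem~\ref{RT.SC.thm.1.10} by combining the dual reduction Theorem~\ref{RT.thm.main.9.0} with the characterization of the (non-iterated) weighted Hardy inequality on the cone of non-decreasing functions, Theorem~\ref{Thm.2.5.0}; this parallels the way Theorem~\ref{RT.SC.thm.1.3} follows from Theorem~\ref{RT.thm.main.8.0} and Theorem~\ref{Thm.2.5.}. First one checks that $T=H_{p,u}$ satisfies conditions {\rm (i)--(iii)}: homogeneity is immediate, monotonicity follows since $t\mapsto t^p$ and $H$ are non-decreasing, and {\rm (iii)} follows from $(a+b)^p\le c_p(a^p+b^p)$. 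Applying Theorem~\ref{RT.thm.main.9.0} with this $T$ and $\beta=q$ shows that \eqref{RT.SC.thm.1.9.eq.1}, with best constant $c_2^1$, holds if and only if
\[
\|(H_{p,u})_{V_*^2}f\|_{q,w,(0,\infty)}\le c\,\|f\|_{1,v,(0,\infty)},\qquad f\in{\mathfrak M}^{\uparrow},
\]
holds, with $c_2^1\approx c$.

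Since $(H_{p,u})_{V_*^2}(f)=\bigl(H(f^pV_*^{2p}u)\bigr)^{1/p}$, the substitution $g=f^p$ (a bijection of ${\mathfrak M}^{\uparrow}$ onto itself) turns this into
\[
\|H(gV_*^{2p}u)\|_{q/p,w,(0,\infty)}\le c^p\,\|g\|_{1/p,v,(0,\infty)},\qquad g\in{\mathfrak M}^{\uparrow},
\]
with $w$ replaced by $w^p$ when $q=\infty$; this is an inequality of the form \eqref{HardyIneqOnTheCone.cor.38.eq.1} with $u$, $p$, $q$ replaced by $V_*^{2p}u$, $1/p$, $q/p$. Feeding this into Theorem~\ref{Thm.2.5.0} is the main step: one verifies that the auxiliary functions of that theorem, formed from the new data, are precisely $V_1^*(x)=\bigl(\int_0^x V_*^{-2}v\bigr)^{1/3}$ and $\int_0^x V_*^{2p}(t)u(t)[V_1^*]^{4p}(t)\,dt=V_3^*(x)$. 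The six parameter ranges {\rm (i)--(vi)} of Theorem~\ref{RT.SC.thm.1.10} then correspond, under $(\text{new }p)=1/p$ and $(\text{new }q)=q/p$, exactly to the six cases of Theorem~\ref{Thm.2.5.0}; transcribing these and taking $p$-th roots (using $(1/p)'=1/(1-p)$ when $p<1$) yields the expressions $A_{2,1}^1,A_{2,2}^1,B_{2,1}^1,\dots$, while the term $\|H_u(\mathbf 1)\|_{q,w}/\|\mathbf 1\|_{p,v}$ occurring in each case of Theorem~\ref{Thm.2.5.0} becomes, after the root, $\bigl\|\,\|\mathbf 1\|_{p,V_*^{2p}u,(0,t)}\,\bigr\|_{q,w,(0,\infty)}/\|\mathbf 1\|_{1,v,(0,\infty)}$, as claimed.

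I do not anticipate a conceptual obstacle, since all the analysis is already carried by Theorems~\ref{RT.thm.main.9.0} and~\ref{Thm.2.5.0}; the work is purely bookkeeping of exponents, and the step most prone to error is the case $q=\infty$, where $w$ must be systematically replaced by $w^p$ before distributing the $p$-th roots. Let me also note why I would not route the proof through Corollary~\ref{RT.thm.main.cor.9.0} (with $\delta=1$) followed by Theorem~\ref{RT.SC.thm.1.3}, which would more literally mirror the proof of Theorem~\ref{RT.SC.thm.2.2}: that route reduces \eqref{RT.SC.thm.1.9.eq.1} to $\|H_{p,V_*^{2p}u}(\int_0^x h)\|_{q,w}\le c\,\|h\|_{1,V_*}$ together with a $\mathbf 1$-condition, and the weight $V_*$ on the right is of the form $(\int_0^x(\cdot))^{-1}$ — the form required by Theorem~\ref{RT.SC.thm.1.3} — only when $V_*(0^+)=\infty$, whereas Theorem~\ref{RT.SC.thm.1.10} only assumes $V_*(x)<\infty$ for $x>0$; the route above avoids this restriction.
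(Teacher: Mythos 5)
Your proposal is correct, and it reaches the same formulas, but it factors the argument differently from the paper. The paper's proof is a one-line composition: it applies Corollary~\ref{RT.cor.main.18.0} to $T=H_{p,u}$, which reduces \eqref{RT.SC.thm.1.9.eq.1} directly to the Hardy inequality on the cone ${\mathfrak M}^{\downarrow}$ with the transformed weights (inequality \eqref{RT.SC.thm.1.19.eq.2}) together with the ${\bf 1}$-condition \eqref{RT.SC.thm.1.19.eq.3}, and then invokes Theorem~\ref{Thm.2.5.}. You instead apply the basic dual reduction Theorem~\ref{RT.thm.main.9.0}, landing on the cone ${\mathfrak M}^{\uparrow}$ with weight $V_*^2$, rewrite it as an inequality of type \eqref{HardyIneqOnTheCone.cor.38.eq.1} with data $(V_*^{2p}u,\,1/p,\,q/p)$, and then quote Theorem~\ref{Thm.2.5.0}. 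Since Theorem~\ref{Thm.2.5.0} is itself proved via Corollary~\ref{HardyIneqOnTheCone.cor.16} (switching back to ${\mathfrak M}^{\downarrow}$) plus Theorem~\ref{Thm.2.5.}, your chain unrolls to exactly the paper's intermediate inequality and ${\bf 1}$-condition, and your identifications $U_1^*\mapsto V_3^*$, $V_1^*\mapsto V_1^*$ and the case correspondence under $(p,q)\mapsto(1/p,q/p)$ are right; the advantage of your factorization is that the weight computations are absorbed into the already-proved Theorem~\ref{Thm.2.5.0}, at the price of the exponent bookkeeping and the replacement $w\mapsto w^p$ when $q=\infty$, which you correctly flag (and which, incidentally, is handled somewhat loosely in the paper's own constants $E_2^1$, $F_2^1$ as well, so this is not a defect of your route relative to the paper's). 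Your side remark about why the alternative path through Corollary~\ref{RT.thm.main.cor.9.0} with $\delta=1$ and Theorem~\ref{RT.SC.thm.1.3} would need $V_*(0)=\infty$ is also sound and explains why that more literal analogue of the proof of Theorem~\ref{RT.SC.thm.2.2} is not used here.
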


\begin{proof}
    By Corollary \ref{RT.cor.main.18.0} applied to the operator $H_{p,u}$, inequality \eqref{RT.SC.thm.1.9.eq.1}
    with the best constant $c_2^1$ holds if and
    only if both
    \begin{equation}\label{RT.SC.thm.1.19.eq.2}
    \left\|\int_0^x \{V_* \cdot [V_1^*]^2\}^{2p} uf\right\|_{q/p,w,(0,\infty)}
    \le c_{2,1}^p \,\|f\|_{1/p,\{V_* \cdot [V_1^*]\}^{-2}v,(0,\infty)}, \, f \in {\mathfrak M}^{\downarrow},
    \end{equation}
    and
    \begin{equation}\label{RT.SC.thm.1.19.eq.3}
    \big\| \| {\bf 1} \|_{p, V_*^{2p}u,(0,t)} \big \|_{q,w,(0,\infty)}\le c_{2,2} \| {\bf 1} \|_{1,
        v,(0,\infty)},
    \end{equation}
    hold. Moreover, $c_2^1 \approx c_{2,1} + \big\|\| {\bf 1} \|_{p, V_*^{2p}u,(0,t)} \big\|_{q,w,(0,\infty)} / \| {\bf 1} \|_{1, v,(0,\infty)}$. Applying Theorem \ref{Thm.2.5.} we obtain the statement.
\end{proof}

For the sake of completeness we give the characterizations of
inequalities of \eqref{IHI.3} and \eqref{IHI.4} here.
\begin{theorem}\label{RT.SC.thm.1.1.3}
    Let $0 < p < \infty$, $0 < q \le \infty$ and $1 < s < \infty$. Assume that $u,\,w \in {\mathcal W}(0,\infty)$
    and $v \in {\mathcal W}(0,\infty)$ be such that \eqref{RT.thm.main.4.eq.0} holds.
    Recall that
    $$
    \Psi \big[v;s\big](x) = \bigg( \int_x^{\infty}
    v^{1-{s}^{\prime}}(t)\,dt\bigg)^{\frac{1}{s^{\prime}+ 1}}, ~ x > 0.
    $$
    Denote by
    $$
    \Psi_1(\tau) : = \int_{\tau}^{\infty} u(x) \Psi[v;s]^{2p}(x)\,dx = \int_{\tau}^{\infty} u(x) \bigg( \int_x^{\infty}
    v^{1-{s}^{\prime}}(t)\,dt\bigg)^{\frac{2p}{s^{\prime}+ 1}}\,dx, ~ \tau > 0.
    $$
    Then inequality \eqref{IHI.3} with the best constant $c_3$ holds if
    and only if:

    {\rm (i)} $p < s \le q < \infty$, and in this case $c_3 \approx A_{3,1} +
    A_{3,2}$, where
    \begin{align*}
    A_{3,1} : & = \sup_{t > 0}\bigg( \int_t^{\infty} [\Psi_1]^{ \frac{q}{p}}(\tau)
    w(\tau)\,d\tau \bigg)^{\frac{1}{q}} \Psi[v;s]^{- \frac{1}{s}} (t),\\
    A_{3,2} : & = \sup_{t > 0} W^{\frac{1}{q}} (t) \bigg( \int_t^{\infty} \bigg(
    \frac{\Psi_1(\tau)}{\Psi[v;s](\tau)} \bigg)^{\frac{s}{s-p}} \psi[v;s](\tau)\,d\tau
    \bigg)^{\frac{s - p}{p s}};
    \end{align*}

    {\rm (ii)} $q  < s < \infty$ and $p < s$, and in this case $c_3 \approx
    B_{3,1} + B_{3,2}$, where
    \begin{align*}
    B_{3,1} : & =  \bigg( \int_0^{\infty} \Psi[v;s]^{\frac{q}{q - s}}(t)
    \bigg(\int_t^{\infty} [\Psi_1]^{ \frac{q}{p}}(\tau) w(\tau)\,d\tau \bigg)^{\frac{q}{s - q}} [\Psi_1]^{ \frac{q}{p}}(t) w(t)\,dt \bigg)^{\frac{s - q}{qs}},\\
    B_{3,2} : & = \bigg( \int_0^{\infty} W^{\frac{q}{s - q}}(t) \bigg(
    \int_t^{\infty} \bigg( \frac{\Psi_1(\tau)}{\Psi[v;s](\tau)} \bigg)^{\frac{s}{s - p}} \psi[v;s](\tau)\,d\tau \bigg)^{\frac{q (s - p)}{p (s - q)}} w(t)\,dt \bigg)^{\frac{s-q}{qs}};
    \end{align*}

    {\rm (iii)} $q < s \le p$, and in this case $c_3 \approx B_{3,1} + C_3$,
    where
    \begin{align*}
    C_3 : & =  \bigg( \int_0^{\infty} \bigg( \esup_{\tau \in (t,\infty)}
    \frac{[\Psi_1]^{\frac{s}{p}}(\tau)}{\Psi[v;s] (\tau)}\bigg)^{\frac{q}{s - q}} W^{\frac{q}{s - q}}(t) w(t)\,dt \bigg)^{\frac{s - q}{s q}};
    \end{align*}

    {\rm (iv)} $s \le q < \infty$ and $s \le p$, and in this case $c_3 =
    D_3$, where
    $$
    D_3 : = \sup_{t > 0}\Psi[v;s]^{- \frac{1}{s}}(t) \bigg( \int_0^{\infty} [\Psi_1]^{\frac{q}{p}}(\max\{\tau,t\}) w(\tau)\,d\tau \bigg)^{\frac{1}{q}};
    $$

    {\rm (v)} $s \le p$ and $q = \infty$, and in this case $c_3 = E_3$,
    where
    $$
    E_3 : = \esup_{t > 0}\Psi[v;s]^{- \frac{1}{s}}(t) \bigg( \esup_{\tau > 0} \,    \Psi_1(\max\{\tau,t\}) w(\tau) \bigg)^{\frac{1}{p}};
    $$

    {\rm (vi)} $p < s$ and $q = \infty$, and in this case $c_3 = F_3$, where
    $$
    F_3 : = \esup_{t > 0} w(t) \bigg( \int_t^{\infty} \bigg( \int_t^{\tau} u(y)
    \Psi[v;s]^{-1}(y)\,dy  \bigg)^{\frac{s}{s - p}}\psi[v;s](\tau)\,d\tau \bigg)^{\frac{s - p}{s p}}.
    $$
\end{theorem}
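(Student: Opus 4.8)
The plan is to follow the same route as the proof of Theorem~\ref{RT.SC.thm.1.1}, replacing the Hardy operator $H$ by the Copson operator $H^*$ and the cone ${\mathfrak M}^{\downarrow}$ by ${\mathfrak M}^{\uparrow}$: first reduce \eqref{IHI.3} to a plain weighted Hardy-type inequality for $H^*$ on the cone of non-decreasing functions via Theorem~\ref{RT.thm.main.4}, and then read off the six cases from Theorem~\ref{Thm.2.5.00}.

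First I would apply Theorem~\ref{RT.thm.main.4} with $T = H_{p,u}^*$ and $\beta = q$; the hypotheses hold because $H_{p,u}^*$ is readily checked to satisfy conditions {\rm (i)-(iii)} and \eqref{RT.thm.main.4.eq.0} is assumed. Hence \eqref{IHI.3} is equivalent, with comparable best constant, to
\begin{equation*}
\bigl\| (H_{p,u}^*)_{\Psi[v;s]^2} f \bigr\|_{q,w,(0,\infty)} \le c\,\|f\|_{s,\psi[v;s],(0,\infty)}, \qquad f \in {\mathfrak M}^{\uparrow}.
\end{equation*}
Unravelling the subscript notation, $(H_{p,u}^*)_{\Psi[v;s]^2}(f) = \bigl(H^*\!\bigl(f^{p}\Psi[v;s]^{2p}u\bigr)\bigr)^{1/p} = \bigl(\int_x^{\infty} f^{p}\Psi[v;s]^{2p}u\bigr)^{1/p}$. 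Raising the displayed inequality to the power $p$ and putting $g = f^{p}$ (which again runs over ${\mathfrak M}^{\uparrow}$) turns it into
\begin{equation*}
\Bigl\| \int_x^{\infty} g\,\Psi[v;s]^{2p}u \Bigr\|_{q/p,\,w,(0,\infty)} \le c^{p}\,\|g\|_{s/p,\,\psi[v;s],(0,\infty)}, \qquad g \in {\mathfrak M}^{\uparrow},
\end{equation*}
with the usual reading that $q/p = \infty$ and the weight becomes $w^{p}$ when $q=\infty$. This is precisely inequality \eqref{Reduction.Theorem.Thm.2.5.00} for the operator $H^*$ with the weights $u$ and $v$ replaced by $\Psi[v;s]^{2p}u$ and $\psi[v;s]$ and the exponents $p,q$ replaced by $s/p,q/p$; since $1<s<\infty$ the dichotomy $1<s/p$ versus $s/p\le1$ is precisely $p<s$ versus $s\le p$, and $q/p\in(0,\infty]$.

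It then remains to apply Theorem~\ref{Thm.2.5.00} to this reduced inequality. In its notation the Copson-type primitive $\int_\tau^{\infty}\Psi[v;s]^{2p}u$ is precisely $\Psi_1(\tau)$, while $\int_\tau^{\infty}\psi[v;s] = \Psi[v;s](\tau)$ by the very definition of $\Psi[v;s]$, and the outer weight $w$ is unchanged (replaced by $w^{p}$ when $q=\infty$, which is absorbed upon taking $p$-th roots of the resulting constant). Cases {\rm (i)-(vi)} of Theorem~\ref{Thm.2.5.00} then produce cases {\rm (i)-(vi)} of the present theorem, the two remaining cases being vacuous since $s<\infty$ forces $s/p<\infty$; concretely, taking the $p$-th root of the constant supplied by Theorem~\ref{Thm.2.5.00}, and using $(s/p)' = s/(s-p)$ together with $\tfrac1r = p\bigl(\tfrac1q-\tfrac1s\bigr)$ for the auxiliary exponent, converts $A_0^*,A_1^*$ into $A_{3,1},A_{3,2}$, converts $B_0^*,B_1^*,C_1^*$ into $B_{3,1},B_{3,2},C_3$, and likewise yields $D_3,E_3,F_3$.

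The argument is a mechanical composition of two results already available, so there is no substantial obstacle; the only step demanding attention is the final bookkeeping — tracking the $1/p$-scaling of every exponent, the conjugation $(s/p)' = s/(s-p)$, the passage from $w$ to $w^{p}$ in the endpoint case $q=\infty$, and the fact that in Theorem~\ref{Thm.2.5.00} the weight accompanying the ``dual'' term is $W$ rather than $W_*$ (reflecting the switch from ${\mathfrak M}^{\downarrow}$ to ${\mathfrak M}^{\uparrow}$) — so that the formulas match $A_{3,i},B_{3,i},C_3,D_3,E_3,F_3$ on the nose. As a consistency check one may instead derive \eqref{IHI.3} from the already-characterized \eqref{IHI.1} of Theorem~\ref{RT.SC.thm.1.1} via the substitution $x=1/t$, which interchanges $H$ with $H^*$ and $\int_0^x$ with $\int_x^{\infty}$ and carries \eqref{RT.thm.main.4.eq.0} into \eqref{RT.thm.main.3.eq.0}; the two routes must give equivalent constants.
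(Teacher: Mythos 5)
Your proof is correct, but it is not the route the paper takes for this theorem: the paper proves Theorem~\ref{RT.SC.thm.1.1.3} by the substitution $x=1/t$, which converts \eqref{IHI.3} into an inequality of the form \eqref{IHI.1} with transformed weights $\tilde u(t)=u(1/t)t^{-2}$, $\tilde w$, $\tilde v$, and then quotes the already proved Theorem~\ref{RT.SC.thm.1.1}, translating the resulting conditions back through the same substitution. You instead reduce \eqref{IHI.3} directly, applying Theorem~\ref{RT.thm.main.4} with $T=H^*_{p,u}$ to land on the inequality $\big\|\int_x^{\infty} g\,\Psi[v;s]^{2p}u\big\|_{q/p,w}\le c^p\|g\|_{s/p,\psi[v;s]}$ for $g\in{\mathfrak M}^{\uparrow}$, and then read off the six cases from Theorem~\ref{Thm.2.5.00} with $U_*=\Psi_1$, $V_*=\Psi[v;s]$; this is precisely the alternative argument the paper itself sketches in its closing Remark (where, note, the reduced inequality is misprinted with $\int_0^x$ in place of $\int_x^{\infty}$ — your version is the right one, as the appearance of $\Psi_1(\tau)=\int_\tau^{\infty}u\,\Psi^{2p}$ and of $W$ rather than $W_*$ confirms). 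The trade-off: the paper's route recycles Theorem~\ref{RT.SC.thm.1.1} verbatim but requires carrying the weights through the substitution and back, while your route avoids that bookkeeping and makes the identification of $\Psi_1$ and $\Psi[v;s]$ with the Copson primitives transparent, at the price of invoking Theorem~\ref{Thm.2.5.00}, which the paper had itself obtained from Theorem~\ref{Thm.2.5.} by the very same change of variables — so the two derivations are ultimately equivalent in content. Your explicit treatment of the $q=\infty$ endpoint (the outer weight becoming $w^p$ before the $p$-th root is taken) is more careful than the paper's; and, as in the paper's own argument, the equalities asserted in cases (iv)--(vi) should really be read as two-sided estimates, since the quasilinear reduction of Theorem~\ref{RT.thm.main.4} only preserves the constant up to equivalence.
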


\begin{proof}
    By change of variables $x = 1 / t$, it is easy to see that inequality \eqref{IHI.3} holds if and only if
    \begin{equation*}
    \left\|H_{p,\tilde{u}} \bigg( \int_0^x h\bigg)\right\|_{q,\tilde{w},(0,\infty)} \leq c
    \,\|h\|_{s,\tilde{v},(0,\infty)}
    \end{equation*}
    holds, where
    $$
    \tilde{u} (t) = u \bigg(\frac{1}{t}\bigg)\frac{1}{t^2}, ~ \tilde{w} (t) = w \bigg(\frac{1}{t}\bigg)\frac{1}{t^2}, ~\tilde{v} (t) = v \bigg(\frac{1}{t}\bigg)\bigg(\frac{1}{t^2}\bigg)^{1-s}, ~ t > 0,
    $$
    when $0 < q < \infty$, and
    $$
    \tilde{u} (t) = u \bigg(\frac{1}{t}\bigg)\frac{1}{t^2}, ~ \tilde{w} (t) = w \bigg(\frac{1}{t}\bigg), ~\tilde{v} (t) = v \bigg(\frac{1}{t}\bigg)\bigg(\frac{1}{t^2}\bigg)^{1-s}, ~ t > 0,
    $$
    when $q = \infty$.

    Using Theorem \ref{RT.SC.thm.1.1}, and then applying substitution of variables mentioned above three times, we get the statement.
\end{proof}

\begin{theorem}\label{RT.SC.thm.1.3.1}
    Let $0 < p < \infty$ and $0 < q \le \infty$. Assume that $u,\,w \in {\mathcal W}(0,\infty)$
    and $v \in {\mathcal W}(0,\infty)$ be such that $V_*(x) < \infty$ for all $x > 0$. Denote by
    $$
    V_2^* (\tau) : = \int_{\tau}^{\infty} u(x) V_*^{2p}(x)\,dx, ~ \tau > 0.
    $$
    Then inequality
    \begin{equation}\label{RT.SC.thm.1.2.eq.1.1}
    \left\|H_{p,u}^* \bigg( \int_x^{\infty} h\bigg)\right\|_{q,w,(0,\infty)} \leq c_3^1
    \,\|h\|_{1,V_*^{-1},(0,\infty)},~h\in {\mathfrak M}^+
    \end{equation}
    with the best constant $c_3^1$ holds if
    and only if:

    {\rm (i)} $p < 1 \le q < \infty$, and in this case $c_3^1 \approx A_{3,1}^1 +
    A_{3,2}^1$, where
    \begin{align*}
    A_{3,1}^1 : & = \sup_{t > 0}\bigg( \int_t^{\infty} [V_2^*]^{\frac{q}{p}}(\tau)
    w(\tau)\,d\tau \bigg)^{\frac{1}{q}} V_*^{- 1} (t),\\
    A_{3,2}^1 : & = \sup_{t > 0} W^{\frac{1}{q}} (t) \bigg( \int_t^{\infty} \bigg(
    \frac{V_2^*(\tau)}{V_*(\tau)} \bigg)^{\frac{1}{1-p}} v(\tau)\,d\tau
    \bigg)^{\frac{1 - p}{p}};
    \end{align*}

    {\rm (ii)} $q  < 1$ and $p < 1$, and in this case $c_3^1 \approx
    B_{3,1}^1 + B_{3,2}^1$, where
    \begin{align*}
    B_{3,1}^1 : & =  \bigg( \int_0^{\infty} V_*^{\frac{q}{q - 1}}(t)
    \bigg(\int_t^{\infty} [V_2^*]^{\frac{q}{p}}(\tau) w(\tau)\,d\tau \bigg)^{\frac{q}{1 - q}} [V_2^*]^{\frac{q}{p}}(t) w(t)\,dt \bigg)^{\frac{1 - q}{q}},\\
    B_{3,2}^1 : & = \bigg( \int_0^{\infty} W^{\frac{q}{1 - q}}(t) \bigg(
    \int_t^{\infty} \bigg( \frac{V_2^*(\tau)}{V_*(\tau)} \bigg)^{\frac{1}{1 - p}} v(\tau)\,d\tau \bigg)^{\frac{q (1 - p)}{p (1 - q)}} w(t)\,dt \bigg)^{\frac{1 - q}{q}};
    \end{align*}

    {\rm (iii)} $q < 1 \le p$, and in this case $c_3^1 \approx B_{3,1}^1 + C_3^1$,
    where
    \begin{align*}
    C_3^1 : & =  \bigg( \int_0^{\infty} \bigg( \esup_{\tau \in (t,\infty)}
    \frac{[V_2^*]^{\frac{1}{p}}(\tau)}{V_* (\tau)}\bigg)^{\frac{q}{1 - q}} W^{\frac{q}{1 - q}}(t) w(t)\,dt \bigg)^{\frac{1 - q}{q}};
    \end{align*}

    {\rm (iv)} $1 \le q < \infty$ and $1 \le p$, and in this case $c_3^1 = D_3^1$, where
    $$
    D_3^1 : = \sup_{t > 0} V_*^{- 1}(t) \bigg( \int_0^{\infty} [V_2^*]^{\frac{q}{p}}(\max\{\tau,t\}) w(\tau)\,d\tau \bigg)^{ \frac{1}{q}};
    $$

    {\rm (v)} $1 \le p$ and $q = \infty$, and in this case $c_3^1 = E_3^1$,
    where
    $$
    E_3^1 : = \esup_{t > 0}V_*^{- 1}(t) \bigg( \esup_{\tau > 0} \,    V_2^*(\max\{\tau,t\}) w(\tau) \bigg)^{\frac{1}{p}};
    $$

    {\rm (vi)} $p < 1$ and $q = \infty$, and in this case $c_3^1 = F_3^1$, where
    $$
    F_3^1 : = \esup_{t > 0} w(t)^{\frac{1}{p}} \bigg( \int_t^{\infty} \bigg( \int_t^{\tau} u(y)
    V_*^{2p-1}(y)\,dy  \bigg)^{\frac{1}{1 - p}}v(\tau)\,d\tau \bigg)^{\frac{1 - p}{p}}.
    $$
\end{theorem}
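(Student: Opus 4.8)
The plan is to mirror the proof of Theorem~\ref{RT.SC.thm.1.3}, with the Copson operator $H^{*}$ playing the role taken there by the Hardy operator. One first records that $T=H^{*}_{p,u}$, i.e. $(H^{*}_{p,u}g)(x)=\big(\int_{x}^{\infty}g^{p}u\big)^{1/p}$, satisfies conditions {\rm (i)-(iii)}: positivity and homogeneity are immediate, and {\rm (iii)} follows from the elementary inequality $(a+b)^{p}\le c_{p}(a^{p}+b^{p})$ together with its companion $(a+b)^{1/p}\le C_{p}(a^{1/p}+b^{1/p})$ for the exponent $1/p$. Since $V_{*}(x)<\infty$ for every $x>0$ by hypothesis, Theorem~\ref{RT.thm.main.9.0} applied with $\beta=q$ and $T=H^{*}_{p,u}$ shows that \eqref{RT.SC.thm.1.2.eq.1.1} is equivalent, with comparable best constants, to
\begin{equation*}
\big\| \big(H^{*}_{p,u}\big)_{V_{*}^{2}} f \big\|_{q,w,(0,\infty)} \le c\,\|f\|_{1,v,(0,\infty)}, \qquad f\in{\mathfrak M}^{\uparrow}.
\end{equation*}

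Next I would unfold the operator. Since $\big(H^{*}_{p,u}\big)_{V_{*}^{2}}f=\big(\int_{x}^{\infty}f^{p}V_{*}^{2p}u\big)^{1/p}$, raising the last inequality to the power $p$ and putting $g=f^{p}$ (which preserves the cone ${\mathfrak M}^{\uparrow}$) transforms it into the weighted Copson inequality
\begin{equation*}
\Big\| \int_{x}^{\infty} g\,V_{*}^{2p}u \Big\|_{q/p,\,w,(0,\infty)} \le c^{p}\,\|g\|_{1/p,\,v,(0,\infty)}, \qquad g\in{\mathfrak M}^{\uparrow};
\end{equation*}
this is precisely inequality \eqref{Reduction.Theorem.Thm.2.5.00} with operator weight $V_{*}^{2p}u$ — whose primitive $\int_{\tau}^{\infty}u\,V_{*}^{2p}$ equals $V_{2}^{*}$ — with the exponents $1/p$ and $q/p$ in place of $p$ and $q$, and with the same right-hand weight $v$ (so that the corresponding $V_{*}$ is unchanged). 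It then remains to apply Theorem~\ref{Thm.2.5.00} and translate its conditions back: the six regimes listed here correspond one-to-one to parts {\rm (i)-(vi)} of that theorem ($1<1/p\le q/p<\infty$ is $p<1\le q<\infty$, $q/p<1/p$ with $1<1/p<\infty$ is $q<1$ and $p<1$, $q/p<1/p\le1$ is $q<1\le p$, and so on through the two $q=\infty$ subcases; the cases $p=\infty$ of Theorem~\ref{Thm.2.5.00} do not occur since $0<p<\infty$), and in each one substitutes $U_{*}=V_{2}^{*}$, uses $(1/p)'=1/(1-p)$, and raises the resulting expression to the power $1/p$ to obtain $A_{3,1}^{1},A_{3,2}^{1},B_{3,1}^{1},C_{3}^{1},D_{3}^{1},E_{3}^{1},F_{3}^{1}$.

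The main nuisance is purely computational: the case-by-case substitution in Theorem~\ref{Thm.2.5.00}, and in particular keeping track of the exponent of the norm weight $w$ when $q=\infty$ (this is what yields the power $w(t)^{1/p}$ in $F_{3}^{1}$); the case $q<\infty$ is entirely routine. As an alternative route, the theorem can be deduced directly from Theorem~\ref{RT.SC.thm.1.3} by the substitution $x=1/t$, which interchanges $H^{*}$ and $H$ and replaces $V$ by $V_{*}$ — indeed $\widetilde v(t)=v(1/t)t^{-2}$ gives $\widetilde V(t)=V_{*}(1/t)$ and hence $\widetilde V_{2}(\tau)=V_{2}^{*}(1/\tau)$ — carried out on the three admissible choices of $\widetilde w$ according as $q<\infty$ or $q=\infty$, exactly as in the proof of Theorem~\ref{RT.SC.thm.1.1.3}.
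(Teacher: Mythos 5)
Your strategy is sound, and you in fact offer two routes. The closing alternative (the substitution $x=1/t$, which converts \eqref{RT.SC.thm.1.2.eq.1.1} into the inequality of Theorem \ref{RT.SC.thm.1.3} with $\widetilde V(t)=V_*(1/t)$, $\widetilde V_2(\tau)=V_2^*(1/\tau)$, and then translates the constants back) is precisely the paper's own proof. Your primary route is genuinely different: reduce via Theorem \ref{RT.thm.main.9.0} with $T=H^*_{p,u}$ (whose properties {\rm (i)}--{\rm (iii)} you rightly verify), pass to $g=f^p$, and identify the result as inequality \eqref{Reduction.Theorem.Thm.2.5.00} with $u$ replaced by $V_*^{2p}u$, $p$ by $1/p$, $q$ by $q/p$, so that Theorem \ref{Thm.2.5.00} applies with $U_*$ replaced by $V_2^*$. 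This is the scheme sketched in the paper's final Remark; it avoids the triple change of variables, and for every regime with $q<\infty$ your case correspondence and the resulting constants come out correctly.

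The weak point is your treatment of $q=\infty$. You claim that tracking the weight through the power substitution ``yields the power $w(t)^{1/p}$ in $F_3^1$'', but raising the weighted $L^\infty$ norm to the power $p$ gives $\big\|F^{1/p}\big\|^p_{\infty,w,(0,\infty)}=\esup_{x>0}F(x)\,w^p(x)$, i.e. the weight becomes $w^p$, not $w$. Feeding $w^p$ into Theorem \ref{Thm.2.5.00}{\rm (vi)} and taking the $1/p$-th root at the end produces
\begin{equation*}
\esup_{t>0}\,w(t)\bigg(\int_t^{\infty}\bigg(\int_t^{\tau}u\,V_*^{2p-1}\bigg)^{\frac{1}{1-p}}v\bigg)^{\frac{1-p}{p}},
\end{equation*}
which is $1$-homogeneous in $w$ — as the best constant in \eqref{RT.SC.thm.1.2.eq.1.1} must be, since its left-hand side is linear in $w$ — whereas $\esup_t w(t)^{1/p}(\cdots)$ is $(1/p)$-homogeneous; the analogous correction in case {\rm (v)} is $\esup_{t>0}V_*^{-1}(t)\,\esup_{\tau>0}\,[V_2^*(\max\{\tau,t\})]^{1/p}w(\tau)$. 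So your reduction, carried out carefully, does not land on the printed $E_3^1$ and $F_3^1$ (these inherit the same weight-power issue from Theorem \ref{RT.SC.thm.1.3}{\rm (v)},{\rm (vi)}); as written, your parenthetical assertion papers over exactly the step where the bookkeeping matters, and you should either redo the $q=\infty$ computation with the weight $w^p$ and record the resulting discrepancy with the stated constants, or not claim that your method produces $w(t)^{1/p}$.
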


\begin{proof}
    By change of variables $x = 1 / t$, it is easy to see that inequality \eqref{RT.SC.thm.1.2.eq.1.1} holds if and only if
    \begin{equation*}
    \left\|H_{p,\tilde{u}} \bigg( \int_0^x h\bigg)\right\|_{q,\tilde{w},(0,\infty)} \leq c
    \,\|h\|_{1,\tilde{V}^{-1},(0,\infty)},~h\in {\mathfrak M}^+
    \end{equation*}
    holds, where
    $$
    \tilde{u} (t) = u \bigg(\frac{1}{t}\bigg)\frac{1}{t^2}, ~ \tilde{w} (t) = w \bigg(\frac{1}{t}\bigg)\frac{1}{t^2}, ~\tilde{V} (t) = \int_0^t v \bigg(\frac{1}{y}\bigg)\frac{1}{y^2}\,dy, ~ t > 0,
    $$
    when $0 < q < \infty$, and
    $$
    \tilde{u} (t) = u \bigg(\frac{1}{t}\bigg)\frac{1}{t^2}, ~ \tilde{w} (t) = w \bigg(\frac{1}{t}\bigg), ~\tilde{V} (t) = \int_0^t v \bigg(\frac{1}{y}\bigg)\frac{1}{y^2}\,dy, ~ t > 0,
    $$
    when $q = \infty$.

    Applying Theorem \ref{RT.SC.thm.1.3}, and then using substitution of variables mentioned above three times, we get the statement.
\end{proof}

\begin{theorem}\label{RT.SC.thm.2.2.1}
    Let $0 < p < \infty$, $0 < q \le \infty$ and $1 < s < \infty$. Assume that $u,\,w \in {\mathcal W}(0,\infty)$
    and $v \in {\mathcal W}(0,\infty)$ be such that \eqref{RT.thm.main.3.eq.0} holds. Denote by
    $$
    \Psi_2(\tau) : = \int_{\tau}^{\infty} u(x) \bigg(\Phi\big[v;s\big] \cdot \Psi \big[\Phi[v;s]^{s} \phi[v;s]^{1-s};s\big]\bigg)^{2p}(x)\,dx, ~ \tau > 0.
    $$
    Recall that
    \begin{align*}
    \Phi \big[v;s\big](x) & = \bigg( \int_0^x
    v^{1-{s}^{\prime}}(t)\,dt\bigg)^{\frac{1}{s^{\prime} + 1}},~ x > 0, \\
    \psi \big[\Phi[v;s]^{s} \phi[v;s]^{1-s};s\big] (x) & \\
    & \hspace{-3cm} \approx \bigg\{\int_x^{\infty} \bigg( \int_0^t v^{1 - s'}\bigg)^{-\frac{2s'}{1 + s'}} v^{1-s'}(t)\,dt\bigg\}^{- \frac{s'}{1 + s'}} \bigg( \int_0^x v^{1 - s'}\bigg)^{- \frac{2s'}{1 + s'}} v^{1-s'}(x),\\
    \Psi \big[\Phi[v;s]^{s} \phi[v;s]^{1-s};s\big]  (x)
    & \approx \bigg\{\int_x^{\infty} \bigg( \int_0^t v^{1 - s'}\bigg)^{- \frac{2s'}{1 + s'}} v^{1-s'}(t)\,dt\bigg\}^{\frac{1}{1 + s'}},
    \end{align*}
    Then inequality \eqref{IHI.4} with the best constant $c_4$ holds if
    and only if:

    {\rm (i)} $p < s \le q < \infty$, and in this case
    $$
    c_4 \approx A_{4,1} +
    A_{4,2} + \|\| {\bf 1} \|_{p, \Phi[v;s]^{2p}u,(t,\infty)} \|_{q,w,(0,\infty)} /
    \| {\bf 1} \|_{s, \phi[v;s],(0,\infty)},
    $$
    where
    \begin{align*}
    A_{4,1} : & = \sup_{t > 0}\bigg( \int_t^{\infty} [\Psi_2]^{\frac{q}{p}}(\tau)
    w(\tau)\,d\tau \bigg)^{\frac{1}{q}} \Psi \big[\Phi^s \phi^{1-s};s\big]^{- \frac{1}{s}} (t),\\
    A_{4,2} : & = \sup_{t > 0} W^{\frac{1}{q}} (t) \bigg( \int_t^{\infty} \bigg(
    \frac{\Psi_2(\tau)}{\Psi \big[\Phi^s \phi^{1-s};s\big](\tau)} \bigg)^{\frac{s}{s-p}} \phi \big[\Phi^s \phi^{1-s};s\big](\tau)\,d\tau
    \bigg)^{\frac{s - p}{p s}};
    \end{align*}

    {\rm (ii)} $q  < s < \infty$ and $p < s$, and in this case
    $$
    c_4 \approx
    B_{4,1} + B_{4,2} + \|\| {\bf 1} \|_{p, \Phi[v;s]^{2p}u,(t,\infty)} \|_{q,w,(0,\infty)} /
    \| {\bf 1} \|_{s, \phi[v;s],(0,\infty)},
    $$
    where
    \begin{align*}
    B_{4,1} : & =  \bigg( \int_0^{\infty} \Psi \big[\Phi^s \phi^{1-s};s\big]^{\frac{q}{q - s}}(t)
    \bigg(\int_t^{\infty} [\Psi_2]^{\frac{q}{p}}(\tau) w(\tau)\,d\tau \bigg)^{\frac{q}{s - q}} [\Psi_2]^{ \frac{q}{p}}(t) w(t)\,dt \bigg)^{\frac{s - q}{qs}},\\
    B_{4,2} : & = \bigg( \int_0^{\infty} W^{\frac{q}{s - q}}(t) \bigg(
    \int_t^{\infty} \bigg( \frac{\Psi_2(\tau)}{\Psi \big[\Phi^s \phi^{1-s};s\big](\tau)} \bigg)^{\frac{s}{s - p}}
    \psi \big[\Phi^s \phi^{1-s};s\big](\tau)\,d\tau \bigg)^{\frac{q (s - p)}{p (s - q)}}w(t)\,dt \bigg)^{\frac{s - q}{qs}};
    \end{align*}

    {\rm (iii)} $q < s \le p$, and in this case
    $$
    c_4 \approx B_{4,1} + C_4 + \|\| {\bf 1} \|_{p, \Phi[v;s]^{2p}u,(t,\infty)} \|_{q,w,(0,\infty)} /
    \| {\bf 1} \|_{s, \phi[v;s],(0,\infty)},
    $$
    where
    \begin{align*}
    C_4 : & =  \bigg( \int_0^{\infty} \bigg( \esup_{\tau \in (t,\infty)}
    \frac{[\Psi_2]^{\frac{s}{p}}(\tau)}{\Psi \big[\Phi^s \phi^{1-s};s\big] (\tau)}\bigg)^{\frac{q}{s - q}} W^{\frac{q}{s - q}}(t) w(t)\,dt \bigg)^{\frac{s - q}{s q}};
    \end{align*}

    {\rm (iv)} $s \le q < \infty$ and $s \le p$, and in this case
    $$
    c_4 =
    D_4 + \|\| {\bf 1} \|_{p, \Phi[v;s]^{2p}u,(t,\infty)} \|_{q,w,(0,\infty)} /
    \| {\bf 1} \|_{s, \phi[v;s],(0,\infty)},
    $$
    where
    $$
    D_4 : = \sup_{t > 0}\Psi \big[\Phi^s \phi^{1-s};s\big]^{- \frac{1}{s}}(t) \bigg( \int_0^{\infty} [\Psi_2]^{\frac{q}{p}}(\max\{\tau,t\}) w(\tau)\,d\tau \bigg)^{ \frac{1}{q}};
    $$

    {\rm (v)} $s \le p$ and $q = \infty$, and in this case $$
    c_4 = E_4 + \|\| {\bf 1} \|_{p, \Phi[v;s]^{2p}u,(t,\infty)} \|_{q,w,(0,\infty)} / \| {\bf 1} \|_{s, \phi[v;s],(0,\infty)},
    $$
    where
    $$
    E_4 : = \esup_{t > 0}\Psi \big[\Phi^s \phi^{1-s};s\big]^{- \frac{1}{s}}(t) \bigg( \esup_{\tau > 0} \, \Psi_2(\max\{\tau,t\}) w(\tau) \bigg)^{\frac{1}{p}};
    $$

    {\rm (vi)} $p < s$ and $q = \infty$, and in this case $$
    c_4 = F_4 + \|\| {\bf 1} \|_{p, \Phi[v;s]^{2p}u,(t,\infty)} \|_{q,w,(0,\infty)} / \| {\bf 1} \|_{s, \phi[v;s],(0,\infty)},
    $$
    where
    $$
    F_4 : = \esup_{t > 0} w(t) \bigg( \int_t^{\infty} \bigg( \int_t^{\tau} u(y)
    \Psi \big[\Phi^s \phi^{1-s};s\big]^{-1}(y)\,dy  \bigg)^{\frac{s}{s - p}}\psi \big[\Phi^s \phi^{1-s};s\big](\tau)\,d\tau \bigg)^{\frac{s - p}{s p}}.
    $$
\end{theorem}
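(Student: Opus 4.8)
The plan is to deduce Theorem~\ref{RT.SC.thm.2.2.1} from Theorem~\ref{RT.SC.thm.2.2} by the change of variables $x=1/t$, exactly in the way Theorems~\ref{RT.SC.thm.1.1.3} and \ref{RT.SC.thm.1.3.1} were obtained from Theorems~\ref{RT.SC.thm.1.1} and \ref{RT.SC.thm.1.3}. For $h\in{\mathfrak M}^+$ set $\tilde h(y):=h(1/y)\,y^{-2}$; this is a bijection of ${\mathfrak M}^+$ onto itself with $\int_0^{1/\sigma}h=\int_\sigma^{\infty}\tilde h$. Put $\tilde u(t):=u(1/t)\,t^{-2}$, $\tilde v(t):=t^{2(s-1)}v(1/t)$, and $\tilde w(t):=w(1/t)\,t^{-2}$ when $q<\infty$, $\tilde w(t):=w(1/t)$ when $q=\infty$. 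Substituting in the three integrals defining the left-hand side of \eqref{IHI.4} shows that \eqref{IHI.4} holds with constant $c$ iff $\big\|H_{p,\tilde u}\big(\int_x^{\infty}\tilde h\big)\big\|_{q,\tilde w,(0,\infty)}\le c\,\|\tilde h\|_{s,\tilde v,(0,\infty)}$ for all $\tilde h\in{\mathfrak M}^+$, i.e.\ \eqref{IHI.2} for the data $(\tilde u,\tilde w,\tilde v)$. Since $1-s'=-1/(s-1)$, we have $(s-1)(1-s')=-1$, hence $\tilde v^{1-s'}(t)=t^{-2}v^{1-s'}(1/t)$ and $\int_x^{\infty}\tilde v^{1-s'}=\int_0^{1/x}v^{1-s'}$; thus \eqref{RT.thm.main.4.eq.0} for $\tilde v$ is equivalent to \eqref{RT.thm.main.3.eq.0} for $v$, and Theorem~\ref{RT.SC.thm.2.2} applies.

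The second step is to record the transformation rules for the auxiliary functions. Using $(s-1)(1-s')=-1$, $(1-s)(1-s')=1$ and $s(1-s')=-s'$, one checks straight from the definitions that for every weight $\omega$ and $\tilde\omega(t):=t^{2(s-1)}\omega(1/t)$ one has $\Psi[\tilde\omega;s](x)=\Phi[\omega;s](1/x)$, $\Phi[\tilde\omega;s](x)=\Psi[\omega;s](1/x)$, $\psi[\tilde\omega;s](x)=x^{-2}\phi[\omega;s](1/x)$ and $\phi[\tilde\omega;s](x)=x^{-2}\psi[\omega;s](1/x)$. Applying these with $\omega=v$ gives $\Psi[\tilde v;s]^{s}\psi[\tilde v;s]^{1-s}=\widetilde{\rho}$ for the weight $\rho:=\Phi[v;s]^{s}\phi[v;s]^{1-s}$; iterating them with $\omega=\rho$ then yields $\Phi\big[\Psi[\tilde v;s]^{s}\psi[\tilde v;s]^{1-s};s\big](x)=\Psi[\rho;s](1/x)$ and $\phi\big[\Psi[\tilde v;s]^{s}\psi[\tilde v;s]^{1-s};s\big](x)=x^{-2}\psi[\rho;s](1/x)$, and a substitution in the defining integral shows that the function $\Phi_2$ built from $(\tilde u,\tilde v)$ in Theorem~\ref{RT.SC.thm.2.2} equals $\tau\mapsto\Psi_2(1/\tau)$ with $\Psi_2$ as in the present statement. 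Likewise $\widetilde{W}_*(t)=W(1/t)$, $\|{\bf 1}\|_{p,\Psi[\tilde v;s]^{2p}\tilde u,(0,t)}=\|{\bf 1}\|_{p,\Phi[v;s]^{2p}u,(1/t,\infty)}$ and $\|{\bf 1}\|_{s,\psi[\tilde v;s],(0,\infty)}=\|{\bf 1}\|_{s,\phi[v;s],(0,\infty)}$. The expanded ``$\approx$'' expressions for $\psi[\Phi[v;s]^{s}\phi[v;s]^{1-s};s]$ and $\Psi[\Phi[v;s]^{s}\phi[v;s]^{1-s};s]$ recorded in the statement follow by inserting the definitions of $\Phi[v;s],\phi[v;s]$ and simplifying via $\rho^{1-s'}=\big(\int_0^{\cdot}v^{1-s'}\big)^{-2s'/(s'+1)}v^{1-s'}$.

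To conclude, one feeds these identities into each of the six cases of Theorem~\ref{RT.SC.thm.2.2} and changes variables $\tau=1/y$ in the resulting integrals and suprema (renaming the outer variable too); the parameter ranges $p<s\le q$, $q<s<\infty$, $q<s\le p$, $s\le q$, $q=\infty$, etc.\ are unaffected, and the quantities $A_{2,1},A_{2,2},B_{2,1},B_{2,2},C_2,D_2,E_2,F_2$ together with the extra term $\|\|{\bf 1}\|_{p,\Psi[v;s]^{2p}u,(0,t)}\|_{q,w,(0,\infty)}/\|{\bf 1}\|_{s,\psi[v;s],(0,\infty)}$ transform into precisely $A_{4,1},A_{4,2},B_{4,1},B_{4,2},C_4,D_4,E_4,F_4$ and the extra term of Theorem~\ref{RT.SC.thm.2.2.1}, giving $c_4\approx$ the stated sums.

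The one genuinely delicate point is the bookkeeping in the second step: one must verify that the nested construction $\Phi_2=\int_0^{\tau}\tilde u\,\big(\Psi[\tilde v;s]\cdot\Phi[\Psi[\tilde v;s]^{s}\psi[\tilde v;s]^{1-s};s]\big)^{2p}$ collapses cleanly to $\Psi_2(1/\tau)$. This works because the ``inner'' auxiliary weight $\rho=\Phi[v;s]^{s}\phi[v;s]^{1-s}$ is sent by $\omega\mapsto\tilde\omega$ to a weight whose $\Psi$-functional is exactly the one occurring inside $\Phi_2$, which in turn rests on the precise exponents in the construction (here $\delta=1$ in Corollary~\ref{RT.cor.main.4}) and the identity $(s-1)(1-s')=-1$. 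Everything else is routine substitution, so the write-up reduces to stating the four transformation rules once and invoking them in the six cases.
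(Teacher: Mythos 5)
Your argument is correct and is essentially the paper's own proof: the paper likewise reduces \eqref{IHI.4} to \eqref{IHI.2} for the transformed weights $\tilde u(t)=u(1/t)t^{-2}$, $\tilde v(t)=v(1/t)t^{2(s-1)}$, $\tilde w$ via the substitution $x=1/t$, invokes Theorem \ref{RT.SC.thm.2.2}, and then translates the conditions back by the same substitution. The only difference is that you write out explicitly the bookkeeping (the rules $\Psi[\tilde v;s](x)=\Phi[v;s](1/x)$, $\Phi_2(\tau)=\Psi_2(1/\tau)$, transfer of \eqref{RT.thm.main.4.eq.0} to \eqref{RT.thm.main.3.eq.0}, etc.) which the paper compresses into ``applying substitution of variables mentioned above three times,'' and your computations there are accurate.
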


\begin{proof}
    Obviously, inequality \eqref{IHI.4} holds if and only if
    \begin{equation*}
    \left\|H_{p,\tilde{u}} \bigg( \int_x^{\infty} h\bigg)\right\|_{q,\tilde{w},(0,\infty)} \leq c
    \,\|h\|_{s,\tilde{v},(0,\infty)}
    \end{equation*}
    holds, where
    $$
    \tilde{u} (t) = u \bigg(\frac{1}{t}\bigg)\frac{1}{t^2}, ~ \tilde{w} (t) = w \bigg(\frac{1}{t}\bigg)\frac{1}{t^2}, ~\tilde{v} (t) = v \bigg(\frac{1}{t}\bigg)\bigg(\frac{1}{t^2}\bigg)^{1-s}, ~ t > 0,
    $$
    when $0 < q < \infty$, and
    $$
    \tilde{u} (t) = u \bigg(\frac{1}{t}\bigg)\frac{1}{t^2}, ~ \tilde{w} (t) = w \bigg(\frac{1}{t}\bigg), ~\tilde{v} (t) = v \bigg(\frac{1}{t}\bigg)\bigg(\frac{1}{t^2}\bigg)^{1-s}, ~ t > 0,
    $$
    when $q = \infty$.

    Using Theorem \ref{RT.SC.thm.2.2}, and then applying substitution of variables mentioned above three times, we get the statement.
\end{proof}

\begin{theorem}\label{RT.SC.thm.1.10.1}
    Let $0 < p < \infty$ and $0 < q \le \infty$. Assume that $u,\,w \in {\mathcal W}(0,\infty)$
    and $v \in {\mathcal W}(0,\infty)$ be such that $V(x) < \infty$ for all $x > 0$.
    Recall that
    $$
    V_1 (x) : = \bigg( \int_x^{\infty} V^{-2}v \bigg)^{\frac{1}{3}}, \qquad (x > 0).
    $$
    Denote by
    $$
    V_3 (\tau) : = \int_{\tau}^{\infty} u(x) \{V \cdot V_1^2\}^{2p}(x)\,dx, ~ \tau > 0.
    $$
    Then inequality
    \begin{equation}\label{RT.SC.thm.1.9.eq.1.1}
    \left\|H_{p,u}^* \bigg( \int_0^x h\bigg)\right\|_{q,w,(0,\infty)} \leq c_4^1
    \,\|h\|_{1,V^{-1},(0,\infty)},
    \end{equation}
    with the best constant $c_4^1$ holds if
    and only if:

    {\rm (i)} $p < 1 \le q < \infty$, and in this case
    $$
    c_4^1 \approx A_{4,1}^1 +
    A_{4,2}^1 + \|\| {\bf 1} \|_{p, V^{2p}u,(t,\infty)} \|_{q,w,(0,\infty)} / \| {\bf 1} \|_{1, v,(0,\infty)},
    $$
    where
    \begin{align*}
    A_{4,1}^1 : & = \sup_{t > 0}\bigg( \int_t^{\infty} [V_3]^{ \frac{q}{p}}(\tau)
    w(\tau)\,d\tau \bigg)^{\frac{1}{q}} [V_1]^{- 1} (t),\\
    A_{4,2}^1 : & = \sup_{t > 0} W^{\frac{1}{q}} (t) \bigg( \int_t^{\infty} \bigg(
    \frac{V_3(\tau)}{V_1(\tau)} \bigg)^{\frac{1}{1-p}} \{V \cdot [V_1]\}^{-2}(\tau)v(\tau)\,d\tau\bigg)^{\frac{1-p}{p}};
    \end{align*}

    {\rm (ii)} $q  < 1$ and $p < 1$, and in this case
    $$
    c_4^1 \approx
    B_{4,1}^1 + B_{4,2}^1 + \|\| {\bf 1} \|_{p, V^{2p}u,(t,\infty)} \|_{q,w,(0,\infty)} / \| {\bf 1} \|_{1, v,(0,\infty)},
    $$
    where
    \begin{align*}
    B_{4,1}^1 : & =  \bigg( \int_0^{\infty} V_1^{\frac{q}{q - 1}}(t)
    \bigg(\int_t^{\infty} [V_3]^{\frac{q}{p}}(\tau) w(\tau)\,d\tau \bigg)^{\frac{q}{1 - q}} V_3^{\frac{q}{p}}(t) w(t)\,dt \bigg)^{\frac{1-q}{q}},\\
    B_{4,2}^1 : & = \bigg( \int_0^{\infty} W^{\frac{q}{1 - q}}(t) \bigg(
    \int_t^{\infty} \bigg( \frac{V_3(\tau)}{V_1(\tau)} \bigg)^{\frac{1}{1 - p}} \{V \cdot [V_1]\}^{-2}(\tau)v(\tau)\,d\tau \bigg)^{\frac{q (1 - p)}{p (1 - q)}} w(t)\,dt \bigg)^{\frac{1-q}{q}};
    \end{align*}

    {\rm (iii)} $q < 1 \le p$, and in this case
    $$
    c_4^1 \approx B_{4,1}^1 + C_4^1 + \|\| {\bf 1} \|_{p, V^{2p}u,(t,\infty)} \|_{q,w,(0,\infty)} / \| {\bf 1} \|_{1, v,(0,\infty)},
    $$
    where
    \begin{align*}
    C_4^1 : & =  \bigg( \int_0^{\infty} \bigg( \esup_{\tau \in (t,\infty)}
    \frac{[V_3]^{\frac{1}{p}}(\tau)}{V_1 (\tau)}\bigg)^{\frac{q}{1-q}} W^{\frac{q}{1-q}}(t) w(t)\,dt \bigg)^{\frac{1-q}{q}};
    \end{align*}

    {\rm (iv)} $1 \le q < \infty$ and $1 \le p$, and in this case
    $$
    c_4^1 = D_4^1 + \|\| {\bf 1} \|_{p, V^{2p}u,(t,\infty)} \|_{q,w,(0,\infty)} / \| {\bf 1} \|_{1, v,(0,\infty)},
    $$
    where
    $$
    D_4^1 : = \sup_{t > 0} V_1^{- 1}(t) \bigg( \int_0^{\infty} [V_3]^{\frac{q}{p}}(\max\{\tau,t\}) w(\tau)\,d\tau \bigg)^{ \frac{1}{q}};
    $$

    {\rm (v)} $1 \le p$ and $q = \infty$, and in this case $$
    c_4^1 = E_4^1 + \|\| {\bf 1} \|_{p, V^{2p}u,(t,\infty)} \|_{q,w,(0,\infty)} / \| {\bf 1} \|_{1, v,(0,\infty)},
    $$
    where
    $$
    E_4^1 : = \esup_{t > 0}V_1^{- 1}(t) \bigg( \esup_{\tau > 0} \,    [V_3](\max\{\tau,t\}) w(\tau) \bigg)^{\frac{1}{p}};
    $$

    {\rm (vi)} $p < 1$ and $q = \infty$, and in this case
    $$
    c_4^1 = F_4^1 + \|\| {\bf 1} \|_{p, V^{2p}u,(t,\infty)} \|_{q,w,(0,\infty)} / \| {\bf 1} \|_{1, v,(0,\infty)},
    $$
    where
    $$
    F_4^1 : = \esup_{t > 0} w(t)^{\frac{1}{p}} \bigg( \int_t^{\infty} \bigg( \int_t^{\tau} u(y)
    V_1^{2p-1}(y)\,dy  \bigg)^{\frac{1}{1-p}}\{V \cdot V_1\}^{-2}(\tau)v(\tau)\,d\tau \bigg)^{\frac{1-p}{p}}.
    $$
\end{theorem}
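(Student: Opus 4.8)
The plan is to reduce inequality \eqref{RT.SC.thm.1.9.eq.1.1} to an inequality already characterized in Theorem \ref{RT.SC.thm.1.10} by means of the change of variables $x = 1/t$, and then to transport the resulting characterization back. This mirrors the proof of Theorem \ref{RT.SC.thm.1.3.1}, with the roles of the Hardy and Copson integrations interchanged.

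First I would perform the substitution $x = 1/t$. Setting $\tilde h(t) := t^{-2} h(1/t)$ and substituting inside each of the three integrations that make up the composed operator $H_{p,u}^*\big(\int_0^{\,\cdot} h\big)$, the outer Copson integration $\int_x^{\infty}$ becomes a Hardy integration while the inner Hardy integration $\int_0^s$ becomes a Copson integration, so that $H_{p,u}^*\big(\int_0^{\,\cdot} h\big)(1/t) = H_{p,\tilde u}\big(\int_{\,\cdot}^{\infty} \tilde h\big)(t)$ with $\tilde u(t) := t^{-2} u(1/t)$. Comparing the two sides of \eqref{RT.SC.thm.1.9.eq.1.1} after the substitution --- the Jacobian $t^{-2}$ on the left being absorbed into $w$ when $q < \infty$ and ignored when $q = \infty$, and the elementary identity $\int_t^{\infty} \tilde v = V(1/t)$ being used on the right --- one sees that \eqref{RT.SC.thm.1.9.eq.1.1} holds if and only if
\[
\left\|H_{p,\tilde u}\bigg(\int_x^{\infty} h\bigg)\right\|_{q,\tilde w,(0,\infty)} \le c\,\|h\|_{1,\tilde V_*^{-1},(0,\infty)}, \quad h \in {\mathfrak M}^+,
\]
holds, where $\tilde u(t) = t^{-2} u(1/t)$ and $\tilde v(t) = t^{-2} v(1/t)$, while $\tilde w(t) = t^{-2} w(1/t)$ when $0 < q < \infty$ and $\tilde w(t) = w(1/t)$ when $q = \infty$.

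Next I would apply Theorem \ref{RT.SC.thm.1.10} to this transformed inequality. This yields the six cases of the characterization, phrased in terms of the tilde weights and of the auxiliary functions $\tilde V_1^* = \big(\int_0^{\,\cdot} \tilde V_*^{-2}\tilde v\big)^{1/3}$ and $\tilde V_3^*(\tau) = \int_0^{\tau} \tilde u \,\{\tilde V_* \cdot [\tilde V_1^*]^2\}^{2p}$. It then remains to undo $x = 1/t$ in each of these conditions. The relevant identities are $\tilde V_*(t) = V(1/t)$, whence $\tilde V_1^*(t) = V_1(1/t)$ and $\tilde V_3^*(t) = V_3(1/t)$ with $V_1$ and $V_3$ as in the statement; also $\tilde W_*(t) = W(1/t)$ and $\|{\bf 1}\|_{p,\tilde V_*^{2p}\tilde u,(0,t)} = \|{\bf 1}\|_{p,V^{2p}u,(1/t,\infty)}$; and the innermost $\min\{\cdot,\cdot\}$ turns into $\max\{\cdot,\cdot\}$ once the accompanying integration variables are swapped. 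Substituting back term by term in cases (i)--(vi) of Theorem \ref{RT.SC.thm.1.10} reproduces precisely the quantities $A_{4,i}^1$, $B_{4,i}^1$, $C_4^1$, $D_4^1$, $E_4^1$, $F_4^1$ together with the extra term $\|\|{\bf 1}\|_{p, V^{2p}u,(t,\infty)}\|_{q,w,(0,\infty)} / \|{\bf 1}\|_{1, v,(0,\infty)}$ listed here, which completes the proof.

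The step I expect to be the main obstacle is the bookkeeping in this last part. One has to carry the Jacobian factors $t^{-2}$ correctly through the nested composite weights $\tilde V_1^*$ and $\tilde V_3^*$ --- each of which conceals two further applications of the substitution --- and through the inner integral of the form $\int_t^{\tau} u(y) V_1^{2p-1}(y)\,dy$ occurring in case (vi), verifying that no stray powers of $t$ remain. This is routine but delicate, and it is exactly what the phrase ``applying the substitution of variables three times'' refers to.
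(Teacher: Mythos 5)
Your proposal is correct and follows essentially the same route as the paper: reduce \eqref{RT.SC.thm.1.9.eq.1.1} via the change of variables $x=1/t$ to the inequality characterized in Theorem \ref{RT.SC.thm.1.10} (with $\tilde u(t)=t^{-2}u(1/t)$, $\tilde w$ adjusted according to whether $q<\infty$ or $q=\infty$, and $\tilde V_*(t)=V(1/t)$), and then transport each condition back using $\tilde V_1^*(t)=V_1(1/t)$, $\tilde V_3^*(t)=V_3(1/t)$, $\tilde W_*(t)=W(1/t)$, with $\min$ turning into $\max$. The bookkeeping identities you list are exactly the ones implicit in the paper's phrase about applying the substitution of variables three times, so no further comment is needed.
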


\begin{proof}
    Obviously, inequality \eqref{RT.SC.thm.1.9.eq.1.1} holds if and only if
    \begin{equation*}
    \left\|H_{p,\tilde{u}} \bigg( \int_x^{\infty} h\bigg)\right\|_{q,\tilde{w},(0,\infty)} \leq c
    \,\|h\|_{1,\tilde{V}_*^{-1},(0,\infty)},~h\in {\mathfrak M}^+
    \end{equation*}
    holds, where
    $$
    \tilde{u} (t) = u \bigg(\frac{1}{t}\bigg)\frac{1}{t^2}, ~ \tilde{w} (t) = w \bigg(\frac{1}{t}\bigg)\frac{1}{t^2}, ~\tilde{V}_* (t) = \int_t^{\infty} v \bigg(\frac{1}{y}\bigg)\frac{1}{y^2}\,dy, ~ t > 0,
    $$
    when $0 < q < \infty$, and
    $$
    \tilde{u} (t) = u \bigg(\frac{1}{t}\bigg)\frac{1}{t^2}, ~ \tilde{w} (t) = w \bigg(\frac{1}{t}\bigg), ~\tilde{V}_* (t) = \int_t^{\infty} v \bigg(\frac{1}{y}\bigg)\frac{1}{y^2}\,dy, ~ t > 0,
    $$
    when $q = \infty$.

    Applying Theorem \ref{RT.SC.thm.1.10}, and then using substitution of variables mentioned above three times, we get the statement.
\end{proof}

\begin{remark}
    It is worth to mention that Theorem \ref{RT.SC.thm.2.2} - \ref{RT.SC.thm.1.10.1} can be proved by reducing corresponding iterated inequality to the cone of monotone functions. For instance: inequality \eqref{IHI.3} with the best constant $c_3$ holds if
    and only if inequality
    $$
    \bigg\| \int_0^x f u \Psi[v;s]^{2p} \bigg\|_{{q/p}, w, (0,\infty)} \le c_3^p \,\|
    f \|_{{s / p}, \psi[v;s],(0,\infty)}, \, f \in {\mathfrak M}^{\uparrow}
    $$
    holds, and the statement of Theorem \ref{RT.SC.thm.1.1.3} immediately follows by Theorem \ref{Thm.2.5.00}.
\end{remark}

\begin{bibdiv}
    \begin{biblist}

        \bib{aor}{article}{
            author={Aguilar Ca{\~n}estro, M. I.},
            author={Ortega Salvador, P.},
            author={Ramirez Torreblanca, C.},
            title={Weighted bilinear Hardy inequalities},
            journal={J. Math. Anal. Appl.},
            volume={387},
            date={2012},
            number={1},
            pages={320--334},
            issn={0022-247X},
            review={\MR{2845753}},
            doi={10.1016/j.jmaa.2011.08.078},
        }

        \bib{ander}{article}{
            author={Andersen, K. F.},
            title={Weighted inequalities for the Stieltjes transformation and
                Hilbert's double series},
            journal={Proc. Roy. Soc. Edinburgh Sect. A},
            volume={86},
            date={1980},
            number={1-2},
            pages={75--84},
            issn={0308-2105},
            review={\MR{580247 (82b:26020)}},
            doi={10.1017/S0308210500012014},
        }

        \bib{arinomuck}{article}{
            author={Ari{\~n}o, M. A.},
            author={Muckenhoupt, B.},
            title={Maximal functions on classical Lorentz spaces and Hardy's
                inequality with weights for nonincreasing functions},
            journal={Trans. Amer. Math. Soc.},
            volume={320},
            date={1990},
            number={2},
            pages={727--735},
            issn={0002-9947},
            review={\MR{989570 (90k:42034)}},
            doi={10.2307/2001699},
        }

        \bib{bengros}{article}{
            author={Bennett, G.},
            author={Grosse-Erdmann, K.- G.},
            title={Weighted Hardy inequalities for decreasing sequences and
                functions},
            journal={Math. Ann.},
            volume={334},
            date={2006},
            number={3},
            pages={489--531},
            issn={0025-5831},
            review={\MR{2207873 (2006m:26038)}},
            doi={10.1007/s00208-005-0678-7},
        }

        \bib{boyd}{article}{
            author={Boyd, D. W.},
            title={The Hilbert transform on rearrangement-invariant spaces},
            journal={Canad. J. Math.},
            volume={19},
            date={1967},
            pages={599--616},
            issn={0008-414X},
            review={\MR{0212512 (35 \#3383)}},
        }

        \bib{BGGM1}{article}{
            author={Burenkov, V. I.},
            author={Gogatishvili, A.},
            author={Guliyev, V. S.},
            author={Mustafayev, R. Ch.},
            title={Boundedness of the fractional maximal operator in local
                Morrey-type spaces},
            journal={Complex Var. Elliptic Equ.},
            volume={55},
            date={2010},
            number={8-10},
            pages={739--758},
            issn={1747-6933},
            review={\MR{2674862 (2011f:42015)}},
        }

        \bib{BGGM2}{article}{
            author={Burenkov, V.I.},
            author={Gogatishvili, A.},
            author={Guliyev, V.S.},
            author={Mustafayev, R.Ch.},
            title={Boundedness of the Riesz potential in local Morrey-type spaces},
            journal={Potential Anal.},
            volume={35},
            date={2011},
            number={1},
            pages={67--87},
            issn={0926-2601},
            review={\MR{2804553 (2012d:42027)}},
        }

        \bib{burgold}{article}{
            author={Burenkov, V. I.},
            author={Goldman, M. L.},
            title={Calculation of the norm of a positive operator on the cone of
                monotone functions},
            language={Russian},
            note={In honor of the ninetieth birthday of Academician S. M.
                Nikolskii (Russian)},
            journal={Trudy Mat. Inst. Steklov.},
            volume={210},
            date={1995},
            number={Teor. Funktsii i Differ. Uravn.},
            pages={65--89},
            issn={0371-9685},
            review={\MR{1421378 (97m:47038)}},
        }

        \bib{BO}{article}{
            author={Burenkov, V.I.},
            author={Oinarov, R.},
            title={Necessary and sufficient conditions for boundedness of the
                Hardy-type operator from a weighted Lebesgue space to a Morrey-type
                space},
            journal={Math. Inequal. Appl.},
            volume={16},
            date={2013},
            number={1},
            pages={1--19},
            issn={1331-4343},
            review={\MR{3060376}},
        }

        \bib{cgmp2008}{article}{
            author={Carro, M.},
            author={Gogatishvili, A.},
            author={Martin, J.},
            author={Pick, L.},
            title={Weighted inequalities involving two Hardy operators with
                applications to embeddings of function spaces},
            journal={J. Operator Theory},
            volume={59},
            date={2008},
            number={2},
            pages={309--332},
            issn={0379-4024},
            review={\MR{2411048 (2009f:26024)}},
        }

        \bib{cpss}{article}{
            author={Carro, M.},
            author={Pick, L.},
            author={Soria, J.},
            author={Stepanov, V. D.},
            title={On embeddings between classical Lorentz spaces},
            journal={Math. Inequal. Appl.},
            volume={4},
            date={2001},
            number={3},
            pages={397--428},
            issn={1331-4343},
            review={\MR{1841071 (2002d:46026)}},
            doi={10.7153/mia-04-37},
        }

        \bib{carsor1993}{article}{
            author={Carro, M. J.},
            author={Soria, J.},
            title={Boundedness of some integral operators},
            journal={Canad. J. Math.},
            volume={45},
            date={1993},
            number={6},
            pages={1155--1166},
            issn={0008-414X},
            review={\MR{1247539 (95d:47064)}},
            doi={10.4153/CJM-1993-064-2},
        }

        \bib{gop2009}{article}{
            author={Evans, W.D.},
            author={Gogatishvili, A.},
            author={Opic, B.},
            title={The $\rho$-quasiconcave functions and weighted inequalities},
            conference={
                title={Inequalities and applications},
            },
            book={
                series={Internat. Ser. Numer. Math.},
                volume={157},
                publisher={Birkh\"auser},
                place={Basel},
            },
            date={2009},
            pages={121--132},
            review={\MR{2758974 (2012a:26025)}},
        }

        \bib{g1}{article}{
            author={Gogatishvili, A.},
            title={Discretization and anti-discretization of function spaces},
            series={},
            edition={},
            journal={In the proceedings of the The Autumn Conference Mathematical Society
                of Japan, September 25--28, Shimane University, Matsue (2002)},
            pages={63--72},
        }

        \bib{gjop}{article}{
            author={Gogatishvili, A.},
            author={Johansson, M.},
            author={Okpoti, C. A.},
            author={Persson, L.-E.},
            title={Characterisation of embeddings in Lorentz spaces},
            journal={Bull. Austral. Math. Soc.},
            volume={76},
            date={2007},
            number={1},
            pages={69--92},
            issn={0004-9727},
            review={\MR{2343440 (2008j:46017)}},
            doi={10.1017/S0004972700039484},
        }

        \bib{gogkufpers2009}{article}{
            author={Gogatishvili, A.},
            author={Kufner, A.},
            author={Persson, L.-E.},
            title={Some new scales of weight characterizations of the class $B_p$},
            journal={Acta Math. Hungar.},
            volume={123},
            date={2009},
            number={4},
            pages={365--377},
            issn={0236-5294},
            review={\MR{2506756 (2010b:42021)}},
            doi={10.1007/s10474-009-8132-z},
        }

        \bib{gogkufpers2013}{article}{
            author={Gogatishvili, A.},
            author={Kufner, A.},
            author={Persson, L.-E.},
            title={The weighted Stieltjes inequality and applications},
            journal={Math. Nachr.},
            volume={286},
            date={2013},
            number={7},
            pages={659--668},
            issn={0025-584X},
            review={\MR{3060837}},
        }

        \bib{GogMusPers1}{article}{
            author={Gogatishvili, A.},
            author={Mustafayev, R. Ch.},
            author={Persson, L.-E.},
            title={Some new iterated Hardy-type inequalities},
            journal={J. Funct. Spaces Appl.},
            date={2012},
            pages={Art. ID 734194, 30},
            issn={0972-6802},
            review={\MR{3000818}},
        }

        \bib{GogMusPers2}{article}{
            author={Gogatishvili, A.},
            author={Mustafayev, R. Ch.},
            author={Persson, L.-E.},
            title={Some new iterated Hardy-type inequalities: the case $s = 1$},
            journal={J. Inequal. Appl.},
            date={2013},
            pages={29 pp.},
            issn={},
            doi={10.1186/1029-242X-2013-515},
        }

        \bib{gpswdokl}{article}{
            author={Gogatishvili, A.},
            author={Persson, L.-E.},
            author={Stepanov, V. D.},
            author={Wall, P.},
            title={On scales of equivalent conditions that characterize the weighted
                Stieltjes inequality},
            language={Russian},
            journal={Dokl. Akad. Nauk},
            volume={447},
            date={2012},
            number={1},
            pages={13--14},
            issn={0869-5652},
            translation={
                journal={Dokl. Math.},
                volume={86},
                date={2012},
                number={3},
                pages={738--739},
                issn={1064-5624},
            },
            review={\MR{3075082}},
        }

        \bib{gpsw2014}{article}{
            author={Gogatishvili, A.},
            author={Persson, L.-E.},
            author={Stepanov, V. D.},
            author={Wall, P.},
            title={Some scales of equivalent conditions to characterize the Stieltjes
                inequality: the case $q < p$},
            journal={Math. Nachr.},
            volume={287},
            date={2014},
            number={2-3},
            pages={242--253},
            issn={0025-584X},
            review={\MR{3163577}},
            doi={10.1002/mana.201200118},
        }

        \bib{gogpick2000}{article}{
            author={Gogatishvili, A.},
            author={Pick, L.},
            title={Duality principles and reduction theorems},
            journal={Math. Inequal. Appl.},
            volume={3},
            date={2000},
            number={4},
            pages={539--558},
            issn={1331-4343},
            review={\MR{1786395 (2002c:46056)}},
        }

        \bib{gogpick2007}{article}{
            author={Gogatishvili, A.},
            author={Pick, L.},
            title={A reduction theorem for supremum operators},
            journal={J. Comput. Appl. Math.},
            volume={208},
            date={2007},
            number={1},
            pages={270--279},
            issn={0377-0427},
            review={\MR{2347749 (2009a:26013)}},
        }

        \bib{gogstepdokl2012_1}{article}{
            author={Gogatishvili, A.},
            author={Stepanov, V. D.},
            title={Integral operators on cones of monotone functions},
            language={Russian},
            journal={Dokl. Akad. Nauk},
            volume={446},
            date={2012},
            number={4},
            pages={367--370},
            issn={0869-5652},
            translation={
                journal={Dokl. Math.},
                volume={86},
                date={2012},
                number={2},
                pages={650--653},
                issn={1064-5624},
            },
            review={\MR{3053208}},
            doi={10.1134/S1064562412050158},
        }

        \bib{gogstepdokl2012_2}{article}{
            author={Gogatishvili, A.},
            author={Stepanov, V. D.},
            title={Operators are cones of monotone functions},
            language={Russian},
            journal={Dokl. Akad. Nauk},
            volume={445},
            date={2012},
            number={6},
            pages={618--621},
            issn={0869-5652},
            translation={
                journal={Dokl. Math.},
                volume={86},
                date={2012},
                number={1},
                pages={562--565},
                issn={1064-5624},
            },
            review={\MR{3050526}},
        }

        \bib{GogStep1}{article}{
            author={Gogatishvili, A.},
            author={Stepanov, V.D.},
            title={Reduction theorems for operators on the cones of monotone
                functions},
            journal={J. Math. Anal. Appl.},
            volume={405},
            date={2013},
            number={1},
            pages={156--172},
            issn={0022-247X},
            review={\MR{3053495}},
            doi={10.1016/j.jmaa.2013.03.046},
        }

        \bib{GogStep}{article}{
            author={Gogatishvili, A.},
            author={Stepanov, V. D.},
            title={Reduction theorems for weighted integral inequalities on the cone
                of monotone functions},
            language={Russian, with Russian summary},
            journal={Uspekhi Mat. Nauk},
            volume={68},
            date={2013},
            number={4(412)},
            pages={3--68},
            issn={0042-1316},
            translation={
                journal={Russian Math. Surveys},
                volume={68},
                date={2013},
                number={4},
                pages={597--664},
                issn={0036-0279},
            },
            review={\MR{3154814}},
        }

        \bib{gold2001}{article}{
            author={Goldman, M. L.},
            title={Sharp estimates for the norms of Hardy-type operators on cones of
                quasimonotone functions},
            language={Russian, with Russian summary},
            journal={Tr. Mat. Inst. Steklova},
            volume={232},
            date={2001},
            number={Funkts. Prostran., Garmon. Anal., Differ. Uravn.},
            pages={115--143},
            issn={0371-9685},
            translation={
                journal={Proc. Steklov Inst. Math.},
                date={2001},
                number={1 (232)},
                pages={109--137},
                issn={0081-5438},
            },
            review={\MR{1851444 (2002m:42019)}},
        }

        \bib{gold2011.1}{article}{
            author={Goldman, M. L.},
            title={Order-sharp estimates for Hardy-type operators on the cones of
                functions with properties of monotonicity},
            journal={Eurasian Math. J.},
            volume={3},
            date={2012},
            number={2},
            pages={53--84},
            issn={2077-9879},
            review={\MR{3024120}},
        }

        \bib{gold2011.2}{article}{
            author={Goldman, M. L.},
            title={Order-sharp estimates for Hardy-type operators on cones of
                quasimonotone functions},
            journal={Eurasian Math. J.},
            volume={2},
            date={2011},
            number={3},
            pages={143--146},
            issn={2077-9879},
            review={\MR{2910846}},
        }

        \bib{heinstep1993}{article}{
            author={Heinig, H. P.},
            author={Stepanov, V. D.},
            title={Weighted Hardy inequalities for increasing functions},
            journal={Canad. J. Math.},
            volume={45},
            date={1993},
            number={1},
            pages={104--116},
            issn={0008-414X},
            review={\MR{1200323 (93j:26011)}},
            doi={10.4153/CJM-1993-006-3},
        }

        \bib{johstepush}{article}{
            author={Johansson, M.},
            author={Stepanov, V. D.},
            author={Ushakova, E. P.},
            title={Hardy inequality with three measures on monotone functions},
            journal={Math. Inequal. Appl.},
            volume={11},
            date={2008},
            number={3},
            pages={393--413},
            issn={1331-4343},
            review={\MR{2431205 (2010d:26024)}},
            doi={10.7153/mia-11-30},
        }

        \bib{kufpers}{book}{
            author={Kufner, A.},
            author={Persson, L.-E.},
            title={Weighted inequalities of Hardy type},
            publisher={World Scientific Publishing Co., Inc., River Edge, NJ},
            date={2003},
            pages={xviii+357},
            isbn={981-238-195-3},
            review={\MR{1982932 (2004c:42034)}},
            doi={10.1142/5129},
        }

        \bib{kufmalpers}{book}{
            author={Kufner, A.},
            author={Maligranda, L.},
            author={Persson, L.-E.},
            title={The Hardy inequality},
            note={About its history and some related results},
            publisher={Vydavatelsk\'y Servis, Plze\v n},
            date={2007},
            pages={162},
            isbn={978-80-86843-15-5},
            review={\MR{2351524 (2008j:26001)}},
        }

        \bib{LaiShanzhong}{article}{
            author={Lai, S.},
            title={Weighted norm inequalities for general operators on monotone
                functions},
            journal={Trans. Amer. Math. Soc.},
            volume={340},
            date={1993},
            number={2},
            pages={811--836},
            issn={0002-9947},
            review={\MR{1132877 (94b:42005)}},
            doi={10.2307/2154678},
        }

        \bib{opkuf}{book}{
            author={Opic, B.},
            author={Kufner, A.},
            title={Hardy-type inequalities},
            series={Pitman Research Notes in Mathematics Series},
            volume={219},
            publisher={Longman Scientific \& Technical, Harlow},
            date={1990},
            pages={xii+333},
            isbn={0-582-05198-3},
            review={\MR{1069756 (92b:26028)}},
        }

        \bib{popo}{article}{
            author={Popova, O. V.},
            title={Hardy-type inequalities on cones of monotone functions},
            language={Russian, with Russian summary},
            journal={Sibirsk. Mat. Zh.},
            volume={53},
            date={2012},
            number={1},
            pages={187--204},
            issn={0037-4474},
            translation={
                journal={Sib. Math. J.},
                volume={53},
                date={2012},
                number={1},
                pages={152--167},
                issn={0037-4466},
            },
            review={\MR{2962198}},
            doi={10.1134/S0037446612010132},
        }

        \bib{ProkhStep1}{article}{
            author={Prokhorov, D. V.},
            author={Stepanov, V. D.},
            title={On weighted Hardy inequalities in mixed norms},
            journal={Proc. Steklov Inst. Math.},
            volume={283},
            date={2013},
            pages={149â€“-164},
        }

        \bib{ProkhStep2}{article}{
            author={Prokhorov, D. V.},
            author={Stepanov, V. D.},
            title={Weighted estimates for a class of sublinear operators},
            language={Russian},
            journal={Dokl. Akad. Nauk},
            volume={453},
            date={2013},
            number={5},
            pages={486--488},
            issn={0869-5652},
            translation={
                journal={Dokl. Math.},
                volume={88},
                date={2013},
                number={3},
                pages={721--723},
                issn={1064-5624},
            },
            review={\MR{3203323}},
        }

        \bib{sawyer1990}{article}{
            author={Sawyer, E.},
            title={Boundedness of classical operators on classical Lorentz spaces},
            journal={Studia Math.},
            volume={96},
            date={1990},
            number={2},
            pages={145--158},
            issn={0039-3223},
            review={\MR{1052631 (91d:26026)}},
        }

        \bib{sin1988}{article}{
            author={Sinnamon, G.},
            title={A note on the Stieltjes transformation},
            journal={Proc. Roy. Soc. Edinburgh Sect. A},
            volume={110},
            date={1988},
            number={1-2},
            pages={73--78},
            issn={0308-2105},
            review={\MR{963841 (90a:26026)}},
            doi={10.1017/S0308210500024860},
        }

        \bib{sinn2002}{article}{
            author={Sinnamon, G.},
            title={Embeddings of concave functions and duals of Lorentz spaces},
            journal={Publ. Mat.},
            volume={46},
            date={2002},
            number={2},
            pages={489--515},
            issn={0214-1493},
            review={\MR{1934367 (2003h:46042)}},
        }

        \bib{Sinn}{article}{
            author={Sinnamon, G.},
            title={Transferring monotonicity in weighted norm inequalities},
            journal={Collect. Math.},
            volume={54},
            date={2003},
            number={2},
            pages={181--216},
            issn={0010-0757},
            review={\MR{1995140 (2004m:26031)}},
        }

        \bib{Sinn2004}{article}{
            author={Sinnamon, G.},
            title={Hardy's inequality and monotonicity},
            journal={in: Proc. "Function Spaces, Differential operators and Nonlinear Analysis" (FSDONA 2004), Acad. Sci., Czech Republic, Milovy,},
            volume={},
            date={2004},
            number={},
            pages={292-310},
        }

        \bib{ss}{article}{
            author={Sinnamon, G.},
            author={Stepanov, V.D.},
            title={The weighted Hardy inequality: new proofs and the case $p=1$},
            journal={J. London Math. Soc. (2)},
            volume={54},
            date={1996},
            number={1},
            pages={89--101},
            issn={0024-6107},
            review={\MR{1395069 (97e:26021)}},
            doi={10.1112/jlms/54.1.89},
        }

        \bib{steptrans}{article}{
            author={Stepanov, V. D.},
            title={The weighted Hardy's inequality for nonincreasing functions},
            journal={Trans. Amer. Math. Soc.},
            volume={338},
            date={1993},
            number={1},
            pages={173--186},
            issn={0002-9947},
            review={\MR{1097171 (93j:26012)}},
            doi={10.2307/2154450},
        }

        \bib{step1993}{article}{
            author={Stepanov, V. D.},
            title={Integral operators on the cone of monotone functions},
            journal={J. London Math. Soc. (2)},
            volume={48},
            date={1993},
            number={3},
            pages={465--487},
            issn={0024-6107},
            review={\MR{1241782 (94m:26025)}},
            doi={10.1112/jlms/s2-48.3.465},
        }

    \end{biblist}
\end{bibdiv}

\end{document}